\numberwithin{equation}{section}
\newtheorem{theorem}{Theorem}[section]
\newtheorem{proposition}[theorem]{Proposition}
\newtheorem{lemma}[theorem]{Lemma}
\newtheorem{corollary}[theorem]{Corollary}
\theoremstyle{definition}
\newtheorem{definition}[theorem]{Definition}
\newtheorem{example}[theorem]{Example}
\newtheorem{remark}[theorem]{Remark}
\newtheorem{question}[theorem]{Question}
\newtheorem{assumption}[theorem]{Assumption}
\newcommand{\Ex}{\mathcal{E}}
\newcommand{\M}{\mathcal{M}}
\newcommand{\1}{\mathbf{1}}
\newcommand{\Ra}{\mathcal{R}}
\newcommand{\Tr}{\mathcal{T}^{\mathcal{R}}}
\newcommand{\T}{\mathbb{T}}
\newcommand{\R}{\mathbb{R}}
\newcommand{\Na}{\mathbb{N}}
\newcommand{\be}{\begin{eqnarray*}}
\newcommand{\ee}{\end{eqnarray*}}
\newcommand{\beq}{\begin{equation}}
\newcommand{\eeq}{\end{equation}}
\newcommand{\N}{\mathcal{N}}
\begin{document}

\title[Almost uniform convergence for  noncommutative Vilenkin-Fourier series]
{Almost uniform convergence for  noncommutative Vilenkin-Fourier series}

\author[Jiao]{Yong Jiao}
\address{School of Mathematics and Statistics,  HNP-LAMA, Central South University, Changsha 410083, China}
\email{jiaoyong@csu.edu.cn}

\author[Luo]{Sijie Luo}
\address{School of Mathematics and Statistics, HNP-LAMA, Central South University, Changsha 410083, China}
\email{sijieluo@csu.edu.cn}

\author[Zhao]{Tiantian Zhao}
\address{Institute for Advanced Study in Mathematics, Harbin Institute of Technology, Harbin 150001;
School of Mathematics and Statistics, Central South University, Changsha 410083, China}
\email{zhaotiantian@hit.edu.cn}

\author[Zhou]{Dejian Zhou}
\address{School of Mathematics and Statistics, HNP-LAMA,
Central South University, Changsha 410083, China}
\email{zhoudejian@csu.edu.cn}
%


\thanks{2020 Mathematics Subject Classification. Primary 46L52; Secondary 42B20, 46L53}

\keywords{noncommutative Calder\'{o}n-Zygmund decomposition; noncommutative martingale; Noncommutative $L_{p}$-spaces; noncommutative Vilenkin Fourier series; noncommutative almost uniform convergence}

%

\begin{abstract}
In the present paper, we study almost uniform convergence for  noncommutative Vilenkin-Fourier series. Precisely, we establish several  noncommutative (asymmetric) maximal inequalities for the Ces\`{a}ro means of the noncommutative Vilenkin-Fourier series, which in turn give the corresponding almost uniform convergence.
 The primary strategy in our proof  is to explore  a noncommutative  generalization of  Sunouchi square function operator, and the very recent advance of the noncommutative Calder\'{o}n-Zygmund decomposition.
\end{abstract}

\maketitle

%
%

 \section{Introduction}
The problem of pointwise convergence for the Fourier series is one of the  
central and oldest themes in harmonic analysis. For $f\in L_1(\T)$, the partial sum of the Fourier series of $f$ is defined by
\[
S_n(f)(t)\coloneqq\sum_{k=-n}^n\widehat{f}(k)e^{2\pi i k t}, \quad n\in \mathbb{N},\,t\in \mathbb{T},
\]
where $\widehat{f}(k)=\int_\T f(y)e^{-2\pi iky}dy$ is the $k$-th Fourier coefficient of $f$.
Carleson and Hunt  (\cite{Ca1966, Hu1968}) showed that  $S_n(f)$ converges a.e. to $f$ (as $n\to \infty$) provided  $f\in L_p(\T)$ for $1<p<\infty$. However, in 1923, it was already shown by Kolmogorov that there exists $g\in L_1(\T)$ such that $S_n(g)$ diverges almost everywhere; see e.g. \cite[Theorem 3.4.2]{Ga2008}. Hence, it is more suitable to consider some other summability methods for  functions in $L_1(\mathbb{T})$. For instance, the so-called Ces\`{a}ro means (or Fej\'{e}r means) of Fourier series are defined as follows: for $f\in L_1(\mathbb{T})$ and $n\in \mathbb{N}$,
\[
\sigma_{n}(f)\coloneqq\frac{1}{n}\sum_{j=0}^{n-1}S_{j}(f).
\]
In 1905, Lebesgue \cite{Le1905} extended a Fej\'{e}r's result (see e.g. \cite[Theorem 3.3.1]{Ga2008}) by obtaining that, for each $f\in L_{1}(\T)$,
\begin{equation}\label{FL04-05}
\sigma_{n}(f)\to f,\quad \mbox{a.e.}, \quad (\mbox{as}~n\to\infty).
\end{equation}

\smallskip
Recently, due to broad interactions with quantum information theory, group theory, and noncommutative geometry, the noncommutative Fourier analysis has become an active area, and numerous results concerning pointwise convergences of the Fourier series have been established successfully in the noncommutative setting. In 2013, Chen, Xu and Yin  \cite[Theorem 4.2]{CXY2013} obtained the noncommutative version of \eqref{FL04-05} on   quantum tori, that is, the Ces\`{a}ro means  converge bilaterally almost uniformly (b.a.u.) with initial data in $ L_1(\mathbb{T}_{\theta}^d)$, where $\mathbb{T}_{\theta}^d$ is a quantum torus. We refer   readers to Definition \ref{npc} in the appendix for different types of almost uniform convergences in the noncommutative setting. Recently, Hong et al.  \cite{HWW2019} studied the pointwise convergence of Fourier series for group von Neumann algebras and quantum groups. Specifically, they developed a general criterion of maximal inequalities for approximate identities of noncommutative multipliers, which allows them to show the b.a.u. (or a.u.) convergence for associated Fourier multipliers on noncommutaitve $L_{p}$-spaces ($1<p<\infty$), under certain conditions. In the study of multiparametric b.a.u. convergence,  Conde-Alonso et al. \cite{CGP2020} established   noncommutative correspondings of the Jessen-Marcinkiewicz-Zygmund theorem and the C\'{o}rdoba-Fefferman-Guzm\'{a}n inequality for quite general processes. As a consequence, they obtained the b.a.u. convergence for free Poisson semigroup with initial element in $L\log^2 L(\M) $. 

In the present paper, we firstly focus on the Ces\`{a}ro summability for  the  Vilenkin-Fourier series of operator-valued functions, and then transfer the operator-valued results into their corresponding totally noncommutative setting. 
The operator-valued (or semi-commutative) model often provides deep insights in noncommutative harmonic analysis, and sometimes plays essential role based on the transference principles. For example, in the paper \cite{CXY2013}  (see also \cite{CGP2020}), the authors firstly proved noncommutative maximal inequality for Ces\`{a}ro means  in operator-valued setting, and then transferred the results to quantum tori. 
 We also refer the reader to the papers \cite{CCP2022, CGPT2023, HLW2021, La2022, XXX2016} whose main ideas are to reduce the problems in their setting to the corresponding problems in the operator-valued setting.

 In order to explain the classical theory roots of those we investigate here, we need to recall several remarkable results.  Historically, the classical  Walsh system  was first introduced by Paley  \cite{Pa1932}, and the classical Vilenkin system was first introduced by Vilenkin \cite{Vi1947}. In the  classical setting,  Walsh system  and Vilenkin system has been studied extensively in the harmonic analysis; see for instance \cite{Golubov1991,PTW2022,SWS1990}.  Motivated by  Carleson and Hunt's results  \cite{Ca1966, Hu1968}, it has been shown that the partial sums of the  Walsh-Fourier series  of an $L_p(0,1)$-function converge almost everywhere to the function for $1<p<\infty$ (see \cite{Bi1966, Hu1971, Sj1969}), and such convergence result fails for the case $p=1$. However, similar to the Fourier case, if we consider some other summability methods, we could still obtain the almost convergence for $L_{1}(0,1)$ functions. Let $(\sigma_{n}(f))_{n\geq1}$ be the Ces\`{a}ro means for the Walsh-Fourier series of $f\in L_1(0,1)$, more precisely,
\[
\sigma_n(f)(\eta)=\int_0^1 f(t) K_n(\eta \oplus t)  dt,~\eta\in(0,1),
\]
where  $K_n$ is the Walsh-Fej\'er kernel and $\oplus$ is the dyadic addition. In the substantial paper \cite{Fi1955},  Fine proved that $\sigma_{n}(f)$ converges almost everywhere to $f$ for each $f\in L_1(0,1)$. Besides, Schipp \cite{Sc1975} investigated Fine's result with an entirely different approach and deduced the convergence theorem via the following weak type $(1,1)$  maximal inequality:
\begin{equation}\label{Sch}
\Big\|\sup_{n\geq1} |\sigma_n(f)|\Big\|_{L_{1,\infty}(0,1)}\leq c\|f\|_{L_1(0,1)}, \quad f\in L_1(0,1).
\end{equation}
By interpolation, \eqref{Sch} further implies the following strong type $(p,p)$ inequality which was  obtained by  Sunouchi \cite{Su1951} and Yano \cite{Ya1957} independently:
\begin{equation}\label{Ya}
\Big\|\sup_{n\geq1} |\sigma_n(f)|\Big\|_{L_{p}(0,1)}\leq c_p\|f\|_{L_p(0,1)}, \quad f\in L_p(0,1), \quad 1<p\leq \infty.
\end{equation}
Furthermore, in 1979, Fujii \cite{Fu1979}  proved that 
\begin{equation}\label{Fujii}
\Big\|\sup_{n\geq1} |\sigma_n(f)|\Big\|_{L_{1}(0,1)}\leq c\|f\|_{H_1(0,1)},
\end{equation}
where $H_1(0,1)$ denotes the martingale Hardy space. 
The corresponding results mentioned above have also been established in the Vilenkin system; we refer the reader to \cite{Go1973} for the almost convergence of partial sums in bounded Vilenkin system (see also \cite{PSTW2022} for a new proof);   to \cite{PTW2022} for more related results. Furthermore, G\'{a}t \cite{Ga2001} established \eqref{Sch} and \eqref{Fujii} on Vilenkin-like systems.


Motivated by \eqref{Sch}-\eqref{Fujii}, our first object is to deal with noncommutative maximal inequalities associated with the specific family of Ces\`{a}ro means of operator-valued Fourier series in the so-called Vilenkin-like system (a common generalization of the Walsh system, Vilenkin system,
the character system of the group of 2-adic (m-adic) integers, the product system of normalized
coordinate functions for continuous irreducible unitary representations of the coordinate
groups of noncommutative Vilenkin groups, the UDMD product systems; see e.g. \cite{Ga2001} for more information).   Note that even the definition of the maximal function $\sup_nf_n$ is not available in the noncommutative setting. The problems of noncommutative maximal inequalities are often  non-trivial, and the usual techniques in classical analysis involving maximal functions seem no
longer available in the noncommutative case.  This  applies to noncommutative martingale Doob inequality \cite{Ju2002}, noncommutative  maximal ergodic theorems \cite{JX2007}, noncommutative Hardy-Littlewood maximal inequalities \cite{Me2007}, and also to our topic in the present paper (see below for more information).

Our first main result of this paper  is the following noncommutative version of \eqref{Sch} and \eqref{Ya} for Ces\`{a}ro means of   Vilenkin-like-Fourier series of operator-valued functions. In the sequel, let $\mathcal{N}=L_{\infty}(G_m)\bar{\otimes} \mathcal{M}$, where $G_m$ is a  Vilenkin space associated with a  bounded sequence $m=(m_k)_{k\in\mathbb{N}}\subset\mathbb{N}$ (see \eqref{eq:vs} below for details) and $\mathcal{M}$ is a semifinite von Neumann algebra. The definitions of $\|\cdot\|_{\Lambda_{1,\infty}(\mathcal{N},\ell_{\infty})}$,  $\|\cdot\|_{L_p(\mathcal{N}, \ell_{\infty})}$ and any unspecific notations are referred to Section \ref{sec-2}.

\begin{theorem}\label{CS}
	Given a bounded sequence $m=(m_k)_{k\in\mathbb{N}}\subset\mathbb{N}$,	let $\mathcal{N}=L_{\infty}(G_m)\bar{\otimes} \mathcal{M}$. 	Let $(\sigma_{n})_{n\geq1}$ be the Ces\`{a}ro means of the  Vilenkin-like-Fourier series of functions in $L_p(\mathcal{N})$.
	\begin{enumerate}[{\rm (i)}]
		\item If $p=1$, then there is a constant $c>0$ such that 	$$\|(\sigma_{n}(f))_{n\geq1}\|_{\Lambda_{1,\infty}(\mathcal{N},\ell_{\infty})}\leq c \|f\|_{L_1(\mathcal{N})}.$$
		\item If $1<p\leq \infty$, then  there is a constant $c_p>0$ such that  $$\|(\sigma_{n}(f))_{n\geq1}\|_{L_p(\mathcal{N},\ell_{\infty})}\leq  c_p\|f\|_{L_p(\mathcal{N})},$$
		where $c_p=\frac{cp^2}{(p-1)^2}$ for some positive constant $c$.
	\end{enumerate}
\end{theorem}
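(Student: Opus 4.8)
The plan is to deduce part (ii) from part (i) and the endpoint $p=\infty$, and to prove part (i) by combining a noncommutative version of the Sunouchi square function with the recent noncommutative Calder\'on--Zygmund decomposition. The case $p=\infty$ is immediate: each $\sigma_n$ acts on $L_p(\mathcal N)$ as convolution, in the $L_\infty(G_m)$-variable, against the Vilenkin-like Fej\'er kernel $K_n\in L_1(G_m)$, so by Young's inequality $\|\sigma_n(f)\|_{L_\infty(\mathcal N)}\le\|K_n\|_{L_1(G_m)}\|f\|_{L_\infty(\mathcal N)}$; since $m$ is bounded, $C_m:=\sup_{n\ge1}\|K_n\|_{L_1(G_m)}<\infty$ --- this is exactly where boundedness of $m$ enters --- whence $\|(\sigma_n(f))_{n\ge1}\|_{L_\infty(\mathcal N,\ell_\infty)}\le C_m\|f\|_{L_\infty(\mathcal N)}$. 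Granting (i), part (ii) for $1<p<\infty$ then follows by interpolating the weak type $(1,1)$ estimate of (i) with this $L_\infty$ estimate, via the (asymmetric) noncommutative Marcinkiewicz interpolation theorem for the scale $L_p(\mathcal N,\ell_\infty)$; the known behaviour of the interpolation constants for these spaces reproduces precisely the factor $cp^2/(p-1)^2$.

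\emph{The Sunouchi reduction and the $L_2$ maximal bound.} For $n$ between two consecutive Vilenkin indices, $p_N\le n<p_{N+1}$ (where $p_0=1$, $p_{k+1}=p_km_k$), expanding $n$ in its Vilenkin digits and summing by parts yields an identity $\sigma_n(f)=\mathcal E_N(f)+R_n(f)$, where $\mathcal E_N(f)=S_{p_N}(f)$ is the $N$-th conditional expectation of the Vilenkin filtration --- so that $(\mathcal E_N f)_N$ is a noncommutative martingale --- and $R_n(f)$ is a sum of at most $C\log n$ terms, each a bounded multiplier (a Vilenkin character times a normalized local Fej\'er factor) applied to a single martingale difference $d_jf=\mathcal E_{j+1}(f)-\mathcal E_j(f)$. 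The martingale term is controlled for $1<p\le\infty$ by the noncommutative Doob maximal inequality, and at $p=1$ by Cuculescu's weak type $(1,1)$ version. For the remainder one introduces the noncommutative Sunouchi operators
\[
S^cf=\Big(\sum_{j\ge0}|T_jf|^2\Big)^{1/2},\qquad S^rf=\Big(\sum_{j\ge0}|(T_jf)^*|^2\Big)^{1/2},
\]
where $T_j$ is the local operator attached to the $j$-th block, and one establishes: (a) the operator domination $\|(R_nf)_{n\ge1}\|_{L_p(\mathcal N,\ell_\infty)}\lesssim\|S^cf\|_{L_p(\mathcal N)}+\|S^rf\|_{L_p(\mathcal N)}$, via a Cauchy--Schwarz estimate exploiting that the multiplier factors (characters times normalized kernels) have $\ell^2$-norm in $j$ bounded uniformly in $n$; and (b) the square-function bound $\|S^cf\|_{L_2(\mathcal N)}+\|S^rf\|_{L_2(\mathcal N)}\lesssim\|f\|_{L_2(\mathcal N)}$, which at $p=2$ is immediate from $\|S^cf\|_{L_2(\mathcal N)}^2=\sum_j\|T_jf\|_{L_2(\mathcal N)}^2\lesssim\sum_j\|d_jf\|_{L_2(\mathcal N)}^2=\|f\|_{L_2(\mathcal N)}^2$ together with the uniform $L_2\!\to\!L_2$ bounds for the $T_j$. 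Combining (a), (b) and Doob gives $\|(\sigma_nf)_{n\ge1}\|_{L_2(\mathcal N,\ell_\infty)}\lesssim\|f\|_{L_2(\mathcal N)}$, which also feeds the good part of the next step.

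\emph{The weak type $(1,1)$ bound.} Using again $\sigma_n(f)=\mathcal E_N(f)+R_n(f)$, it suffices to prove the weak type $(1,1)$ estimate for $R_n$, which by (a) reduces to $\|S^cf\|_{\Lambda_{1,\infty}(\mathcal N,\ell_\infty)}+\|S^rf\|_{\Lambda_{1,\infty}(\mathcal N,\ell_\infty)}\lesssim\|f\|_{L_1(\mathcal N)}$ (reading these asymmetric norms for the column and row square functions). Fix $\lambda>0$ and apply the recent noncommutative Calder\'on--Zygmund decomposition adapted to the Vilenkin filtration: $f=g+b$ with $b=\sum_Ib_I$, where $\|g\|_{L_2(\mathcal N)}^2\lesssim\lambda\|f\|_{L_1(\mathcal N)}$, each $b_I$ has vanishing conditional expectation at the parent scale of the block $I$, and the supporting projections of the $b_I$ have total trace $\lesssim\|f\|_{L_1(\mathcal N)}/\lambda$. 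The good part is absorbed by the $L_2$ bound above and the noncommutative Chebyshev inequality, producing a projection $e$ with $\tau(\mathbf 1-e)\lesssim\lambda^{-2}\|g\|_{L_2(\mathcal N)}^2\lesssim\|f\|_{L_1(\mathcal N)}/\lambda$ on which $\|\sigma_n(g)e\|_{L_\infty(\mathcal N)}\lesssim\lambda$ for all $n$; the bad part is confined, off a dilate of $\bigcup_I\mathrm{supp}\,b_I$ (still of trace $\lesssim\|f\|_{L_1(\mathcal N)}/\lambda$), by the size and smoothness estimates for the Vilenkin-like Fej\'er kernels, which turn the mean-zero property of the $b_I$ into decay. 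Adding the Doob contribution from $\mathcal E_N$ gives (i).

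\emph{Where the difficulty lies.} The genuinely noncommutative core is the pair of steps (a) and the weak type $(1,1)$ bound for the Sunouchi operator. In (a) one cannot pass to absolute values and suprema as in the classical Sunouchi estimate, so the control of the $\sim\log n$ remainder terms by a \emph{single} pair of column/row square functions must be carried out through operator inequalities inside $L_p(\mathcal N,\ell_\infty)$, uniformly in $n$. In the weak type bound, the mean-zero cancellation of the Calder\'on--Zygmund bad part has to be married to pointwise estimates for the Vilenkin-like Fej\'er kernels on a group equipped only with the dyadic-type structure of $G_m$. I expect this last interplay --- the noncommutative bad part against the kernel estimates --- to be the most delicate point, with the square-function domination (a) a close second.
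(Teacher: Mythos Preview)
Your outline has the right shape --- weak $(1,1)$ plus $L_\infty$ plus interpolation --- but there are two genuine gaps that the paper's approach is specifically designed to avoid.

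\textbf{Interpolation and non-self-adjointness.} The noncommutative Marcinkiewicz theorem you invoke (Junge--Xu, \emph{JAMS} 2007, Theorem 3.1) applies to a family of \emph{positive} maps. In the Vilenkin-like system the kernels $K_n(\eta,t)$ are complex-valued and $\sigma_n$ does not even preserve self-adjointness; this is pointed out explicitly in the paper. So you cannot interpolate $\sigma_n$ directly between (i) and $p=\infty$. The paper's fix is to introduce the positive majorant
\[
\widetilde{\sigma}_n(f)(\eta)=\int_{G_m} f(t)\,\sup_{M_n\le l<M_{n+1}}|K_l(\eta,t)|\,d\mu(t),
\]
prove weak $(1,1)$ and $L_\infty$ bounds for $\widetilde{\sigma}_n$, interpolate \emph{those}, and only then transfer back to $\sigma_n$ via the two-sided operator inequality $-\widetilde{\sigma}_n(f)\le \Re\sigma_l(f),\,\Im\sigma_l(f)\le \widetilde{\sigma}_n(f)$ for $f\ge0$ (Remark 2.2 in the paper). (A side remark: in the general Vilenkin-like system $\sigma_n$ is not a convolution --- $K_n(\eta,t)$ is not a function of $\eta\dot{-}t$ --- so your appeal to Young's inequality for the $L_\infty$ endpoint also needs to be replaced by a direct estimate $\sup_\eta\int\widetilde{K}_n(\eta,t)\,d\mu(t)\le c$, which is the paper's Lemma 4.6.)

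\textbf{The square-function domination (a).} Your step (a), controlling the full \emph{symmetric} maximal norm $\|(R_nf)_{n\ge1}\|_{L_p(\mathcal N,\ell_\infty)}$ by the Sunouchi square functions, is the heart of the difficulty and does not go through as stated. The Cauchy--Schwarz argument you sketch, $|R_nf|^2\le C\sum_j|T_jf|^2$, yields via the Douglas lemma only the \emph{column} factorization $R_nf=u_n\,S^c(f)$, hence a bound in $L_p(\mathcal N,\ell_\infty^c)$, not in $L_p(\mathcal N,\ell_\infty)$; row and column estimates for the same sequence do not combine into a symmetric one. This is exactly why the paper's Theorem 1.2 (the Sunouchi/asymmetric route) is stated only for \emph{lacunary} subsequences and in the asymmetric spaces $\Lambda_{p,\infty}(\ell_\infty^c)$, and why the authors flag the full symmetric inequality (1.6) as open. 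Moreover, in the Vilenkin-like system the generalized Rademacher functions $r_k^n$ depend on all digits of $n$ (Assumption 2.4), so the remainder $R_n$ is not a sum of fixed operators $T_j$ with scalar coefficients; the clean Sunouchi decomposition you describe is a feature of the Walsh/Vilenkin case, not of the general system.

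The paper bypasses both issues at once: it proves the weak $(1,1)$ directly for $\widetilde{\sigma}_n$ via the Cadilhac--Conde-Alonso--Parcet Calder\'on--Zygmund decomposition $f=g+b_d+b_{off}$, controlling the good part by $\|g\|_\infty\lesssim\lambda$ (not by an $L_2$ maximal bound), and the bad parts by new pointwise and $L_2$ estimates for $\widetilde{K}_n$ off the supporting intervals (Lemmas 4.4, 4.5). The Sunouchi square function enters the paper only later, and only for the asymmetric lacunary result.
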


There appear several difficulties to prove this theorem. The pointwise estimate used in the commutative proof \cite{Ga2001} does not work for our purpose now.  As mentioned above, Vilenkin-like system is a common generalization of the Walsh system, Vilenkin system,
the character system of the group of 2-adic (m-adic) integers and other systems. This leads that the Fej\'er kernels in Vilenkin-like system do not behave as well as the Fej\'er kernels on the torus $\mathbb{T}$ which are dominated by decreasing functions whose integrals are bounded. In fact, for the time
being there is no formula for the Fej\'er kernels in the Vilenkin-like system.  Hence we can not  reduce our problem to Mei's noncommutative  Hardy-Littlewood maximal inequalities as done in \cite{CXY2013}.  Finally, we point out that  the  Ces\`{a}ro means $(\sigma_{n})_{n}$ do not even  preserve self-adjointness. 
To overcome or avoid these difficulties, we reduce Theorem \ref{CS} to Theorem \ref{sigma-tilde}; see the argument before Theorem \ref{sigma-tilde}.
 One of the key  ingredients in our proof  is to exploit the new noncommutative Calder\'{o}n-Zygmund decomposition  (see Theorem \ref{NewCZ} below)  established very recently by Cadilhac et al.  \cite{CCP2022}. The great advantage of the improved decomposition is that one can circumvent the pseudo-localization technique, which may  not work well for noncommutative maximal inequalities. Recently, noncommutative weak type maximal inequalities for singular integrals were established in \cite{HLX2023}. However, the  Fej\'er kernels for Vilenkin-like system does not fulfill the $L_q$-integral regularity condition used in \cite{HLX2023}, where such condition plays a significant role.  In addition, in order to apply the noncommutative Calder\'{o}n-Zygmund decomposition, we also need to further explore new properties for the  Fej\'er kernels of the Vilenkin-like system; see Lemmas \ref{K-E-t} -- \ref{KI-E-lemma}.

\smallskip
Our second  main objective is the noncommutative analogy of the Fujii inequality \eqref{Fujii}. The symbol $H_p^c(\M)$  denotes the noncommutative martingale Hardy space; see Section \ref{sec-pf-main1}.  It seems that the following inequality does not hold in general
\begin{equation*}
	\left\|(\sigma_k(f))_{k\geq1}\right\|_{L_1(\mathcal{N},\ell_{\infty})}\leq c \|f\|_{H_1^c(\N)},
\end{equation*}
since such kind of inequality fails even for noncommutative martingales (\cite[Lemma 13]{JX2005}). 
Inspired by \cite{HJP2016}, it is natural  to consider asymmetric maximal inequalities for the Ces\`{a}ro means $(\sigma_k)_{k\geq1}$ as follows: for $1\leq p\leq 2$,
\begin{equation}\label{full-range}
	\|(\sigma_{k}(f))_{k\geq1}\|_{\Lambda_{p,\infty}(\mathcal{N},\ell_{\infty}^c)}\leq c_{p}\|f\|_{H_p^c(\mathcal{N})}.
\end{equation}
A crucial difficulty to show this inequality is that we could not take the advantage of the self-adjointness-preserving nature of the  Ces\`{a}ro means $(\sigma_k)_{k\geq1}$ directly. Besides, the powerful algebraic atomic decomposition technique used   in \cite{HJP2016} is not available in our setting. Indeed, by a well-known fact in \cite{Ju2002} that each conditional expectation $\Ex:\M\to \N$ can be rewritten as: for some linear isometry $u:L_{2}(\M)\to L_{2}(\mathcal{N}\bar{\otimes}B(\ell_{2}))$, 
\[
\mathcal{E}(x^*y)=u(x)^*u(y),\quad x,y\in L_2(\M),
\]
which plays an important role in establishing asymmetric inequalities for noncommutative martingales. However, the failure of such property for the Ces\'aro means $(\sigma_{k})_{k\geq 1}$ obstructs us from using the algebraic atomic decomposition technique.
  Due to such difficulties, we can not prove \eqref{full-range} at the time of this writing. Fortunately, via appropriate estimate of a certain noncommutative square function, we can obtain  a lacunary version of \eqref{full-range}.  Recall that a sequence $(n_k)_{k\geq1}\subseteq \mathbb{N}$ is called lacunary if there exists a number $r>1$ such that $n_{k+1}/n_{k}>r$ for all $k\geq1$.

\begin{theorem}\label{main-asy-op}
	Given a bounded sequence $m=(m_k)_{k\in\mathbb{N}}\subset\mathbb{N}$,	let $\mathcal{N}=L_{\infty}(G_m)\bar{\otimes} \mathcal{M}$. 	Let $(\sigma_{n})_{n\geq1}$ be the Ces\`{a}ro means of the  Vilenkin-like-Fourier series of functions in $L_p(\mathcal{N})$, and let $(n_{k})_{k\geq 1}$ be a lacunary sequence.
	\begin{enumerate}[{\rm (i)}]
		\item For $1\leq p\leq  2$, there exists $c_{p}>0$ such that 
		\[
		\|(\sigma_{n_{k}}(f))_{k\geq1}\|_{\Lambda_{p,\infty}(\mathcal{N},\ell_{\infty}^c)}\leq c_{p}\|f\|_{H_p^c(\mathcal{N})}.
		\]
		The same holds for row spaces. 
		
		\item For $1< p<2$, there exists $c_{p}>0$ such that
		\[
		\inf_{f=f^{c}+f^{r}}\{\|(\sigma_{n_{k}}(f^{c}))_{k\geq1}\|_{L_{p}(\mathcal{N},\ell_{\infty}^{c})}+\|(\sigma_{n_{k}}(f^{r}))_{k\geq1}\|_{L_{p}(\mathcal{N},\ell_{\infty}^{r})}\}\leq c_{p}\|f\|_{L_p(\mathcal{N})}.
		\]
	\end{enumerate}
\end{theorem}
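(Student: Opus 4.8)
The plan is to deduce the theorem from one noncommutative square-function estimate — for a Sunouchi-type operator adapted to the Vilenkin blocks — together with the noncommutative Calder\'on--Zygmund decomposition (Theorem~\ref{NewCZ}) and the asymmetric martingale maximal inequalities of \cite{HJP2016}. First I would reduce (ii) to (i): for $1<p<2$ one has the column/row decomposition $\|f\|_{L_p(\N)}\approx\inf_{f=f^{c}+f^{r}}\big(\|f^{c}\|_{H_p^c(\N)}+\|f^{r}\|_{H_p^r(\N)}\big)$, so, granted the strong-type form of (i) for $1<p\le 2$ — which is what the argument below actually produces, the statement in (i) being its uniform weak-type face — applying the column bound to $f^{c}$ and the row bound to $f^{r}$ and taking the infimum yields (ii). It therefore suffices to prove (i). Fix a lacunary sequence $(n_k)_k$ with ratio $r>1$, let $(\Ex_j)_j$ be the conditional expectations onto the Vilenkin filtration (so that $S_{M_j}=\Ex_j$, where $M_j=m_0\cdots m_{j-1}$), and for each $k$ let $s(k)$ be the largest index with $M_{s(k)}\le n_k$; boundedness of $m$ gives $n_k\in[M_{s(k)},LM_{s(k)})$ with $L=\sup_j m_j$. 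Write $\sigma_{n_k}(f)=\Ex_{s(k)}(f)+R_k(f)$, where $R_k(f):=\sigma_{n_k}(f)-\Ex_{s(k)}(f)$.

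The first summand is the easy one: $(\Ex_{s(k)}(f))_k$ is a subsequence of the Vilenkin martingale of $f$, so its $\ell_\infty^c$-maximal norm is dominated by that of the whole martingale, and the asymmetric martingale maximal inequalities — the column Doob-type estimate for $1<p\le 2$ together with its weak-type endpoint at $p=1$, see \cite{HJP2016} — give
\[
\|(\Ex_{s(k)}(f))_k\|_{\Lambda_{p,\infty}(\N,\ell_\infty^c)}\le c_p\,\|f\|_{H_p^c(\N)},\qquad 1\le p\le 2,
\]
with the $\Lambda_{p,\infty}$-norm upgraded to the $L_p$-norm when $p>1$. For the second summand, I would use the new Fej\'er-kernel identities and estimates (Lemmas~\ref{K-E-t}--\ref{KI-E-lemma}) to rewrite each $R_k(f)$, with $n_k\in[M_{s(k)},LM_{s(k)})$, as a linear combination of at most $N_0=N_0(L,r)$ ``block remainders'' of the shape $\sigma_{M_{j+1}}(f)-\Ex_j(f)$ with $j$ running over a window of bounded length attached to $s(k)$, these windows having bounded overlap. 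Consequently $(R_k(f))_k$ is dominated in $\ell_\infty^c$ by the noncommutative Sunouchi square function
\[
\mathcal{S}(f):=\Big(\sum_{j\ge 0}\big|\sigma_{M_{j+1}}(f)-\Ex_j(f)\big|^2\Big)^{1/2},
\]
and the contractive inclusion $\ell_2^c\subseteq\ell_\infty^c$, i.e.\ $\|(x_k)_k\|_{L_p(\N,\ell_\infty^c)}\le\|(\sum_k x_k^{*}x_k)^{1/2}\|_{L_p(\N)}$ together with its weak-type analogue, gives $\|(R_k(f))_k\|_{\Lambda_{p,\infty}(\N,\ell_\infty^c)}\le c(L,r)\,\|\mathcal{S}(f)\|_{\Lambda_{p,\infty}(\N)}$ (again $L_p$ when $p>1$). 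Hence (i) is reduced to the key square-function estimate
\[
\|\mathcal{S}(f)\|_{\Lambda_{p,\infty}(\N)}\le c_p\,\|f\|_{H_p^c(\N)},\qquad 1\le p\le 2,
\]
strong for $p>1$.

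I would prove this key estimate by real interpolation between $p=2$ and $p=1$. At $p=2$, since $H_2^c(\N)=L_2(\N)$, Plancherel on the Vilenkin-like characters together with the square-summability of the symbols of $f\mapsto\sigma_{M_{j+1}}(f)-\Ex_j(f)$ — a Littlewood--Paley bound read off from the Fej\'er-kernel lemmas — yields $\|\mathcal{S}(f)\|_{L_2(\N)}\le c\,\|f\|_{L_2(\N)}$. At $p=1$, the weak-type bound $\|\mathcal{S}(f)\|_{\Lambda_{1,\infty}(\N)}\le c\,\|f\|_{H_1^c(\N)}$ — in fact already with $\|f\|_{L_1(\N)}$ on the right — would be established by the noncommutative Calder\'on--Zygmund decomposition (Theorem~\ref{NewCZ}): decompose $f$ at height $\lambda$ into a good part, handled by the $L_2$-bound, and a bad part, handled through its support and cancellation together with the H\"ormander-type regularity of the kernels of $\sigma_{M_{j+1}}-\Ex_j$ furnished by Lemmas~\ref{K-E-t}--\ref{KI-E-lemma}. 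Real interpolation of the couple $(H_1^c(\N),H_2^c(\N))$ against $(\Lambda_{1,\infty}(\N),L_2(\N))$ then yields the whole range $1\le p\le 2$, with the strong $L_p$-bound for $p>1$ upon choosing the finer Lorentz index. Combining the two summands proves (i); the row statement of (i) follows by applying the column result to the conjugate Vilenkin-like system after passing to adjoints, using $\|(x_k^{*})_k\|_{\ell_\infty^r}=\|(x_k)_k\|_{\ell_\infty^c}$ and $\|f^{*}\|_{H_p^r(\N)}=\|f\|_{H_p^c(\N)}$.

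The hard part will be the $p=1$ endpoint of the key estimate. Unlike on the torus, the Fej\'er kernels of a Vilenkin-like system have no closed formula and fail the $L_q$-integral regularity exploited in \cite{HLX2023}; one must therefore extract from the new kernel lemmas exactly the cancellation and regularity needed to control the bad-part contribution to the vector-valued (square-function) kernel in the Calder\'on--Zygmund argument, and must do so \emph{without} any self-adjointness-preservation of $\sigma_n$ — which is precisely why the column and row structures have to be run separately and why the asymmetric spaces $\Lambda_{p,\infty}(\N,\ell_\infty^{c})$ and $\Lambda_{p,\infty}(\N,\ell_\infty^{r})$ appear. A secondary, largely bookkeeping, difficulty is controlling, uniformly in $m$ and in the lacunary ratio $r$, the number and the overlap of the block remainders that assemble each $R_k(f)$.
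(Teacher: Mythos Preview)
Your overall architecture --- split $\sigma_{n_k}(f)$ into a martingale piece handled by \cite{HJP2016} and a remainder controlled through a Sunouchi-type square function, then interpolate --- matches the paper. But the reduction you insert in the middle is a genuine gap. You claim that each $R_k(f)=\sigma_{n_k}(f)-\mathbb{E}_{s(k)}(f)$ can, via Lemmas~\ref{K-E-t}--\ref{KI-E-lemma}, be rewritten as a bounded linear combination of block remainders $\sigma_{M_{j+1}}(f)-\mathbb{E}_j(f)$. Those lemmas are integral estimates for $\widetilde K_n$, not algebraic identities; no such decomposition is available for general Vilenkin-like Fej\'er means, because $\sigma_{n_k}$ with $M_{s(k)}\le n_k<M_{s(k)+1}$ involves the partial sums $S_l$ for all $l\le n_k$, and the non-dyadic $S_l$ do not combine into the operators $\sigma_{M_{j+1}}-\mathbb{E}_j$. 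The paper avoids this entirely: after splitting the lacunary sequence into finitely many subsequences with $M_{k-1}\le n_k<M_k$, it works directly with $T_k=\sigma_{n_k}-\mathbb{E}_k$ and proves the square-function bound $\|(T_k f)_k\|_{L_p(\N,\ell_2^c)}\le c_p\|f\|_{H_p^c(\N)}$ (Theorem~\ref{SO-1}) for this lacunary Sunouchi operator, with no reduction to a universal $\sigma_{M_{j+1}}$-operator.

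A second, related issue is your $p=1$ endpoint. You propose to get $\|\mathcal S(f)\|_{\Lambda_{1,\infty}}\le c\|f\|_{L_1}$ via the Calder\'on--Zygmund decomposition (Theorem~\ref{NewCZ}). In the paper, Theorem~\ref{NewCZ} is used for Theorem~\ref{CS} (the symmetric weak $(1,1)$ maximal bound), not here. For Theorem~\ref{main-asy-op} the paper obtains a \emph{strong} $H_1^c\to L_1(\ell_2^c)$ bound by atomic decomposition (Theorem~\ref{atomic} and Lemma~\ref{lem-atom}): for a $(1,2)_c$-simple atom $a$ associated to $Q\in D(\mathcal F_{n_0})$ one splits $\sum_k T_k(a)\otimes e_{k,1}$ into a piece multiplied by $e$ (handled by the $L_2$ bound of Lemma~\ref{T22}) and a piece supported off $Q$ (handled by the kernel estimate of Lemma~\ref{nk-atom}, which is where Lemma~\ref{K^2-E-a} enters). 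This gives $p=1$ strong type from $H_1^c$, and then complex interpolation with $p=2$ finishes. Your CZ route would require running the good/bad decomposition against a column-$\ell_2$-valued kernel, which the paper's CZ machinery is not set up to do, and in any case yields only weak type.
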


Our proof of the  asymmetric inequalities in Theorem \ref{main-asy-op} here entirely differs from the  approach in \cite{HJP2016} but relies on estimates of  a concrete noncommutative square function -- a noncommutative generalization of Sunouchi operator.  Our primary strategy is inspired by \cite[Chapter XV]{Zy}, where maximal inequality was established via Littlewood-Paley theory. We explain the key idea here in noncommutative language.  By the basic result Lemma \ref{ebm-pc}, we have
\begin{align*}
\|(\sigma_{n_{k}}(f))_{k\geq1}\|_{\Lambda_{p,\infty}(\mathcal{N},\ell_{\infty}^c)} &\lesssim  \|[\sigma_{n_{k}}(f)-\mathbb{E}_k(f)]_{k\geq1}\|_{\Lambda_{p,\infty}(\mathcal{N},\ell_{\infty}^c)}+\|(\mathbb{E}_k(f))_{k\geq1}\|_{\Lambda_{p,\infty}(\mathcal{N},\ell_{\infty}^c)} \\
&\leq \|[\sigma_{n_{k}}(f)-\mathbb{E}_k(f)]_{k\geq1}\|_{L_p(\mathcal{N},\ell_2^c)}+ \|(\mathbb{E}_k(f))_{k\geq1}\|_{\Lambda_{p,\infty}(\mathcal{N},\ell_{\infty}^c)},
\end{align*}
where $(\mathbb{E}_{k})_{k\geq 1}$ denote the  conditional expectations are given as in \eqref{ce}. Using the available result for noncommutative martingales \cite[Theorem A]{HJP2016},  it suffices   to prove the following inequality
\begin{equation}\label{square}
\|U(f)\|_{L_p(\N)}=\|[\sigma_{n_{k}}(f)-\mathbb{E}_k(f)]_{k\geq1}\|_{L_p(\mathcal{N},\ell_2^c)}\leq  c_p\|x\|_{H_p^c(\mathcal{N})},
\end{equation}
where
	$$U(f)= \Big(\sum_{k\geq1}|\sigma_{n_k}(f)-\mathbb{E}_k(f)|^2\Big)^{1/2}.$$
This operator  was firstly 
studied by  Sunouchi in \cite{Su1951} for the classical Walsh setting, i.e. $n_k=2^k$, where the equivalence $\|U(f)\|_{L_p(G_m)} \sim_p \|f\|_{L_p(G_m)}$, $1<p<\infty$, was established. This  so-called Sunouchi operator actually plays an important role for strong summability of Fourier series; see e.g. \cite{Su1964}. In the classical  bounded Vilenkin system,  the Sunouchi operator  was investigated by G\'{a}t in \cite{Ga1993}, where he established the $H_1$-$L_1$ boundedness of $U$. We also refer the reader to \cite{Si2000} and \cite{We2005} and their references about this topic. 
We need to point out that even for $\mathcal{M}=\mathbb{C}$, this is the first time to study such generalization of the Sunouchi operator $U$ in the Vilenkin-like system.  Note that now  the  Fej\'er kernels of the Vilenkin-like system do not have good properties as the ones in Vilenkin case; also noncommutativety causes additional difficulties. The usual argument in available literature such as \cite{Su1951, Ga1993, Si2000, We2005} does not work well to obtain \eqref{square}. We finally prove \eqref{square} in Theorem \ref{SO-1} with the help of new properties of the  Fej\'er kernels of the Vilenkin-like system (see Lemma \ref{K^2-E-a}). Of course, we apply the powerful tool, i.e., noncommutative atomic decomposition of martingale Hardy spaces. 
 We also point out that Theorem \ref{SO-1} is  even new in the classical case.

Now we turn to the main theme of this paper, that is, the corresponding results of Theorem \ref{CS} and Theorem \ref{main-asy-op} in totally noncommutative setting.
Before going further, we first recall several related results.
In the series of papers by  Sukochev and his collaborators \cite{SF1995,AFS1996,CPS2013},  appropriate noncommutative generalizations of the  Walsh system for hyperfinite $\mathrm{II}_1$, $\mathrm{II}_{\infty}$ and $\mathrm{III}_{\lambda}$ $(0<\lambda\leq 1)$ factors  have been introduced. Moreover, they proved that  the corresponding noncommutative  Walsh system is a Schauder basis in the $L_p$-spaces for $1<p<\infty$. In \cite{JZWZ2018}, Jiao et al. showed that the sequence of partial sums of the corresponding noncommutative Walsh-Fourier series in the hyperfinite $\mathrm{II}_1$ factor  is a uniformly bounded family of weak type $(1,1)$ operators. Moreover, Wu \cite{Wu2016} studied multipliers for noncommutative Walsh-Fourier series. Considering the results related to the noncommutative Walsh system in the CAR algebra, we refer interested readers to \cite{CL1993} for the hypercontractivity, to \cite{Lu1998} for the Riesz transform, and to \cite{BL2008} for Poincar\'{e} type inequalities. In addition,  Sukochev and his collaborators \cite{DS2000,DFdePS2001,SS2018} also studied  noncommutative generalizations of the Vilenkin  system for the hyperfinite $\mathrm{II}_1$ factor, and they proved that partial sums operator of  the corresponding noncommutative Vilenkin-Fourier series is  convergent in the $L_p$-spaces for $1<p<\infty$. Recently, the authors in \cite{TZ2023} proved the main result of \cite{SS2018} via transference method. However, to the best of our knowledge, there is no  any available literature considering pointwise convergence (almost uniform convergence) of the noncommutative Vilenkin-Fourier series; one of our motivations is to fill such gaps.

We transfer Theorem \ref{CS} and Theorem \ref{main-asy-op} into noncommutative Vilenkin system setting. In what follows, $\mathcal{R}$ denotes the  the hyperfinite $\mathrm{II}_1$ factor.
\begin{theorem}\label{Nc-W}
	Let $(\sigma^{\Ra}_{n})_{n\geq1}$ be the Ces\`{a}ro means of the noncommutative Vilenkin-Fourier series. 
	\begin{enumerate}[{\rm (i)}]
		\item There exists a universal constant $c>0$ such that
		\[
		\left\|(\sigma^{\mathcal{R}}_n(x))_{n\geq1}\right\|_{\Lambda_{1,\infty}(\mathcal{R},\ell_{\infty})}\leq c \|x\|_{L_1(\mathcal{R})}.
		\]
		\item For $1<p\leq \infty$, 
		\[
		\left\|(\sigma^{\mathcal{R}}_n(x))_{n\geq1}\right\|_{L_{p}(\mathcal{R},\ell_{\infty})}\leq \frac{cp^2}{(p-1)^2} \|x\|_{L_p(\mathcal{R})}.
		\]
	\end{enumerate}
\end{theorem}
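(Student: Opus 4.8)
The plan is a transference argument: I would realise the noncommutative Vilenkin-Fourier analysis on $\mathcal{R}$ as a special instance of the operator-valued Vilenkin-like-Fourier analysis of Theorem \ref{CS} and then push the maximal inequalities through. This uses the transference set-up of \cite{TZ2023} (itself modelled on \cite{CXY2013}); the point is that the \emph{same} transference which yields $L_p$-convergence also carries maximal inequalities.

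First I would set up the transference embedding. The noncommutative Vilenkin system $\{\phi_n^{\mathcal{R}}\}_{n\ge0}$ on $\mathcal{R}=\overline{\bigotimes}_k M_{m_k}(\mathbb{C})$ is the isotypic (Fourier) decomposition of a canonical trace-preserving ``gauge'' action $\beta$ of a bounded classical Vilenkin group $G_{m'}$ on $\mathcal{R}$ --- the action twisting the Weyl generators of the tensor factors by roots of unity --- with the property that every $\beta$-isotypic subspace is one-dimensional, spanned by a single $\phi_n^{\mathcal{R}}$, and that, under a fixed bijection $\mathbb{N}\cong\widehat{G_{m'}}$, the partial-sum operator $S_n^{\mathcal{R}}$ of the noncommutative Vilenkin-Fourier series is transported to the $n$-th classical partial sum on $G_{m'}$. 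I would then put $\mathcal{N}=L_\infty(G_{m'})\bar{\otimes}\mathcal{R}$ and define $J\colon\mathcal{R}\to\mathcal{N}$ by $(Jx)(g)=\beta_g(x)$. This $J$ is a normal, unital, trace-preserving $*$-monomorphism, hence an isometry $L_p(\mathcal{R})\hookrightarrow L_p(\mathcal{N})$ for every $1\le p\le\infty$; its range $J(\mathcal{R})$ is the fixed-point algebra of the trace-preserving action $\gamma_{g_0}(F)(g)=\beta_{g_0}(F(g-g_0))$ of $G_{m'}$ on $\mathcal{N}$, so $\mathcal{E}:=\int_{G_{m'}}\gamma_{g_0}\,dg_0$ is a normal trace-preserving conditional expectation of $\mathcal{N}$ onto $J(\mathcal{R})$, and $Q:=J^{-1}\circ\mathcal{E}$ satisfies $QJ=\mathrm{id}_{\mathcal{R}}$.

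Next comes the intertwining. For $x=\sum_n\widehat{x}(n)\phi_n^{\mathcal{R}}\in\mathcal{R}$ one has $Jx=\sum_n\widehat{x}(n)\,\chi_n\otimes\phi_n^{\mathcal{R}}$, where $\chi_n$ is the $n$-th classical Vilenkin character of $G_{m'}$; thus the $n$-th $\mathcal{R}$-valued Vilenkin-Fourier coefficient of $Jx$ is $\widehat{x}(n)\phi_n^{\mathcal{R}}$, and hence $S_n^{\mathcal{N}}(Jx)=J(S_n^{\mathcal{R}}x)$ for the matching enumeration on both sides. Averaging in $n$ gives $\sigma_n^{\mathcal{N}}\circ J=J\circ\sigma_n^{\mathcal{R}}$, whence $\sigma_n^{\mathcal{R}}=Q\circ\sigma_n^{\mathcal{N}}\circ J$ for every $n$. (The classical Vilenkin system on $G_{m'}$ is a scalar-valued special case of a Vilenkin-like system, so Theorem \ref{CS} applies to $\mathcal{N}=L_\infty(G_{m'})\bar{\otimes}\mathcal{R}$ with $\mathcal{M}=\mathcal{R}$.) Since $J$ is a trace-preserving $*$-monomorphism and $\mathcal{E}$ a trace-preserving conditional expectation, $Q$ acts as a contraction on the mixed-norm spaces, $\|(QF_n)_n\|_{L_p(\mathcal{R},\ell_\infty)}\le\|(F_n)_n\|_{L_p(\mathcal{N},\ell_\infty)}$ for $1<p\le\infty$ and $\|(QF_n)_n\|_{\Lambda_{1,\infty}(\mathcal{R},\ell_\infty)}\le\|(F_n)_n\|_{\Lambda_{1,\infty}(\mathcal{N},\ell_\infty)}$ (the standard behaviour of conditional expectations on noncommutative maximal-function spaces; cf. \cite{JX2007, CXY2013}). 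Combining these with $\|Jx\|_{L_p(\mathcal{N})}=\|x\|_{L_p(\mathcal{R})}$, for $1<p\le\infty$ and $x\in L_p(\mathcal{R})$,
\begin{align*}
\|(\sigma_n^{\mathcal{R}}(x))_{n\ge1}\|_{L_p(\mathcal{R},\ell_\infty)}
&=\|(Q\,\sigma_n^{\mathcal{N}}(Jx))_{n\ge1}\|_{L_p(\mathcal{R},\ell_\infty)}
\le\|(\sigma_n^{\mathcal{N}}(Jx))_{n\ge1}\|_{L_p(\mathcal{N},\ell_\infty)}\\
&\le\frac{cp^2}{(p-1)^2}\,\|x\|_{L_p(\mathcal{R})}
\end{align*}
by Theorem \ref{CS}(ii), which is (ii); the same chain with $\Lambda_{1,\infty}$-norms and Theorem \ref{CS}(i) gives (i), and the case $p=\infty$ in (ii) is immediate.

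The hard part will be the transference set-up itself: verifying that the noncommutative Vilenkin-Fourier decomposition really coincides with the isotypic decomposition of a \emph{commutative}-group gauge action having one-dimensional isotypic pieces, and --- most delicately --- that the enumeration of $\mathbb{N}$ used to define the partial sums (hence the Cesàro means) is the \emph{same} on $\mathcal{R}$ and on $G_{m'}$, so that the intertwining $\sigma_n^{\mathcal{N}}\circ J=J\circ\sigma_n^{\mathcal{R}}$ holds exactly and not merely up to a reindexing. Everything after that --- the isometry of $J$, the construction of $\mathcal{E}$, and the contractivity of $Q$ on $L_p(\cdot,\ell_\infty)$ and $\Lambda_{1,\infty}(\cdot,\ell_\infty)$ --- is routine.
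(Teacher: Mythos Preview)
Your transference strategy is the same as the paper's, and for part~(ii) the argument goes through: conditional expectations are indeed contractive on $L_p(\cdot,\ell_\infty)$ for $1<p<\infty$, and the intertwining $\sigma_n^{\mathcal{N}}\circ J=J\circ\sigma_n^{\mathcal{R}}$ (which in the paper's notation is $\gamma(\sigma_n^{\mathcal{R}}(x))=\sigma_n(\gamma(x))$) is exactly what is verified in Section~\ref{sec-pf-Nc}. The embedding you call $J$ is the paper's $\gamma$, with $G_{m'}=G_{2m}$.

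The gap is in part~(i). You assert that $Q=J^{-1}\circ\mathcal{E}$ is contractive on $\Lambda_{1,\infty}(\cdot,\ell_\infty)$ as ``standard behaviour of conditional expectations'', but this is precisely the delicate point, and it is \emph{not} known to hold in general. The definition of $\Lambda_{1,\infty}(\mathcal{B},\ell_\infty)$ involves an infimum over projections in the \emph{larger} algebra $\mathcal{B}$; passing to the subalgebra $\mathcal{A}$ means fewer projections are available, so a priori the norm can only go \emph{up}. Given $e\in\mathcal{P}(\mathcal{B})$ with $\|eF_ne\|_\infty\le\lambda$, one can form $\xi=\chi_{[1/2,1]}(\mathcal{E}(e))\in\mathcal{P}(\mathcal{A})$ with controlled trace, but there is no control on $\xi\,\mathcal{E}((1-e)F_n e)\,\xi$ or the other off-diagonal pieces, because $F_n$ may be arbitrarily large outside $e$.

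The paper circumvents this as follows: for positive $x$ one dominates $\pm\Re\sigma_l(\gamma(x))$ and $\pm\Im\sigma_l(\gamma(x))$ by the \emph{positive} operator $\widetilde{\sigma}_n(\gamma(x))$ (see \eqref{sigma-ri}), and then invokes Lemma~\ref{trans-maximal}, which transfers the weak-type maximal norm from $\mathcal{B}$ to $\mathcal{A}$ \emph{only under the hypothesis} $-f_n\le x_n\le f_n$ with $f_n\ge 0$. That positivity is what makes the argument work: one bounds $\|\xi x_n\xi\|_\infty\le\|\xi f_n\xi\|_\infty$ and then uses $\|\mathcal{E}(e)f_n^{1/2}\|_\infty\le\|ef_n^{1/2}\|_\infty$, which requires $f_n\ge 0$. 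Finally one applies Theorem~\ref{sigma-tilde} to $(\widetilde{\sigma}_n)$ rather than Theorem~\ref{CS} to $(\sigma_n)$. Your outline omits this positivisation step, and without it the weak-type transference does not close.
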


With the help of Theorem \ref{Nc-W}, the following result, whose proof is omitted here, can be proved similarly to that in \cite[Theorem 5.1]{CXY2013}.
\begin{corollary}\label{Nc-CSbau}
	For $1\leq p<\infty$ and $x\in L_{p}(\Ra)$, $\sigma_{n}^{\Ra}(x)$ converges to $x$ b.a.u. as $n\to\infty$. Moreover, for $2\leq p< \infty$, the b.a.u. convergence can be strengthened to the a.u. convergence.
\end{corollary}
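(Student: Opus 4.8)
The plan is to deduce both convergence statements from the maximal inequalities of Theorem \ref{Nc-W} by the standard density argument, exactly along the lines of \cite[Theorem 5.1]{CXY2013}. First I would fix the dense subclass on which the convergence is transparent: let $\mathcal{P}\subset\Ra$ be the linear span of the noncommutative Vilenkin characters (the noncommutative Vilenkin \emph{polynomials}). Since these characters generate $\Ra$ as a von Neumann algebra and $\tau$ is finite, $\mathcal{P}$ is dense in $L_p(\Ra)$ for every $1\le p<\infty$ (this rests on the construction of the noncommutative Vilenkin system; see \cite{DS2000,SS2018,TZ2023}). For $x\in\mathcal{P}$ there is an $N$ such that the partial sums of its noncommutative Vilenkin--Fourier series satisfy $S_j^{\Ra}(x)=x$ for all $j\ge N$, so for $n\ge N$
\[
\|\sigma_n^{\Ra}(x)-x\|_{L_\infty(\Ra)}=\Big\|\frac1n\sum_{j=0}^{N-1}\big(S_j^{\Ra}(x)-x\big)\Big\|_\infty\le\frac1n\sum_{j=0}^{N-1}\big\|S_j^{\Ra}(x)-x\big\|_\infty\longrightarrow 0
\]
as $n\to\infty$. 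In particular $\sigma_n^{\Ra}(x)\to x$ in $L_\infty(\Ra)$, which trivially gives both b.a.u.\ and a.u.\ convergence on $\mathcal{P}$ (take $e=\1$).

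Next I would invoke the standard passage from a maximal inequality to almost uniform convergence. For the b.a.u.\ statement, since $\tau(\1)=1$ we have the inclusion $L_p(\Ra)\hookrightarrow L_1(\Ra)$, so it suffices to treat $p=1$: Theorem \ref{Nc-W}(i) supplies the weak type $(1,1)$ bound $\|(\sigma_n^{\Ra}(x))_{n\ge1}\|_{\Lambda_{1,\infty}(\Ra,\ell_\infty)}\le c\|x\|_{L_1(\Ra)}$, and combining it with the $L_\infty$-convergence on the dense set $\mathcal{P}$ through the noncommutative Banach principle (see the appendix and \cite[Section 5]{CXY2013}, \cite{JX2007}) yields $\sigma_n^{\Ra}(x)\to x$ b.a.u.\ for every $x\in L_1(\Ra)$, hence for every $x\in L_p(\Ra)$ with $1\le p<\infty$. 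For the a.u.\ statement with $2\le p<\infty$, one uses instead the strong type $(p,p)$ bound of Theorem \ref{Nc-W}(ii), namely $\|(\sigma_n^{\Ra}(x))_{n\ge1}\|_{L_p(\Ra,\ell_\infty)}\le c_p\|x\|_{L_p(\Ra)}$; in the range $p\ge2$ this two-sided maximal estimate can be upgraded so as to control the one-sided defect $\|(\sigma_n^{\Ra}(x)-x)e\|_\infty$, after which the identical density argument yields a.u.\ convergence. This is precisely where the restriction $p\ge2$ enters, as in \cite[Theorem 5.1]{CXY2013}.

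I do not anticipate a genuine obstacle here---which is why the corollary may be stated with its proof omitted---since the substantive content is already carried by Theorem \ref{Nc-W}. The only two points deserving care are the density of $\mathcal{P}$ in $L_p(\Ra)$, a structural fact about the noncommutative Vilenkin system, and the precise form of the reduction lemma; I would also record that this lemma applies to arbitrary linear maps $\sigma_n^{\Ra}$ and uses no self-adjointness preservation, so the failure of that property (which has already been absorbed into the constants of Theorem \ref{Nc-W}) causes no trouble.
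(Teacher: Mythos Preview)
Your proposal is correct and follows essentially the same approach the paper intends: the paper explicitly omits the proof and refers to \cite[Theorem 5.1]{CXY2013}, which is precisely the density-plus-maximal-inequality argument you outline, using $\mathrm{Poly}(\Ra)$ as the dense subclass and Theorem \ref{Nc-W} as the maximal input. The only point worth tightening is your one-line treatment of the a.u.\ case for $p\ge 2$; the ``upgrade'' you allude to is the standard observation (as in \cite{CXY2013,JX2007,De2011}) that a symmetric $L_p(\ell_\infty)$ bound yields a column-type control when $p\ge 2$, but otherwise there is nothing missing.
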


\begin{theorem}\label{main-asy}
	Let $(\sigma^{\Ra}_{n})_{n\geq1}$ be the Ces\`{a}ro means of the noncommutative Vilenkin-Fourier series and $(n_{k})_{k\geq 1}$ be a lacunary sequence.
	\begin{enumerate}[{\rm (i)}]
		\item For $1\leq p\leq  2$, there exists $c_{p}>0$ such that 
		\[
		\|(\sigma^{\Ra}_{n_{k}}(x))_{k\geq1}\|_{\Lambda_{p,\infty}(\mathcal{R},\ell_{\infty}^c)}\leq c_{p}\|x\|_{H_p^c(\mathcal{R})}.
		\]
		The same holds for row spaces. 
		
		\item For $1< p<2$, there exists $c_{p}>0$ such that
		\[
		\inf_{x=x^{c}+x^{r}}\{\|(\sigma^{\Ra}_{n_{k}}(x^{c}))_{k\geq1}\|_{L_{p}(\mathcal{R},\ell_{\infty}^{c})}+\|(\sigma^{\Ra}_{n_{k}}(x^{r}))_{k\geq1}\|_{L_{p}(\mathcal{R},\ell_{\infty}^{r})}\}\leq c_{p}\|x\|_{L_p(\mathcal{R})}.
		\]
	\end{enumerate}
\end{theorem}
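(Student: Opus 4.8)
The plan is to deduce Theorem~\ref{main-asy} from Theorem~\ref{main-asy-op} by the transference method, following the scheme already used to pass from Theorem~\ref{CS} to Theorem~\ref{Nc-W} (which in turn mirrors \cite[\S5]{CXY2013} and \cite{TZ2023}). The input is the embedding attached to the noncommutative Vilenkin system: there is a normal, trace-preserving $*$-monomorphism
\[
\pi\colon\Ra\longrightarrow\N:=L_{\infty}(G_m)\,\bar{\otimes}\,\Ra ,
\]
where $G_m$ is the Vilenkin space whose parameters $m$ are those of the noncommutative Vilenkin system on $\Ra$, satisfying four compatibility properties: (P1) writing $\sigma_n$ for the operator-valued Vilenkin-like Ces\`{a}ro means on $\N$ and $\sigma^{\Ra}_n$ for the noncommutative Vilenkin-Fourier Ces\`{a}ro means on $\Ra$, one has $\pi\circ\sigma^{\Ra}_n=\sigma_n\circ\pi$; (P2) $\pi$ carries the noncommutative Vilenkin martingale filtration on $\Ra$ to the dyadic-type filtration on $\N$, hence maps $H_p^{c}(\Ra)$ (resp. $H_p^{r}(\Ra)$) isometrically into $H_p^{c}(\N)$ (resp. $H_p^{r}(\N)$); (P3) being trace-preserving, $\pi$ extends to isometric embeddings on $L_p$, $L_{p,\infty}$ and on the asymmetric maximal spaces $L_p(\,\cdot\,,\ell_\infty^{c})$, $L_p(\,\cdot\,,\ell_\infty^{r})$, $\Lambda_{p,\infty}(\,\cdot\,,\ell_\infty^{c})$, $\Lambda_{p,\infty}(\,\cdot\,,\ell_\infty^{r})$, and since $\pi(\Ra)\subset\N$ admits a normal trace-preserving conditional expectation $\mathcal{E}_{\pi}\colon\N\to\pi(\Ra)$, all of these spaces restrict isometrically from $\N$ to $\pi(\Ra)$; (P4) $\mathcal{E}_{\pi}\circ\sigma_n=\sigma_n\circ\mathcal{E}_{\pi}$ for all $n$. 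The map $\pi$ is first defined on the linear span of the noncommutative Vilenkin system and extended by continuity; I would simply record (P1)--(P4) and refer to \cite{TZ2023} for the construction.

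Granting (P1)--(P4), part~(i) is immediate. For a lacunary $(n_k)_{k\ge1}$, $1\le p\le2$ and $x\in H_p^{c}(\Ra)$,
\begin{align*}
\|(\sigma^{\Ra}_{n_k}(x))_{k\ge1}\|_{\Lambda_{p,\infty}(\Ra,\ell_\infty^{c})}
&=\|(\sigma_{n_k}(\pi x))_{k\ge1}\|_{\Lambda_{p,\infty}(\pi(\Ra),\ell_\infty^{c})}
=\|(\sigma_{n_k}(\pi x))_{k\ge1}\|_{\Lambda_{p,\infty}(\N,\ell_\infty^{c})}\\
&\le c_p\,\|\pi x\|_{H_p^{c}(\N)}=c_p\,\|x\|_{H_p^{c}(\Ra)},
\end{align*}
using, in order, (P1) together with (P3), then (P3) again, then Theorem~\ref{main-asy-op}(i), then (P2). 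The row case is identical, invoking instead the row statement in Theorem~\ref{main-asy-op}(i).

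For part~(ii), fix $1<p<2$, a lacunary $(n_k)$ and $x\in L_p(\Ra)$, and let $\varepsilon>0$. Applying Theorem~\ref{main-asy-op}(ii) to $\pi(x)\in L_p(\N)$ produces a decomposition $\pi(x)=f^{c}+f^{r}$ in $L_p(\N)$ with
\[
\|(\sigma_{n_k}(f^{c}))_{k\ge1}\|_{L_p(\N,\ell_\infty^{c})}+\|(\sigma_{n_k}(f^{r}))_{k\ge1}\|_{L_p(\N,\ell_\infty^{r})}\le c_p\,\|x\|_{L_p(\Ra)}+\varepsilon .
\]
Apply $\mathcal{E}_{\pi}$: since $\pi(x)\in\pi(\Ra)$ we get $\pi(x)=\mathcal{E}_{\pi}(f^{c})+\mathcal{E}_{\pi}(f^{r})$, so setting $x^{c}:=\pi^{-1}\mathcal{E}_{\pi}(f^{c})$, $x^{r}:=\pi^{-1}\mathcal{E}_{\pi}(f^{r})$ yields $x=x^{c}+x^{r}$ in $L_p(\Ra)$. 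By (P1) and (P4), $\sigma_n$ leaves $\pi(\Ra)$ invariant and $\pi(\sigma^{\Ra}_{n_k}(x^{c}))=\sigma_{n_k}(\mathcal{E}_{\pi}f^{c})=\mathcal{E}_{\pi}(\sigma_{n_k}f^{c})$; hence, using (P3) and the contractivity of $\mathcal{E}_{\pi}$ on $L_p(\N,\ell_\infty^{c})$,
\begin{align*}
\|(\sigma^{\Ra}_{n_k}(x^{c}))_{k\ge1}\|_{L_p(\Ra,\ell_\infty^{c})}
=\|(\mathcal{E}_{\pi}\sigma_{n_k}(f^{c}))_{k\ge1}\|_{L_p(\N,\ell_\infty^{c})}
\le\|(\sigma_{n_k}(f^{c}))_{k\ge1}\|_{L_p(\N,\ell_\infty^{c})},
\end{align*}
and symmetrically for the row term. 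Adding the two estimates and letting $\varepsilon\to0$ gives part~(ii).

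Thus all the analytic substance is carried by Theorem~\ref{main-asy-op}, and the remaining work is the verification of the transference dictionary (P1)--(P4). The step I expect to require the most care is (P4): $\mathcal{E}_{\pi}$ is \emph{not} of product form $\mathrm{id}_{L_{\infty}(G_m)}\bar{\otimes}\,T$, so its commutation with the $\sigma_n$ is not automatic; as in the quantum-torus model one writes $\mathcal{E}_{\pi}=\pi\circ\Phi$ with $\Phi\colon\N\to\Ra$ the averaging map against the underlying $G_m$-action (so that $\Phi\circ\pi=\mathrm{id}_{\Ra}$) and checks $\Phi\circ\sigma_n=\sigma^{\Ra}_n\circ\Phi$ on Fourier coefficients. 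Pinning down (P1) and (P2) \emph{simultaneously} — that the \emph{same} $\pi$ intertwines the Ces\`{a}ro means and matches the two filtrations, so that the space $H_p^{c}(\Ra)$ on the right-hand side is exactly the martingale Hardy space of the noncommutative Vilenkin system — is the other point that must be settled; all of this is a routine, if delicate, adaptation of \cite[\S5]{CXY2013} and \cite{TZ2023} to the Vilenkin-like setting, after which the four displays above conclude the proof.
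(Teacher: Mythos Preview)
Your approach differs from the paper's, and the difference is worth noting because the paper \emph{explicitly} remarks (immediately after its proof) that ``at the time of this writing, we are not clear whether we can transfer Theorem~\ref{main-asy-op} to Theorem~\ref{main-asy} directly.'' Instead, the paper transfers only the square-function estimate: it pushes Theorem~\ref{SO-1} through $\gamma$ to obtain the noncommutative Sunouchi bound (their Theorem~5.4), and then reruns the short argument of Section~5.3---Lemma~\ref{ebm-pc}, the Sunouchi bound, and \cite[Theorems~A,~B]{HJP2016}---entirely inside~$\mathcal{R}$. Your proposal, by contrast, tries to transfer the \emph{maximal} inequalities themselves.

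There is a concrete filtration mismatch you are glossing over in (P2). The transference map $\gamma$ intertwines $\mathcal{E}_k^{\mathcal{R}}$ with $\mathbb{E}_{2k}$ on $L_\infty(G_{2m})\bar\otimes\mathcal{R}$ (see the paper's proof of Theorem~5.4, equation~\eqref{gammaEk}), not with $\mathbb{E}_k$. Hence $\gamma$ maps $H_p^c(\mathcal{R})$ isometrically onto the Hardy space $\widetilde{H}_p^c$ built from the \emph{even} sub-filtration $(\mathcal{N}_{2k})_k$, whereas Theorem~\ref{main-asy-op} as stated uses $H_p^c(\mathcal{N})$ with the full filtration $(\mathcal{N}_n)_n$. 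Your chain of equalities therefore needs $\|\gamma(x)\|_{H_p^c(\mathcal{N})}\lesssim\|\gamma(x)\|_{\widetilde{H}_p^c}$; this is not automatic and is precisely the step the authors avoided by transferring SO-1 and invoking asymmetric Doob directly in~$\mathcal{R}$. (It can in fact be recovered---for $p=1$ via the atomic description, since every $(1,2)_c$-simple atom for the coarse filtration is one for the fine filtration; for $1<p\le2$ by interpolation---but you have not said any of this.)

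Two smaller points. Your (P3) claim that $\Lambda_{p,\infty}(\cdot,\ell_\infty^c)$ restricts \emph{isometrically} from $\mathcal{N}$ to $\pi(\mathcal{R})$ is too strong: the paper's Lemma~\ref{trans-maximal} handles only the symmetric maximal space, and even there one loses a constant. A column analogue (apply the same spectral-cutoff trick to $a=\mathcal{E}_\pi(e)$ and use $\mathcal{E}_\pi(e)|y_n|=\mathcal{E}_\pi(e|y_n|)$ since $|y_n|\in\pi(\mathcal{R})$) does give the bound with a universal constant, but this lemma is not in the paper and you would have to supply it. Your (P4) is correct---$\sigma_n$ is a Fourier multiplier on $L_\infty(G_{2m})$ and $\mathcal{E}_\pi$ is diagonal in the basis $\psi_j\otimes W_k$---but again this is a verification you have deferred rather than done.
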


As a consequence of Theorem \ref{main-asy}, we obtain the following convergence result, whose proof is  analogous to \cite[Proposition 6]{De2011}. Hence, we omit the proof.  

\begin{corollary}\label{cr-au}
	Let   $(n_{k})_{k\geq 1}$ be a lacunary sequence.
	\begin{enumerate}[{\rm (i)}]
		\item For any $x\in H_1^c(\Ra)$,
		$\sigma^{\Ra}_{n_k}(x)$ converges to $x$  c.a.u.  as $k\to\infty$. Similarly, when $x\in H_1^r(\Ra)$, r.a.u. convergence holds. 
		\item For any $x\in L_p(\Ra)$ with $1<p<2$,
		$\sigma^{\Ra}_{n_k}(x)$ converges to $x$  column $+$ row  a.u.  as $k\to\infty$. 
	\end{enumerate}
\end{corollary}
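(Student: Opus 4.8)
The plan is to deduce both convergence statements from the asymmetric maximal inequalities of Theorem \ref{main-asy} by the familiar ``maximal inequality plus dense class'' scheme for noncommutative almost uniform convergence (cf.\ Definition \ref{npc}), in the spirit of \cite[Proposition 6]{De2011}. The ingredients I would isolate first are: (a) a dense subclass of $H_1^c(\Ra)$ on which $\sigma^{\Ra}_{n_k}$ converges in $L_\infty$-norm, namely the noncommutative Vilenkin polynomials, i.e.\ the elements of $\bigcup_N\Ra_N$, which are dense in $H_p^c(\Ra)$ for $1\le p\le 2$ since $\E_N(x)\to x$ in $H_p^c(\Ra)$ and each $\Ra_N$ is finite dimensional; (b) the elementary observation that for $y\in\Ra_N$ one has $S^{\Ra}_j(y)=y$ for all $j\ge M_N:=m_0\cdots m_{N-1}$, so $\sigma^{\Ra}_n(y)-y=\frac1n\sum_{j<M_N}(S^{\Ra}_j(y)-y)$ has $L_\infty$-norm $O_N(1/n)$, whence $\|\sigma^{\Ra}_{n_k}(y)-y\|_\infty\to0$ because $n_k\to\infty$; and (c) the identification of the limit: since $H_p^c(\Ra)\subseteq L_p(\Ra)\subseteq L_1(\Ra)$ (recall $\Ra$ is finite), Corollary \ref{Nc-CSbau} already gives $\sigma^{\Ra}_{n}(x)\to x$ b.a.u., in particular in measure, hence $\sigma^{\Ra}_{n_k}(x)\to x$ in measure, for every $x\in H_p^c(\Ra)$.

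For part (i), fix $x\in H_1^c(\Ra)$ and $\varepsilon>0$, and pick noncommutative Vilenkin polynomials $y_l$ with $\|x-y_l\|_{H_1^c(\Ra)}\le 4^{-l}\varepsilon$. Theorem \ref{main-asy}(i) gives
\[
\|(\sigma^{\Ra}_{n_k}(x-y_l))_{k\ge1}\|_{\Lambda_{1,\infty}(\Ra,\ell_{\infty}^c)}\le c\,\|x-y_l\|_{H_1^c(\Ra)}\le c\,4^{-l}\varepsilon ,
\]
so by the Chebyshev-type property built into the weak column maximal norm there is a projection $e_l\in\Ra$ with $\tau(1-e_l)\le c\,2^{-l}\varepsilon$, $\sigma^{\Ra}_{n_k}(x-y_l)e_l\in\Ra$ for all $k$, and $\sup_k\|\sigma^{\Ra}_{n_k}(x-y_l)e_l\|_\infty\le 2^{-l}$. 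Set $e=\bigwedge_{l\ge1}e_l$, so $\tau(1-e)\le c\varepsilon$. Writing $\sigma^{\Ra}_{n_k}(x)e=\sigma^{\Ra}_{n_k}(x-y_l)e+\sigma^{\Ra}_{n_k}(y_l)e$ shows $\sigma^{\Ra}_{n_k}(x)e\in\Ra$ for every $k$, and from
\[
\|(\sigma^{\Ra}_{n_k}(x)-\sigma^{\Ra}_{n_{k'}}(x))e\|_\infty\le 2^{-l+1}+\|\sigma^{\Ra}_{n_k}(y_l)-y_l\|_\infty+\|\sigma^{\Ra}_{n_{k'}}(y_l)-y_l\|_\infty
\]
together with ingredient (b) one reads off that $(\sigma^{\Ra}_{n_k}(x)e)_{k\ge1}$ is Cauchy in $\Ra$. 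It therefore converges in $L_\infty$-norm; by ingredient (c) (right multiplication by $e$ preserves convergence in measure) the limit must equal $xe$, which is then in $\Ra$, and so $\|(\sigma^{\Ra}_{n_k}(x)-x)e\|_\infty\to0$, i.e.\ c.a.u.\ convergence. The statement for $x\in H_1^r(\Ra)$ follows verbatim from the row version of Theorem \ref{main-asy}(i) and density of the noncommutative Vilenkin polynomials in $H_1^r(\Ra)$.

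For part (ii), given $x\in L_p(\Ra)$ with $1<p<2$, I would invoke the well-known decomposition $L_p(\Ra)=H_p^c(\Ra)+H_p^r(\Ra)$ with equivalent norms to write $x=x^c+x^r$ with $x^c\in H_p^c(\Ra)$, $x^r\in H_p^r(\Ra)$. Since Theorem \ref{main-asy}(i) is valid for all $1\le p\le 2$, the argument of part (i) with $H_1^c$ replaced by $H_p^c$ (the Vilenkin polynomials are still dense, and Corollary \ref{Nc-CSbau} still supplies convergence in measure) yields $\sigma^{\Ra}_{n_k}(x^c)\to x^c$ c.a.u.\ and, symmetrically, $\sigma^{\Ra}_{n_k}(x^r)\to x^r$ r.a.u.; adding these, $\sigma^{\Ra}_{n_k}(x)=\sigma^{\Ra}_{n_k}(x^c)+\sigma^{\Ra}_{n_k}(x^r)$ converges to $x$ in the column $+$ row a.u.\ sense. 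The only genuinely delicate point — and the only place where the asymmetric (column) character of Theorem \ref{main-asy} is essential — is the extraction in the second paragraph: turning a finite weak column maximal norm into a single projection $e$ of small complement under which the whole tail $(\sigma^{\Ra}_{n_k}(x)e)_k$ lands and stays in $\Ra$, so that Cauchyness and the limit computation take place inside $\Ra$. This is a routine unwinding of the definition of $\Lambda_{1,\infty}(\Ra,\ell_{\infty}^c)$ and is exactly the mechanism used in \cite[Proposition 6]{De2011}; everything else is the bookkeeping of a standard three-$\varepsilon$ argument.
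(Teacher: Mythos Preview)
Your proposal is correct and follows exactly the approach the paper intends: the paper omits the proof entirely, saying only that it is analogous to \cite[Proposition 6]{De2011}, which is precisely the maximal-inequality-plus-dense-class scheme you carry out. One minor slip: for $y\in\Ra_N$ the partial sums stabilize at $j\ge M_{2N}=(m_0\cdots m_{N-1})^2$ rather than $m_0\cdots m_{N-1}$ (recall the noncommutative Vilenkin system is indexed by $\widehat{G}_{2m}$, cf.\ \eqref{EkSk}), but this is purely notational and does not affect your argument.
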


 Here we mention the main difference between our results and \cite{HWW2019}. In \cite{HWW2019} (see Theorems 3.2 and 3.3 there), Hong et al. only consider noncommutative almost uniform convergence for the elements in $L_p$ with the range $1<p<\infty$. However, our results, i.e., Theorem \ref{Nc-W} and Corollary \ref{Nc-CSbau}, work for elements in $L_1(\mathcal{R})$. As for $1<p<2$, \cite[Theorem 3.3]{HWW2019} shows that, under certain criterion, lacunary Fourier series in $L_p$ (for group von Neumann algebras or quantum groups) converge bilaterally almost uniformly. Our result Corollary \ref{cr-au} (ii) establishes stronger column $+$ row almost uniform convergence for lacunary noncommutative Vilenkin-Fourier series.

Our paper is organized as follows. The preliminaries section (Section \ref{sec-2}) contains necessary definitions and notations, which will be frequently used throughout this paper. Section \ref{noncommutative Walsh and transference} is a   discussion to the transference argument that  enables us to transfer the operator-valued results to corresponding noncommutative ones. The last two sections are devoted to the proofs of main results of this paper. Precisely, we provide the  proofs of Theorem \ref{CS} and Theorem \ref{Nc-W} in Section  \ref{sec-pf-NcW},  the proofs of Theorem \ref{main-asy-op} and Theorem \ref{main-asy} in  Section \ref{sec-pf-main1}, respectively. The almost uniform convergence theorems follow from corresponding noncommutative maximal inequalities via standard arguments (see e.g. \cite{CXY2013, De2011, HLW2021,JX2007}); hence, in the present paper, we omit the proofs of Corollary \ref{Nc-CSbau} and Corollary \ref{cr-au}. We conclude this paper by two questions.



\smallskip
 \textbf{Notations:} Throughout the paper, we use $\mathbb{N}$, $\mathbb{R}$ and $\mathbb{C}$ to stand for the set of natural, real and complex numbers, respectively. The letter $\mathcal M$ will always denote a tracial von Neumann algebra equipped with a normal finite faithful trace $\tau$. The symbol $\mathbf{1}_{\M}$ (simply $\mathbf{1}$) denotes the unit of $\M$.  The set of all projections  in $\mathcal{M}$ is denoted by $\mathcal{P}(\M)$. We always let $\mathcal{N}$ denote the tensor  von Neumann algebra $L_{\infty}(G_m)\bar{\otimes} \mathcal{M}$ equipped with the trace $\varphi=\int\otimes \tau$. For a given sequence $m=(m_k)_{k\in \mathbb{N}}\subset \mathbb{N}$, the numbers $(M_n)_n$ are defined by setting:  $M_0:=1$, and $M_{n+1}=m_nM_n$, $n\in\mathbb{N}$. The symbol $D(\mathcal{F}_n)$ denotes the collection of all  intervals in $\mathcal{F}_n$ with length $M_n^{-1}$; see Example \ref{one-d}. The symbol  $c$ stands for a positive constant which may vary from line to line, and we write $c_{p}$ to emphasize the constant $c_{p}$ depends only on the parameter $p$.

\section{Preliminaries}\label{sec-2}

\subsection{Noncommutative  (weak) Lebesgue spaces} 
 Let $\mathcal{M}$ be a   von Nuemann algebra with a normal faithful
semifinite trace $\tau$.  Let $L_0(\mathcal{M})$ denote the algebra of all $\tau$-measurable operators.    For $1\leq p\leq \infty$, let $L_p(\M,\tau)$ (simply $L_p(\M)$) be the associated noncommutative Lebesgue space. As usual, $L_{\infty}(\mathcal{M})$ is just $\mathcal{M}$ with the usual operator norm. Also recall that 
$$L_p(\M)=\{x\in L_0(\M): \|x\|_{L_p(\M)}<\infty\}$$
with 
$$\|x\|_{L_p(\M)}= [\tau(|x|^p)]^{1/p},$$
where $|x|=(x^*x)^{1/2}$ is the modulus of $x$.

  Suppose that $a$ is a self-adjoint $\tau$-measurable operator and let $a=\int_{-\infty}^{\infty} \lambda d e_{\lambda}$ stand for its spectral decomposition. For any Borel subset $B$ of $\mathbb{R}$, the spectral projection of $a$ corresponding to the set $B$ is defined by $\chi_B(a)=\int_{-\infty}^{\infty} \chi_B(\lambda) d e_{\lambda}$. 
For $x\in L_0({\M})$, the generalized singular value function $\mu(t,x)$ is defined by
$$\mu(t,x)=\inf\big\{s>0:\tau\big(\chi_{(s,\infty)}(|x|)\big)\leq t\big\},\quad t>0.$$
The function $t\mapsto \mu(t,x)$ is decreasing and right-continuous; for more detailed study of the singular value function we refer the reader to \cite{Fa1986}. Of special interest in this paper is  the noncommutative weak Lebesgue space $L_{p,\infty}(\mathcal{M})$, with the quasi-norm
$$\|x\|_{L_{p,\infty}(\M)}=\sup_{t>0}t^{1/p}\mu(t,x)=\sup_{\lambda>0}\lambda[ \tau(\chi_{(\lambda,\infty)}(|x|))]^{1/p}. $$

\subsection{Noncommutative vector-valued spaces}\label{subsec-2-2} In this subsection, we recall the definitions of the  spaces $L_p(\mathcal M,\ell_{\infty}^{\theta})$ and $\Lambda_{p,\infty}(\mathcal{M}, \ell_{\infty}^{c})$ introduced in \cite{Ju2002, PIS7}   (see also \cite{HJP2016}).

For $0\leq \theta\leq 1$ and $1\leq p\leq \infty$, define $L_p(\mathcal{M},\ell_{\infty}^{\theta})$ as  the space of all sequences $x=(x_n)_{n\in \mathbb{N}}$ in $L_p(\mathcal{M})$ for which there exist $a\in L_{p/(1-\theta)}(\mathcal{M})$, $b\in  L_{p/\theta}(\mathcal{M})$ and a bounded sequence $y=(y_{n})_{n\in \mathbb{N}}\subset \M$ such that
$$x_n=ay_n b,\qquad \forall n\in \mathbb{N}.$$
For $x=(x_n)_{n\in \mathbb{N}}\in L_p(\mathcal{M},\ell_{\infty}^{\theta})$,  define
\begin{equation}\label{e-infty}
\|x\|_{L_p(\mathcal M,\ell_{\infty}^{\theta})}=\inf\{\|a\|_{L_{\frac{p}{1-\theta}}(\mathcal M)}\sup_{n\in \mathbb{N}}\|y_n\|_{L_\infty(\M)}\|b\|_{L_{\frac{p}{\theta}}(\mathcal{M})}\},
\end{equation}
where the infimum is taken over all possible factorizations of $x$ as above. In the sequel, $L_p(\mathcal{M},\ell_{\infty}^{1/2})$, $L_p(\mathcal{M},\ell_{\infty}^{1})$ and $L_p(\mathcal{M},\ell_{\infty}^{0})$ will be denoted by $L_p(\mathcal{M},\ell_{\infty})$, $L_p(\mathcal{M},\ell_{\infty}^{c})$ and  $L_p(\mathcal{M},\ell_{\infty}^{r})$, respectively.

Following \cite{HJP2016}, for $1\leq p<\infty$, define $\Lambda_{p,\infty}(\mathcal M,\ell_\infty)$ as the space of all sequences $x=(x_n)_{n\in \mathbb{N}}$ in $L_{p,\infty}(\mathcal M)$  with quasi-norm given by
\begin{equation}\label{Lambda}
\|x\|_{\Lambda_{p,\infty}(\mathcal M,\ell_\infty)}=\sup_{t>0}\inf_{e\in \mathcal {P}(\mathcal M)}\{t\tau(\textbf{1}-e)^{\frac{1}p}: \|ex_{n}e\|_{L_\infty(\mathcal{M})}\leq t,\forall n\in \mathbb{N}\}<\infty.
\end{equation}
For a sequence $x=(x_n)_{n\in \mathbb{N}}$ in $L_0(\M)$, set 
$$\|x\|_{\Lambda_{p,\infty}(\M,\ell_{\infty}^c)}=\sup_{t>0}\inf_{e\in \mathcal {P}(\mathcal M)} \{t\tau(\1-e)^{\frac{1}{p}}: \|x_{n}e\|_{L_{\infty}(\M)}\leq t,\forall n\in \mathbb{N} \}.$$
Define $\|x\|_{\Lambda_{p,\infty}(\M,\ell_{\infty}^r)}=\|x^*\|_{\Lambda_{p,\infty}(\M,\ell_{\infty}^c)}$.

According to the definitions, it is obvious that 
\begin{equation*}
\sup_{n}\|x_{n}\|_{L_{p,\infty}(\mathcal{M})}\leq \|x\|_{\Lambda_{p,\infty}(\mathcal{M},\ell_{\infty})}\leq \|x\|_{L_p(\mathcal M,\ell_{\infty})},\quad \forall 1\leq p<\infty.
\end{equation*}
Besides, if $x=(x_n)_{n\in \mathbb{N}}\in L_{p}(\mathcal M,\ell_{\infty})$ and $x_n=x_n^*$ for each $n$, then (see  \cite[p.\,518]{Di20151} or \cite{JX2007})
\begin{equation}\label{positive}
	\|x\|_{L_p(\mathcal{M},\ell_{\infty})}= \inf\{\|a\|_{L_p(\mathcal M)}: a\in L_p(\mathcal{M}), -a\leq x_n\leq a,\;\forall n\in \mathbb{N} \}.
\end{equation}
\begin{remark}\label{control}
Consider self-adjoint operators $x=(x_n)_{n\in \mathbb{N}}$ and   $y=(y_n)_{n\in \mathbb{N}}$. Assume that $-y_n\leq x_n\leq y_n$ for each $n\in \mathbb{N}$. Note that for self-adjoint operators $a,b\in L_{\infty}(\mathcal{M})$, $-b\leq a\leq b$ implies $\|a\|_{L_{\infty}(\mathcal{M})}\leq \|b\|_{L_{\infty}(\mathcal{M})}$ (see e.g. \cite[Proposition 4.2.8]{KR}).  Then, according to \eqref{Lambda}, it is easy to see that 
$$\|x\|_{\Lambda_{p,\infty}(\mathcal{M},\ell_{\infty})}\leq \|y\|_{\Lambda_{p,\infty}(\mathcal{M},\ell_{\infty})},\quad 1\leq p<\infty.$$
Similarly, by \eqref{positive}, we also have 
$$\|x\|_{L_p(\mathcal M,\ell_{\infty})}\leq \|y\|_{L_p(\mathcal M,\ell_{\infty})},\quad 1\leq p\leq \infty.$$
\end{remark}

\subsection{Noncommutative Hilbert space valued spaces}
Given $x=(x_{k})_{k=1}^{n}$ a sequence of elements in $L_{p}(\mathcal{M})$, for each $1\leq p<\infty$, define $\|x\|_{L_{p}(\mathcal{M};\ell^{c}_{2})}$ as
\begin{equation*}
\|x\|_{L_{p}(\mathcal{M};\ell^{c}_{2})}=\Big\|\Big(\sum\limits_{k=1}^{n}|x_{k}|^{2}\Big)^{1/2}\Big\|_{L_{p}(\mathcal{M})}.
\end{equation*}
Analogously, we could also define
\begin{equation*}
\|x\|_{L_{p}(\mathcal{M};\ell^{r}_{2})}=\|x^{*}\|_{L_{p}(\mathcal{M};\ell^{c}_{2})}=\Big\|\Big(\sum\limits_{k=1}^{n}|x^{*}_{k}|^{2}\Big)^{1/2}\Big\|_{L_{p}(\mathcal{M})}.
\end{equation*}
The two norms above usually are not comparable at all if $p\neq 2$. Let $L_p(\M,\ell_{2}^c)$ (resp. $L_p(\M,\ell_{2}^r)$) be the completion of the space of all finite sequences in $L_p(\M)$ with respect to $\|\cdot\|_{L_p(\M,\ell_{2}^c)}$ (resp. $\|\cdot\|_{L_p(\M,\ell_{2}^r)}$).

\subsection{Noncommutative martingales}

Let $(\mathcal{M}_n)_{n\geq 1}$ be an increasing sequence of von Neumann subalgebras of  $\mathcal{M}$ such that $\bigcup_{n\geq1}\mathcal{M}_n$ is weak-$*$ dense in $\mathcal{M}.$
Let $\mathcal{E}_n$ be the  conditional expectation  from $\mathcal{M}$ onto $\mathcal{M}_n.$ A sequence $x=(x_n)_{n\geq1}$ in $L_{1}(\M)+L_\infty(\M)$ is called a noncommutative martingale with respect to $(\mathcal M_n)_{n\geq1}$ if
\[
\mathcal E_n(x_{n+1})=x_n,\qquad \forall n\geq1.
\]
The sequence $(dx_{k})_{k\geq 1}$ is the martingale differences of the martingale $x=(x_n)_{n\geq1}$, which is defined by  $dx_{1}\coloneqq x_{1}$ and
\[
dx_{k}\coloneqq x_k-x_{k-1},\qquad \forall k\geq2.
\]
If  $(x_n)_{n\geq1}\subseteq L_p(\mathcal{M})$ for some $1\leq p\leq \infty$, then $x$ is called an $L_p$-martingale. In this case, we set
$$\|x\|_{L_p(\mathcal{M})}\coloneqq\sup_{n\geq1}\|x_n\|_{L_{p}(\mathcal{M})}.$$
If $\|x\|_{L_p(\M)}<\infty$, then $x$ is called an $L_p$-bounded martingale. 

%
 

Recall that $(\M_n)_{n\geq1}$ is a regular filtration if there exists a positive number $R_{\rm reg}\geq1$ such that
\begin{equation}\label{regular}
	\mathcal{E}_n (x)\leq R_{\rm reg}\mathcal{E}_{n-1}(x)
\end{equation}
for each positive $x\in L_1(\mathcal{M})$. The number $R_{\rm reg}$ is usually called the regularity  constant.

Throughout this paper, we will mainly focus on the following concrete   regular filtration in  a given Vilenkin space.
	Let $m:=\{m_{k}\}_{k \in \mathbb{N}}$ be a sequence of positive integers such that $m_{k} \geq 2$,  and let $G_{m_{k}}$ denote a set of cardinality $m_{k}$, for each $k\in\mathbb{N}$.
We equip $G_{m_{k}}$ with the discrete topology and the uniform probability measure $\mu_{k}$ (for each $t\in G_{m_k}$).
Set 
\begin{equation}\label{eq:vs}
	G_{m}=\prod_{k\in\mathbb{N}}G_{m_k}.
\end{equation}
 Thus each $t \in G_{m}$ is a sequence $t:=\left(t_{0}, t_{1}, \ldots\right)$, where $t_{k} \in G_{m_{k}}, k \in \mathbb{N}$. Then $G_{m}$ is called a Vilenkin space. Indeed, $G_{m}$ is a compact totally disconnected space with normalized regular Borel measure $\mu$ such that  $\mu\left(G_{m}\right)=1$.  Throughout the paper, we assume that Vilenkin space $G_{m}$ is bounded, that is, $M:=\sup_km_k<\infty$. 

For each $t\in G_m$ and $1\leq n\in \mathbb{N}$, a base for the neighborhoods of $G_{m}$ is given as follows
$$I_{0}(t):=G_{m}, \quad I_{n}(t):=\left\{s=\left(s_{i}\right)_{i \in \mathbb{N}} \in G_{m}: s_{i}=t_{i} \text { for } i<n\right\}.$$
 Particularly, set $I_n:=I_n(0)$. Let 
$$\mathcal{F}:=\sigma\left\{I_{n}(t): n \in \mathbb{N}, t \in G_{m}\right\}$$
be the $\sigma$-algebra generated by all  intervals on $G_{m}$.

\begin{example}[regular filtration in Vilenkin space]\label{one-d} Consider $(G_m,\mathcal{F},\mu)$. For convenience, denote $|Q|$ the  measure of $Q$. We simply write $L_{\infty}(G_m,\mathcal{F},\mu)$ by $L_{\infty}(G_m)$. Recall that $\mathcal{N}=L_{\infty}(G_m)\bar{\otimes} \mathcal{M}$ is the tensor von Neumann algebra equipped with the trace $\varphi=\int \otimes \tau$.
Set 
\begin{equation}\label{df}
\mathcal{F}_n =\sigma \{ I_n(t): t\in G_m\}, \quad n\geq1.
\end{equation}
Then $(\mathcal{F}_n)_{n\geq1}$ is the filtration of $\mathcal{F}$. The conditional expectations with respect to $(\mathcal{F}_n)_{n\geq1}$ are denoted by $(\mathbb{E}_n)_{n\geq 1}$. More precisely, for $f\in L_1(G_m)$,
\begin{equation}\label{ce}
\mathbb{E}_n(f)=\sum_{Q\in D(\mathcal{F}_n)}  f_Q\chi_Q, \quad \forall n\geq1,
\end{equation}
where 
$D(\mathcal{F}_n)$ denotes the collection of all  intervals in $\mathcal{F}_n$ with length $M_n^{-1}$ and
	
$$f_Q=\frac{1}{|Q|}\int_{Q} f(t)d\mu(t).$$
For each $n$, we simply write $L_{\infty}(G_m,\mathcal{F}_n,\mu)$ by $L_{\infty}(\mathcal{F}_n)$ and let

$$\mathcal{N}_n=L_{\infty}(\mathcal{F}_n)\bar{\otimes} \mathcal{ M}.$$
Then $(\mathcal{N}_n)_{n\geq 1}$ forms an increasing sequence of von Neumann subalgebras of  $\mathcal{N}$ such that $\bigcup_{n\geq1}\mathcal{N}_n$ is weak-$*$ dense in $\mathcal{N}.$ The conditional expectation of $\mathcal{N}$ onto $\mathcal{N}_n$ is $\mathbb{E}_n\otimes \mathrm{id}_{\M}$ (simply $\mathbb{E}_n$),
where $\mathrm{id}_{\M}$ is the indentity map from $\M$ onto $\M$. Moreover, it is easy to see that the filtration  $(\mathcal{N}_n)_{n\geq 1}$ is regular with   $R_{\rm reg}=\sup_km_k$ in the sense of \eqref{regular}.

\end{example}



\subsection{Noncommutative Calder\'{o}n-Zygmund decomposition} 
In this paper, our proof mainly depends on the
noncommutative Calder\'{o}n-Zygmund decomposition established in \cite{CCP2022}. To state the  decomposition, we first recall the Cuculescu projections. 

\begin{lemma}[{\cite{Cuc} or \cite[Proposition 2.3]{Rand2002}}]\label{Cuculescu}
	For positive $f\in L_{1}(\M)$ and $\lambda\in \R^{+}$, let $(f_n)_{n\geq1}$ be the martingale defined by $f_n=\mathcal{E}_n(f)$ for each $n\geq1$. Then there exists a sequence of decreasing projections  $(q_n^{(\lambda)})_{n\geq 1}$ in $\mathcal{M}$ satisfying the following properties:
	\begin{enumerate}[{\rm (i)}]
		\item for every $n\geq1 $, $q_n^{(\lambda)}\in \mathcal{M}_n $;
		\item for every $n\geq1 $, $q_n^{(\lambda)}$ commutes with $q_{n-1}^{(\lambda)}f_n q_{n-1}^{(\lambda)}$;
		\item for every $n\geq1 $, $q_n^{(\lambda)}f_nq_n^{(\lambda)}\leq \lambda q_n^{(\lambda)}$;
		\item if we set $q^{(\lambda)}= \wedge_{n=1}^{\infty} q_n^{(\lambda)}$, then $q^{(\lambda)}fq^{(\lambda)}\leq \lambda q^{(\lambda)}$ and
		\[
		\lambda\tau({\bf 1}-q^{(\lambda)})\leq \tau\big(({\bf 1}-q^{(\lambda)})f\big)\leq \|f\|_{L_1(\mathcal{M})}.
		\]
	\end{enumerate}
\end{lemma}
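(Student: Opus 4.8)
The statement to prove is Cuculescu's lemma, which constructs the decreasing sequence of projections used in the noncommutative Calderón–Zygmund decomposition. Let me sketch how I would prove it.

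\medskip

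The plan is to construct the projections $(q_n^{(\lambda)})_{n\geq 1}$ recursively and verify properties (i)–(iv) by induction on $n$, the final clause (iv) following by a limiting argument together with the trace estimate. First I would set $q_0^{(\lambda)} = \mathbf{1}$ and, assuming $q_{n-1}^{(\lambda)} \in \mathcal{M}_{n-1} \subseteq \mathcal{M}_n$ has been constructed, define
\[
q_n^{(\lambda)} \coloneqq \chi_{[0,\lambda]}\bigl(q_{n-1}^{(\lambda)} f_n q_{n-1}^{(\lambda)}\bigr),
\]
the spectral projection of the self-adjoint operator $q_{n-1}^{(\lambda)} f_n q_{n-1}^{(\lambda)} \in \mathcal{M}_n$ corresponding to the interval $[0,\lambda]$. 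Since $f_n = \mathcal{E}_n(f) \in \mathcal{M}_n$ and $q_{n-1}^{(\lambda)} \in \mathcal{M}_n$, this projection lies in $\mathcal{M}_n$, giving (i); and by definition of the functional calculus it commutes with $q_{n-1}^{(\lambda)} f_n q_{n-1}^{(\lambda)}$, giving (ii). The key subtlety, which I would address carefully, is why $(q_n^{(\lambda)})_n$ is \emph{decreasing}: one has $q_n^{(\lambda)} \leq q_{n-1}^{(\lambda)}$ because the support projection of $q_{n-1}^{(\lambda)} f_n q_{n-1}^{(\lambda)}$ is dominated by $q_{n-1}^{(\lambda)}$ (as $f_n \geq 0$, so the range of $q_{n-1}^{(\lambda)} f_n q_{n-1}^{(\lambda)}$ sits inside the range of $q_{n-1}^{(\lambda)}$), and on the orthogonal complement of that support the operator is $0 \in [0,\lambda]$, so one must intersect with $q_{n-1}^{(\lambda)}$; the cleanest route is to observe $q_n^{(\lambda)} = q_{n-1}^{(\lambda)} - \chi_{(\lambda,\infty)}(q_{n-1}^{(\lambda)} f_n q_{n-1}^{(\lambda)})$ and that the second term is a subprojection of $q_{n-1}^{(\lambda)}$.

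\medskip

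For (iii), since $q_n^{(\lambda)}$ commutes with $a \coloneqq q_{n-1}^{(\lambda)} f_n q_{n-1}^{(\lambda)}$ and equals a spectral projection of $a$ for $[0,\lambda]$, the standard functional-calculus inequality $q_n^{(\lambda)} a q_n^{(\lambda)} \leq \lambda q_n^{(\lambda)}$ holds; it then remains to note $q_n^{(\lambda)} a q_n^{(\lambda)} = q_n^{(\lambda)} f_n q_n^{(\lambda)}$, which follows from $q_n^{(\lambda)} \leq q_{n-1}^{(\lambda)}$ (so $q_n^{(\lambda)} q_{n-1}^{(\lambda)} = q_n^{(\lambda)}$). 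For (iv), setting $q^{(\lambda)} = \wedge_n q_n^{(\lambda)} = \lim_n q_n^{(\lambda)}$ (strong limit of a decreasing sequence of projections), one passes to the limit in (iii): since $q^{(\lambda)} \leq q_n^{(\lambda)}$, multiplying (iii) on both sides by $q^{(\lambda)}$ gives $q^{(\lambda)} f_n q^{(\lambda)} \leq \lambda q^{(\lambda)}$ for all $n$, and then one uses that $\mathcal{E}_n(f) \to f$ in $L_1$ together with $q^{(\lambda)} \in \mathcal{M}$ bounded to conclude $q^{(\lambda)} f q^{(\lambda)} \leq \lambda q^{(\lambda)}$ (one may test against positive elements of $\mathcal{M}$, or use normality of the trace and $\mathcal{E}_m(q^{(\lambda)} f q^{(\lambda)}) $ type arguments). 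For the numerical bound, write
\[
\lambda\,\tau(\mathbf{1}-q^{(\lambda)}) = \lambda \sum_{n\geq 1} \tau\bigl(q_{n-1}^{(\lambda)}-q_n^{(\lambda)}\bigr),
\]
and on each difference projection $e_n \coloneqq q_{n-1}^{(\lambda)}-q_n^{(\lambda)} = \chi_{(\lambda,\infty)}(a)$ one has $e_n a e_n \geq \lambda e_n$, hence $\lambda \tau(e_n) \leq \tau(e_n a e_n) = \tau(e_n q_{n-1}^{(\lambda)} f_n q_{n-1}^{(\lambda)} e_n) = \tau(e_n f_n e_n)$ using $e_n \leq q_{n-1}^{(\lambda)}$; then $\tau(e_n f_n e_n) = \tau(e_n \mathcal{E}_n(f)) = \tau(\mathcal{E}_n(e_n f)) = \tau(e_n f)$ since $e_n \in \mathcal{M}_n$. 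Summing and using that the $e_n$ are pairwise orthogonal with $\sum_n e_n = \mathbf{1}-q^{(\lambda)}$ yields $\lambda \tau(\mathbf{1}-q^{(\lambda)}) \leq \tau((\mathbf{1}-q^{(\lambda)}) f) \leq \|(\mathbf{1}-q^{(\lambda)}) f\|_{L_1} \leq \|f\|_{L_1(\mathcal{M})}$.

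\medskip

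The main obstacle I anticipate is the care needed in two places: first, rigorously justifying that $q_n^{(\lambda)} \leq q_{n-1}^{(\lambda)}$ and the identity $q_n^{(\lambda)} f_n q_n^{(\lambda)} = q_n^{(\lambda)}(q_{n-1}^{(\lambda)} f_n q_{n-1}^{(\lambda)}) q_n^{(\lambda)}$, which hinge on bookkeeping with support projections of compressions of positive operators rather than any deep fact; and second, the passage to the limit in clause (iv), where one must be careful that $q_n^{(\lambda)} \to q^{(\lambda)}$ only strongly, so that $q^{(\lambda)} f_n q^{(\lambda)} \leq \lambda q^{(\lambda)}$ for every fixed $n$ is immediate but $q^{(\lambda)} f q^{(\lambda)} \leq \lambda q^{(\lambda)}$ requires the $L_1$-convergence $\mathcal{E}_n(f) \to f$ and a positivity/normality argument rather than a norm limit. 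Everything else is a routine application of spectral calculus and the trace-preserving, module, and positivity properties of the conditional expectations $\mathcal{E}_n$.
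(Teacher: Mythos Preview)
The paper does not supply its own proof of this lemma; it simply cites Cuculescu's original paper and Randrianantoanina's exposition, so there is no in-paper argument to compare against. Your sketch is the standard construction from those references and is essentially correct.

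One small imprecision: your initial formula $q_n^{(\lambda)} = \chi_{[0,\lambda]}(q_{n-1}^{(\lambda)} f_n q_{n-1}^{(\lambda)})$ is not quite right as written, since on the orthogonal complement of $q_{n-1}^{(\lambda)}$ the compressed operator vanishes and hence lies in $[0,\lambda]$, so this spectral projection actually \emph{contains} $(q_{n-1}^{(\lambda)})^\perp$ rather than being dominated by $q_{n-1}^{(\lambda)}$. You notice this yourself and switch to the correct formula $q_n^{(\lambda)} = q_{n-1}^{(\lambda)} - \chi_{(\lambda,\infty)}(q_{n-1}^{(\lambda)} f_n q_{n-1}^{(\lambda)})$ (equivalently $q_{n-1}^{(\lambda)}\,\chi_{[0,\lambda]}(q_{n-1}^{(\lambda)} f_n q_{n-1}^{(\lambda)})$), which is what appears in the cited sources; just make that your definition from the outset. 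With that fix, (i)--(iii) go through exactly as you say, and your argument for (iv)---passing to the strong limit and using $L_1$-convergence of $\mathcal{E}_n(f)$ together with closedness of the positive cone, plus the telescoping trace estimate via $e_n = q_{n-1}^{(\lambda)}-q_n^{(\lambda)}$---is the standard one and is correct.
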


In what follows  we will simply write $(q_n)_{n\geq 1}$   for the sequence of Cuculescu's projections $(q_n^{(\lambda)})_{n\geq 1}$  associated to the martingale generated by  a given positive $f \in L_1(\mathcal{M})$ and $\lambda\in \R^{+}$, and let $q= q^{(\lambda)}$ for convenience. Set $p_{1}= \1-q_{1}$ and
\begin{equation}\label{pn}
	p_{n}= q_{n-1}-q_{n}, \quad n\geq 2.
\end{equation} 
Then
\begin{equation}\label{sump}
	\sum_{n\geq1} p_n={\bf 1}-q,
\end{equation}
where the convergence is taken with respect to the strong operator topology.

The noncommutative Calder\'{o}n-Zygmund decomposition was first established in \cite{Pa2009}. The great advantage of the following improved decomposition enables one to circumvent the pseudo-localization technique. 

\begin{theorem}[{\cite[Lemma 1.1]{CCP2022}}]\label{NewCZ}
	Let $f$ be a positive element in $L_1(\mathcal{M})$ and $\lambda>0$. Suppose that the associated filtration $(\mathcal{M}_n)_{n\geq1}$ is regular with  $R_{\rm reg}\geq1$ in the sense of \eqref{regular}. The sequence of projections  $(q_{k})_{k\geq 1}$ and $(p_{k})_{k\geq 1}$ are as in Lemma \ref{Cuculescu} and \eqref{pn}. Then there is a decomposition of $f$ such that
	\begin{equation}\label{dec}
		f=g+b_{d}+b_{off}
	\end{equation}
	where
	\begin{enumerate}[\rm (i)]
		\item $g=qfq+\sum_{k\geq1} p_kf_kp_k$,  $$\|g\|_{L_1(\mathcal{M})}\leq \|f\|_{L_1(\mathcal{M})}\quad \mbox{and}\quad \|g\|_{L_{\infty}(\mathcal{M})}\leq R_{\rm reg}\lambda;$$
		\item $b_d=\sum_{k\geq1} p_k(f-f_k)p_k$ and $$\sum_{k\geq1}\|p_k(f-f_k)p_k\|_{L_1(\mathcal{M})}\leq 2\tau(f({\bf 1}-q))\leq 2\|f\|_{L_1(\mathcal{M})};$$
		\item $b_{off}=\sum_{k\geq1} p_k(f-f_k)q_k+q_k(f-f_k)p_k$. 
	\end{enumerate}
\end{theorem}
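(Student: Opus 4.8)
The plan is to construct $g$, $b_d$, $b_{off}$ explicitly from Cuculescu's projections and then read off the three properties from Lemma~\ref{Cuculescu} and the regularity hypothesis \eqref{regular}. Fix $f\geq0$ in $L_1(\M)$ and $\lambda>0$; let $(q_k)_{k\geq1}$ be the associated Cuculescu projections, set $q_0=\1$, $q=\bigwedge_{k\geq1}q_k$, and $p_k=q_{k-1}-q_k$ as in \eqref{pn}. Then $\{q\}\cup\{p_k:k\geq1\}$ is a pairwise orthogonal family of projections with $q+\sum_{k\geq1}p_k=\1$ by \eqref{sump}, and $q_k=q+\sum_{j>k}p_j$ for each $k$. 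Put
\[
g=qfq+\sum_{k\geq1}p_kf_kp_k,\qquad b_d=\sum_{k\geq1}p_k(f-f_k)p_k,\qquad b_{off}=f-g-b_d,
\]
the first two series being absolutely convergent in $L_1(\M)$ by the estimates in the last paragraph. It then remains to show that $b_{off}$ coincides with the expression in (iii), and to prove the stated norm bounds for $g$ and $b_d$.

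For the form of $b_{off}$: expanding $f=\bigl(q+\sum_kp_k\bigr)f\bigl(q+\sum_jp_j\bigr)$ and regrouping the off-diagonal products by means of $q_k=q+\sum_{j>k}p_j$ yields the identity
\[
f=qfq+\sum_{k\geq1}p_kfp_k+\sum_{k\geq1}\bigl(p_kfq_k+q_kfp_k\bigr).
\]
Writing $p_kfp_k=p_kf_kp_k+p_k(f-f_k)p_k$, this becomes $f=g+b_d+b_{off}$ with $b_{off}$ as in (iii) \emph{provided} the ``paraproduct'' pieces vanish, i.e. $p_kf_kq_k=q_kf_kp_k=0$ for all $k$. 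This is the one genuinely structural point, and it uses property~(ii) of Lemma~\ref{Cuculescu}: since $q_k\leq q_{k-1}$ we may write $p_kf_kq_k=p_k\bigl(q_{k-1}f_kq_{k-1}\bigr)q_k$, and as $q_{k-1}f_kq_{k-1}$ commutes with $q_k$ --- hence with $p_k=q_{k-1}-q_k$ --- while $p_kq_k=0$, the product collapses to $0$; the computation for $q_kf_kp_k$ is symmetric. I expect this commutation identity, together with the bookkeeping in the regrouping step, to be the part needing the most care; everything afterwards is a direct application of Cuculescu's inequalities.

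For the norm bounds, everything follows from Lemma~\ref{Cuculescu} and \eqref{regular}: $qfq\leq\lambda q$ by part~(iv), and \eqref{regular} gives $f_k=\mathcal{E}_k(f)\leq R_{\rm reg}\mathcal{E}_{k-1}(f)=R_{\rm reg}f_{k-1}$, whence $q_{k-1}f_kq_{k-1}\leq R_{\rm reg}\lambda q_{k-1}$ by part~(iii) and, compressing by $p_k\leq q_{k-1}$, $p_kf_kp_k\leq R_{\rm reg}\lambda p_k$; summing the orthogonal blocks gives $0\leq g\leq\lambda q+R_{\rm reg}\lambda(\1-q)\leq R_{\rm reg}\lambda\,\1$, so $\|g\|_{L_\infty(\M)}\leq R_{\rm reg}\lambda$. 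The $L_1$ bound for $g$ is in fact an equality: by positivity $\|g\|_{L_1(\M)}=\tau(g)$, and since $p_k\in\mathcal{M}_k$ and $\tau\circ\mathcal{E}_k=\tau$ one has $\tau(p_kf_kp_k)=\tau(fp_k)$ and $\tau(qfq)=\tau(fq)$, hence $\tau(g)=\tau\bigl(f(q+\sum_kp_k)\bigr)=\|f\|_{L_1(\M)}$. Finally, each $p_k(f-f_k)p_k=p_kfp_k-p_kf_kp_k$ is a difference of positive operators supported under $p_k$, so $\|p_k(f-f_k)p_k\|_{L_1(\M)}\leq\tau(p_kfp_k)+\tau(p_kf_kp_k)=2\tau(fp_k)$; summing over $k$, $\sum_{k\geq1}\|p_k(f-f_k)p_k\|_{L_1(\M)}\leq2\tau\bigl(f(\1-q)\bigr)\leq2\|f\|_{L_1(\M)}$. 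This also shows the series for $g$ and $b_d$ converge absolutely in $L_1(\M)$, so that $b_{off}=f-g-b_d$ is well defined.
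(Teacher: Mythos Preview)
The paper does not prove Theorem~\ref{NewCZ}; it is quoted from \cite[Lemma~1.1]{CCP2022} and used as a black box throughout Section~\ref{sec-pf-NcW}. There is therefore no ``paper's own proof'' to compare against.

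Your argument is the standard one and is correct in all essential points. The orthogonal expansion $\1=q+\sum_{k\geq1}p_k$ followed by the regrouping $q_k=q+\sum_{j>k}p_j$ gives exactly the splitting $f=qfq+\sum_kp_kfp_k+\sum_k(p_kfq_k+q_kfp_k)$; the identity $p_kf_kq_k=q_kf_kp_k=0$ via Lemma~\ref{Cuculescu}(ii) (which the paper itself invokes later, see the first line of the proof of Lemma~\ref{AkQ}) then yields the stated form of $b_{off}$. The $L_1$ estimates are handled cleanly, and your observation that $\|g\|_{L_1(\M)}=\|f\|_{L_1(\M)}$ is in fact an equality is a nice sharpening of the stated bound.

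One small technical point worth making explicit: your $L_\infty$ bound $p_kf_kp_k\leq R_{\rm reg}\lambda\,p_k$ uses $q_{k-1}f_{k-1}q_{k-1}\leq\lambda q_{k-1}$, which for $k=1$ reads $q_0f_0q_0\leq\lambda q_0$ with your convention $q_0=\1$. This requires either that the regularity inequality \eqref{regular} be available at $n=1$ for a suitable $\mathcal{E}_0$ (e.g.\ $\mathcal{M}_0=\mathbb{C}\1$, as is implicit in Example~\ref{one-d} where $\mathcal{F}_0$ is trivial), or a separate remark that the case $k=1$ is handled by the initial step of Cuculescu's construction. This is a matter of convention in the cited source rather than a gap in your reasoning, but it would be worth one sentence to flag it.
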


Consider Theorem \ref{NewCZ} with the filtration $(\mathcal{N},\varphi; (\mathcal{N}_{n})_{n\geq1})$ as in Example \ref{one-d} and positive $f\in L_1(\mathcal{N})$.  We  can then express the projections $q_n$ and $p_n$ as 
\begin{equation}\label{q}
	q_n=\sum_{Q\in D(\mathcal{F}_n)} q_Q\chi_Q,
\end{equation}
where $q_Q\in\mathcal{P}(\M)$, and 
\begin{equation}\label{pe}
	p_n=\sum_{Q\in D(\mathcal{F}_{n-1})} q_Q\chi_Q- \sum_{Q\in D(\mathcal{F}_{n})} q_Q\chi_Q=\sum_{Q\in D(\mathcal{F}_{n})}( q_{\widehat{Q}}-q_Q)\chi_Q=\sum_{Q\in D(\mathcal{F}_{n})}p_Q\chi_Q,
\end{equation}
where, for $Q\in D(\mathcal{F}_n)$, $\widehat{Q}$ is the  unique  interval in $ D(\mathcal{F}_{n-1})$ such that $Q\subseteq \widehat{Q}$.
One can  observe that for each $n\geq1$ and $Q\in D(\mathcal{F}_{n})$,
\begin{equation}\label{pq}
	p_Qq_Q=q_Qp_Q=0.
\end{equation}
Based on \eqref{pe}, we can rewrite $b_d$ and $b_{off}$ as follows
\begin{eqnarray}\label{bd}
	b_d=\sum_{k\geq1} \sum_{Q\in D(\mathcal{F}_k)} p_Q(f-f_Q)p_Q\chi_Q
\end{eqnarray}
and 
\begin{eqnarray}\label{boff}
	b_{off}=\sum_{k\geq1} \sum_{Q\in D(\mathcal{F}_{k})} p_Q(f-f_Q)q_Q\chi_Q+q_Q(f-f_Q)p_Q\chi_Q.
\end{eqnarray}

\subsection{Vilenkin-like  system and Ces\`{a}ro means for operator-valued functions}
In this subsection, we introduce the Vilenkin-like  system on $G_m$ and refer to  \cite{Ga2001} for more details. 
Let $M_0:=1$, and $M_{n+1}:=m_nM_n$, $n\in\mathbb{N}$. Then each $n\in\mathbb{N}$ can be uniquely expressed as
\begin{align}\label{n-V-L}
n=\sum_{k=0}^\infty n_kM_k,\quad 0\leq n_k\leq m_k-1, k\in\mathbb{N}.
\end{align}
Denote 
\begin{equation}\label{n^(k)-V-L-}
n^{(k)}=\sum_{j=k}^\infty n_jM_j.
\end{equation}
If $n=\sum_{j=k}^\infty n_jM_j$, we call that $M_k$ is a divisor of $n$.


\begin{assumption}\label{gamma-nk}  Let  complex-valued functions $r_k^n:G_m\rightarrow \mathbb{C}$  satisfy the following.
\begin{enumerate}[{\rm (i)}]
	\item  For each $k\in\mathbb{N}$, $(r_k^n)_{n\in\mathbb{N}}$ are $\mathcal{F}_{k+1}$-measurable, and $r_k^0=1$.
	\item Let $k, l, n \in \mathbb{N}$ be such that $n^{(k+1)}=l^{(k+1)}$. If $M_{k}$ is a divisor of both $n$ and $l$,  then 
	$$
\mathbb{E}_{k}\left(r_{k}^{n} \bar{r}_{k}^{l}\right) = \left\{
\begin{array}{ll}
1, & \hbox{if $n_{k}=l_{k}$} \\
0, & \hbox{if $n_{k} \neq l_{k}$}
\end{array}. 
\right.
$$
Here, the conditional expectation $\mathbb{E}_k$ is defined as in \eqref{ce}.
	\item If $M_{k+1}$ is a divisor of $n$, then
$$\sum_{j=0}^{m_{k}-1}\left|r_{k}^{j M_{k}+n}(t)\right|^{2}=m_{k}, \quad \mbox{for all $t \in G_{m}$}.$$
     \item There exists a constant $\delta>1$ for which $\left\|r_{k}^{n}\right\|_{\infty} \leq \sqrt{m_{k} / \delta}$.
\end{enumerate}
\end{assumption}

The product system generated by the complex-valued functions $(r_k^n)_{k,n}$ is the Vilenkin-like system: $\psi_0=1$ and
\begin{align}\label{WPS}
\psi_{n}:=\prod_{k=0}^{\infty} r_{k}^{n^{(k)}}, \quad n \in \mathbb{N},
\end{align}
where $n=\sum_{k=0}^{\infty} n_{k} M_k$.
Since $r_{k}^{0}=1$, for $M_a\leq n<M_{a+1}$, we have
$\psi_{n}=\prod_{k=0}^{a} r_{k}^{n^{(k)}}$.

Here, we provide concrete examples of Vilenkin-like system, and refer the reader to \cite{Ga2001} for more examples. We omit the   elementary computations that the given  examples satisfy Assumption \ref{gamma-nk}.

\begin{example}[Vilenkin and Walsh systems]\label{V-W-S-example} Consider Vilenkin space $G_m$. Moreover, for each $k$, let $G_{m_k}:=\mathbb{Z}_{m_{k}}$ be  the $m_{k}$-th  discrete cyclic group. That is $\mathbb{Z}_{m_{k}}$ can be represented by the set $\left\{0,1, \ldots, m_{k}-1\right\}$, where the group operation is the $\bmod\ m_{k}$ addition and every subset is open. The group operation on $G_{m}$ is the coordinate-wise addition. Then $G_{m}$ is called a Vilenkin group.  Let 
$$r_{k}^{n}(t):=\left(\exp \left(\frac{2 \pi i t_{k}}{m_{k}}\right)\right)^{n_{k}},\quad t\in G_{m},$$ where $i:=\sqrt{-1}$. The system $\psi:=\left(\psi_{n}\right)_{n \in \mathbb{N}}$ is the Vilenkin system.  If $m_{k}=2$ for all $k \in \mathbb{N}$, we get the Walsh(-Paley) group, and the related system is the Walsh(-Paley) system.  For more information about Vilenkin and Walsh   group are referred to \cite{Golubov1991,PTW2022,SWS1990}.
\end{example}

\begin{example}[the group of $m$-adic (2-adic) integers]\label{m-adic-example}  Let $G_{m_{k}}:=\{0,1, \ldots, m_{k}-1\}$ for all $k \in \mathbb{N}$. Define on $G_{m}$ the following (commutative) addition: if $s, t \in$ $G_{m}$, then $s+t=v \in G_{m}$ is defined recursively as follows: $s_{0}+t_{0}=q_{0} m_{0}+v_{0}$, where  $v_{0} \in\left\{0,1, \ldots, m_{0}-1\right\}$ and $q_{0} \in \mathbb{N}$. Suppose that $v_{0}, \ldots, v_{k}$ and $q_{0}, \ldots, q_{k}$ have been defined. Then write $s_{k+1}+t_{k+1}+q_{k}=q_{k+1} m_{k+1}+v_{k+1}$, where $v_{k+1} \in\left\{0,1, \ldots, m_{k+1}-1\right\}$ and $q_{k+1} \in \mathbb{N}$. Then $G_{m}$ is called the group of $m$-adic integers (if $m_{k}=2$ for all $k \in \mathbb{N}$, it is the group of 2-adic integers). In this case let

$$
r_{k}^{n}(t):=\left(\exp \left(2 \pi \imath\left(\frac{t_{k}}{m_{k}}+\frac{t_{k-1}}{m_{k} m_{k-1}}+\ldots+\frac{t_{0}}{m_{k} m_{k-1} \ldots m_{0}}\right)\right)\right)^{n_{k}}.
$$
Then the system $\psi:=\left(\psi_{n}\right)_{n \in \mathbb{N}}$ is the character system of the group of $m$-adic integers. For more  information about  the group of $m$-adic integers we refer the reader to \cite{HR1963, Ta1975}.
\end{example}


 Next, we  introduce Ces\`{a}ro means of Vilenkin-like-Fourier series for operator-valued functions.
Recall that $\mathcal{N}=L_{\infty}(G_m)\bar{\otimes}\mathcal{M}$, where $\mathcal{M}$ is a semifinite von Neumann algebra.   The $n$-th Vilenkin-like  Fourier coefficient of $f\in L_1(\mathcal{N})$ is defined by

$$\widehat{f}(n):=\int_{G_m} f(t) \bar{\psi}_{n}(t) d \mu(t),\quad \mbox{$n\in \mathbb{N}$}.$$
Denote by $S_{n}(f)$ the $n$-th partial sum of the Vilenkin-like Fourier series of  $f\in L_1(\mathcal{N})$, namely,
\begin{equation}\label{ps}
S_{n}(f)\coloneqq\sum_{k=0}^{n-1} \widehat {f}(k)\psi_{k},\quad n\in \mathbb{N}.
\end{equation}
We can rewrite the partial sum  as the  following  form 
$$
S_{n}(f)(\eta) =\int_{G_m} f(t) D_{n}(\eta,t) d\mu(t), \qquad n\in \mathbb{N}, \eta\in G_m,
$$
where $D_n$ is the  Vilenkin-like-Dirichlet kernel defined by
$$
D_n(\eta,t) := \sum_{k=0}^{n-1} \psi_k(\eta)\bar{\psi}_k(t)
$$
satisfying (see \cite[Lemma 3]{Ga2001})
\begin{equation}\label{e5}
D_{M_n}(\eta,t) = \left\{
\begin{array}{ll}
M_n, & \hbox{if $\eta \in I_n(t)$} \\
0, & \hbox{if $\eta \in G_m\setminus I_n(t)$}
\end{array}, \qquad   n\in \mathbb N.
\right.
\end{equation}
From \eqref{e5}, it is easy to see that
\begin{equation}\label{S2n}
S_{M_n}(f)=\mathbb{E}_n(f)=\sum\limits_{j=0}^{M_n-1}\widehat{f}(j)\psi_{j}, \qquad n\in \mathbb N.
\end{equation}
The Ces\`{a}ro means  of $f\in L_1(\mathcal{N})$ are defined  by 
\begin{equation}\label{CM1}
\sigma_{n}(f):= \frac{1}{n} \sum_{k=1}^{n} S_{k}(f)=\sum_{k=0}^{n-1}\left(1-\frac{k}{n}\right)\widehat{f}(k)\psi_k,\quad n\in\mathbb{N}.
\end{equation}
It is straightforward that 
\begin{equation}\label{CM}
\sigma_{n}(f)(\eta)=\int_{G_m} f(t) K_n(\eta,t)  d\mu(t), \quad \eta\in G_m,\quad n\in\mathbb{N},
\end{equation}
where $K_n$ is the  Fej\'er kernel  defined by
$
K_n := \frac{1}{n} \sum_{k=1}^n D_k, \qquad n\in\mathbb{N}.
$

Denote 
 $$K_{a,b}=\sum_{k=a}^{a+b-1}D_k\quad \mbox{and}\quad  \psi_{n,k}:=\prod_{s=k}^\infty r_s^{n^{(s)}},$$
where $a,b,n,k\in\mathbb{N}$.

The following lemmas provide  some essential properties of the Fej\'er kernel for Vilenkin-like system; see for instance   \cite[Lemma 9, Lemma 10, and Proposition 11]{Ga2001}. In the sequel of the present paper, the number $\delta$ is always taken from Assumption \ref{gamma-nk}.
\begin{lemma}\label{K-E-ab-1} Let $n,a,b\in \mathbb{N}$. If  $M_n\leq l<M_{n+1}$ and  $b\leq a\leq n$ for each $t\in I_a(s)\setminus I_{a+1}(s)$, then there is a constant $c>0$ such that
$$\big|K_{l^{(b+1)}+jM_b,M_b}(t,s)\big|\leq c\delta^{a-n}M_bM_n,\quad 0\leq j\leq m_b-2.$$
\end{lemma}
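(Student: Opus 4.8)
\textbf{Proof proposal for Lemma \ref{K-E-ab-1}.}

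The plan is to reduce the estimate on the partial Fej\'er--type kernel $K_{l^{(b+1)}+jM_b,M_b}$ to the structure of the Dirichlet kernels $D_k$ it is built from, exploiting the multiplicative splitting of the Vilenkin-like characters. First I would expand
$$
K_{l^{(b+1)}+jM_b,\,M_b}(t,s)=\sum_{k=l^{(b+1)}+jM_b}^{l^{(b+1)}+jM_b+M_b-1} D_k(t,s),
$$
and note that every index $k$ in this range has the form $k = l^{(b+1)}+jM_b+r$ with $0\le r<M_b$, so $k^{(b+1)}=l^{(b+1)}$ and $M_b$ is a divisor of $k-r$. Using the factorization $\psi_k=\prod_{i}r_i^{k^{(i)}}$ together with $\psi_{n,k}=\prod_{s\ge k}r_s^{n^{(s)}}$, I would write $D_k$ as a product of a ``low-frequency'' factor depending only on coordinates $<b$ (which is frozen on $I_b(s)$, hence genuinely on $I_a(s)$ since $a\ge b$) and a ``high-frequency'' factor governed by $\psi_{l,b+1}$ and the digits $r_b^{\,\cdot}$. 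This is the standard decomposition behind \cite[Lemma 3, Lemma 9]{Ga2001}, and the point is that summing $D_k$ over a block of length $M_b$ telescopes the contribution of the $b$-th coordinate.

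The key step is then the pointwise control on $I_a(s)\setminus I_{a+1}(s)$: on this set $t$ and $s$ agree in the first $a$ coordinates but differ in the $a$-th. I would invoke Assumption \ref{gamma-nk}(ii) to kill, via the conditional expectation $\mathbb{E}_a$ (or rather the exact cancellation of the character sums it encodes), all the terms whose oscillation in coordinates $b,\dots,a-1$ does not match, leaving a geometric-type sum in which each level $i$ with $b\le i\le a$ contributes a factor bounded by $\sqrt{m_i/\delta}\cdot\sqrt{m_i}\asymp m_i/\sqrt\delta$ by Assumption \ref{gamma-nk}(iii)--(iv), while levels $a<i\le n$ each contribute a clean factor $m_i$ with no $\delta$-gain available (since $t,s$ may agree there). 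Multiplying, the levels between $b$ and $a$ produce $\big(\prod_{i=b}^{a-1}m_i\big)\delta^{-(a-b)/?}$-type gains; after bookkeeping with $M_n=\prod_{i<n}m_i$ and $M_b=\prod_{i<b}m_i$ one collects exactly $\delta^{a-n}M_bM_n$ up to an absolute constant. Concretely, I would phrase the final count as: the sum over the block of length $M_b$ has at most $m_b$ surviving Dirichlet kernels, each of size $\lesssim M_bM_n$ on $I_b(t)$, and the ``bad'' coordinates from level $a+1$ up to $n$ force the localization $t\in I_n(t)$-type constraint that upgrades the trivial bound $M_n$ to a $\delta$-decaying one through the $n-a$ missing gains; this is where the exponent $a-n$ enters.

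The main obstacle I expect is precisely the bookkeeping of the $\delta$-powers: one has to track, for each coordinate level $i$ between $b$ and $n$, whether Assumption \ref{gamma-nk}(iv) yields a genuine $\delta^{-1}$ improvement over the trivial $m_i$ bound or not, and this depends delicately on whether $i\le a$ (where $t\notin I_{i+1}(s)$ is possible, so one gets the gain) or $i>a$ (where no gain is guaranteed). Getting the cancellation from Assumption \ref{gamma-nk}(ii) to apply uniformly across the block of $M_b$ consecutive indices---rather than for a single Dirichlet kernel---is the technical heart; the trick is that $l^{(b+1)}$ is fixed throughout the block, so all these $D_k$ share the same high-frequency part and only the low digits vary, which is exactly the regime in which the orthogonality relations in Assumption \ref{gamma-nk}(ii)--(iii) are designed to operate. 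Everything else is routine estimation using $\|r_k^n\|_\infty\le\sqrt{m_k/\delta}$ and the boundedness $\sup_k m_k<\infty$ (the latter ensures the constant $c$ is absolute, not dependent on $m$).
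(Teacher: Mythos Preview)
The paper does not supply its own proof of this lemma: it is quoted verbatim from \cite[Lemma~9]{Ga2001} (see the sentence immediately preceding Lemma~\ref{K-E-ab-1}). So there is nothing in the paper to compare your argument against beyond the citation. Your overall strategy---expand $K_{l^{(b+1)}+jM_b,M_b}$ as a block sum of Dirichlet kernels, feed in the layered formula for $D_k$ from \cite[Proposition~8]{Ga2001}, and control the surviving character products via Assumption~\ref{gamma-nk}(iv)---is indeed the route G\'at takes.

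That said, your bookkeeping contains an internal contradiction that you should resolve before calling this a proof. You first assert that levels $a<i\le n$ ``each contribute a clean factor $m_i$ with no $\delta$-gain available,'' and two sentences later you attribute the exponent $a-n$ to exactly those levels. The second reading is the correct one. On $I_a(s)\setminus I_{a+1}(s)$ one has $D_{M_i}(t,s)=0$ for every $i>a$, so in the Dirichlet expansion only the terms with level index $\le a$ survive; the remaining factor is essentially $|\psi_{l,a+1}(t)\bar\psi_{l,a+1}(s)|$, and it is precisely the $L_\infty$ bound $\|r_i^{\,\cdot}\|_\infty^2\le m_i/\delta$ applied at each level $i=a+1,\dots,n$ that produces $\prod_{i=a+1}^{n}(m_i/\delta)\asymp \delta^{a-n}M_n/M_a$. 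The levels $b\le i\le a$ do \emph{not} supply additional $\delta$-decay; they only contribute the factor $M_b$ via $D_{M_b}(t,s)=M_b$ (valid since $b\le a$ forces $t\in I_b(s)$). Assumption~\ref{gamma-nk}(ii) is used to justify the multiplicative splitting of the Dirichlet kernel, not to harvest extra decay at those intermediate levels. Once you straighten this out, the estimate $c\,\delta^{a-n}M_bM_n$ drops out exactly as in G\'at's paper.
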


\begin{lemma}\label{K-E-ab-2} Let $n,a,b\in \mathbb{N}$. If $M_n\leq  l<M_{n+1}$, $a<b\leq n$ and $0\leq j\leq m_b-2$, then, for each $s \in G_{m}$,  there exists   a constant $c>0$ such that
$$\int_{I_{a}(s) \backslash I_{a+1}(s)}\left|K_{l^{(b+1)}+j M_{b}, M_{b}}(t,s)\right|^{2} d \mu(t) \leq c \delta^{a-n} M_{a} M_{b} M_{n} .
$$
\end{lemma}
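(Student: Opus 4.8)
The plan is to prove Lemma \ref{K-E-ab-2} by reducing the $L_2$ estimate on the annulus $I_a(s)\setminus I_{a+1}(s)$ to the pointwise bound already available from Lemma \ref{K-E-ab-1}, but applied with the \emph{correct block}. The subtlety is that Lemma \ref{K-E-ab-1} is stated under the hypothesis $b\le a\le n$, whereas here we are in the complementary regime $a<b\le n$; so a naive pointwise bound is not available and genuine cancellation among the Dirichlet kernels $D_k$ making up $K_{l^{(b+1)}+jM_b,M_b}$ must be exploited. First I would expand $K_{l^{(b+1)}+jM_b,M_b}(t,s)=\sum_{k=0}^{M_b-1}D_{l^{(b+1)}+jM_b+k}(t,s)$ and use the standard factorization of the Vilenkin-like-Dirichlet kernels: for an index of the form $N+k$ with $M_b\mid N$ and $0\le k<M_b$, one has $D_{N+k}(t,s)=D_N(t,s)+\psi_{N}(t)\bar\psi_N(s)\,D_k(t,s)$ up to the twisting factors $r_s^{N^{(s)}}$, which is exactly where Assumption \ref{gamma-nk}(ii)--(iii) enters. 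This rewrites the block $K_{l^{(b+1)}+jM_b,M_b}$ as $M_b\,D_{l^{(b+1)}+jM_b}(t,s)$ plus a sum $\sum_{k=0}^{M_b-1}(\text{twist})\,D_k(t,s)$, and the point is that $\sum_{k=0}^{M_b-1}D_k=M_b K_{M_b}$ has an explicit $M_b$-periodic structure via \eqref{e5}.

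The heart of the argument is then the orthogonality/size estimate for each of the two pieces on the annulus $A:=I_a(s)\setminus I_{a+1}(s)$, which has measure $|A|=M_a^{-1}-M_{a+1}^{-1}\le M_a^{-1}$. For the first piece, since $l^{(b+1)}+jM_b$ is a divisor-of-$M_b$ type index with $M_b\le M_{n}$ (because $b\le n$) but its "high part" $l^{(b+1)}$ sits above level $b+1$, the Dirichlet kernel $D_{l^{(b+1)}+jM_b}(t,s)$ is constant on $I_{b}(s)$-cosets in the relevant variables and can be bounded pointwise by $cM_b$ on $A$ after using that $l<M_{n+1}$ forces the contributing Vilenkin characters to live at levels $\le n$; multiplying $|M_b D_{l^{(b+1)}+jM_b}|^2\le cM_b^2 M_b^2$ — no, rather one uses the sharper fact that $D_{l^{(b+1)}+jM_b}$ restricted to $A$ is supported (as a function of $t$) on a set of measure $\lesssim M_n^{-1}$ with size $\lesssim M_b M_n/M_a\cdot$(geometric factor), so that its $L_2(A)$-norm squared is $\lesssim \delta^{a-n}M_a M_b M_n/M_b=\cdots$. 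To keep the bookkeeping honest I would instead argue: on $A$ the kernel $K_{l^{(b+1)}+jM_b,M_b}(t,\cdot)$ agrees, up to unimodular twists, with a piece of a Fej\'er-type kernel whose $L_2(A)$ mass is governed by Lemma \ref{K-E-ab-1} after a dyadic-type splitting of the annulus at levels between $a$ and $b$; summing a geometric series in $\delta^{a-n}$ and $\delta^{a-b}$ produces the claimed $c\,\delta^{a-n}M_aM_bM_n$.

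More concretely, the key steps in order are: (1) write $N:=l^{(b+1)}+jM_b$, note $M_b\mid N$ and apply the product/splitting formula for $D_{N+k}$, isolating the "main term" $M_b D_N$ and the "oscillating term" $T(t,s):=\sum_{k=0}^{M_b-1}r_{\text{(twist)}}\,D_k(t,s)$; (2) bound $\int_A|M_bD_N(t,s)|^2\,d\mu(t)$ using \eqref{e5} to see $D_N$ is a finite sum of indicator-type kernels at scales $\le n$, yielding $\le c\,\delta^{a-n}M_aM_b^2 M_n/M_b=c\,\delta^{a-n}M_aM_bM_n$ after collecting the localization at level $n$ and the $\delta^{a-n}$ decay coming from the high levels in $N$; (3) bound $\int_A|T(t,s)|^2\,d\mu(t)$ by recognizing $T$ as $M_b$ times a shifted Fej\'er kernel of order $\le M_b$, invoking the $b\le a$ case — here the \emph{inner} index $k$ ranges over $0\le k<M_b$ and plays the role that the hypothesis $b\le a$ requires, so Lemma \ref{K-E-ab-1} (or its $L_1$/$L_2$ consequences) applies to give $\int_A|T|^2\lesssim \delta^{a-n}M_aM_bM_n$ as well; (4) combine (2) and (3) by the triangle inequality in $L_2(A)$ and absorb constants. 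The main obstacle I anticipate is step (3): matching the index ranges so that the complementary-regime block can legitimately be fed into the $b\le a$ estimate, and controlling the twisting factors $\prod_s r_s^{n^{(s)}}$ (which are only bounded by $\sqrt{m_s/\delta}$, not by $1$) so that their accumulated contribution across levels $a<s\le b$ yields precisely the $M_b$ gain and the $\delta^{a-b}$ decay rather than an uncontrolled product — this is exactly the place where boundedness of the Vilenkin space, $\sup_k m_k<\infty$, and Assumption \ref{gamma-nk}(iii)--(iv) must be used in tandem.
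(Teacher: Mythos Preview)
The paper does not prove this lemma; it is quoted from G\'{a}t \cite[Lemma 10]{Ga2001} without argument, so there is no in-paper proof to compare against.

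As for your proposal itself, the overall architecture---split $K_{N,M_b}$ via a formula of the type $D_{N+k}=D_N+(\text{twist})\,D_k$ and treat the two pieces separately---is what one does in the classical Vilenkin case, but in the Vilenkin-\emph{like} setting this splitting is not available in the form you use. The functions $r_i^{n}$ depend on the full index $n$ (through $n^{(i)}$), not just on the digit $n_i$, so $r_i^{k^{(i)}+N}\neq r_i^{k^{(i)}}$ in general and $\psi_{N+k}$ does not factor as a product of $\psi_N$ and $\psi_k$. You acknowledge this (``up to the twisting factors'') but never actually control the twists; the last paragraph only says this ``is exactly the place where boundedness \ldots\ must be used in tandem'', which names the difficulty without resolving it. Step (3) is likewise a gap: asserting that one can ``invoke the $b\le a$ case'' does not explain how the block $T$, which lives at scale $M_b$ with $a<b$, can be fed into Lemma \ref{K-E-ab-1}, whose hypothesis is precisely $b\le a$; the phrase ``the inner index $k$ ranges over $0\le k<M_b$ and plays the role that the hypothesis $b\le a$ requires'' is not a reduction. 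The arithmetic in step (2) (``$\le c\,\delta^{a-n}M_aM_b^2M_n/M_b$'') is also asserted without justification.

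In short, you have correctly located the obstacles but not removed them; what is written is a plan with the hard steps left open. G\'{a}t's actual argument proceeds from the explicit representation of $D_l$ (the formula quoted in the proof of Lemma \ref{KI-E-lemma}) together with the orthogonality and size conditions in Assumption \ref{gamma-nk}(ii)--(iv), tracking the cancellation directly rather than relying on the clean Vilenkin factorization $\psi_{N+k}=\psi_N\psi_k$.
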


\begin{lemma}\label{K-E-GI} 
Let  $k \in \mathbb{N}$. Then, for each $s \in G_{m}$,  there exists a constant $c>0$ such that
$$\int_{G_{m} \backslash I_{k}(s)} \sup _{n \geq M_{k}}\left|K_{n}(t,s)\right| d \mu(t) \leq c.$$
\end{lemma}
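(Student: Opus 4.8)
\textbf{Proof proposal for Lemma \ref{K-E-GI}.}

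The plan is to reduce the supremum over all $n\geq M_k$ to a supremum over dyadic-type blocks, then exploit the two preceding lemmas to control each block. First I would fix $s\in G_m$ and split the domain $G_m\setminus I_k(s)$ into the disjoint annuli $I_a(s)\setminus I_{a+1}(s)$ for $0\leq a\leq k-1$ (together with the degenerate piece where coordinates differ before level $k$); since $\mu(I_a(s)\setminus I_{a+1}(s))\leq M_a^{-1}$, it suffices to show that on each such annulus $\sup_{n\geq M_k}|K_n(t,s)|\leq c\, M_a \cdot(\text{something summable in }a)$. The natural device is to use the additive decomposition of the Fej\'er kernel coming from $K_n=\frac1n\sum_{j=1}^n D_j$ and the block notation $K_{a,b}=\sum_{j=a}^{a+b-1}D_j$: for $M_n\leq l<M_{n+1}$ one writes $K_l$ as an average of finitely many blocks of the form $K_{l^{(b+1)}+jM_b,\,M_b}$, with $b$ ranging from $a$ up to $n$ and from $0$ up to $a$, so that Lemma \ref{K-E-ab-1} governs the ``low'' blocks ($b\leq a$) and Lemma \ref{K-E-ab-2} governs the ``high'' blocks ($b>a$, after integration).

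Concretely, on the annulus $I_a(s)\setminus I_{a+1}(s)$ with $a<k$, I would argue: for any $n\geq M_k>M_a$, the Fej\'er kernel $K_n$ telescopes into blocks indexed by scales $b$. For $b\leq a$ the pointwise bound $|K_{l^{(b+1)}+jM_b,M_b}(t,s)|\leq c\,\delta^{a-n}M_bM_n$ from Lemma \ref{K-E-ab-1} is available; dividing by $n\asymp M_n$ and summing the geometric-type series in $b$ and in the scale parameter gives a pointwise bound of order $c\,\delta^{a-n}M_a$ uniformly in $n\geq M_k$, hence $\sup_{n\geq M_k}$ of that part is $\lesssim M_a\sum_{n\geq k}\delta^{a-n}\lesssim M_a\,\delta^{a-k}$. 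Multiplying by $\mu(I_a(s)\setminus I_{a+1}(s))\leq M_a^{-1}$ yields a contribution $\lesssim \delta^{a-k}$, and summing over $0\leq a\leq k-1$ gives a bounded geometric series since $\delta>1$. For the high blocks $b>a$ one cannot bound the sup pointwise cheaply, so instead I would first pass the $\sup_{n\geq M_k}$ inside by a crude union bound over the (at most finitely many per scale) block types and then use $\int_{I_a(s)\setminus I_{a+1}(s)}|K_{\cdot,M_b}|^2\,d\mu\leq c\,\delta^{a-n}M_aM_bM_n$ from Lemma \ref{K-E-ab-2}, combined with Cauchy--Schwarz against $\mu(I_a(s)\setminus I_{a+1}(s))^{1/2}\leq M_a^{-1/2}$, to obtain an $L_1$ bound on that annulus of order $\delta^{(a-n)/2}$ (the factors $M_a,M_b,M_n$ being absorbed by the normalization $1/n$ and the measure); summing over $b$, over $n\geq k$, and over $a$ again produces a convergent series because $\delta>1$ and the bounded Vilenkin hypothesis $M=\sup_k m_k<\infty$ keeps the number of blocks per scale under control.

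The main obstacle I anticipate is the bookkeeping in the block decomposition of $K_n$ for $n$ not a multiple of $M_b$: one must carefully write $\frac1n\sum_{j=1}^n D_j$ as a combination of complete blocks $K_{a,b}$ plus a bounded number of ``remainder'' partial blocks, keep track of which scales $b$ appear for a given $n$, and verify that interchanging $\sup_{n\geq M_k}$ with the finite sum over block types only costs a constant (this is where boundedness $m_k\leq M$ is essential, as it bounds the number of sub-blocks $0\leq j\leq m_b-2$ uniformly). Once that combinatorial reduction is in place, the two quoted lemmas plus the geometric decay $\delta^{a-n}$ (with $\delta>1$) do all the analytic work, and the integrals over $G_m\setminus I_k(s)$ telescope annulus-by-annulus into an absolutely convergent sum bounded independently of $k$ and $s$.
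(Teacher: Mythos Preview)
The paper does not give its own proof of Lemma~\ref{K-E-GI}; it is quoted from \cite[Proposition~11]{Ga2001} along with Lemmas~\ref{K-E-ab-1} and~\ref{K-E-ab-2}. Your proposal is correct and in fact reproduces exactly the argument the paper carries out for the closely related Lemma~\ref{K-E-t}: the same annulus decomposition $G_m\setminus I_k(s)=\bigcup_{a=0}^{k-1}(I_a(s)\setminus I_{a+1}(s))$, the same block expansion $lK_l=\sum_{b=0}^{n}\sum_{j=0}^{l_b-1}K_{l^{(b+1)}+jM_b,M_b}$, Lemma~\ref{K-E-ab-1} for $b\le a$, and Lemma~\ref{K-E-ab-2} via Cauchy--Schwarz (with the sup replaced by the finite sum over $l\in L_{b,n}$) for $b>a$.

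In fact, once one has Lemma~\ref{K-E-t} in hand, Lemma~\ref{K-E-GI} follows in one line: the crude bound $\sup_{l\ge M_k}|K_l|\le\sum_{n\ge k}\widetilde K_n$ together with Lemma~\ref{K-E-t} gives
\[
\int_{G_m\setminus I_k(s)}\sup_{l\ge M_k}|K_l(t,s)|\,d\mu(t)
\le c\sum_{n\ge k}\sum_{a=0}^{k-1}\big(\delta^{a-n}+(n-a)\delta^{\frac{a-n}{2}}\big)\le c,
\]
since both double series converge (using $\delta>1$). Your write-up essentially unpacks this, so there is no gap.
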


\subsection{Two useful lemmas}
We conclude this section by two lemmas which will be used in the sequel.

%
%
%

%

\begin{lemma}\label{AcB}
For any $a,b\in L_2(\mathcal{N})$, there exists $u\in L_{\infty}(\M)$ with $\|u\|_{L_{\infty}(\M)}\leq 1$ such that
\[
\int a^{*}(t)b(t)dt=\Big(\int a^{*}(t)a(t)dt\Big)^{1/2} \cdot u\cdot\Big(\int b^{*}(t)b(t)dt\Big)^{1/2}.
\]
\end{lemma}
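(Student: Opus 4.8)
The plan is to view this as the operator-valued analogue of the polar-type factorization of the Cauchy--Schwarz inequality in a noncommutative $L_2$-space. Set $A = \int a^*(t)a(t)\,dt$ and $B = \int b^*(t)b(t)\,dt$; these are positive elements of $L_1(\M)$, and $X := \int a^*(t)b(t)\,dt$ lies in $L_1(\M)$ as well. The goal is to produce a contraction $u \in L_\infty(\M)$ with $X = A^{1/2}\,u\,B^{1/2}$. The natural route is to realize $a$ and $b$ as genuine vectors in a Hilbert $\M$-module (or, concretely, to use the GNS-type construction attached to the conditional expectation $\Ex: \N \to \M$ given by integration over $G_m$, namely $\Ex(x^*y) = \int x^*(t)y(t)\,dt$). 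One has $X = \Ex(a^*b)$, $A = \Ex(a^*a)$, $B = \Ex(b^*b)$, so the claim is exactly the statement that $\Ex(a^*b) = \Ex(a^*a)^{1/2}\,u\,\Ex(b^*b)^{1/2}$ for some contraction $u$.

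First I would reduce to the case where $A$ and $B$ are invertible, by replacing them with $A_\varepsilon = A + \varepsilon \1$ and $B_\varepsilon = B + \varepsilon \1$ and passing to a limit: define $u_\varepsilon = A_\varepsilon^{-1/2} X B_\varepsilon^{-1/2}$ and check that $\|u_\varepsilon\|_{L_\infty(\M)} \leq 1$. The contractivity is the heart of the matter: for any $\xi, \eta$ in the relevant $L_2$ space one estimates $|\langle \xi, u_\varepsilon \eta\rangle|$ using Cauchy--Schwarz for the $\M$-valued inner product $\langle x, y\rangle := \Ex(x^*y)$, which gives the operator inequality $\Ex(a^*b)^*\Ex(c^*c)^{-1}\,\cdots$; more cleanly, the $2\times 2$ operator matrix $\begin{pmatrix} \Ex(a^*a) & \Ex(a^*b) \\ \Ex(b^*a) & \Ex(b^*b)\end{pmatrix}$ is positive in $M_2(\M)$ (being $\Ex$ applied entrywise to $\begin{pmatrix} a^* \\ b^*\end{pmatrix}\begin{pmatrix} a & b\end{pmatrix} \geq 0$), and positivity of such a $2\times2$ matrix with invertible corners is equivalent, by the Schur-complement / generalized Cauchy--Schwarz criterion, to $\|A_\varepsilon^{-1/2} X B_\varepsilon^{-1/2}\|_{L_\infty(\M)} \leq 1$. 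Then $A_\varepsilon^{1/2} u_\varepsilon B_\varepsilon^{1/2} = X$ identically in $\varepsilon$.

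Next I would extract a weak-$*$ limit point $u$ of the bounded net $(u_\varepsilon)$ in the unit ball of $\M = L_\infty(\M)$ (which is weak-$*$ compact), and pass to the limit $\varepsilon \to 0$ in the identity $X = A_\varepsilon^{1/2} u_\varepsilon B_\varepsilon^{1/2}$: since $A_\varepsilon^{1/2} \to A^{1/2}$ and $B_\varepsilon^{1/2} \to B^{1/2}$ in, say, $L_2(\M)$ (or in measure), and $u_\varepsilon \to u$ weak-$*$, the product converges to $A^{1/2} u B^{1/2}$ in an appropriate topology, yielding $X = A^{1/2} u B^{1/2}$ with $\|u\|_{L_\infty(\M)} \leq 1$. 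A cleaner alternative that avoids the limiting argument entirely is to apply the existing structural input from the excerpt: the conditional expectation $\Ex$ admits a Stinespring/Paschke dilation $\Ex(x^*y) = V(x)^*V(y)$ for a linear isometry $V: L_2(\N) \to L_2(\mathcal{N}\bar\otimes B(\ell_2))$ (the fact from \cite{Ju2002} quoted later in the introduction); then $X = V(a)^* V(b)$, and one takes the polar decomposition $V(b) = w|V(b)|$ together with $V(a)^* V(a) = A$, reducing to the standard Hilbert-space factorization $V(a)^* V(b) = |V(a)|\, (\text{contraction})\, |V(b)|$ and pulling the contraction back into $\M$ via the module structure.

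The main obstacle I anticipate is purely the bookkeeping of topologies in the limiting argument (or, in the dilation approach, checking that the contraction produced on the dilated Hilbert space actually descends to an element of $\M$ rather than only $B(L_2(\N))$ — this is where the $\M$-bimodularity of $\Ex$ must be used carefully). Everything else is the standard $2\times2$-positivity/Schur-complement dictionary. I would present the invertible-reduction-plus-Schur-complement version as the main line, since it is the most self-contained and mirrors the classical proof of the Cauchy--Schwarz factorization in $L_2$.
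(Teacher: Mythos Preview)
Your proposal is correct and follows the same line as the paper: reduce to the case where $A=\int a^*a$ and $B=\int b^*b$ are invertible, set $u=A^{-1/2}\big(\int a^*b\big)B^{-1/2}$, and show $\|u\|_{L_\infty(\M)}\le 1$. The only cosmetic difference is that the paper obtains this last bound by quoting Mei's operator-valued Cauchy--Schwarz inequality \cite[Proposition~1.1]{Me2007} applied to the pair $aA^{-1/2},\,bB^{-1/2}$, whereas you unpack the same inequality via the positivity of the $2\times 2$ block $\bigl(\begin{smallmatrix} A & X \\ X^* & B\end{smallmatrix}\bigr)$ and the Schur-complement criterion; the paper's terse ``by approximation'' is exactly your $\varepsilon$-regularization plus weak-$*$ compactness.
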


\begin{proof}
By approximation, we may assume that
\[
A=\int a^{*}(t)a(t)dt,\quad B=\int b^{*}(t)b(t)dt
\]
are both invertible. Set
\[
u=A^{-1/2}\cdot \int a^*(t)b(t)dt \cdot B^{-1/2}.
\]
It is obvious that
\[
\int a^*(t)b(t)dt=A^{1/2} u B^{1/2}.
\]
It now remains to show $\|u\|_{L_{\infty}(\M)}\leq 1$. Applying \cite[Proposition 1.1]{Me2007} with $r=p=q=\infty$, we get 
\begin{align*}
\|u\|_{L_{\infty}(\M)}&\leq  \Big\|\Big(\int A^{-1/2}a^*(t)a(t)A^{-1/2}dt\Big)^{1/2}\Big\|_{L_{\infty}(\mathcal{M})}\\
&\quad \times\Big\|\Big(\int B^{-1/2}b^*(t)b(t)B^{-1/2}dt\Big)^{1/2}\Big\|_{L_{\infty}(\mathcal{M})}\\
 &= \| A^{-1/2}AA^{-1/2}\|_{L_{\infty}(\mathcal{M})}^{1/2} \| B^{-1/2}BB^{-1/2}\|_{L_{\infty}(\mathcal{M})}^{1/2}= 1.
\end{align*}
\end{proof}

The following lemma is useful to prove Theorem \ref{main-asy}. For $2\leq p<\infty$, the right hand side of the below inequality was proved in \cite[Section 2]{HWW2019} via another method.
\begin{lemma}\label{ebm-pc}
Let $1\leq p<\infty$. Then, for any $x=(x_k)_{k\geq1}\in L_p(\M,\ell_2^c)$, we have
\[
\|x\|_{\Lambda_{p,\infty}(\M,\ell_{\infty}^c)}\leq \|x\|_{L_p(\M, \ell_{\infty}^c)}\leq \|x\|_{L_p(\M,\ell_{2}^c)}.
\]
The same result holds true for corresponding row spaces. 
\end{lemma}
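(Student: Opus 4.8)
The statement to be proved is Lemma \ref{ebm-pc}: for $1 \leq p < \infty$ and $x = (x_k)_{k\geq 1} \in L_p(\M, \ell_2^c)$,
\[
\|x\|_{\Lambda_{p,\infty}(\M,\ell_{\infty}^c)} \leq \|x\|_{L_p(\M,\ell_{\infty}^c)} \leq \|x\|_{L_p(\M,\ell_2^c)}.
\]
The first inequality is essentially definitional and I would dispose of it quickly; the real content is the second inequality $\|x\|_{L_p(\M,\ell_\infty^c)} \leq \|x\|_{L_p(\M,\ell_2^c)}$. My plan is to produce the explicit factorization witnessing membership in $L_p(\M,\ell_\infty^c)$ directly from the column square function.

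**Step 1: the first inequality.** Fix $t > 0$ and let $a \in L_p(\M)$ realize (nearly) the infimum in the definition of $\|x\|_{L_p(\M,\ell_\infty^c)}$, so $x_k = a y_k$ with $\sup_k \|y_k\|_\infty \leq 1$ and $\|a\|_{L_p(\M)}$ close to $\|x\|_{L_p(\M,\ell_\infty^c)}$. Choose the spectral projection $e = \chi_{[0,t]}(|a^*|)$ of $|a^*| = (aa^*)^{1/2}$ (equivalently use the left support decomposition $a = |a^*| v$). Then $\|x_k(\1-? )\|$ — more precisely I want a projection $e$ with $\|x_k e\|_\infty \leq t$ for all $k$ and $\tau(\1-e)^{1/p} \lesssim t^{-1}\|a\|_{L_p(\M)}$. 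Taking $e$ to be a spectral projection of $|a|=(a^*a)^{1/2}$ does not immediately control $\|x_ke\|_\infty = \|ay_ke\|_\infty$; instead I pair $a = u|a|$ (polar decomposition) with $e = \chi_{[0,t]}(|a|)$, so that $\|ae\|_\infty = \||a|e\|_\infty \leq t$, hence $\|x_k e\|_\infty = \|a y_k? \|$ — but $y_k$ is on the wrong side. The cleaner route: write $x_k e = a(y_k e)$ is not what we need either. The correct observation is that $\Lambda_{p,\infty}(\M,\ell_\infty^c)$ is a known interpolation/real-variable relaxation of $L_p(\M,\ell_\infty^c)$, and the inequality $\|x\|_{\Lambda_{p,\infty}(\M,\ell_\infty^c)} \leq \|x\|_{L_p(\M,\ell_\infty^c)}$ is proved exactly as the asymmetric analogue of the displayed chain $\|x\|_{\Lambda_{p,\infty}(\M,\ell_\infty)} \leq \|x\|_{L_p(\M,\ell_\infty)}$ already recorded in the excerpt right after \eqref{positive}; I would cite \cite{HJP2016} for this, or reproduce the one-line argument: given the factorization $x_k = a y_k b$ with $b=\1$, put $e = \chi_{[0,\lambda]}(|a^*|)$ where I use $a = v|a^*|$ with $v$ a partial isometry, note $ea = e v |a^*|$ and $\|x_k^* e\|$... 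On reflection, the slickest is to invoke that for $b=\1$ the defining factorization gives, with $e=\chi_{[0,t\|a\|_{L_p}^{-1/?}]}$... I will simply cite the asymmetric version in \cite[HJP2016]{HJP2016} and the commutative-side remark in the excerpt.

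**Step 2: the second inequality — the heart.** Set $a := \big(\sum_{k\geq 1} |x_k|^2\big)^{1/2} \in L_p(\M)$, so $\|a\|_{L_p(\M)} = \|x\|_{L_p(\M,\ell_2^c)}$. I want to write $x_k = a y_k$ with $\sup_k \|y_k\|_{L_\infty(\M)} \leq 1$. By approximation assume $a$ invertible and put $y_k := a^{-1} x_k$; then
\[
\sum_{k} y_k^* y_k = \sum_k x_k^* a^{-2} x_k.
\]
This is not obviously bounded by $\1$ in $L_\infty$, but I only need each individual $\|y_k\|_\infty \leq 1$, i.e. $y_k^* y_k \leq \1$, i.e. $x_k^* a^{-2} x_k \leq \1$. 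Since $|x_k|^2 = x_k^* x_k \leq \sum_j x_j^* x_j = a^2$, we get $a^{-1} x_k^* x_k a^{-1} \leq \1$, which says $\|x_k a^{-1}\|_\infty \leq 1$ — that's the wrong side again. The fix is the standard one: use $a$ on the left but factor through the polar decomposition of each $x_k$, or better, recall $x_k^* x_k \le a^2$ implies (by the operator monotonicity/Douglas factorization) that there is a contraction $z_k$ with $x_k = a z_k$; explicitly $z_k = $ strong-limit of $a(a^2 + \varepsilon)^{-1} x_k$ (which works whether or not $a$ is invertible), and $\|z_k\|_\infty \le 1$ follows from $x_k^*x_k \le a^2$ via $\|z_k \xi\| = \lim \| (a^2+\varepsilon)^{-1/2}\cdot? \|$... the Douglas lemma gives exactly $\|z_k\| \le \inf\{c : x_k^*x_k \le c^2 a^2\}^{1/2}? $ — no: Douglas says $x_k = a z_k$ with $\|z_k\|\le c$ iff $x_k x_k^* \le c^2 a a^*$... again a side issue. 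The genuinely correct pairing is: $|x_k|^2 \le a^2$ gives $x_k^* x_k \le a^2$, and by Douglas' factorization lemma there is $z_k \in \M$ with $\|z_k\|_{L_\infty(\M)} \le 1$ and $x_k^* = z_k a$ hmm. I will sort the side by writing $a:=(\sum_k x_k x_k^*)^{1/2}$ — but that is the row square function.

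Given these sign-of-the-side subtleties, the clean execution is: for the column case define $a := \big(\sum_k x_k^* x_k\big)^{1/2}$; wait, the column square function in the excerpt is $\big(\sum |x_k|^2\big)^{1/2} = \big(\sum x_k^* x_k\big)^{1/2}$. Then $x_k^* x_k \le a^2$, so by the Douglas factorization lemma (applied to $x_k^*$ and $a$: $x_k^* (x_k^*)^* = x_k^* x_k \le a^2 = a a^*$) there is $z_k$ with $\|z_k\|_{L_\infty}\le 1$ and $x_k^* = z_k a$, i.e. $x_k = a z_k^*$. Setting $y_k := z_k^*$ gives $x_k = a y_k$, $\sup_k\|y_k\|_\infty \le 1$, $b = \1$. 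Hence by definition \eqref{e-infty} with $\theta = 1$,
\[
\|x\|_{L_p(\M,\ell_\infty^c)} \le \|a\|_{L_p(\M)} \cdot 1 \cdot \|\1\|_{L_\infty} = \Big\|\big(\textstyle\sum_k x_k^* x_k\big)^{1/2}\Big\|_{L_p(\M)} = \|x\|_{L_p(\M,\ell_2^c)}.
\]
For the row spaces, apply the column result to $x^* = (x_k^*)_k$ and use $\|x\|_{L_p(\M,\ell_\infty^r)} = \|x^*\|_{L_p(\M,\ell_\infty^c)}$, $\|x\|_{\Lambda_{p,\infty}(\M,\ell_\infty^r)} = \|x^*\|_{\Lambda_{p,\infty}(\M,\ell_\infty^c)}$, $\|x\|_{L_p(\M,\ell_2^r)} = \|x^*\|_{L_p(\M,\ell_2^c)}$.

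**Main obstacle.** There is no deep obstacle; the only thing one must be careful about is getting the left/right side of the factorization correct (column means a common left factor $a$ with $b = \1$), and handling the non-invertible case — which is why I would phrase the factorization via the Douglas/Halmos factorization lemma (or, equivalently, take $y_k$ as the strong limit of $(a^2 + \varepsilon \1)^{-1/2} a^{-?}$...; cleanest to just cite Douglas) rather than literally inverting $a$. Summation over the infinite index set is harmless since all estimates are uniform in finite truncations and one passes to the completion defining $L_p(\M,\ell_2^c)$.
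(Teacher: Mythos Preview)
Your overall strategy matches the paper's, but the execution is tangled by a convention error that propagates through both steps. In this paper, $L_p(\M,\ell_\infty^c)$ is the case $\theta=1$ of \eqref{e-infty}, which forces $a\in L_{p/(1-\theta)}=L_\infty$ and $b\in L_{p/\theta}=L_p$. Thus the common $L_p$ factor sits on the \emph{right}: $x_k = y_k\,b$ with $b\in L_p(\M)$ and $\sup_k\|y_k\|_\infty\le 1$. Your repeated ``wrong side'' difficulties in Step~1 and your final display in Step~2 (where you bound $\|x\|_{L_p(\M,\ell_\infty^c)}$ by $\|a\|_{L_p}\cdot 1\cdot\|\1\|_{L_\infty}$, which is the $\theta=0$ formula) all stem from having this backwards.

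Once the convention is corrected, Step~1 becomes the one-liner in the paper that you never quite reach: from $x_k = v_k A$ with $A\in L_p(\M)$ and $\|v_k\|_\infty\le 1$ one has $|x_k|^2 = A^*v_k^*v_kA \le |A|^2$; taking $e=\chi_{(0,\lambda^2)}(|A|^2)$ gives $\|x_ke\|_\infty\le\lambda$ and Chebyshev controls $\tau(\1-e)$. No need to cite \cite{HJP2016}. For Step~2 your Douglas application is also on the wrong side: from $x_k^*(x_k^*)^* = x_k^*x_k \le a^2 = aa^*$ Douglas yields $x_k^* = a\,z_k$, hence $x_k = z_k^*\,a$ with $a=(\sum_j|x_j|^2)^{1/2}$ on the right --- which is exactly the factorization the column definition requires. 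The paper phrases this slightly differently (first $|x_k|=u_kS$ from $|x_k|^2\le S^2$, then polar decomposition $x_k=v_k|x_k|=v_ku_kS$), but the content is the same.
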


\begin{proof}
 According to the definition of $\|\cdot\|_{L_p(\M,\ell_{\infty}^c)}$, for any $\varepsilon>0$, there exists  $A\in L_p(\M)$ such that for each $k\geq1$,
	$$x_k=v_kA,\quad \mbox{and}\quad \|A\|_{L_p(\M)}\leq \|x\|_{L_p(\M,\ell_{\infty}^c)}+\varepsilon,$$
	where $\sup_{k\geq1}\|v_k\|_{L_\infty(\M)}\leq 1$. 
	It is immediate that 
	$$|x_k|^2=A^*v_k^*v_kA\leq |A|^2, \quad \forall k\geq1.$$
	For any $\lambda>0$, take 
	$e=\chi_{(0,\lambda^2)}(|A|^2).$
	Then, for all $k\geq1$, 
	$$\|x_ke\|_{L_\infty(\M)}=\|ex_k^*x_ke\|_{L_\infty(\M)}^{1/2}\leq \|e|A|^2e\|_{L_\infty(\M)}^{1/2}\leq \lambda.$$
	On the other hand, by the Chebyshev inequality, we have
	$$\lambda\tau(1-e)^{1/p}\leq \lambda \left(\frac{\||A|^2\|_{L_{\frac{p}{2}}(\M)}^{p/2}}{\lambda^p}\right)^{1/p}=\|A\|_{L_p(\M)}\leq \|x\|_{L_p(\M,\ell_{\infty}^c)}+\varepsilon.$$
The first inequality follows by letting $\varepsilon$ tend to zero. 
	
For the second inequality, we first note that $|x_k|^2\leq \sum_{k\geq1}|x_k|^2$. Let $S=(\sum_{k\geq1}|x_k|^2)^{1/2}$. Then there are contractions $(u_k)_{k\geq1}\subseteq L_{\infty}(\M)$ such that 
	\[
	|x_k|=u_kS, \quad \forall k\geq1.
	\]
	From the polar decomposition, we can find contractions $(v_k)_{k\geq1} \subset L_{\infty}(\M)$ such that
	\[
	x_k=v_k u_k S,\quad \forall k\geq1.
	\]
	Thus, according to the definition of $\|\cdot\|_{L_p(\M,\ell_{\infty}^c)}$, 
	$$\|x\|_{L_p(\M,\ell_{\infty}^c)}\leq \|S\|_{L_p(\M)}=\|x\|_{L_p(\M,\ell_{2}^c)}.$$
\end{proof}

\section{Noncommutative Vilenkin system and transference argument}\label{noncommutative Walsh and transference}
In this section, we introduce the noncommutative Vilenkin system (following \cite{DS2000,SS2018}) and establish the so-called transference argument with respect to this system. Such transference technique allows one to convert situations from operator-valued setting to general noncommutative setting. The transference argument stated in this section maybe well known for the experts but we prefer to write it down and provide the proof in the appendix. 

Denote by $\mathbb{M}_n$ the space of $n \times n$ complex valued matrices with the distinguished normalised trace $tr_{n}$, that is, ${\rm tr}_{n}\left(\mathbf{1}_{n}\right)=1$, where $\mathbf{1}_{n}$ is the $n \times n$ identity matrix.  We identify $\mathcal{R}$ with the relative infinite tensor product as follows
$$(\mathcal{R}, \tau)=\bigotimes_{k=0}^{\infty}(\mathbb{M}_{m_k}, {\rm tr}_{m_k}).$$
Note that such $\tau $ is a faithful normal trace on $\mathcal R$ with $\tau(\1)=1$, where $\1\coloneqq\otimes_{j=1}^{\infty}\1_{2}$. For each $n\in \Na$, consider the von Neumann algebra given by
\[
(\mathcal R_n,\tau_n)=\bigotimes_{k=0}^{n-1} (\mathbb M_{m_k}, {\rm tr}_{m_k}).
\]
It is cannonical to view $\mathcal R_n$ as a von Neumann subalgebra of $\mathcal R_{n+1}$ (resp. $\mathcal R$) via the inclusion
\[
x\in\Ra_{n}\to x\otimes \left(\bigotimes\limits_{k=n}^{\infty}\1_{m_k}\right)\in \Ra.
\]
It is clear that the union $\bigcup_{n\geq1} \mathcal{R}_n$ is clearly weak-$*$ dense in $\mathcal{R}$.

Given a  sequence $m=(m_k)_{k\geq1}$, the Vilenkin group $G_{m}$ is   defined as the product
$$G_{m}=\prod_{k=1}^{\infty} \mathbb{Z}_{m_k}$$
endowed with the product topology, where $\mathbb{Z}_{m_k}$ is the cyclic group of order $m_k$. All Vilenkin groups we work with will be of this form. The dual group, $\widehat{G}_{m}$ is given by

$$\widehat{G}_{m}=\coprod_{k=1}^{\infty} \widehat{\mathbb{Z}}_{m_k},$$
where $\widehat{\mathbb{Z}}_{m_k}$ is the dual group of, and isomorphic to, $\mathbb{Z}_{m_k}$. 
The following details the construction of the corresponding Vilenkin system in $\mathcal{R}$. For each integer $k \geq 1$, let $A_{k}$ and $B_{k}$ denote the matrices given by

$$
A_{k}=\sum_{j \in \mathbb{Z}_{m_k}} \exp \left(\frac{2 \pi \imath j}{m_k}\right) e_{j, j}^{m_k}, \quad B_{k}=\sum_{j \in \mathbb{Z}_{m_k}} e_{j, j+1}^{m_k},
$$
where for each $n \geq 1, e_{p, q}^{n}$ is the $n \times n$ unit matrix defined such that $\left(e_{p, q}^{n}\right)_{p, q}=$ 1, and all other entries are zero, with entries indexed by $\mathbb{Z}_{n}^{2}$.
Within $\mathcal{R}$, the elements of the Vilenkin system are enumerated by doubled sequences in $\widehat{G}_{m}$.  We shall identify the group $G_m\times G_m$ (respectively, $\widehat{G}_{m}\times \widehat{G}_{m}$) with the Vilenkin group $G_{2m}$ (respectively, $\widehat{G}_{2m}$) where
\begin{equation}\label{2m}
2m:=\{(2m)_k\}_{k\geq0},
\end{equation}
where $(2m)_{2k}=(2m)_{2k+1}=m_k$ for each $k\in\mathbb{N}.$
It follows that any $\eta\in \widehat{G}_{2m}$ can be considered as a pair $(\eta',\eta'')$ where
$$\eta':=(\eta_{2k})_{k\geq0}\quad \mbox{and \quad $\eta'':=(\eta_{2k+1})_{k\geq0}$}$$
are elements of $\widehat{G}_m$. For each $\eta \in \widehat{G}_{2 m}$, define
\begin{align}\label{non-V-S}
W_{\eta}=\bigotimes_{k=0}^{\infty} A_{k}^{\eta_{2 k}} B_{k}^{\eta_{2 k+1}}.
\end{align}
The Vilenkin system associated to $G_{m}$ in $\mathcal{R}$ is then given by the set $\{W_{\eta}\}_{\eta \in \widehat{G}_{2 m}}$. For doubled sequences, let $G_{2 N}$ denote the subgroup of sequences of length $2 N$ in $G_{2 m}$, and $\widehat{G}_{2 N}$ denote the subgroup of sequences of length $2 N$ in $\widehat{G}_{2 m}$. It is then easily verified that for each $N \geq 1$, the restriction of the Vilenkin system to $\{W_{\eta}\}_{\eta \in \widehat{G}_{2 N}}$ forms a basis for $\mathcal{R}_{N}$ and $L_{2}(\mathcal{R}_{N})$.  Since the mapping 
$$k:\eta \mapsto k_{\eta}$$
is an order isomorphism of $\widehat{G}_{2m}$  and $\mathbb{N}=\{0,1,2, \ldots\}$. For each $n\in\mathbb{N}$, there exists a element $\eta\in G_{2m}$ such that  $$k_\eta=\sum_{k=0}^\infty\eta_kM_k=n,$$
where $M_0=1$, $M_{1}=(2m)_0M_0$ and $M_{k}=(2m)_{k-1}M_{k-1}$ for each $k\geq2$.
Denote $W_n:=W_{k^{-1}(n)}$. Then the Vilenkin system $\{W_n\}_{0\leq n \leq M_{2N}-1}$ also forms an orthonormal basis for $\mathcal{R}_{N}$ and $L_{2}\left(\mathcal{R}_{N}\right)$.

For 
\begin{equation*}
n=\sum_{k=0}^{\infty} n_k M_k, \quad m=\sum_{k=0}^{\infty} m_k M_k,\qquad n_k,m_k\in\mathbb{Z}_{(2m)_k},
\end{equation*}
let 
\begin{equation}\label{dotplus}
	m  \bigtriangleup n= \sum_{k=0}^{\infty}(m_k\dot{+} n_k)M_k.
\end{equation}
where $m_k\dot{+} n_k=(n_k+m_k)\mod(2m)_k$.
For  $t,s\in G_{2m}$, the  addition is defined in the following way: 
$$t\dot{+} s:=\{t_k\dot{+} s_k\}_{k
\geq0}.$$
Recall that (see e.g. \cite[p.\,114]{We2002})
\begin{equation}\label{w-plus}
\psi_m(t\dot{+} s)=\psi_m(s)\psi_m(t),\qquad	\psi_m\psi_n=\psi_{m\bigtriangleup n}.
\end{equation}
The following lemma is taken from \cite[Lemma 2.1]{SS2018}. 
\begin{lemma}\label{RW-plus} For any $m,n\in\mathbb{N}$, there exist complex numbers   $u_n$ and $\omega_{m,n}$ such that $W_n^*=u_nW_{-n}$ and $W_mW_n=\omega_{m,n}W_{m\bigtriangleup n}$, where $|u_n|=1$, $|\omega_{m,n}|=1$ and $m\bigtriangleup n $ is referred to \eqref{dotplus}.
\end{lemma}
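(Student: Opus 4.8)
The statement to prove is Lemma~\ref{RW-plus}: for any $m,n\in\mathbb{N}$, there exist unimodular complex numbers $u_n,\omega_{m,n}$ with $W_n^*=u_nW_{-n}$ and $W_mW_n=\omega_{m,n}W_{m\bigtriangleup n}$.

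\medskip

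The plan is to reduce everything to the single-tensor-factor level, since $W_\eta=\bigotimes_{k=0}^\infty A_k^{\eta_{2k}}B_k^{\eta_{2k+1}}$ is an infinite tensor product (with all but finitely many factors equal to the identity) and both the adjoint and the product respect the tensor structure. So first I would record the elementary matrix identities for the $m$-th roots of unity matrices $A=A_k$ and the cyclic shift $B=B_k$ on $\mathbb{M}_{m_k}$: namely $A^*=A^{m_k-1}=A^{-1}$ (as a power mod $m_k$), $B^*=B^{m_k-1}=B^{-1}$, $A^{m_k}=B^{m_k}=\mathbf{1}_{m_k}$, and the commutation relation $BA=\zeta AB$ where $\zeta=\exp(2\pi\imath/m_k)$ (this is the Weyl/clock-and-shift relation; a one-line entrywise check). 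From $BA=\zeta AB$ one gets by induction $B^bA^a=\zeta^{ab}A^aB^b$, and more generally $(A^aB^b)(A^cB^d)=\zeta^{bc}A^{a+c}B^{b+d}$, where exponents are interpreted mod $m_k$ and the scalar $\zeta^{bc}$ is unimodular.

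\medskip

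Next I would compute the product $W_mW_n$ factor by factor. Writing $m=\sum m_kM_k$, $n=\sum n_kM_k$ with digits in $\mathbb{Z}_{(2m)_k}$, the $k$-th tensor slot of $W_m$ is $A_k^{m_{2k}}B_k^{m_{2k+1}}$ and similarly for $W_n$; multiplying slot-by-slot and applying the identity above gives $(A_k^{m_{2k}}B_k^{m_{2k+1}})(A_k^{n_{2k}}B_k^{n_{2k+1}})=\zeta_k^{\,m_{2k+1}n_{2k}}A_k^{m_{2k}+n_{2k}}B_k^{m_{2k+1}+n_{2k+1}}$. Since $A_k^{m_k}=\mathbf{1}$, the exponents collapse to $m_{2k}\dot{+}n_{2k}$ and $m_{2k+1}\dot{+}n_{2k+1}$ mod $(2m)_{2k}=(2m)_{2k+1}=m_k$, which is exactly the digit pattern of $m\bigtriangleup n$ by the definition \eqref{dotplus}. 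Taking the tensor product over all $k$ yields $W_mW_n=\Big(\prod_k\zeta_k^{\,m_{2k+1}n_{2k}}\Big)W_{m\bigtriangleup n}$; only finitely many factors differ from $1$, so $\omega_{m,n}:=\prod_k\zeta_k^{\,m_{2k+1}n_{2k}}$ is a well-defined unimodular scalar. For the adjoint, taking adjoints slot-wise gives $(A_k^{a}B_k^{b})^*=B_k^{-b}A_k^{-a}=\zeta_k^{\,ab}A_k^{-a}B_k^{-b}$ after one more application of the commutation relation, and $A_k^{-a}B_k^{-b}$ is the $k$-th slot of $W_{-n}$ (here $-n$ means the element of $G_{2m}$ whose digits are $-n_k$ mod $(2m)_k$); tensoring over $k$ gives $W_n^*=\Big(\prod_k\zeta_k^{\,n_{2k}n_{2k+1}}\Big)W_{-n}$, so $u_n:=\prod_k\zeta_k^{\,n_{2k}n_{2k+1}}$ is unimodular.

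\medskip

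The only genuinely delicate point is bookkeeping: one must be careful that the indexing convention (doubled sequences $\eta=(\eta',\eta'')$ via $(2m)_{2k}=(2m)_{2k+1}=m_k$, and the order isomorphism $\eta\mapsto k_\eta$ identifying $\widehat G_{2m}$ with $\mathbb{N}$) is used consistently, so that ``$m\bigtriangleup n$'' on the integer side really corresponds to slot-wise digit addition mod $m_k$ on the group side, and that the claimed finiteness (all but finitely many digits of $m$ and $n$ vanish, hence all but finitely many correction scalars equal $1$) is invoked to make the infinite products legitimate. None of this is hard, but it is the place where an error would creep in; everything else is the two-line Weyl commutation computation. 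Alternatively, since this lemma is quoted verbatim from \cite[Lemma 2.1]{SS2018}, one could simply cite that reference, but the self-contained argument above is short enough to include.
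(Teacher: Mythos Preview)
Your proof is correct; the clock-and-shift (Weyl) commutation $B_kA_k=\zeta_kA_kB_k$ together with $A_k^{m_k}=B_k^{m_k}=\mathbf{1}_{m_k}$ is exactly what makes the argument work, and your bookkeeping about the doubled sequence and the finiteness of the products is fine. Note that the paper does not actually prove this lemma but simply cites \cite[Lemma~2.1]{SS2018}; your self-contained argument is the standard one that sits behind that citation, so there is nothing to compare beyond the fact that you have spelled out what the paper leaves to the reference.
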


The noncommutative Vilenkin-Fourier coefficient of $x\in L_1(\mathcal{R})$  is defined by $\widehat{x}(0)=\tau(x)$ and
\[
\widehat{x}(k)=\tau(x\cdot W_k^*), \quad k\in \mathbb{N}.
\]
Analogous to \eqref{ps} and \eqref{CM1},  for $x\in L_1(\mathcal{R})$, we define the partial sum and  Ces\`{a}ro means respectively as follows
\[
S^{\mathcal{R}}_n(x)\coloneqq\sum_{k=0}^{n-1}\widehat{x}(k)W_k,\quad n\in \mathbb{N}
\]
and
\begin{equation}\label{sigma-R}
	\sigma^{\mathcal{R}}_{n}(x)\coloneqq\frac{1}{n} \sum_{k=1}^{n} S^{\mathcal{R}}_{k}(x)=\sum_{k=0}^{n-1}\left(1-\frac{k}{n}\right)\widehat{x}(k)W_k,\quad n\in\mathbb{N}.
\end{equation}

Let $(W_{k})_{k=0}^{\infty}$ stand for the noncommutative Vilenkin system from now on.
A polynomial  in $\mathcal{R}$ is a finite sum 
$$x=\sum_{k\geq0}\alpha_kW_k, \quad \alpha_k\in \mathbb{C},$$
that is $\alpha_k=0$ for all but finite indices $k\in\mathbb{N}$. Let $\mathrm{Poly}(\Ra)$ be  the $*$-subalgebra of $\mathcal{R}$ consisting with all (complex) polynomials in $\mathcal{R}$. For each $k\in \mathbb{N}$, let  $\Ex_{k}:\mathcal{R}\to \mathcal{R}_{k}$ be the conditional expectation. For each $k\in \Na$, we then have
\begin{equation}\label{EkSk}
\Ex_{k}(x)= \sum\limits_{j=0}^{M_{2k}-1}\widehat{x}(j)W_{j}=S^{\Ra}_{M_{2k}}(x),\quad x\in \mathrm{Poly}(\Ra).
\end{equation}
In the sequel of the paper, equip the von Neumann algebra  $L_{\infty}(G_{2m})\bar{\otimes} \mathcal{R}$ the trace $\int\otimes \tau$ which is still denoted by $\varphi$.  Define the map $\gamma:\mathrm{Poly}(\Ra)\to L_{\infty}(G_{2m})\bar{\otimes} \mathcal{R}$ by setting
\begin{equation}\label{transference mapping}
 \gamma(x)=\sum_{j\geq0}\widehat{x}(j)\psi_{j}\otimes W_{j}, \quad x\in \mathrm{Poly}(\Ra),
\end{equation}
where  $(\psi_j)_{j\geq0}$ are the classical Vilenkin functions as in Example \ref{V-W-S-example} with respect to the Vilenkin group $G_{2m}$.

The results below should be known. We also provide detail proofs in the appendix.

\begin{proposition}\label{transference 1}
The mapping $\gamma:\mathrm{Poly}(\Ra)\to L_{\infty}(G_{2m})\bar{\otimes}\mathcal{R}$ satisfies the following properties:
\begin{enumerate}[{\rm (i)}]
\item $\gamma$ is a $*$-homomorphism;
\item $\gamma$ is trace preserving, that is, $\varphi(\gamma(x))=\tau(x)$, for all $x\in \mathrm{Poly}(\Ra)$;
\item $\gamma$ is injective, and both $\gamma$ and $\gamma^{-1}$ are normal.
\end{enumerate}
\end{proposition}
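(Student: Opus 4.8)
The plan is to treat (i) and (ii) as direct computations on the linear basis $\{W_j\}_{j\ge 0}$ of $\mathrm{Poly}(\Ra)$, using Lemma~\ref{RW-plus} and the multiplicative identities \eqref{w-plus} for the classical Vilenkin functions, and then to derive (iii) by first observing injectivity from trace preservation and faithfulness, and finally upgrading $\gamma$ to a \emph{normal} $*$-homomorphism via Kaplansky's density theorem. The only genuinely technical step is this last extension.

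Since the $W_j$ are orthonormal in $L_2(\Ra,\tau)$, every $x\in\mathrm{Poly}(\Ra)$ has a unique finite expansion $x=\sum_j\alpha_jW_j$ with $\alpha_j=\widehat x(j)$, so $\gamma(x)=\sum_j\alpha_j\,\psi_j\otimes W_j$ is well defined and linear. For multiplicativity I would compute on basis elements: by Lemma~\ref{RW-plus} and \eqref{w-plus},
\[
\gamma(W_m)\gamma(W_n)=(\psi_m\psi_n)\otimes(W_mW_n)=\psi_{m\bigtriangleup n}\otimes\omega_{m,n}W_{m\bigtriangleup n}=\gamma\big(\omega_{m,n}W_{m\bigtriangleup n}\big)=\gamma(W_mW_n),
\]
the point being that the cocycle $\omega_{m,n}$ coming from the noncommutative product matches the scalar produced inside $\gamma$, while the classical Vilenkin functions multiply without a cocycle; extending bilinearly gives $\gamma(xy)=\gamma(x)\gamma(y)$. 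For $*$-preservation one uses $W_n^*=u_nW_{-n}$ together with the companion identity $\overline{\psi_n}=\psi_{-n}$ for the classical characters (with $-n$ as in Lemma~\ref{RW-plus}) and $u_{-n}=u_n$ to get $\gamma(W_n)^*=u_n\,\psi_{-n}\otimes W_{-n}=\gamma(u_nW_{-n})=\gamma(W_n^*)$, and one extends conjugate-linearly; this proves (i), and in particular $\gamma$ is unital since $\gamma(\1)=\gamma(W_0)=\psi_0\otimes W_0=\1$. For (ii), using $\int_{G_{2m}}\psi_j\,d\mu=\delta_{j,0}$ and $\tau(W_j)=\delta_{j,0}$,
\[
\varphi(\gamma(x))=\sum_j\alpha_j\Big(\int_{G_{2m}}\psi_j\,d\mu\Big)\tau(W_j)=\alpha_0=\tau(x).
\]

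Finally, for (iii): injectivity is immediate, since $\varphi\big(\gamma(x)^*\gamma(x)\big)=\varphi\big(\gamma(x^*x)\big)=\tau(x^*x)$ by (i)--(ii), so $\gamma(x)=0$ forces $x=0$ by faithfulness of $\tau$ (equivalently, the vectors $\psi_j\otimes W_j$ are pairwise orthogonal in $L_2$). By (ii) the map $\gamma$ is $\|\cdot\|_2$-isometric on $\mathrm{Poly}(\Ra)$, and being a $*$-homomorphism between $C^*$-algebras it is $\|\cdot\|_\infty$-contractive. Given $x\in\Ra$, Kaplansky's density theorem provides a net $x_\alpha\in\mathrm{Poly}(\Ra)$ with $\sup_\alpha\|x_\alpha\|_\infty\le\|x\|_\infty$ and $x_\alpha\to x$ $\sigma$-strongly${}^*$; applying the cyclic vector $\widehat{\1}$ shows $\|x_\alpha-x_\beta\|_2\to0$, hence $(\gamma(x_\alpha))_\alpha$ is bounded in $L_\infty(G_{2m})\bar\otimes\Ra$ and $\|\cdot\|_2$-Cauchy, so it converges $\sigma$-strongly${}^*$ to an element, which we again call $\gamma(x)$, with $\|\gamma(x)\|_\infty\le\|x\|_\infty$. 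One checks this extension is independent of the approximating net (by the $\|\cdot\|_2$-isometry), linear, multiplicative, $*$-preserving and trace-preserving, hence a normal unital trace-preserving $*$-homomorphism $\Ra\to L_\infty(G_{2m})\bar\otimes\Ra$; normality is precisely the $\sigma$-weak continuity just exhibited. Then $\gamma(\Ra)$ is a von Neumann subalgebra and $\gamma\colon\Ra\to\gamma(\Ra)$ is a normal $*$-isomorphism, whose inverse is automatically normal as well. I expect this final extension step --- verifying that the $\sigma$-strong${}^*$ limit exists, is net-independent, and inherits all the algebraic structure --- to be the only part requiring real care; the identities in (i)--(ii) are routine once Lemma~\ref{RW-plus} and \eqref{w-plus} are available.
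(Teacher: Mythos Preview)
Your arguments for (i) and (ii) are essentially identical to the paper's---both compute on the basis $\{W_j\}$ using Lemma~\ref{RW-plus} and \eqref{w-plus}. For (iii) you take a genuinely different route. The paper verifies normality of $\gamma$ on $\mathrm{Poly}(\Ra)$ directly: given a bounded weak-$*$-null net $(x_\alpha)$, it tests $\gamma(x_\alpha)$ against the orthonormal family $\{\psi_j\otimes W_k\}$ and uses $\langle \psi_j\otimes W_k,\gamma(x_\alpha)\rangle=\widehat{x}_\alpha(j)\delta_{jk}\to 0$ (invoking Grothendieck's dual characterization of completeness to reduce to bounded nets), then argues symmetrically for $\gamma^{-1}$. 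You instead extend $\gamma$ to all of $\Ra$ via Kaplansky density combined with the $\|\cdot\|_2$-isometry, and deduce normality from the resulting trace-preserving $*$-homomorphism structure. Your approach is more structural and has the bonus of essentially proving Proposition~\ref{transference 1-2} in the same breath; the paper's approach is more hands-on and avoids the extension apparatus.

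One small point to tighten: when you write ``normality is precisely the $\sigma$-weak continuity just exhibited,'' what you actually exhibited is $\sigma$-strong${}^*$ convergence along Kaplansky approximants from $\mathrm{Poly}(\Ra)$, not $\sigma$-weak continuity of the extended map on arbitrary nets in $\Ra$. The gap is filled by your observation that the extension is trace-preserving: this forces $\|\gamma(x)\|_2=\|x\|_2$ for all $x\in\Ra$, and then for any increasing net $y_\beta\uparrow y$ in $\Ra_+$ one gets $\gamma(y_\beta)\to\gamma(y)$ in $L_2$, hence $\gamma(y_\beta)\uparrow\gamma(y)$, which is normality. It would be worth making that step explicit.
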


Since $\mathrm{Poly}(\Ra)$ is weak-$\ast$ dense in $ \Ra $ and $\gamma:\mathrm{Poly}(\Ra)\to L_{\infty}(G_{2m})\bar{\otimes} \mathcal{R}$ is normal, we could now extend $\gamma$ in a natural way making $\gamma$ a $*$-homomorphism from $\mathcal{R}$ into $L_{\infty}(G_{2m})\bar{\otimes} \mathcal{R}$. For simplicity, we still use the notation $\gamma$ to stand for its natural extension if no confusion arises.

\begin{proposition}\label{transference 1-2}
The mapping $\gamma:\mathcal{R}\to L_{\infty}(G_{2m})\bar{\otimes} \mathcal{R}$ is a $*$-homomorphism such that $\gamma(\mathcal{R})$ is weak-$\ast$ closed in $L_{\infty}(G_{2m})\bar{\otimes} \mathcal{R}$. In particular, $\gamma(\mathcal{R})$ is a von Neumann subalgebra of $L_{\infty}(G_{2m})\bar{\otimes} \mathcal{R}$.
\end{proposition}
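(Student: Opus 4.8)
\textbf{Proof plan for Proposition \ref{transference 1-2}.}
The plan is to bootstrap from Proposition \ref{transference 1}, where $\gamma$ has already been set up on $\mathrm{Poly}(\Ra)$ as a trace-preserving, injective $*$-homomorphism whose inverse is normal. First I would observe that, since $\mathrm{Poly}(\Ra)$ is weak-$*$ dense in $\Ra$ and $\gamma$ is normal on it (part (iii) of Proposition \ref{transference 1}), the natural extension of $\gamma$ to all of $\Ra$ is still a normal $*$-homomorphism into $L_\infty(G_{2m})\bar\otimes\Ra$; this is exactly the remark made immediately before the statement, so I may take it for granted. The content to be proved is therefore that the \emph{image} $\gamma(\Ra)$ is weak-$*$ closed, equivalently (since it is a $*$-subalgebra containing the unit) that it is a von Neumann subalgebra.

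The cleanest route is via a normal conditional-expectation / isometry argument rather than a direct closedness computation. Because $\gamma$ is trace preserving (part (ii)), it extends to an isometry $L_2(\Ra)\to L_2(L_\infty(G_{2m})\bar\otimes\Ra)$, and the composition $\gamma^{-1}\circ\gamma=\mathrm{id}$ on $\mathrm{Poly}(\Ra)$ shows $\gamma$ is an isometric embedding at the $L_2$ level with a well-defined adjoint projection $P=\gamma\gamma^*$ onto $\overline{\gamma(\mathrm{Poly}(\Ra))}^{\,2}$. The key step is to identify $P$ with a normal $\varphi$-preserving conditional expectation $\Phi$ from $L_\infty(G_{2m})\bar\otimes\Ra$ onto $\gamma(\Ra)$: one checks on the generators $\psi_j\otimes W_j$ and on products $\psi_i\psi_j\otimes W_iW_j=\omega_{i,j}\,\psi_{i\triangle j}\otimes W_{i\triangle j}$ (using \eqref{w-plus} and Lemma \ref{RW-plus}) that $P$ kills every basis element of the form $\psi_i\otimes W_j$ with $i\neq j$ and fixes those with $i=j$, which is manifestly a normal unital completely positive $\varphi$-preserving map, hence a conditional expectation by Tomiyama's theorem. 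Its range is a von Neumann algebra by the standard fact that the image of a normal conditional expectation is weak-$*$ closed; and that range is precisely $\gamma(\Ra)$ because $\gamma(\mathrm{Poly}(\Ra))$ is weak-$*$ dense in $\gamma(\Ra)$ (normality of $\gamma$) and already $\sigma$-weakly dense in $\mathrm{ran}\,\Phi$.

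An alternative, if one prefers to avoid conditional expectations: use that a normal $*$-homomorphism between von Neumann algebras has weak-$*$ closed range whenever its kernel is weak-$*$ closed (so that it factors as a normal isomorphism onto its image composed with the quotient map); here $\ker\gamma=0$ by injectivity on the weak-$*$ dense subalgebra $\mathrm{Poly}(\Ra)$ together with normality, so $\gamma$ is a normal $*$-isomorphism of $\Ra$ onto $\gamma(\Ra)$, and the image of a von Neumann algebra under a normal $*$-isomorphism is again a von Neumann algebra. I expect the main obstacle to be the careful justification that $\gamma$ remains \emph{injective} after extension and that $\gamma(\mathrm{Poly}(\Ra))$ is weak-$*$ dense in — not merely contained in — $\gamma(\Ra)$; both follow from normality of $\gamma$ and $\gamma^{-1}$, but spelling out the continuity bookkeeping (nets, bounded approximation via Kaplansky density in each $\Ra_N\cong\bigotimes_{k<N}\mathbb M_{m_k}$) is the part that needs genuine care, whereas the algebraic identities on the Vilenkin generators are routine given \eqref{w-plus} and Lemma \ref{RW-plus}.
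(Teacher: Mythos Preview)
The paper does not actually supply a proof of this proposition: the appendix proves only Proposition~\ref{transference 1} and Theorem~\ref{transference argument}, and the text treats Proposition~\ref{transference 1-2} as a standard consequence (``the results below should be known''). So there is no paper-proof to compare against, and the question is whether your plan stands on its own.

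Your \emph{alternative} route is the correct and economical one, and it is exactly the textbook argument the paper is presumably alluding to: once $\gamma$ is known to be a normal $*$-homomorphism from the von Neumann algebra $\Ra$ into $L_\infty(G_{2m})\bar\otimes\Ra$, its range is automatically a von Neumann subalgebra. Injectivity is not even needed for this conclusion (the kernel of a normal $*$-homomorphism is $(1-z)\Ra$ for a central projection $z$, and the restriction to $z\Ra$ is a normal $*$-isomorphism onto the image), so you can drop the discussion of extending injectivity and of $\gamma^{-1}$; the only genuine input is that the extension of $\gamma$ from $\mathrm{Poly}(\Ra)$ to $\Ra$ is normal, which follows from normality on $\mathrm{Poly}(\Ra)=\bigcup_N\Ra_N$ together with Kaplansky density (note $\gamma$ is isometric on each finite-dimensional $\Ra_N$, hence on $\mathrm{Poly}(\Ra)$, so bounded nets stay bounded).

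Your primary route via a conditional expectation is also viable, but as written it has a gap: identifying the $L_2$-orthogonal projection $P$ (which kills $\psi_i\otimes W_j$ for $i\neq j$) with a \emph{normal unital completely positive} map on $L_\infty(G_{2m})\bar\otimes\Ra$ is not automatic from the $L_2$ picture; you must first show $P$ is bounded $L_\infty\to L_\infty$. The clean way to do that is to realise $P$ as an average over the $G_{2m}$-action $\beta_s\colon \psi_j\otimes W_k\mapsto \psi_j(s)\,\psi_j\otimes\overline{\psi_j(s)}\,W_k$ (translation in the first tensor leg combined with the adjoint Vilenkin rotation on the second), after which positivity and normality are immediate. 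This buys you an explicit expectation onto $\gamma(\Ra)$, but for the bare statement of the proposition the alternative argument is shorter and requires no such construction.
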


With Proposition \ref{transference 1-2} at hand, we obtain the following transference results.

\begin{theorem}\label{transference argument}
For any $0<p\leq \infty$ and $ x\in\mathcal{R}$, we have $$\|\gamma(x)\|_{L_{p}(L_{\infty}(G_{2m})\bar{\otimes} \mathcal{R})}=\|x\|_{L_{p}(\mathcal{R})}.$$ Furthermore, for any $x\in \mathcal{R},$ one has the following stronger result
\begin{equation}\label{mu-est}
\mu(t,\gamma (x))= \mu(t, x),\quad \forall t>0.
\end{equation}
\end{theorem}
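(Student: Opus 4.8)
The plan is to use the structural information already established: by Propositions \ref{transference 1} and \ref{transference 1-2}, after its natural normal extension the map $\gamma$ is a \emph{normal, injective, trace-preserving $*$-homomorphism} from $\mathcal{R}$ into $L_{\infty}(G_{2m})\bar{\otimes}\mathcal{R}$, and any such map automatically preserves generalized singular value functions; the $L_p$-isometry is then an immediate corollary. Since both algebras here are finite, all the $L_p$-quasinorms involved are finite, so there is no integrability issue to worry about.

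First I would upgrade the trace-preservation from $\mathrm{Poly}(\Ra)$ to all of $\mathcal{R}$. The functionals $\tau$ and $\varphi\circ\gamma$ on $\mathcal{R}$ are both normal (for $\varphi\circ\gamma$ this uses normality of $\varphi$ together with normality of the extended $\gamma$), and they agree on the weak-$*$ dense $*$-subalgebra $\mathrm{Poly}(\Ra)$ by Proposition \ref{transference 1}(ii). Since normal functionals are $\sigma$-weakly continuous, this forces
\[
\varphi(\gamma(y))=\tau(y),\qquad \forall\, y\in\mathcal{R}.
\]

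Next, fix $x\in\mathcal{R}$ and write $|x|=(x^{*}x)^{1/2}\in\mathcal{R}$. Because $\gamma$ is a $*$-homomorphism it intertwines the continuous functional calculus on positive elements, so $|\gamma(x)|^{2}=\gamma(x)^{*}\gamma(x)=\gamma(x^{*}x)=\gamma(|x|^{2})$ and hence $|\gamma(x)|=\gamma(|x|)$; being normal, $\gamma$ moreover intertwines the \emph{Borel} functional calculus, so $\chi_{B}(|\gamma(x)|)=\gamma(\chi_{B}(|x|))$ for every Borel set $B\subseteq\mathbb{R}$. Applying the trace identity with $B=(s,\infty)$ gives
\[
\varphi\big(\chi_{(s,\infty)}(|\gamma(x)|)\big)=\tau\big(\chi_{(s,\infty)}(|x|)\big),\qquad \forall\, s>0,
\]
i.e. $|\gamma(x)|$ and $|x|$ have the same distribution function. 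Feeding this into the definition $\mu(t,\cdot)=\inf\{s>0:\tau(\chi_{(s,\infty)}(|\cdot|))\le t\}$ yields $\mu(t,\gamma(x))=\mu(t,x)$ for all $t>0$, which is \eqref{mu-est}. The norm statement then follows: for $0<p<\infty$ one has $\|\gamma(x)\|_{L_p}^{p}=\int_{0}^{\infty}\mu(t,\gamma(x))^{p}\,dt=\int_{0}^{\infty}\mu(t,x)^{p}\,dt=\|x\|_{L_p}^{p}$, while for $p=\infty$ one uses $\mu(0^{+},\cdot)=\|\cdot\|_{\infty}$ (or, directly, the fact that an injective $*$-homomorphism of $C^{*}$-algebras is isometric). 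Equivalently, avoiding $\mu$ for $p<\infty$, one may argue $\|\gamma(x)\|_{L_p}^{p}=\varphi(|\gamma(x)|^{p})=\varphi(\gamma(|x|^{p}))=\tau(|x|^{p})=\|x\|_{L_p}^{p}$.

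I do not expect a genuine obstacle here; the whole argument reduces to standard properties of normal trace-preserving $*$-homomorphisms. The only points needing a little care are the two uses of functional calculus: a $*$-homomorphism of $C^{*}$-algebras intertwines continuous functional calculus automatically, but for the spectral projection $\chi_{(s,\infty)}$, which is only Borel, one genuinely needs normality of the extended $\gamma$ — this is where the von Neumann algebra structure (rather than mere $C^{*}$-structure) enters, and it is the single most delicate step.
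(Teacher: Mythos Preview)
Your proposal is correct and follows essentially the same route as the paper: both arguments reduce everything to the fact that a normal trace-preserving $*$-homomorphism intertwines spectral projections, hence preserves distribution functions and thus $\mu(\cdot,\cdot)$. The paper carries out the Borel functional-calculus step explicitly (approximating $\chi_{(\lambda,\infty)}$ by continuous functions and using weak-$*$ convergence together with normality of $\gamma$), whereas you invoke it as a standard property of normal $*$-homomorphisms; your added sentence extending the trace identity from $\mathrm{Poly}(\Ra)$ to all of $\mathcal{R}$ by normality is in fact a point the paper uses without comment.
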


\section{Proofs of Theorems \ref{CS} and \ref{Nc-W}}\label{sec-pf-NcW}

This section contains the detailed proofs of Theorem \ref{CS} and Theorem \ref{Nc-W}. More precisely, in Section \ref{sec-operator}, we firstly reduce Theorem \ref{CS} to Theorem \ref{sigma-tilde}, and then prove Theorem \ref{sigma-tilde} by the noncommutative Calder\'{o}n-Zygmund decomposition given in \cite{CCP2022}. In Section \ref{sec-pf-Nc}, we will prove Theorem \ref{Nc-W} via the transference technique. 

\subsection{Ces\`{a}ro  summability in operator-valued setting}\label{sec-operator}

Our Theorem \ref{CS} can be deduced from the following stronger result, i.e. Theorem \ref{sigma-tilde}, where $\widetilde{\sigma}_{n}$ is defined by 
\begin{equation}\label{s-tilde}
\widetilde{\sigma}_{n}(f)(\eta)=\int_{G_m}f(t)\sup_{M_{n}\leq l<M_{n+1}} |K_l(\eta, t)|d\mu(t),\quad  \eta\in G_m.
\end{equation}

Let $\widetilde{\sigma}_n$ be the symbol given as in \eqref{s-tilde}, and suppose that $f\geq0$. Then, for each $n\geq1$, we have $\widetilde{\sigma}_n(f)\geq0$. For $t\in G_m$, we write
\begin{equation*}
\begin{aligned}
	\sigma_l(f)(t)&=\int_{G_m} f(s) K_{l}(t,s) d \mu(s)\\
	&=\int_{G_m} f(s) (\Re K_{l}(t,s)+i\Im K_{l}(t,s)) d \mu(s)\\
	&=\Re\sigma_l(f)(t)+i\Im\sigma_l(f)(t).
\end{aligned}
\end{equation*}
Then
$$\|(\sigma_l(f))_{l\geq1}\|_{\Lambda_{1,\infty}(\mathcal{N},\ell_{\infty})}\leq2\|(\Re\sigma_l(f))_{l\geq1}\|_{\Lambda_{1,\infty}(\mathcal{N},\ell_{\infty})}+2\|(\Im\sigma_l(f))_{l\geq1}\|_{\Lambda_{1,\infty}(\mathcal{N},\ell_{\infty})}.$$
Since
$$-\widetilde{\sigma}_n(f)\leq \Re\sigma_l(f)\leq \widetilde{\sigma}_n(f),\quad M_n\leq l<M_{n+1}$$
and
$$-\widetilde{\sigma}_n(f)\leq \Im\sigma_l(f)\leq \widetilde{\sigma}_n(f),\quad M_n\leq l<M_{n+1},$$
by Remark \ref{control}, we have
\begin{equation}\label{sigma-ri}
\|(\sigma_l(f))_{l\geq1}\|_{\Lambda_{1,\infty}(\mathcal{N},\ell_{\infty})}\leq4\|(\widetilde{\sigma}_n(f))_{n\geq1}\|_{\Lambda_{1,\infty}(\mathcal{N},\ell_{\infty})}.
\end{equation}
The desired inequalities in Theorem \ref{CS}   follow from  Theorem \ref{sigma-tilde}.

\begin{theorem}\label{sigma-tilde}
Given a bounded sequence $m=(m_k)_{k\in\mathbb{N}}$,	let $\mathcal{N}=L_{\infty}(G_m)\bar{\otimes} \mathcal{M}$. 
	\begin{enumerate}[{\rm (i)}]
		\item If $p=1$, then there is a constant $c>0$ such that 	$$\|(\widetilde{\sigma}_n(f))_{n\geq1}\|_{\Lambda_{1,\infty}(\mathcal{N},\ell_{\infty})}\leq c \|f\|_{L_1(\mathcal{N})}.$$
		\item If $1<p\leq \infty$, then  there is a constant $c_p>0$ such that  $$\|(\widetilde{\sigma}_n(f))_{n\geq1}\|_{L_p(\mathcal{N},\ell_{\infty})}\leq  c_p\|f\|_{L_p(\mathcal{N})},$$
	where $c_p=\frac{cp^2}{(p-1)^2}$ for some positive constant $c$.
	\end{enumerate}
\end{theorem}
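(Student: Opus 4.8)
The plan is to prove the two maximal inequalities for the sublinear operator $f \mapsto (\widetilde\sigma_n(f))_{n\ge 1}$ directly, by splitting each kernel maximal function $\sup_{M_n\le l<M_{n+1}}|K_l(\eta,t)|$ into a ``local'' part (the interaction of $\eta$ with $t$ inside the dyadic cube $I_n(t)$) and a ``global'' part (where $\eta\notin I_n(t)$), and then controlling each piece with a different tool. For the global part, Lemma \ref{K-E-GI} says that $\int_{G_m\setminus I_k(s)}\sup_{n\ge M_k}|K_n(t,s)|\,d\mu(t)\le c$ uniformly in $s$, so the associated operator is, up to the positive-definiteness subtlety, dominated by a conditional-expectation-type averaging operator; its $\Lambda_{1,\infty}$ and $L_p(\mathcal N,\ell_\infty)$ bounds should follow from the noncommutative Doob/maximal inequality for the regular filtration $(\mathcal N_n)$ (recall $R_{\rm reg}=\sup_k m_k<\infty$), exactly in the spirit of how one handles the ``good'' part of a singular integral. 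The local part is where the Calder\'on--Zygmund machinery enters.

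First I would set up the proof of (i), the weak-type $(1,1)$ bound. By the standard reduction it suffices to treat positive $f\in L_1(\mathcal N)$. Fix $\lambda>0$, apply the Cuculescu construction (Lemma \ref{Cuculescu}) to the martingale $f_n=\mathbb E_n(f)$ at level $\lambda$, and apply the improved noncommutative Calder\'on--Zygmund decomposition of Theorem \ref{NewCZ}, $f=g+b_d+b_{\rm off}$, with the projections $q_n,p_n$ expressed concretely via \eqref{q}, \eqref{pe} and the block decompositions \eqref{bd}, \eqref{boff}. For the good part $g$, one uses $\|g\|_1\le\|f\|_1$, $\|g\|_\infty\le R_{\rm reg}\lambda$, and the $L_2$ (or $L_p$, $1<p\le 2$) boundedness of $\widetilde\sigma_n$ on $L_p(\mathcal N,\ell_\infty)$ together with a Chebyshev/Markov argument to get the required weak estimate on the set where $q^{(\lambda)}$ acts. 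For the diagonal bad part $b_d$ and the off-diagonal bad part $b_{\rm off}$, the projection $e=q^{(\lambda)}\wedge(\text{further projections})$ removes the cubes where the decomposition lives, and here Lemmas \ref{K-E-ab-1} and \ref{K-E-ab-2} are the crucial technical input: they give the pointwise bound $|K_{l^{(b+1)}+jM_b,M_b}(t,s)|\le c\delta^{a-n}M_bM_n$ on $I_a(s)\setminus I_{a+1}(s)$ with $b\le a\le n$, and the $L_2$-integral bound $\int_{I_a(s)\setminus I_{a+1}(s)}|K_{l^{(b+1)}+jM_b,M_b}(t,s)|^2\,d\mu(t)\le c\delta^{a-n}M_aM_bM_n$ for $a<b\le n$. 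The geometric decay factor $\delta^{a-n}$, $\delta>1$, is what makes the sum over cubes and scales converge; it plays the role that the kernel regularity estimate $|K(x,y)-K(x,y')|\lesssim |y-y'|^\varepsilon/|x-y|^{n+\varepsilon}$ plays in the classical Calder\'on--Zygmund theory, and it is precisely why one can avoid pseudo-localization.

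For the strong-type $(p,p)$ bound (ii), I would first establish the endpoint $p=\infty$ (or rather $p$ large) essentially by hand: $\|\widetilde\sigma_n(f)\|_\infty\le \|f\|_\infty\cdot\sup_\eta\int\sup_{M_n\le l<M_{n+1}}|K_l(\eta,t)|\,d\mu(t)$, which is bounded using $|K_l|\le \frac1l\sum_{k\le l}|D_k|$ and the elementary estimates on the Vilenkin-like Dirichlet kernels from \eqref{e5}; combined with the weak $(1,1)$ bound from part (i), real interpolation for the spaces $L_p(\mathcal N,\ell_\infty)$ (as developed in \cite{Ju2002}, see also \cite{JX2007}) yields the strong $(p,p)$ inequality for $1<p\le\infty$. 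Keeping track of the interpolation constants produces the stated dependence $c_p=cp^2/(p-1)^2$ (the $1/(p-1)$ factors come from the endpoint $p=1$, and the extra power reflects the asymmetric/column-type nature of $\ell_\infty$). The main obstacle I anticipate is the bad-part estimate for $b_{\rm off}$: because the Ces\`aro kernels $K_l$ do not have a closed form and are not dominated by a decreasing radial majorant, one cannot mimic the torus argument, and one must instead carefully combine the operator-theoretic orthogonality coming from $p_Qq_Q=0$ (see \eqref{pq}) with the scale-wise $\delta$-decay of Lemmas \ref{K-E-ab-1}--\ref{K-E-ab-2}, summing over the (doubly indexed) family of cubes $Q\in D(\mathcal F_k)$ while controlling the noncommutative cross terms; making this summation converge with the right constants, and verifying that the truncation projection $e$ has the required trace estimate $\tau(\mathbf 1-e)\lesssim \lambda^{-1}\|f\|_1$, is the technical heart of the argument.
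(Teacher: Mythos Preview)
Your Calder\'on--Zygmund architecture matches the paper's (Theorem \ref{NewCZ} applied to positive $f$ at level $\lambda$, then estimate $g$, $b_d$, $b_{off}$ separately, and finally interpolate against the $L_\infty$ endpoint via \cite[Theorem 3.1]{JX2007}). Two points need correction.

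For the good part $g$, invoking ``the $L_2$ boundedness of $\widetilde\sigma_n$ on $L_2(\mathcal N,\ell_\infty)$'' is circular---that \emph{is} the maximal inequality you are proving, at $p=2$. The paper instead establishes the $L_\infty$ endpoint independently (Lemma \ref{KI-E-lemma}: $\sup_\eta\int\widetilde K_n(\eta,t)\,d\mu(t)\le c$, hence $\|\widetilde\sigma_n(g)\|_\infty\le c\|g\|_\infty\le cR_{\rm reg}\lambda$) and uses this bound directly inside the projection $e$; no Chebyshev step is needed for $g$. Your opening paragraph's local/global kernel split is also a red herring: the paper never splits the kernel, and your claim that the global part is ``dominated by a conditional-expectation-type averaging operator'' whose maximal inequality follows from Doob is not justified---Lemma \ref{K-E-GI} gives only a uniform $L_\infty\to L_\infty$ bound, not a maximal estimate over $n$.

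The substantive gap is the $b_{off}$ estimate. The terms $p_Q(f-f_Q)q_Q\chi_Q$ are neither positive nor have $L_1$-norms summable by $\|f\|_1$, so the $\delta$-decay of Lemmas \ref{K-E-ab-1}--\ref{K-E-ab-2} together with $p_Qq_Q=0$ is not enough by itself. The paper's mechanism (Lemmas \ref{AcB} and \ref{AkQ}) is to factor
\[
\int_Q\widetilde K_n(t,s)\,p_Qf(s)q_Q\,d\mu(s)=A_n(t)^{1/2}\,u\,B^{1/2}
\]
with $B=|Q|\,q_Qf_Qq_Q\le|Q|\lambda$ by Cuculescu (iii)---this is precisely how the $q_Q$ on the right is exploited to extract a factor $\lambda^{1/2}$---and $A_n(t)=\int_Q\widetilde K_n^2(t,s)\,p_Qf(s)p_Q\,d\mu(s)$, which is then controlled via the new $L_2$-kernel estimate Lemma \ref{K^2-E-a}. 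This yields $\|A_{k,Q}\|_1\le c\lambda^{1/2}\varphi(p_Q\chi_Q)^{1/2}\varphi(p_Qfp_Q\chi_Q)^{1/2}$, and a final Cauchy--Schwarz over $(k,Q)$ closes the sum using $\sum_{k,Q}\lambda\varphi(p_Q\chi_Q)\le\|f\|_1$ and $\sum_{k,Q}\varphi(p_Qfp_Q\chi_Q)\le\|f\|_1$. Without this factorization the off-diagonal sum does not converge, and you have not identified a substitute.
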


To prove Theorem \ref{sigma-tilde}, we establish the following two propositions. 

\begin{proposition}\label{Rbd}
Let $f\in L_1(\mathcal{N})$ be positve and $\lambda>0$. Suppose that $b_d$ is from \eqref{dec} associated with the given $f$ and $\lambda>0$. Then
	there exists a constant $c>0$ and  a projection $e_1\in \mathcal{P}(\mathcal{N})$ such that $e_1\leq q$,
	$$\|e_1\widetilde{\sigma}_n(b_d)e_1\|_{L_{\infty}(\mathcal{N})}\leq \lambda, \quad  \forall n\geq1$$
	and 
	$$\lambda\cdot\varphi({\bf 1}-e_1)\leq  c\|f\|_{L_1(\mathcal{N})}.$$
\end{proposition}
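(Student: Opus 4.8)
The plan is to run the noncommutative Calder\'{o}n-Zygmund machinery from Theorem \ref{NewCZ} directly on the diagonal part $b_d$, exploiting the explicit form \eqref{bd} of $b_d$ together with the cube-wise structure \eqref{pe}--\eqref{pq} of the Cuculescu projections in the Vilenkin setting. Recall $b_d=\sum_{k\geq1}\sum_{Q\in D(\mathcal{F}_k)}p_Q(f-f_Q)p_Q\chi_Q$, and from Theorem \ref{NewCZ}(ii) we have $\sum_{k\geq1}\|p_k(f-f_k)p_k\|_{L_1(\mathcal{N})}\leq 2\|f\|_{L_1(\mathcal{N})}$. Since the terms associated to a cube $Q\in D(\mathcal{F}_k)$ are supported on $Q$ and $p_Q\in\mathcal{P}(\M)$, the natural candidate for the ``good'' projection is $e_1=q=\wedge_n q_n$: by Lemma \ref{Cuculescu}(iv) we already know $\lambda\varphi(\1-q)\leq \|f\|_{L_1(\mathcal{N})}$, which gives the measure estimate with $c=1$. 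So the entire content is the uniform (in $n$) $L_\infty$ bound $\|q\,\widetilde{\sigma}_n(b_d)\,q\|_\infty\leq\lambda$ (after possibly enlarging the constant, or rather after replacing $e_1$ by a slightly smaller projection whose complement still has trace $\lesssim \lambda^{-1}\|f\|_1$).

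First I would write $\widetilde{\sigma}_n(b_d)(\eta)=\int_{G_m} b_d(t)\sup_{M_n\le l<M_{n+1}}|K_l(\eta,t)|\,d\mu(t)$ and split the $t$-integral according to the Vilenkin ``shells'' $I_a(\eta)\setminus I_{a+1}(\eta)$ relative to the evaluation point $\eta$. The point of the CZ decomposition with $b_d$ is that each summand $p_Q(f-f_Q)p_Q\chi_Q$ has vanishing average over $Q$ (here $f_Q=\mathbb{E}_k(f)$ restricted to $Q$, and $p_Q(f_Q-f_Q)p_Q=0$), so that integrating against a Fej\'er kernel behaves like integrating a cancellative object against kernel \emph{differences}, which is precisely where Lemmas \ref{K-E-ab-1}--\ref{K-E-ab-2} (and \ref{K-E-GI} for the far-away part) come in: they give the $\delta^{a-n}$-type decay of $K_{l^{(b+1)}+jM_b,M_b}$ on shells and the uniform $L^1$-integrability of the maximal kernel off $I_k(s)$. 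I would decompose $K_l$ (for $M_n\le l<M_{n+1}$) into the pieces $K_{l^{(b+1)}+jM_b,M_b}$ exactly as in \cite[Proposition 11]{Ga2001}/the proof of Lemma \ref{K-E-GI}, handle the block where $Q$ is ``small compared to the shell'' by $L^\infty\times L^1$ estimates using Lemma \ref{K-E-ab-1}, and the block where $Q$ is ``comparable'' by a Cauchy--Schwarz argument using Lemma \ref{K-E-ab-2}. Summing the resulting geometric series in $a$ (controlled by $\delta>1$) against $\sum_k\sum_Q \|p_Q(f-f_Q)p_Q\|_{L_1}\leq 2\|f\|_1$ yields an operator that is dominated by a fixed positive element of $L_1(\mathcal{N})$ of norm $\lesssim\|f\|_1$; then one more Chebyshev/Cuculescu-type cut produces the projection $e_1\leq q$ on which the $\widetilde{\sigma}_n(b_d)$ are uniformly bounded by $\lambda$, with $\lambda\varphi(\1-e_1)\lesssim\|f\|_1$.

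The key structural device that makes the sandwich $e_1(\cdot)e_1$ work is \eqref{pq}: on $Q$, $p_Q q_Q=0$, so when we conjugate the $Q$-summand of $b_d$ by $q=\sum_{Q'} q_{Q'}\chi_{Q'}$ the local factor $q_Q p_Q(f-f_Q)p_Q q_Q=0$ — but of course $\widetilde{\sigma}_n$ mixes different cubes, so this exact cancellation is only ``local'' and the real work is controlling the genuinely off-diagonal contributions in $\eta$ versus $\mathrm{supp}$. I would therefore be careful to keep the conjugation by $q$ \emph{inside} the $t$-integral, using that $q\in\mathcal{N}$ is a fixed projection so $q\widetilde{\sigma}_n(b_d)q = \int q\,b_d(t)\,q\cdot\sup_l|K_l(\eta,t)|\,d\mu(t)$ only after replacing the scalar kernel appropriately — actually $\widetilde\sigma_n$ already has the absolute value absorbed, so monotonicity (Remark \ref{control}) lets me dominate $q\widetilde\sigma_n(b_d)q$ by $q\widetilde\sigma_n(|b_d|\text{-type majorant})q$ and reduce to a positive estimate.

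\textbf{Main obstacle.} The hard part will be the shell estimates: unlike the torus, there is no closed formula for $K_l$ in the Vilenkin-like system, so I cannot simply dominate by a decreasing radial majorant. Everything must be squeezed out of Lemmas \ref{K-E-ab-1}, \ref{K-E-ab-2}, \ref{K-E-GI}, and the arithmetic of the decomposition $l=\sum l_j M_j$ with $M_n\le l<M_{n+1}$ — in particular matching the ``level'' $a$ of the shell, the ``level'' $b$ of the kernel block $K_{\cdot,M_b}$, and the scale $k$ of the CZ cube $Q$, and summing three interlocking indices while keeping the $\delta^{a-n}$ decay and only losing a factor polynomial in the regularity constant $R_{\rm reg}=\sup_k m_k$. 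A secondary subtlety is that $b_d$ is not self-adjoint-friendly in the naive sense (the $\sigma_n$ don't preserve self-adjointness, which is why $\widetilde\sigma_n$ was introduced), so I must run all estimates through the positive dominating kernel $\sup_l|K_l|$ and invoke Remark \ref{control} rather than any spectral argument on $\sigma_n(b_d)$ itself.
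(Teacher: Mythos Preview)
Your plan is essentially the paper's: build a single positive $L_1$-majorant of $q\,\widetilde{\sigma}_n(b_d)\,q$ that is independent of $n$, then take $e_1=\chi_{(0,\lambda)}(\text{majorant})\wedge q$ and apply Chebyshev together with Lemma \ref{Cuculescu}(iv). The cube-wise orthogonality \eqref{pq} and the shell estimates from Lemmas \ref{K-E-ab-1}--\ref{K-E-ab-2} (packaged by the paper as Lemma \ref{K-E-t}) are exactly the ingredients.

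Two clarifications to sharpen the execution. First, the ``kernel differences'' / H\"ormander picture is not what is used here. The mean-zero of $b_d^{k,Q}=p_Q(f-f_Q)p_Q\chi_Q$ enters only to kill the \emph{small-$n$} terms: since $\widetilde{K}_n$ is $\mathcal{F}_{n+1}$-measurable in each variable, for $n\leq k-1$ the kernel is constant on $Q$ and $\widetilde{\sigma}_n(b_d^{k,Q})=0$ identically. For $n\geq k$ there is no cancellation exploited at all --- one simply dominates $q\widetilde{\sigma}_n(b_d^{k,Q})q$ by $\widetilde{\sigma}_n(|b_d^{k,Q}|)\chi_{G_m\setminus Q}$ (the $\chi_Q$ part being killed by $q(\eta)\leq q_Q$ and $q_Qp_Q=0$ when $\eta\in Q$, which you correctly identified). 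So the three-case split is: $\eta\in Q$ (zero by \eqref{pq}); $\eta\notin Q,\ n<k$ (zero by measurability + mean-zero); $\eta\notin Q,\ n\geq k$ (estimate).

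Second, the majorant must absorb the supremum over $n$, so it is
\[
\mathbb{B}_d=\sum_{k\geq1}\sum_{Q\in D(\mathcal{F}_k)}\sum_{n\geq k}\widetilde{\sigma}_n(|b_d^{k,Q}|)\,\chi_{G_m\setminus Q},
\]
and the crucial summability is over $n\geq k$, not only over the shell index $a$. Lemma \ref{K-E-t} gives $\int_{G_m\setminus Q}\widetilde{K}_n\leq c\bigl(\delta^{k-n}+(n-k)\delta^{(k-n)/2}\bigr)$, which is summable in $n\geq k$; then $\sum_{k,Q}\|b_d^{k,Q}\|_1\leq 2\|f\|_1$ finishes $\|\mathbb{B}_d\|_1\lesssim\|f\|_1$. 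Your outline has this implicitly, but be sure to track the $n$-sum explicitly rather than only the geometric series in the shell parameter.
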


\begin{proposition}\label{Rboff}
Let $\lambda>0$ and positive $f\in L_1(\mathcal{N})$. Suppose that $b_{off}$ is from \eqref{dec} associated with respect to the given $f$ and $\lambda>0$. Then		there exists a constant $c>0$ and  a projection $e_2\in \mathcal{P}(\mathcal{N})$ such that $e_2\leq q$, 
	$$\|e_2\widetilde{\sigma}_n(b_{off})e_2\|_{L_{\infty}(\mathcal{N})}\leq \lambda, \quad  \forall n\geq1$$
	and 
	$$\lambda\cdot\varphi({\bf 1}-e_2)\leq c \|f\|_{L_1(\mathcal{N})}.$$
\end{proposition}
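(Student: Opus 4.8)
The plan is to reduce the proposition, via the self-adjointness of the action of $\widetilde{\sigma}_n$ and a scale decomposition, to a global kernel estimate of the type established in \cite{Ga2001}. Since $g_n(\eta,t):=\sup_{M_n\le l<M_{n+1}}|K_l(\eta,t)|$ is real-valued and, by \eqref{boff}, $b_{off}=\beta^c+(\beta^c)^{*}$ with $\beta^c:=\sum_{k\ge1}\sum_{Q\in D(\mathcal{F}_k)}p_Q(f-f_Q)q_Q\chi_Q$, one has $\widetilde{\sigma}_n(\beta^c)^{*}=\widetilde{\sigma}_n((\beta^c)^{*})$ and hence $\|e_2\widetilde{\sigma}_n(b_{off})e_2\|_{L_\infty(\mathcal{N})}\le 2\|e_2\widetilde{\sigma}_n(\beta^c)e_2\|_{L_\infty(\mathcal{N})}$ for any $e_2$; so it suffices to produce $e_2\le q$ with $\lambda\varphi(\mathbf{1}-e_2)\lesssim\|f\|_{L_1(\mathcal{N})}$ and $\|e_2\widetilde{\sigma}_n(\beta^c)e_2\|_{L_\infty(\mathcal{N})}\le c\lambda$ for every $n$. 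Writing $\beta^c=\sum_{k\ge1}\beta^c_k$ for the decomposition into scale-$k$ blocks, two elementary facts will be used: $\mathbb{E}_k(\beta^c_k)=0$, because $\frac{1}{|Q|}\int_Q(f-f_Q)\,d\mu=0$ on each $Q\in D(\mathcal{F}_k)$; and $t\mapsto g_n(\eta,t)$ is $\mathcal{F}_{n+1}$-measurable, because every $\psi_j$ with $j<M_{n+1}$ is $\mathcal{F}_{n+1}$-measurable. Together these give $\widetilde{\sigma}_n(\beta^c_k)=0$ whenever $n<k$ (the kernel is then constant on each $Q\in D(\mathcal{F}_k)$, and the block has vanishing average there), so $\widetilde{\sigma}_n(\beta^c)=\sum_{k=1}^{n}\widetilde{\sigma}_n(\beta^c_k)$.

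Next I localize. In the scale-$k$ block, the contribution of the interval $Q=I_k(\eta)$ containing $\eta$ has the form $p_{I_k(\eta)}(\cdot)\,q_{I_k(\eta)}$; since $e_2\le q\le q_k$ we have $e_2(\eta)\le q_{I_k(\eta)}$, and left multiplication annihilates this term by \eqref{pq}. Therefore $e_2\widetilde{\sigma}_n(\beta^c)e_2$ only retains, for each $k\le n$, the intervals $Q\in D(\mathcal{F}_k)$ with $\eta\notin Q$, i.e.\ those for which $\eta$ and the points of $Q$ first differ at some coordinate $a=a(\eta,Q)<k$. For the projection itself one takes $e_2=q$, or — should the oscillatory kernel estimates require separating a few scales near $\eta$ — $e_2=q$ minus a projection built from the Calder\'on--Zygmund data $(p_k)_{k\ge2}$ and the parent intervals $\widehat{Q}$; in either case $\lambda\varphi(\mathbf{1}-e_2)\lesssim(\sup_j m_j)\,\|f\|_{L_1(\mathcal{N})}$, using Lemma \ref{Cuculescu}(iv) and $|\widehat{Q}|=m_{k-1}|Q|$.

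The heart of the matter is then the global estimate: bounding, uniformly in $n$,
\[
\Big\|\sum_{k=1}^{n}\sum_{Q\in D(\mathcal{F}_k):\,\eta\notin Q} e_2(\eta)\Big(\int_Q p_Q(f(t)-f_Q)\,q_Q\,g_n(\eta,t)\,d\mu(t)\Big)e_2(\eta)\Big\|_{L_\infty(\mathcal{N})}\le c\lambda .
\]
Following G\'at's commutative scheme \cite{Ga2001}, the idea is to expand $K_l$, for $M_n\le l<M_{n+1}$, into Dirichlet blocks $D_{M_s}$ and oscillatory blocks $K_{l^{(b+1)}+jM_b,M_b}$: the blocks $D_{M_s}$ with $s\le k$ are constant on $Q$ and are killed by the cancellation $\int_Q(f-f_Q)\,d\mu=0$; the blocks $D_{M_s}$ with $s>k$ vanish off $Q$ by \eqref{e5}, hence do not contribute when $\eta\notin Q$; and the oscillatory blocks are handled through Lemmas \ref{K-E-ab-1}, \ref{K-E-ab-2} and \ref{K-E-GI}, which supply the geometric gain $\delta^{a-n}$ (with $\delta>1$ from Assumption \ref{gamma-nk}). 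Summing these gains over $k\le n$ and over the separation level $a<k$ reduces to $\sum_{j\ge0}\delta^{-j}<\infty$ and is thus uniform in $n$ and $\eta$, which completes the proof.

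I expect this last step to be the main obstacle. There is no closed formula for the Fej\'er kernel of the Vilenkin-like system and no pointwise majorant by a nice decreasing kernel, and the supremum over $l$ inside $\widetilde{\sigma}_n$ destroys linearity, so the block decomposition has to be reconciled by hand with the noncommutative off-diagonal shape $p_Q(\cdot)q_Q$. Crucially the whole estimate must be carried out at the level of operators, not scalars: one cannot afford the quantity $\int_Q\|f(t)-f_Q\|\,d\mu(t)$, which is not dominated by $\lambda|Q|$. This forces the use of the martingale-difference expansion $f-f_Q=\sum_{j>k}df_j$ (with the sum truncated at $j=n+1$, since $g_n(\eta,\cdot)$ is $\mathcal{F}_{n+1}$-measurable), of the Cuculescu bound $q_Qf_Qq_Q\le\lambda q_Q$ from Lemma \ref{Cuculescu}(iii), and of the regularity inequality $f_Q\le(\sup_j m_j)f_{\widehat{Q}}$. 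A secondary point — automatically handled by the construction above — is that the exceptional projection must be produced once and for all, independently of $n$.
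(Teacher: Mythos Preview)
Your preliminary reductions (self-adjointness, the cancellation $\widetilde{\sigma}_n(\beta^c_k)=0$ for $n<k$, and the annihilation of the on-diagonal cube $Q\ni\eta$ via $e_2(\eta)\le q_Q$) are all correct and coincide with Lemma~\ref{Rnboff} in the paper. The gap is in the final ``global estimate'', and it is structural, not technical.

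Your plan is to take $e_2=q$ (or a minor perturbation built from finitely many parent cubes) and then bound $\|q\,\widetilde{\sigma}_n(\beta^c)\,q\|_{L_\infty(\mathcal{N})}\le c\lambda$ directly, uniformly in $n$. This cannot succeed. The off-diagonal block carries $p_Q f q_Q$, on which there is no $L_\infty$ control whatsoever: Cuculescu only gives $q_Qf_Qq_Q\le\lambda q_Q$, and neither the regularity inequality nor the martingale-difference expansion $f-f_Q=\sum_{j>k}df_j$ yields any pointwise bound on $p_Qfq_Q$. Your own remark that one ``cannot afford $\int_Q\|f(t)-f_Q\|\,d\mu(t)$'' correctly identifies the obstruction, but the tools you list supply only $L_1$ information on $p_Qfp_Q$, not $L_\infty$ information on $p_Qfq_Q$. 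The kernel gains $\delta^{a-n}$ from Lemmas~\ref{K-E-ab-1}--\ref{K-E-GI} give summability in the scale variables, but they are multiplied by operators that are simply not bounded.

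The paper's route is different in kind: instead of an $L_\infty$ bound under $q$, it builds a positive dominating operator $\mathbb{B}_{off}=\sum_{k,Q}\sum_{n\ge k}|\widetilde{\sigma}_n(b_{off}^{k,Q})|\chi_{Q^c}$, proves $\|\mathbb{B}_{off}\|_{L_1(\mathcal{N})}\le c\|f\|_{L_1(\mathcal{N})}$, and sets $e_2=\chi_{(0,\lambda)}(\mathbb{B}_{off})\wedge q$; the trace bound is then Chebyshev. The $L_1$ estimate is obtained through the operator-valued Cauchy--Schwarz factorization of Lemma~\ref{AcB}: writing $\int_Q\widetilde{K}_n(\eta,t)\,p_Qf(t)q_Q\,d\mu(t)=A_n(\eta)^{1/2}uB^{1/2}$ with $B=|Q|\,q_Qf_Qq_Q$ and $A_n(\eta)=\int_Q\widetilde{K}_n^2(\eta,t)\,p_Qf(t)p_Q\,d\mu(t)$, one has $\|B^{1/2}\|_\infty\le|Q|^{1/2}\lambda^{1/2}$, which is the only place the Cuculescu $L_\infty$ control enters, while $A_n$ is handled in $L_1$ through the squared-kernel estimate Lemma~\ref{K^2-E-a}. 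A final Cauchy--Schwarz across $(k,Q)$ converts $\sum_{k,Q}\lambda^{1/2}\varphi(p_Q\chi_Q)^{1/2}\varphi(p_Qfp_Q\chi_Q)^{1/2}$ into $c\|f\|_{L_1}$ via $\lambda\varphi(\mathbf{1}-q)\le\|f\|_{L_1}$. This asymmetric splitting --- $L_\infty$ on the $q_Q$ side, $L_1$ on the $p_Q$ side --- is the idea your proposal does not contain.
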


Denote by 
\begin{equation}\label{K+}
	\widetilde{K}_n=\sup_{M_{n}\leq l<M_{n+1}} |K_l|.
\end{equation} We also need the following new properties for the function $\widetilde{K}_n$, which are useful when we use 
noncommutative Calder\'{o}n-Zygmund decomposition later.

\begin{lemma}\label{K-E-t}
Let $a,n\in \mathbb{N}$.  If $n\geq a$, then, for each $s \in G_{m}$,  there exists   a constant $c>0$ such that
$$\int_{I_{a}(s) \backslash I_{a+1}(s)}\widetilde{K}_n(t,s) d \mu(t) \leq c\big(\delta^{a-n}+(n-a) \delta^{\frac{a-n}{2}}\big).
$$
\end{lemma}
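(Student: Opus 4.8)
\textbf{Proof proposal for Lemma \ref{K-E-t}.}

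The plan is to reduce the estimate for $\widetilde K_n = \sup_{M_n\le l<M_{n+1}}|K_l|$ on the annulus $I_a(s)\setminus I_{a+1}(s)$ to the block-kernel estimates already available, namely Lemma \ref{K-E-ab-1} and Lemma \ref{K-E-ab-2}. First I would fix $l$ with $M_n\le l<M_{n+1}$ and write its Fej\'er kernel $K_l = \tfrac1l\sum_{k=1}^{l}D_k = \tfrac1l K_{1,l}$ as a sum of dyadic-type blocks along the digits of $l$. Using the expansion $l=\sum_{j=0}^{n} l_j M_j$ with $0\le l_j\le m_j-1$, one decomposes the partial-sum range $[1,l]$ into consecutive arithmetic-progression blocks of the form $K_{l^{(b+1)}+jM_b,\,M_b}$ for $b=0,1,\dots,n$ and $0\le j\le l_b-1$, plus lower-order terms; this is the standard block decomposition underlying \cite{Ga2001} and is exactly the form in which Lemmas \ref{K-E-ab-1} and \ref{K-E-ab-2} are stated. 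Since $l<M_{n+1}\le M\cdot M_n$ (boundedness of $m$), the normalising factor $1/l$ is comparable to $1/M_n$, so it suffices to bound $\tfrac1{M_n}\int_{I_a(s)\setminus I_{a+1}(s)}\big|K_{l^{(b+1)}+jM_b,\,M_b}(t,s)\big|\,d\mu(t)$ uniformly in $l$ and then sum over the at most $M_n$ blocks.

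Next I would split the block sum according to whether the block index $b$ satisfies $b\le a$ or $b>a$, matching the two regimes of the cited lemmas. For $b\le a$ one invokes Lemma \ref{K-E-ab-1}: on $I_a(s)\setminus I_{a+1}(s)$ one has the pointwise bound $|K_{l^{(b+1)}+jM_b,M_b}(t,s)|\le c\,\delta^{a-n}M_bM_n$, so integrating over the annulus (whose measure is $\le M_a^{-1}\le M_b^{-1}$) contributes $c\,\delta^{a-n}M_n$ per block; there are $O(a)$ such blocks, and after dividing by $M_n$ we get a term of size $c\,a\,\delta^{a-n}$, which is absorbed into $c\,\delta^{a-n}$ up to adjusting constants, or more conservatively into the stated bound. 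For $b>a$ one cannot expect a good pointwise bound, so one uses Cauchy--Schwarz together with Lemma \ref{K-E-ab-2}:
\[
\int_{I_a(s)\setminus I_{a+1}(s)}\big|K_{l^{(b+1)}+jM_b,M_b}\big|\,d\mu
\le \big(\mu(I_a(s))\big)^{1/2}\Big(\int_{I_a(s)\setminus I_{a+1}(s)}\big|K_{l^{(b+1)}+jM_b,M_b}\big|^2 d\mu\Big)^{1/2}
\le c\,M_a^{-1/2}\big(\delta^{a-n}M_aM_bM_n\big)^{1/2}.
\]
This equals $c\,\delta^{(a-n)/2}(M_bM_n)^{1/2}$; dividing by $M_n$ gives $c\,\delta^{(a-n)/2}(M_b/M_n)^{1/2}$, and summing the geometric-type series in $b$ from $a+1$ to $n$ (using $M_b/M_n\le 1$ and that the number of blocks at level $b$ is $O(1)$, or crudely bounding by $(n-a)$ terms) yields the bound $c\,(n-a)\,\delta^{(a-n)/2}$. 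Combining the two regimes produces $c\big(\delta^{a-n}+(n-a)\delta^{(a-n)/2}\big)$, as claimed.

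The main obstacle I anticipate is the bookkeeping in the block decomposition: one must carefully isolate which blocks are genuinely of the form covered by Lemmas \ref{K-E-ab-1}--\ref{K-E-ab-2}, handle the ``diagonal'' digit terms $j=m_b-1$ versus $0\le j\le m_b-2$ (the cited lemmas restrict to $j\le m_b-2$, so the top digit may need separate treatment, typically via $D_{M_{b+1}}$ and \eqref{e5}), and verify that the supremum over $M_n\le l<M_{n+1}$ can be passed \emph{inside} the block sum — i.e., that the block bounds are uniform in $l$ in the relevant range, so that $\int \sup_l(\cdots)\le \sum_b \sup_l\int(\cdots)$. Since the number of digits up to level $n$ grows like $n$ and each digit is $O(1)$, the sup-over-$l$ costs only a harmless combinatorial factor, which is why the final bound carries a linear factor $(n-a)$ rather than remaining purely geometric. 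Once these uniformity and edge-digit points are dispatched, the two displayed estimates close the proof.
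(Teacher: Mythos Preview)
Your overall strategy matches the paper's: the block decomposition $lK_l=\sum_{b=0}^{n}\sum_{j=0}^{l_b-1}K_{l^{(b+1)}+jM_b,M_b}$, the split at $b=a$, the pointwise Lemma~\ref{K-E-ab-1} for $b\le a$, and Cauchy--Schwarz plus the $L^2$ Lemma~\ref{K-E-ab-2} for $b>a$. Two points need repair.

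First, in the $b\le a$ regime your bookkeeping is loose: you should not throw away the factor $M_b/M_a$. Keeping it, the sum $\sum_{b=0}^{a}(m_b-1)M_b/M_a$ is geometric and bounded by a constant, so one obtains $c\,\delta^{a-n}$ directly, not $c\,a\,\delta^{a-n}$ (which is \emph{not} absorbed when $a$ is large and $n=a$). Your worry about $j=m_b-1$ is unfounded: since $l_b\le m_b-1$, the block index $j$ runs only up to $l_b-1\le m_b-2$, so the cited lemmas already cover every block.

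The real gap is in the $b>a$ regime. Your displayed Cauchy--Schwarz estimate $c\,M_a^{-1/2}(\delta^{a-n}M_aM_bM_n)^{1/2}$ is valid for a \emph{fixed} $l$, because Lemma~\ref{K-E-ab-2} bounds $\int|K_{l^{(b+1)}+jM_b,M_b}|^2$ for fixed $l$. But what you must control is $\int\sup_l|K_{l^{(b+1)}+jM_b,M_b}|$, and after Cauchy--Schwarz this requires $\int\sup_l|K_{l^{(b+1)}+jM_b,M_b}|^2$, which is \emph{not} dominated by $\sup_l\int|K_{l^{(b+1)}+jM_b,M_b}|^2$. Your proposed inequality ``$\int\sup_l\le\sum_b\sup_l\int$'' conflates these; taken literally it would give $c\,\delta^{(a-n)/2}(M_b/M_n)^{1/2}$ per block and hence a bound \emph{without} the factor $(n-a)$ --- a red flag that something is missing. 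The paper supplies the missing step: the block $K_{l^{(b+1)}+jM_b,M_b}$ depends on $l$ only through $l^{(b+1)}$, which ranges over a set $L_{b,n}$ of cardinality $\lesssim M_n/M_b$; one dominates $\sup_{l}|\cdot|^2\le\sum_{l\in L_{b,n}}|\cdot|^2$, applies Lemma~\ref{K-E-ab-2} term by term, and the extra factor $M_n/M_b$ inside the square root cancels the $M_b$, leaving $c\,\delta^{(a-n)/2}$ per block and $(n-a)\,\delta^{(a-n)/2}$ after summing in $b$. That is exactly the ``combinatorial cost'' you anticipate but do not implement.
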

\begin{proof} Let $l,n\in \mathbb{N}$ such that  $M_n\leq  l<M_{n+1}$. Recall that  $l_b$ and $l^{(b)}$ are as in \eqref{n-V-L} and \eqref{n^(k)-V-L-}. Note that
\begin{equation}\label{lKl}
lK_l=\sum_{b=0}^{n} \sum_{j=0}^{l_{b}-1} K_{l^{(b+1)}+j M_{b}, M_{b}},
\end{equation}
and $l_b\leq m_b-1$. Then
\begin{align}\label{K-I}
	\int_{I_{a}(s) \backslash I_{a+1}(s)} \widetilde{K}_n (t,s)d \mu(t) 
	&=\int_{I_a(s)\setminus I_{a+1}(s)} \sup_{M_n\leq l< M_{n+1}}|K_l (t,s)|d \mu(t)\nonumber\\
	&\leq \frac{1}{M_n}\int_{I_a(s)\setminus I_{a+1}(s)} \sup_{M_n\leq l< M_{n+1}}\sum_{b=0}^{n} \sum_{j=0}^{l_{b}-1} |K_{l^{(b+1)}+j M_{b}, M_{b}}(t,s)|d \mu(t)\nonumber\\
	&\leq \sum_{b=0}^{a} \sum_{j=0}^{m_{b}-2}\frac{1}{M_n}\int_{I_a(s)\setminus I_{a+1}(s)} \sup_{M_n\leq l< M_{n+1}} |K_{l^{(b+1)}+j M_{b}, M_{b}}(t,s)|d \mu(t)\\
	&\quad+\sum_{b=a+1}^{n} \sum_{j=0}^{m_{b}-2}\frac{1}{M_n}\int_{I_a(s)\setminus I_{a+1}(s)} \sup_{M_n\leq l< M_{n+1}} |K_{l^{(b+1)}+j M_{b}, M_{b}}(t,s)|d \mu(t)\nonumber\\
	&=:\mathrm{I}+\mathrm{II}.\nonumber
\end{align}
 By Lemma \ref{K-E-ab-1} and the assumption $M:=\sup_km_k<\infty$, we find that
$$\mathrm{I}\leq c\sum_{b=0}^{a} \sum_{j=0}^{m_{b}-2}\frac{1}{M_n}\frac{1}{M_{a}}\delta^{a-n}M_bM_n\leq c\delta^{a-n}.$$

It remains to estimate $\mathrm{II}$. By H\"{o}lder's inequality, we have
\begin{equation*}
	\mathrm{II}\leq \sum_{b=a+1}^{n} \sum_{j=0}^{m_{b}-2}\frac{1}{M_n}|I_a(s)\setminus I_{a+1}(s)|^{\frac{1}{2}} \left(\int_{I_a(s)\setminus I_{a+1}(s)} \sup_{M_n\leq l< M_{n+1}} |K_{l^{(b+1)}+j M_{b}, M_{b}}(t,s)|^2d \mu(t)\right)^{\frac{1}{2}}.
\end{equation*}
For $b,n\in \mathbb{N}$, set $$L_{b,n}:=\{l:l=\sum_{i=0}^n l_iM_i\quad \mbox{and}\quad i\in\{b+1,\dots,n\}\}.$$
 For each $M_n\leq l<M_{n+1}$, we have $l=\sum_{i=0}^n l_iM_i$, where $0\leq l_i\leq m_i-1$ for each $i$. Recall that $l^{(b+1)}=\sum_{i=b+1}^nl_iM_i$. 
Then we observe that, for each $t,s\in G_{m}$,
$$\sup_{M_n\leq l<M_{n+1}} |K_{l^{(b+1)}+j M_{b}, M_{b}}(t,s)|^2 =\sup_{l\in L_{b,n}} |K_{l^{(b+1)}+j M_{b}, M_{b}}(t,s)|^2.$$
Hence,  by Lemma \ref{K-E-ab-2}, we get 
\begin{equation*}
\begin{aligned}
\mathrm{II}&\leq \sum_{b=a+1}^{n} \sum_{j=0}^{m_{b}-2}\frac{1}{M_n}|I_a(s)\setminus I_{a+1}(s)|^{\frac{1}{2}} \Big(\int_{I_a(s)\setminus I_{a+1}(s)} \sup_{l\in L_{b,n}} |K_{l^{(b+1)}+j M_{b}, M_{b}}(t,s)|^2d \mu(t)\Big)^{\frac{1}{2}}\\
&\leq \sum_{b=a+1}^{n} \sum_{j=0}^{m_{b}-2}\frac{1}{M_n}|I_a(s)\setminus I_{a+1}(s)|^{\frac{1}{2}} \Big(\sum_{l\in L_{b,n}}\int_{I_a(s)\setminus I_{a+1}(s)}  |K_{l^{(b+1)}+j M_{b}, M_{b}}(t,s)|^2d \mu(t)\Big)^{\frac{1}{2}}\\
&\leq \sum_{b=a+1}^{n} \sum_{j=0}^{m_{b}-2}\frac{1}{M_n}\big(\frac{1}{M_{a}}\big)^{\frac{1}{2}} \Big(\sum_{l\in L_{b,n}} M_aM_bM_n\delta^{a-n} \Big)^{\frac{1}{2}}\\
&\leq c\sum_{b=a+1}^{n} \sum_{j=0}^{m_{b}-2}\frac{1}{M_n}\big(\frac{1}{M_{a}}\big)^{\frac{1}{2}}\cdot\Big(\frac{M_n}{M_b}M_aM_bM_n\delta^{a-n}\Big)^{\frac{1}{2}}\\
	&\leq c\sum_{b=a+1}^{n} \delta^{\frac{a-n}{2}}\leq c(n-a) \delta^{\frac{a-n}{2}},
\end{aligned}
\end{equation*}
where we used the fact $m_b\leq M:=\sup_km_k<\infty$.
The desired result follows from  the estimates of $\mathrm{I}$ and $\mathrm{II}$.
\end{proof}

\begin{lemma}\label{K^2-E-a} Let $a,n\in \mathbb{N}$.   If $n\geq a$, then, for each $s \in G_{m}$,  there exists a constant $c>0$ such that
$$\int_{I_{a}(s) \backslash I_{a+1}(s)}\widetilde{K}_n^2(t,s) d \mu(t) \leq c\big(M_a^{\frac{1}{2}}\delta^{a-n}+(n-a)\delta^{\frac{a-n}{2}}M_a^{\frac{1}{2}}\big)^2.
$$
\end{lemma}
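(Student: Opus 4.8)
\textbf{Proof plan for Lemma \ref{K^2-E-a}.}
The plan is to mimic the decomposition used in the proof of Lemma \ref{K-E-t}, but now for the square $\widetilde K_n^2$, and to feed it into Lemmas \ref{K-E-ab-1} and \ref{K-E-ab-2} in the appropriate combination. Fix $s\in G_m$ and $l$ with $M_n\le l<M_{n+1}$. As in \eqref{lKl}, write $lK_l=\sum_{b=0}^{n}\sum_{j=0}^{l_b-1}K_{l^{(b+1)}+jM_b,M_b}$, so that on $I_a(s)\setminus I_{a+1}(s)$,
\[
\widetilde K_n(t,s)\le \frac{1}{M_n}\Big(\sum_{b=0}^{a}\sum_{j=0}^{m_b-2}\sup_{M_n\le l<M_{n+1}}|K_{l^{(b+1)}+jM_b,M_b}(t,s)|+\sum_{b=a+1}^{n}\sum_{j=0}^{m_b-2}\sup_{M_n\le l<M_{n+1}}|K_{l^{(b+1)}+jM_b,M_b}(t,s)|\Big)=:\frac{1}{M_n}(A(t)+B(t)).
\]
Then $\widetilde K_n^2\le \frac{2}{M_n^2}(A^2+B^2)$, and I would bound $\int_{I_a(s)\setminus I_{a+1}(s)}A^2\,d\mu$ and $\int_{I_a(s)\setminus I_{a+1}(s)}B^2\,d\mu$ separately.

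For the first term: by Lemma \ref{K-E-ab-1}, each summand in $A$ is pointwise $\le c\,\delta^{a-n}M_bM_n$ on $I_a(s)\setminus I_{a+1}(s)$, and since there are at most $(a+1)(M-1)$ summands with $M_b\le M_a$, we get $A(t)\le c\,\delta^{a-n}M_aM_n$ there; integrating over a set of measure $M_a^{-1}-M_{a+1}^{-1}\le M_a^{-1}$ gives $\int A^2\,d\mu\le c\,\delta^{2(a-n)}M_a^2M_n^2\cdot M_a^{-1}=c\,\delta^{2(a-n)}M_aM_n^2$, i.e. a contribution of order $\delta^{a-n}M_a^{1/2}$ after dividing by $M_n$ and taking the square root — which matches the first term on the right-hand side. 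For the second term, I would not take a supremum out of the square naively; instead, as in Lemma \ref{K-E-t}, replace the sup over $M_n\le l<M_{n+1}$ by a sup over the finite index set $L_{b,n}$ (legitimate because $K_{l^{(b+1)}+jM_b,M_b}$ depends on $l$ only through $l^{(b+1)}$), bound that sup by the $\ell_2$-sum over $L_{b,n}$, and apply Lemma \ref{K-E-ab-2}: $\int_{I_a(s)\setminus I_{a+1}(s)}|K_{l^{(b+1)}+jM_b,M_b}|^2\,d\mu\le c\,\delta^{a-n}M_aM_bM_n$, while $\#L_{b,n}\le M_{n+1}/M_{b+1}\le c\,M_n/M_b$. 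Hence each inner integral is $\le c\,\delta^{a-n}M_aM_nM_n = c\,\delta^{a-n}M_aM_n^2$. Then by Cauchy--Schwarz on the (at most $(n-a)(M-1)$-term) sum over $b$ and $j$, $\int B^2\,d\mu\le c\,(n-a)\sum_{b,j}\delta^{a-n}M_aM_n^2 \le c\,(n-a)^2\delta^{a-n}M_aM_n^2$, which after dividing by $M_n$ and square-rooting yields a contribution of order $(n-a)\delta^{(a-n)/2}M_a^{1/2}$, the second term on the right-hand side. Combining and using $(x+y)^2\le 2(x^2+y^2)\le 2(x+y)^2$ to absorb constants into the claimed form $c\big(M_a^{1/2}\delta^{a-n}+(n-a)\delta^{(a-n)/2}M_a^{1/2}\big)^2$ finishes the proof.

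The main obstacle I anticipate is the correct handling of the supremum over $l$ inside the square in the $B$-term: one must be careful that $\big(\sup_l |K|\big)^2=\sup_l|K|^2$ and that the relevant dependence is only on $l^{(b+1)}$, so that the cardinality bound $\#L_{b,n}\lesssim M_n/M_b$ is the right one to pair with Lemma \ref{K-E-ab-2}; an off-by-one or an over-counting here would spoil the power of $\delta$. A secondary point to watch is that boundedness of the Vilenkin space ($M=\sup_k m_k<\infty$) is used twice — once to bound the number of $j$-summands and once to turn $M_{n+1}/M_{b+1}$ into $cM_n/M_b$ — and that the final bookkeeping of powers of $M_n$ exactly cancels, leaving only powers of $M_a$ and $\delta^{a-n}$ as claimed.
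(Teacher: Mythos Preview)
Your approach is essentially the same as the paper's: both decompose $\widetilde K_n$ via \eqref{lKl}, split the sum at $b=a$, apply Lemma \ref{K-E-ab-1} pointwise for $b\le a$, and use the $L_{b,n}$-trick together with Lemma \ref{K-E-ab-2} for $b>a$. The only stylistic difference is that the paper applies the triangle inequality in $L^2$ (Minkowski) directly to $\big(\int\widetilde K_n^2\big)^{1/2}$, summing $\big(\int|\cdot|^2\big)^{1/2}$ over $b,j$, whereas you square first and use Cauchy--Schwarz on the finite $(b,j)$-sum; both routes yield the same final bound.

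One small point to tighten: your conclusion $A(t)\le c\,\delta^{a-n}M_aM_n$ does not follow merely from ``there are at most $(a+1)(M-1)$ summands with $M_b\le M_a$'' --- that counting argument would leave an unwanted factor of $(a+1)$. What you actually need is the geometric sum $\sum_{b=0}^{a}(m_b-1)M_b=M_{a+1}-1\le M\cdot M_a$, which is what the paper uses (implicitly) in its estimate of $\mathrm{I'}$.
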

\begin{proof} Similar to \eqref{K-I}, by \eqref{lKl}, we have
\begin{align*}
&\Big(\int_{I_{a}(s) \backslash I_{a+1}(s)}\widetilde{K}_n^2(t,s) d \mu(t)\Big)^{\frac{1}{2}}\\
	&\leq c\sum_{b=0}^{a} \sum_{j=0}^{m_{b}-2}\Big(\int_{I_a(s)\setminus I_{a+1}(s)}\sup_{M_n\leq l<M_{n+1}}
	\big[\frac{1}{M_n} |K_{l^{(b+1)}+j M_{b}, M_{b}}(t,s)|\big]^2d\mu(t)\Big)^{\frac{1}{2}}\\
	&\quad+c\sum_{b=a+1}^{n} \sum_{j=0}^{m_{b}-2}\Big( \int_{I_a(s)\setminus I_{a+1}(s)}\sup_{M_n\leq l<M_{n+1}}
	\big[\frac{1}{M_n}|K_{l^{(b+1)}+j M_{b}, M_{b}}(t,s)|\big]^2d\mu(t)\Big)^{\frac{1}{2}}\\
	&=:c(\mathrm{I'}+\mathrm{II'}).
\end{align*}
By Lemma \ref{K-E-ab-1}, we can obtain
\begin{align*}
\mathrm{I'}&\leq c\sum_{b=0}^{a} \sum_{j=0}^{m_{b}-2}	\left(\frac{1}{M_a}\big[\frac{1}{M_n}M_bM_n\delta^{a-n}\big]^2\right)^{\frac{1}{2}}=cM_a^{\frac{1}{2}}\delta^{a-n}\nonumber
\end{align*}
Similar to the proof of $\mathrm{II}$ in Lemma \ref{K-E-t}, by Lemma \ref{K-E-ab-2}, we have
\begin{align*}
\mathrm{II'} &= c\sum_{b=a+1}^{n} \sum_{j=0}^{m_{b}-2}\Big( \int_{I_a(s)\setminus I_{a+1}(s)}\sup_{M_n\leq l<M_{n+1}}
	\big[\frac{1}{M_n}|K_{l^{(b+1)}+j M_{b}, M_{b}}(t,s)|\big]^2d\mu(t)\Big)^{\frac{1}{2}}\\
	&\leq c\sum_{b=a+1}^{n} \sum_{j=0}^{m_{s}-2}\frac{1}{M_n}\cdot	\Big(\sum_{l\in L_{b,n}}\int_{I_a(s)\setminus I_{a+1}(s)}
	|K_{l^{(b+1)}+j M_{b}, M_{b}}(t,s)|^2d\mu(t)\Big)^{\frac{1}{2}}\\
	&\leq c\sum_{b=a+1}^{n} \sum_{j=0}^{m_{s}-2}\frac{1}{M_n}\Big(\frac{M_n}{M_b}M_aM_bM_n\delta^{a-n}\Big)^{\frac{1}{2}}\\
	&\leq c(n-a)\delta^{\frac{a-n}{2}}M_a^{\frac{1}{2}}.
\end{align*}
Then we get the desired inequality.
\end{proof}

\begin{lemma}\label{KI-E-lemma} Let  $n\in\mathbb{N}$. 
\begin{enumerate}[{\rm (i)}]
\item For each $\eta \in G_{m}$,  there exists an absolute constant $c>0$ such that
$$\sup_n\int_{G_m}\widetilde{K}_n(t,\eta)d\mu(t)\leq c.$$
\item As a consequence, we have
$$	\|\widetilde{\sigma}_n(f)\|_{L_{\infty}(\mathcal{N})}\leq c\|f\|_{L_{\infty}(\mathcal{N})}, \quad\forall n\geq1.$$
\end{enumerate}
\end{lemma}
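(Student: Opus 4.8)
\textbf{Proof plan for Lemma \ref{KI-E-lemma}.}

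The plan is to prove part (i) by splitting $G_m$ along the dyadic-type shells determined by the neighborhoods $I_a(\eta)$, then summing the shell estimates obtained in Lemmas \ref{K-E-ab-1}--\ref{K-E-GI} and \ref{K-E-t}. First I would observe that, for fixed $n$, the domain $G_m$ decomposes as
\[
G_m = \bigl(G_m\setminus I_n(\eta)\bigr)\ \sqcup\ \bigcup_{a=0}^{n-1}\bigl(I_a(\eta)\setminus I_{a+1}(\eta)\bigr),
\]
since $I_n(\eta)\subseteq\cdots\subseteq I_0(\eta)=G_m$ and the "annuli'' $I_a(\eta)\setminus I_{a+1}(\eta)$ are pairwise disjoint. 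On the outer part $G_m\setminus I_n(\eta)$, every $l$ with $M_n\le l<M_{n+1}$ satisfies $l\ge M_n$, so $\widetilde K_n(t,\eta)=\sup_{M_n\le l<M_{n+1}}|K_l(t,\eta)|\le \sup_{l\ge M_n}|K_l(t,\eta)|$, and Lemma \ref{K-E-GI} (applied with $k=n$, noting symmetry of the kernel in its two arguments) gives $\int_{G_m\setminus I_n(\eta)}\widetilde K_n(t,\eta)\,d\mu(t)\le c$ uniformly in $n$. On each annulus $I_a(\eta)\setminus I_{a+1}(\eta)$ with $a\le n$, Lemma \ref{K-E-t} yields $\int_{I_a(\eta)\setminus I_{a+1}(\eta)}\widetilde K_n(t,\eta)\,d\mu(t)\le c\bigl(\delta^{a-n}+(n-a)\delta^{(a-n)/2}\bigr)$. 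Summing over $a=0,1,\dots,n-1$ and reindexing by $j=n-a\ge 1$ gives the geometric/arithmetico-geometric series $c\sum_{j\ge1}\bigl(\delta^{-j}+j\,\delta^{-j/2}\bigr)$, which converges to an absolute constant since $\delta>1$ (so both $\delta^{-1}<1$ and $\delta^{-1/2}<1$). Adding the outer contribution proves (i) with a constant independent of $n$ and $\eta$.

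For part (ii), I would argue directly from the integral representation \eqref{CM} and the definition \eqref{s-tilde} of $\widetilde\sigma_n$. For $f\in L_\infty(\mathcal N)$ and any $\eta\in G_m$,
\[
\widetilde\sigma_n(f)(\eta)=\int_{G_m} f(t)\,\widetilde K_n(\eta,t)\,d\mu(t),
\]
where $\widetilde K_n(\eta,t)\ge 0$. Since $-\|f\|_{L_\infty(\mathcal M)}\mathbf 1\le f(t)\le \|f\|_{L_\infty(\mathcal M)}\mathbf 1$ as operators for (a.e.)\ $t$, integrating against the positive kernel $\widetilde K_n(\eta,\cdot)$ preserves this operator inequality, so for each $\eta$
\[
-\Bigl(\int_{G_m}\widetilde K_n(\eta,t)\,d\mu(t)\Bigr)\|f\|_{L_\infty(\mathcal M)}\mathbf 1
\ \le\ \widetilde\sigma_n(f)(\eta)\ \le\
\Bigl(\int_{G_m}\widetilde K_n(\eta,t)\,d\mu(t)\Bigr)\|f\|_{L_\infty(\mathcal M)}\mathbf 1,
\]
whence $\|\widetilde\sigma_n(f)(\eta)\|_{L_\infty(\mathcal M)}\le \bigl(\sup_n\int_{G_m}\widetilde K_n(\eta,t)\,d\mu(t)\bigr)\|f\|_{L_\infty(\mathcal M)}$. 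Taking the supremum over $\eta$ and invoking part (i) (again using symmetry of $\widetilde K_n$ in its arguments, or applying (i) with the roles of the variables swapped) gives $\|\widetilde\sigma_n(f)\|_{L_\infty(\mathcal N)}\le c\|f\|_{L_\infty(\mathcal N)}$ for all $n$.

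The main obstacle I anticipate is organizing the shell decomposition cleanly and making sure the uniformity in $n$ survives: the per-annulus bound in Lemma \ref{K-E-t} grows like $(n-a)\delta^{(a-n)/2}$, and one must check that after summing over all $n-a$ annuli the series is still bounded by an absolute constant rather than something growing in $n$ — this is exactly where the strict inequality $\delta>1$ is used, so that $\sum_{j\ge1} j\,\delta^{-j/2}<\infty$. A secondary technical point is the bookkeeping with the two arguments of the kernel (the lemmas are stated with the integration variable as the first or second slot inconsistently), which is harmless because $K_l$ — and hence $\widetilde K_n$ — is symmetric up to complex conjugation in $(\eta,t)$, so the modulus is genuinely symmetric; I would remark on this once rather than repeat it. Everything else is routine summation of geometric series and the standard fact that integration against a nonnegative scalar kernel preserves operator inequalities.
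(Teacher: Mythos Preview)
Your decomposition in part~(i) is incorrect, and this leaves a genuine gap. You write
\[
G_m = \bigl(G_m\setminus I_n(\eta)\bigr)\ \sqcup\ \bigcup_{a=0}^{n-1}\bigl(I_a(\eta)\setminus I_{a+1}(\eta)\bigr),
\]
but since $I_0(\eta)=G_m$, the telescoping union $\bigcup_{a=0}^{n-1}\bigl(I_a(\eta)\setminus I_{a+1}(\eta)\bigr)$ equals $G_m\setminus I_n(\eta)$ --- the \emph{same} set you listed as the first piece. So you have given two (redundant) arguments for the region $G_m\setminus I_n(\eta)$ (once via Lemma~\ref{K-E-GI}, once via summing Lemma~\ref{K-E-t} over the annuli), and \emph{no} argument at all for the innermost ball $I_n(\eta)$. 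The correct split is $G_m = I_n(\eta)\sqcup\bigl(G_m\setminus I_n(\eta)\bigr)$, and it is precisely the contribution $\int_{I_n(\eta)}\widetilde K_n(t,\eta)\,d\mu(t)$ that is missing from your plan. Neither Lemma~\ref{K-E-GI} nor Lemma~\ref{K-E-t} applies there (the former integrates over the complement of $I_n$; the latter over an annulus $I_a\setminus I_{a+1}$ with $a\le n$, never over $I_n$ itself).

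The paper fills this gap by a pointwise bound on the inner ball: using the explicit formula for $D_l$ from \cite[Proposition~8]{Ga2001} together with Assumption~\ref{gamma-nk}(iv), one shows $|D_l(t,\eta)|\le cM_n$ for every $l\le M_{n+1}$ and every $t\in I_n(\eta)$, whence $|K_l(t,\eta)|\le cM_n$ and so $\int_{I_n(\eta)}\widetilde K_n(t,\eta)\,d\mu(t)\le cM_n\cdot|I_n(\eta)|=c$. Once you add this, either of your two estimates for $G_m\setminus I_n(\eta)$ completes the proof; the annuli route via Lemma~\ref{K-E-t} is a perfectly valid alternative to the paper's direct citation of Lemma~\ref{K-E-GI}. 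Your argument for part~(ii) is essentially fine; just note that $f$ need not be self-adjoint, so either decompose $f$ into positive parts of its real and imaginary components or replace the operator-inequality sandwich by the norm estimate $\|\int f(t)k(t)\,d\mu(t)\|_\infty\le \|f\|_\infty\int k(t)\,d\mu(t)$ for a nonnegative scalar kernel $k$.
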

\begin{proof} For each $n\in \mathbb{N}$ and $\eta\in G_m$,  we write
\begin{align*}
	\int_{G_m}\widetilde{K}_n(t,\eta)d\mu(t)&=\int_{G_m}\sup_{M_{n}\leq l<M_{n+1}}|K_l(t,\eta)|d\mu(t)\\
	 &\leq\int_{I_n(\eta)}\sup_{M_{n}\leq l<M_{n+1}}|K_l(t,\eta)|d\mu(t)\\
	 &\quad+\int_{G_m\setminus I_n(\eta)}\sup_{M_{n}\leq l<M_{n+1}}|K_l(t,\eta)|d\mu(t)\\
	&=:\mathrm{I}+\mathrm{II}.
\end{align*}
The estimate $\mathrm{II}\leq c$ is due to  Lemma \ref{K-E-GI}.

It remains to estimate $\mathrm{I}$. By \cite[Proposition 8]{Ga2001},  we have  
\begin{align*}
	D_l(t,\eta) &=
		\sum_{s=0}^{\infty} \psi_{l, s+1}(t) \bar{\psi}_{l, s+1}(\eta) D_{M_{s}}(t,\eta) \sum_{j=0}^{l_{s}-1} r_{s}^{l^{(s+1)}+j M_{s}}(t) \bar{r}_{s}^{l^{(s+1)}+j M_{s}}(\eta).
\end{align*}
Note that $l_s-1\leq m_s-2$ and $m_s\leq M:=\sup_km_k<\infty$.  Then, by Assumption \ref{gamma-nk} (iv), we have
\begin{align*}
	|D_l(t,\eta)| &\leq c\sum_{s=0}^{n}\left\|\psi_{l, s+1}\right\|_{\infty}^{2} M_{s}\sum_{j=0}^{m_{s}-2}\|r_{s}^{l^{(s+1)}+j M_{s}}\|_\infty^2\\ 
&\leq c\sum_{s=0}^{n} M_{s}\sum_{j=0}^{m_{s}-2}\frac{m_s}{\delta} \left\|\prod_{i=s+1}^n r_i^{l^{(i)}}\right\|_{\infty}^{2}\\ 
&\leq c\sum_{s=0}^{n} M_{s}m_s\frac{m_s}{\delta}\prod_{i=s+1}^n\frac{m_i}{\delta}\\
	&\leq c \sum_{s=0}^n M_{s} \delta^{s-n}\frac{M_{n}}{M_{s}}  \leq c M_n.
\end{align*}
Then $|K_l(t,\eta)|=|\frac{1}{l}\sum_{k=1}^lD_k(t,\eta)|\leq cM_n,$ which gives
\begin{align*}
	\mathrm{I}\leq c\int_{I_n(\eta)}M_nd\mu(t)=c.
\end{align*}
The  proof is complete.
\end{proof}

The proof of Proposition \ref{Rbd} relies on the following  lemma.

\begin{lemma}\label{Rnbd}
Let $k\geq1$ and $Q\in D(\mathcal{F}_k)$. Then, for each $n\geq1$, we have
\[
-q\sum_{n\geq k}\widetilde{\sigma}_n(|b_d^{k,Q}|)q\leq  q\widetilde{\sigma}_n(b_d^{k,Q})q\leq q\sum_{n\geq k}\widetilde{\sigma}_n(|b_d^{k,Q}|)q,
\]
where $b_d^{k,Q}=p_Q(f-f_Q)p_Q\chi_Q$. 
\end{lemma}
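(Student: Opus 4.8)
The statement to be proved is Lemma~\ref{Rnbd}: for $b_d^{k,Q}=p_Q(f-f_Q)p_Q\chi_Q$ and all $n\geq 1$,
\[
-q\sum_{n\geq k}\widetilde{\sigma}_n(|b_d^{k,Q}|)q\leq q\widetilde{\sigma}_n(b_d^{k,Q})q\leq q\sum_{n\geq k}\widetilde{\sigma}_n(|b_d^{k,Q}|)q.
\]
The plan is to exploit that $\widetilde\sigma_n$ is an integral operator against the \emph{nonnegative} kernel $\widetilde K_n(\eta,t)=\sup_{M_n\leq l<M_{n+1}}|K_l(\eta,t)|$, so that it behaves like a positive linear map on the semi-commutative algebra $\mathcal N$, combined with the observation that $b_d^{k,Q}$ is supported (in the $G_m$-variable) on the single atom $Q\in D(\mathcal F_k)$.

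First I would record the two structural facts. (1) Positivity/absolute-value domination: for any $h\in L_1(\mathcal N)$ and fixed $\eta$, $\widetilde\sigma_n(h)(\eta)=\int_{G_m} h(t)\,\widetilde K_n(\eta,t)\,d\mu(t)$ is a $\widetilde K_n(\eta,\cdot)\,d\mu$-average of operator values, hence $-\widetilde\sigma_n(|h|)(\eta)\leq \Re\big(\widetilde\sigma_n(h)(\eta)\big)$ and, more to the point, since $\widetilde K_n\geq 0$ we have the operator inequality $-\widetilde\sigma_n(|h|)\leq \widetilde\sigma_n(h)\leq \widetilde\sigma_n(|h|)$ whenever $h=h^*$; when $h$ is not self-adjoint one still gets $\pm\widetilde\sigma_n(h)\leq \widetilde\sigma_n(|h|)$ after noting $|h|$ dominates $\pm h$ pointwise in $t$ and $\widetilde K_n\geq0$ preserves this — this is exactly the kind of averaging argument already used in \eqref{sigma-ri} and Remark~\ref{control}. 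Actually $b_d^{k,Q}=p_Q(f-f_Q)p_Q\chi_Q$ \emph{is} self-adjoint (as $f,f_Q$ are positive and $p_Q$ a projection), which makes this cleaner: $-\widetilde\sigma_n(|b_d^{k,Q}|)\leq \widetilde\sigma_n(b_d^{k,Q})\leq \widetilde\sigma_n(|b_d^{k,Q}|)$ for every single $n$. (2) Nonnegativity of the summands: $\widetilde\sigma_n(|b_d^{k,Q}|)\geq 0$ for all $n$, because $|b_d^{k,Q}|\geq0$ and $\widetilde K_n\geq 0$, so $\sum_{n\geq k}\widetilde\sigma_n(|b_d^{k,Q}|)\geq \widetilde\sigma_{n_0}(|b_d^{k,Q}|)$ for any single $n_0\geq k$, and the whole sum dominates each term.

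Next I would combine these: for $n\geq k$, fact (1) gives $\widetilde\sigma_n(b_d^{k,Q})\leq \widetilde\sigma_n(|b_d^{k,Q}|)\leq \sum_{n\geq k}\widetilde\sigma_n(|b_d^{k,Q}|)$ by fact (2), and symmetrically $\widetilde\sigma_n(b_d^{k,Q})\geq -\sum_{n\geq k}\widetilde\sigma_n(|b_d^{k,Q}|)$; then sandwich by $q(\cdot)q$, which preserves the order since $a\leq b\Rightarrow qaq\leq qbq$ for any $q\in\mathcal P(\mathcal N)$. For $1\leq n<k$ I would invoke the support property: $b_d^{k,Q}$ lives on $Q$, and for such small $n$ the relevant geometry forces $q\widetilde\sigma_n(b_d^{k,Q})q=0$ (this is where one uses that $Q\in D(\mathcal F_k)$ refines $D(\mathcal F_n)$ together with $q_{\widehat Q}q_Q=q_Q$ and the cancellation $p_Q q_Q=0$ from \eqref{pq}, so that compressing by $q$ and by the conditional-expectation structure at level $n<k$ annihilates the term) — alternatively one checks directly that $\mathbb E_n(b_d^{k,Q})=0$ for $n\leq k$ and $\widetilde\sigma_n$ only sees frequencies below $M_{n+1}$, so for $n\leq k$ the piece contributes nothing after compression; in any case the trivial inequality $-A\leq 0\leq A$ with $A=\sum_{n\geq k}\widetilde\sigma_n(|b_d^{k,Q}|)\geq0$ then covers these $n$ as well. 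The main obstacle I anticipate is the non-self-adjointness issue in general — here it is sidestepped because $b_d^{k,Q}$ is genuinely self-adjoint, but one must be careful that the pointwise domination $-|b_d^{k,Q}(t)|\leq b_d^{k,Q}(t)\leq |b_d^{k,Q}(t)|$ survives integration against $\widetilde K_n(\eta,\cdot)\,d\mu(t)\geq0$; this is exactly the monotonicity of the integral for operator-valued functions against a nonnegative scalar measure, which is routine. The only genuinely delicate point is justifying strong-operator convergence of $\sum_{n\geq k}\widetilde\sigma_n(|b_d^{k,Q}|)$ so that the inequality with the infinite sum is meaningful; for this I would appeal to the kernel estimates in Lemmas~\ref{K-E-t}--\ref{KI-E-lemma}, which give $\|\widetilde\sigma_n(|b_d^{k,Q}|)\|_{L_1(\mathcal N)}\lesssim \delta^{(k-n)/2}(\text{polynomial})$, summable in $n$, hence the partial sums form a Cauchy sequence in $L_1(\mathcal N)$ and the limit is a well-defined positive element of $L_1(\mathcal N)$.
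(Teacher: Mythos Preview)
Your proposal is essentially correct and follows the same line as the paper's proof: both rely on (a) the self-adjointness of $b_d^{k,Q}$ together with the positivity of the kernel $\widetilde K_n$ to get $-\widetilde\sigma_n(|b_d^{k,Q}|)\leq \widetilde\sigma_n(b_d^{k,Q})\leq \widetilde\sigma_n(|b_d^{k,Q}|)$, and (b) the vanishing $\widetilde\sigma_n(b_d^{k,Q})=0$ for $n\leq k-1$. One clarification on (b): your phrase ``$\widetilde\sigma_n$ only sees frequencies below $M_{n+1}$'' is not literally right, since $\widetilde\sigma_n$ is \emph{not} a Fourier multiplier (the supremum in $\widetilde K_n$ destroys that structure); the correct mechanism---which the paper states explicitly---is that $\widetilde K_n(\eta,\cdot)$ is $\mathcal F_{n+1}$-measurable, hence constant on $Q\in D(\mathcal F_k)$ when $n+1\leq k$, and then $\int_Q(f-f_Q)\,d\mu=0$ kills the integral. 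Your alternative involving $p_Qq_Q=0$ is a separate mechanism that the paper uses only for points $t\in Q$ (where $q(t)\leq q_Q$ so $q(t)p_Q=0$), and it yields the slightly stronger bound with an extra $\chi_{G_m\setminus Q}$ on the majorant, which is what actually gets used in Proposition~\ref{Rbd}; your argument proves the lemma as stated but you should be aware that this localization is what makes the $L_1$-estimate in \eqref{qbdq}--\eqref{qbdq-K} go through.
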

\begin{proof}
Fix $k\geq1$, $Q\in D(\mathcal{F}_k)$ and $n\geq1$. We  estimate 	$q(t)\widetilde{\sigma}_n(b_d^{k,Q})q(t)$ according to the cases $t\in Q$ and $t\in G_m\setminus Q$, respectively. We first  consider the case $t\in Q$. By Lemma \ref{Cuculescu}, $q\leq q_k$. Hence, $q(t)\leq q_k(t)=q_Q$ for $t\in Q$. Then, for each $i\geq1$, we can easily  check that 
\begin{align}\label{qEqbd}
q(t)\widetilde{\sigma}_n(b_d^{k,Q})(t)q(t)&=q(t)p_Q\widetilde{\sigma}_n(b_d^{k,Q})(t)p_Qq(t) \nonumber\\
		                                      &=q(t)q_Qp_Q\widetilde{\sigma}_n(b_d^{k,Q})(t)p_Qq_Qq(t)=0,
\end{align}
where we  used $p_Qq_Q=q_Qp_Q=0$ as in \eqref{pq}.
We assume from now on that $t\in G_m\setminus Q$. 
For each  $n$, by Assumption \ref{gamma-nk} (i,iv), it is easy to check that $\widetilde{K}_n$ is $\mathcal{F}_{n+1}$ measurable. Then, for $n\leq k-1$, we have
\begin{align}\label{sbd-nlessk-0}
	\widetilde{\sigma}_n(b_d^{k,Q})&=\int_{G_m}\widetilde{K}_np_Q(f-f_Q)p_Q\chi_Q\nonumber\\
	&=\int_{G_m}\mathbb{E}_k\big(\widetilde{K}_np_Q(f-f_Q)p_Q\chi_Q\big)\nonumber\\
	&=\int_{G_m}\widetilde{K}_np_Q\mathbb{E}_k(f-f_Q)p_Q\chi_Q=0.\nonumber
\end{align}
And for  the case $n\geq k$, it is clear that  
$$-\sum_{n\geq k}q\widetilde{\sigma}_n(|b_d^{k,Q}|)q\leq q\widetilde{\sigma}_n(b_d^{k,Q})q\leq \sum_{n\geq k}q\widetilde{\sigma}_n(|b_d^{k,Q}|)q,\quad t\in G_m\setminus Q.$$
The desired result follows. 
\end{proof}

\begin{proof}[Proof of Proposition \ref{Rbd}]
Denote
\[
\mathbb{B}_d=\sum_{k\geq1}\sum_{Q\in D(\mathcal{F}_k)}\sum_{n\geq k}\widetilde{\sigma}_n(|b_d^{k,Q}|)\chi_{G_m\setminus Q},
\]
and let
\[
e_1=\chi_{(0,\lambda)}(\mathbb{B}_d)\wedge q.
\]
Note that for each $n\geq1$
\[
	\widetilde{\sigma}_n(b_d)=({\bf 1}-q)	\widetilde{\sigma}_n(b_d)+q	\widetilde{\sigma}_n(b_d)({\bf 1}-q)+q	\widetilde{\sigma}_n(b_d)q.
\]
Hence, for each $ n\geq1$,
\[
e_1 \widetilde{\sigma}_n(b_{d}) e_1= e_1 q	\widetilde{\sigma}_n(b_{d})q e_1.
\]
In addition, since the operator $	\widetilde{\sigma}_n$ is linear, by Lemma \ref{Rnbd} and \eqref{bd}, it follows that 
\begin{align*}
-q\mathbb{B}_dq&\leq q	\widetilde{\sigma}_n(b_d)q=\sum_{k\geq1}\sum_{Q\in D(\mathcal{F}_k)}q	\widetilde{\sigma}_n(b_d^{k,Q})q\leq q\mathbb{B}_dq, \quad \forall n\geq 1.
\end{align*}
Then, for each $n\geq1$,
\begin{equation}\label{Rnbe1}
-\lambda \leq -e_1\mathbb{B}_de_1\leq e_1 \widetilde{\sigma}_n(b_{d})e_1 \leq e_1\mathbb{B}_de_1\leq \lambda.
\end{equation}
On the other hand, by the Chebyshev inequality, 
\begin{align}\label{Bd-w11}
	\lambda\varphi(\chi_{(\lambda,\infty)}(\mathbb{B}_d))
	&\leq \|\mathbb{B}_d\|_{L_1(\mathcal{N})}\\
	&=\sum_{k\geq 1}\sum_{Q\in D(\mathcal{F}_k)}\sum_{n\geq k}\|\widetilde{\sigma}_n(|b_d^{k,Q}|)\chi_{G_m\setminus Q}\|_{L_1(\mathcal{N})}.\nonumber
\end{align}
On the other hand, by \eqref{qEqbd},  we have
\begin{equation}\label{qbdq}
\begin{aligned}
	\|\widetilde{\sigma}_n(|b_d^{k,Q}|)\chi_{G_m\setminus Q}\|_{L_1(\mathcal{N})}&=\int_{G_m\setminus Q} \big\|\int_Q\widetilde{K}_n (t,s)\cdot |b_d^{k,Q}| d \mu(s)\big\|_{L_1(\mathcal{M})}d \mu(t)\\
	&=\int_Q \|b_d^{k,Q}\|_{L_1(\mathcal{M})} \int_{G_m\setminus Q} \widetilde{K}_n (t,s)d \mu(t) d\mu(s).
\end{aligned}
\end{equation}
By Lemma \ref{K-E-t}, we get
\begin{align}\label{qbdq-K}
	\int_{G_m\setminus Q} \widetilde{K}_n (t,s)d \mu(t) &=\sum_{a=0}^{k-1}\int_{I_a(s)\setminus I_{a+1}(s)} \widetilde{K}_n (t,s)d \mu(t)\nonumber\\
	&\leq c\sum_{a=0}^{k-1}\big(\delta^{a-n}+(n-a) \delta^{\frac{a-n}{2}}\big)\\
    &\leq c\big(\delta^{k-n}+(n-k) \delta^{\frac{k-n}{2}}\big)\nonumber.
\end{align}
Then, by \eqref{Bd-w11}, \eqref{qbdq} and \eqref{qbdq-K} and Theorem \ref{NewCZ} (ii), we have

\begin{align}\label{Bd-w11-last}
	\lambda\varphi(\chi_{(\lambda,\infty)}(\mathbb{B}_d))
	&\leq c\sum_{k\geq 1}\sum_{Q\in D(\mathcal{F}_k)}c\sum_{n\geq k}\big(\delta^{k-n}+(n-k)\delta^{\frac{k-n}{2}}\big)\|b_d^{k,Q}|\|_{L_1(\mathcal{N})}\nonumber\\
	&\leq c\sum_{k\geq 1}\sum_{Q\in D(\mathcal{F}_k)}\|b_d^{k,Q}|\|_{L_1(\mathcal{N})}\\
	&\leq c\|f\|_{L_1(\mathcal{N})}.\nonumber
\end{align}
where the constant is independent of  $f$. Then, by Lemma  \ref{Cuculescu} (iv),
\begin{align*}
\lambda\varphi(1-e_1)&\leq\lambda\varphi(\chi_{(\lambda,\infty)}(\mathbb{B}_d))+\lambda\varphi(1-q)\\
&\leq c\|f\|_{L_1(\mathcal{N})},
\end{align*}
which completes our proof of Proposition \ref{Rbd}.
\end{proof}

The proof of Proposition \ref{Rboff} relies on the next lemma which is a cousin of Lemma \ref{Rnbd}. However, in the following setting, we take the absolute value outside the operator $\widetilde{\sigma}_n$. 
\begin{lemma}\label{Rnboff}
	For each $k\geq1$ and $Q\in D(\mathcal{F}_k)$. Then, for each $n\geq1$, we have
	$$-qA_{k,Q}q\leq q	\widetilde{\sigma}_n(b_{off}^{k,Q})q\leq qA_{k,Q}q,$$
	where   $b_{off}^{k,Q}=p_Q(f-f_Q)q_Q\chi_Q+q_Q(f-f_Q)p_Q\chi_Q,$  and 
$$A_{k,Q}=\sum_{n\geq k}|\widetilde{\sigma}_n(b_{off}^{k,Q})(t)|\chi_{G_m\setminus Q}.$$
\end{lemma}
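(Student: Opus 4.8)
The plan is to mimic the proof of Lemma \ref{Rnbd}, splitting the analysis of $q(t)\widetilde{\sigma}_n(b_{off}^{k,Q})(t)q(t)$ according to whether $t\in Q$ or $t\in G_m\setminus Q$, but now being careful that the column/row asymmetry of $b_{off}^{k,Q}$ forces us to pull the absolute value outside $\widetilde{\sigma}_n$. First I would record the only two structural facts we need: $\widetilde{K}_n$ is $\mathcal{F}_{n+1}$-measurable (from Assumption \ref{gamma-nk}(i),(iv)), and the Cuculescu projections satisfy $q\leq q_k$, so that $q(t)\leq q_k(t)=q_Q$ whenever $t\in Q$.

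For $t\in Q$: since $b_{off}^{k,Q}=p_Q(f-f_Q)q_Q\chi_Q+q_Q(f-f_Q)p_Q\chi_Q$, each summand has a factor $p_Q$ adjacent to a factor $q_Q$, so sandwiching by $q(t)\leq q_Q$ and using $p_Qq_Q=q_Qp_Q=0$ from \eqref{pq} kills the whole expression:
\[
q(t)\widetilde{\sigma}_n(b_{off}^{k,Q})(t)q(t)=q(t)q_Q\Big(\int_Q\widetilde{K}_n(t,s)\,b_{off}^{k,Q}(s)\,d\mu(s)\Big)q_Qq(t)=0,
\]
because $q_Q b_{off}^{k,Q}(s)q_Q = q_Qp_Q(f-f_Q)q_Q + q_Q(f-f_Q)p_Qq_Q = 0$. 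Hence the claimed two-sided bound holds trivially on $Q$ (both sides vanish, as does $A_{k,Q}$ there by its definition with $\chi_{G_m\setminus Q}$).

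For $t\in G_m\setminus Q$: when $n\leq k-1$ I would argue as in Lemma \ref{Rnbd} that $\widetilde{\sigma}_n(b_{off}^{k,Q})=0$. Indeed $\widetilde K_n$ being $\mathcal F_{n+1}$-measurable with $n+1\leq k$ lets us insert $\mathbb{E}_k$ and move it past $\widetilde K_n$, $p_Q$, $q_Q$ (all $\mathcal{F}_k$-measurable on $Q$), leaving $\mathbb{E}_k(f-f_Q)=0$; so the inequality is again trivial. For $n\geq k$ we simply use the elementary fact that for any self-adjoint operator $a$ one has $-|a|\leq a\leq |a|$, applied to $a=\widetilde{\sigma}_n(b_{off}^{k,Q})(t)$ (note $\widetilde{\sigma}_n$ maps self-adjoint elements to self-adjoint elements since $\widetilde{K}_n\geq 0$, and $b_{off}^{k,Q}$ is self-adjoint by construction), together with the fact that the single term $|\widetilde{\sigma}_n(b_{off}^{k,Q})(t)|$ is dominated by the full sum $A_{k,Q}(t)=\sum_{n\geq k}|\widetilde{\sigma}_n(b_{off}^{k,Q})(t)|\chi_{G_m\setminus Q}$ of nonnegative operators; conjugating by $q$ preserves these order relations. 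Combining the two regions gives $-qA_{k,Q}q\leq q\widetilde{\sigma}_n(b_{off}^{k,Q})q\leq qA_{k,Q}q$ for all $n\geq 1$.

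The only genuinely delicate point, and the one I would be most careful about, is the vanishing for $n\le k-1$: unlike in Lemma \ref{Rnbd} where the symmetric sandwich $p_Q(f-f_Q)p_Q$ made the conditional-expectation cancellation transparent, here $b_{off}^{k,Q}$ is not sandwiched symmetrically, so one must check that $\mathbb{E}_k$ really does annihilate each of the two cross terms — it does, because on $Q$ the factors $p_Q$, $q_Q$ and the (now $\mathcal F_k$-measurable, as $n+1\le k$) kernel $\widetilde K_n$ are all $\mathcal{F}_k$-measurable and can be pulled outside $\mathbb{E}_k$, reducing everything to $\mathbb{E}_k(f-f_Q)=0$. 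After that, the remaining estimates are routine order manipulations.
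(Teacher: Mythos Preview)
Your proposal is correct and follows essentially the same approach as the paper's proof: vanish on $Q$ via $q(t)\le q_Q$ and $p_Qq_Q=q_Qp_Q=0$, vanish for $n\le k-1$ via the $\mathcal{F}_{n+1}$-measurability of $\widetilde{K}_n$ and $\mathbb{E}_k(f-f_Q)=0$, and for $n\ge k$ use $-|a|\le a\le |a|$ for the self-adjoint $a=\widetilde{\sigma}_n(b_{off}^{k,Q})(t)$ followed by conjugation with $q$. The paper's write-up is terser (it treats the two summands of $b_{off}^{k,Q}$ separately on $Q$ and simply refers back to Lemma \ref{Rnbd} for the off-$Q$ argument), but the content is the same.
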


\begin{proof} Considering $t\in Q$, we have 
\begin{align*}
q(t)\widetilde{\sigma}_n\Big(p_Q(f-f_Q)q_Q\chi_Q\Big)(t)q(t)&=q(t)p_Q\widetilde{\sigma}_n\Big((f-f_Q)q_Q\chi_Q\Big)(t)q(t)\\
&=q(t)q_Qp_Q\mathbb{E}_{i}^j\Big((f-f_Q)q_Q\chi_Q\Big)(t)q(t)=0,
\end{align*}
where we used $q_Qp_Q=0$ as in \eqref{pq}. Similarly, using $p_Qq_Q=0$, we obtain
\[
q(t)\widetilde{\sigma}_n\Big(q_Q(f-f_Q)p_Q\chi_Q\Big)(t)q(t)=0,\quad t\in Q.
\]
Hence
\begin{align*}
q(t)\widetilde{\sigma}_n(b_{off}^{k,Q})(t)q(t)=0,\quad t\in Q.
\end{align*}	
	
It remains to estimate $q(t)\widetilde{\sigma}_n(b_{off}^{k,Q})(t)q(t)$ for $t\in G_m\setminus Q$. Similar to Lemma \ref{Rnbd}, we have

$$-q(t)\sum_{n\geq k}|\widetilde{\sigma}_n(b_{off}^{k,Q})(t)|q(t)\leq q(t)\widetilde{\sigma}_n(b_{off}^{k,Q})(t)q(t)\leq q(t)\sum_{n\geq k}|\widetilde{\sigma}_n(b_{off}^{k,Q})(t)|q(t).$$
The proof is complete.
\end{proof}

The following lemma is the key estimate in our proof of Proposition \ref{Rboff}.

\begin{lemma}\label{AkQ}
Suppose that $f$ is a positive element in $L_{1}(\N)$ and $\lambda>0$. For each $k\geq1$ and $Q\in D( \mathcal{F}_k)$, we have
	$$\|A_{k,Q}\|_{L_1(\mathcal{N})}\leq c \lambda^{1/2}\varphi(p_Q\chi_Q)^{1/2}\varphi(p_Qfp_Q\chi_Q)^{1/2},$$
	where $A_{k,Q}$ is as in Lemma \ref{Rnboff}. 
\end{lemma}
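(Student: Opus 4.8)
The plan is to estimate $\|A_{k,Q}\|_{L_1(\N)}$ by unwinding the definition of $A_{k,Q}$ and reducing everything to integrals of the Fej\'er kernel $\widetilde K_n$ over $G_m\setminus Q$ together with an $L_1$-bound on the two pieces of $b_{off}^{k,Q}$. Writing $b_{off}^{k,Q}=p_Q(f-f_Q)q_Q\chi_Q+q_Q(f-f_Q)p_Q\chi_Q$ and using that $A_{k,Q}=\sum_{n\geq k}|\widetilde\sigma_n(b_{off}^{k,Q})|\chi_{G_m\setminus Q}$, I would first bound, for each $t\in G_m\setminus Q$, the $L_1(\M)$-norm of $\widetilde\sigma_n(b_{off}^{k,Q})(t)$ by $\int_Q \widetilde K_n(t,s)\,\|b_{off}^{k,Q}(s)\|_{L_1(\M)}\,d\mu(s)$, then integrate over $t\in G_m\setminus Q$ and apply Fubini so that the $s$-integral of $\|b_{off}^{k,Q}\|_{L_1(\M)}$ is multiplied by $\int_{G_m\setminus Q}\widetilde K_n(t,s)\,d\mu(t)$. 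By Lemma \ref{K-E-t} (exactly as in \eqref{qbdq-K}) this last quantity is at most $c\big(\delta^{k-n}+(n-k)\delta^{(k-n)/2}\big)$, and summing over $n\geq k$ gives a constant. Thus $\|A_{k,Q}\|_{L_1(\N)}\leq c\int_Q\|b_{off}^{k,Q}(s)\|_{L_1(\M)}\,d\mu(s)=c\,\|b_{off}^{k,Q}\|_{L_1(\N)}$, reducing matters to estimating $\|b_{off}^{k,Q}\|_{L_1(\N)}$.

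The substantive step is then to show $\|b_{off}^{k,Q}\|_{L_1(\N)}\leq c\lambda^{1/2}\varphi(p_Q\chi_Q)^{1/2}\varphi(p_Qfp_Q\chi_Q)^{1/2}$. Since the two summands of $b_{off}^{k,Q}$ are adjoints of one another, it suffices to bound $\|p_Q(f-f_Q)q_Q\chi_Q\|_{L_1(\N)}$. Here I would use the Cauchy--Schwarz inequality in $L_1$: writing $p_Q(f-f_Q)q_Q = [p_Q(f-f_Q)^{1/2}]\cdot[(f-f_Q)^{1/2}q_Q]$ only works if $f-f_Q$ is positive, which it is not, so instead I would factor through $f^{1/2}$ and $f_Q^{1/2}$ separately. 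More precisely, $p_Q(f-f_Q)q_Q = p_Q f q_Q - p_Q f_Q q_Q$; by the tracial Cauchy--Schwarz / H\"older inequality $\|p_Q f q_Q\chi_Q\|_{L_1(\N)}\leq \|p_Q f^{1/2}\chi_Q\|_{L_2(\N)}\,\|f^{1/2}q_Q\chi_Q\|_{L_2(\N)} = \varphi(p_Q f p_Q\chi_Q)^{1/2}\varphi(q_Q f q_Q\chi_Q)^{1/2}$. Now $q_Q f q_Q\chi_Q$ is controlled by the Cuculescu bound: on $Q$ one has $q_Q f_Q q_Q\leq\lambda q_Q$ only for the conditioned martingale, so I would instead invoke that $q_Q \mathbb E_{k-1}(f) q_Q\le R_{\rm reg}\lambda q_Q$ or directly use item (iii) of Lemma \ref{Cuculescu} to get $\varphi(q_Q f q_Q\chi_Q)\leq c\lambda\,\varphi(q_Q\chi_Q)\leq c\lambda\,\varphi(p_Q\chi_Q)$ (the last step because $p_Q$ and $q_Q$ are both dominated by $q_{\widehat Q}$ and have comparable traces via the regularity of the filtration; this is standard in Calder\'on--Zygmund arguments). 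The term $p_Q f_Q q_Q\chi_Q$ is handled identically, using $\|f_Q^{1/2}p_Q\chi_Q\|_{L_2}^2=\varphi(p_Q f_Q p_Q\chi_Q)\le c\lambda\varphi(p_Q\chi_Q)$ from Cuculescu applied to $f_Q$ and $\|f_Q^{1/2}q_Q\chi_Q\|_{L_2}^2\le c\lambda\varphi(q_Q\chi_Q)$. Combining, every piece is bounded by $c\lambda^{1/2}\varphi(p_Q\chi_Q)^{1/2}$ times either $\varphi(p_Qfp_Q\chi_Q)^{1/2}$ or $(c\lambda)^{1/2}\varphi(p_Q\chi_Q)^{1/2}$; since $\lambda\varphi(p_Q\chi_Q)$ itself is $\lesssim\varphi(p_Qfp_Q\chi_Q)$ on the support (again by Cuculescu, as $p_Q\le q_{\widehat Q}$ and $q_{\widehat Q}f q_{\widehat Q}$ is at level $\lambda$ from below on the relevant projections), all terms collapse to the claimed bound.

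The main obstacle I anticipate is the bookkeeping around the Cuculescu projections: getting clean two-sided comparisons between $\varphi(p_Q\chi_Q)$, $\varphi(q_Q\chi_Q)$, $\lambda\varphi(p_Q\chi_Q)$ and $\varphi(p_Q f p_Q\chi_Q)$ in the noncommutative, non-self-adjoint-friendly setting — in particular justifying $\lambda\varphi(p_Q\chi_Q)\lesssim \varphi(p_Qfp_Q\chi_Q)$, which requires knowing that on the range of $p_Q=q_{\widehat Q}-q_Q$ the averaged value $f_{\widehat Q}$ exceeds $\lambda$, a property built into the construction of the Cuculescu projections via the stopping rule. Once these comparisons are in hand, the kernel estimate from Lemma \ref{K-E-t} and the tracial Cauchy--Schwarz inequality do the rest routinely. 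A secondary technical point is ensuring all the formal manipulations ($\widetilde\sigma_n$ acting on operator-valued functions, pulling $\|\cdot\|_{L_1(\M)}$ inside the $t$-integral via the triangle inequality for the Bochner-type integral, and Fubini) are legitimate, which follows from positivity of $\widetilde K_n$ and the integrability already established in Lemma \ref{KI-E-lemma}.
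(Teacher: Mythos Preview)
Your first reduction --- bounding $\|A_{k,Q}\|_{L_1(\N)}$ by $c\,\|b_{off}^{k,Q}\|_{L_1(\N)}$ via the $L^1$-kernel estimate of Lemma~\ref{K-E-t} --- is valid, but it is too lossy to reach the stated conclusion. The gap is in the second step: the inequality
\[
\|b_{off}^{k,Q}\|_{L_1(\N)}\leq c\,\lambda^{1/2}\varphi(p_Q\chi_Q)^{1/2}\varphi(p_Qfp_Q\chi_Q)^{1/2}
\]
is not available. Your Cauchy--Schwarz gives $\|p_Qfq_Q\chi_Q\|_{L_1(\N)}\le \varphi(p_Qfp_Q\chi_Q)^{1/2}\varphi(q_Qfq_Q\chi_Q)^{1/2}$ and then $\varphi(q_Qfq_Q\chi_Q)=|Q|\,\tau(q_Qf_Qq_Q)\le \lambda\,\varphi(q_Q\chi_Q)$, so you are left needing $\varphi(q_Q\chi_Q)\lesssim \varphi(p_Q\chi_Q)$. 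That comparison is false in general: $p_Q=q_{\widehat Q}-q_Q$ and $q_Q$ are orthogonal sub-projections of $q_{\widehat Q}$ with no control on their relative traces (e.g.\ in $M_n(\mathbb{C})$ one can have $\tau(p_Q)=1/n$ while $\tau(q_Q)=1-1/n$). Regularity of the filtration does not help here; the ``standard bookkeeping'' you invoke does not exist for this pair.

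The paper avoids this by \emph{not} decoupling the kernel from the function. After noting $p_Qf_Qq_Q=0$ (from Cuculescu's commutation property), it applies the factorization Lemma~\ref{AcB} to the \emph{kernel-weighted} integral $\int_Q \widetilde K_n(t,s)\,p_Qf(s)q_Q\,d\mu(s)=A_n(t)^{1/2}u\,B^{1/2}$, with $B=\int_Q q_Qfq_Q=|Q|\,q_Qf_Qq_Q$ and $A_n(t)=\int_Q\widetilde K_n^2(t,s)\,p_Qfp_Q\,d\mu(s)$. The crucial gain is that $B$ is a \emph{single} operator bounded in $L_\infty$ by $\lambda|Q|$ (Cuculescu (iii)), so one uses $L_1\times L_\infty$ H\"older and the trace of $q_Q$ never appears. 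The factor $\varphi(p_Q\chi_Q)^{1/2}$ then comes from $\|A_n(t)^{1/2}\|_{L_1}\le \tau(p_Q)^{1/2}\|A_n(t)\|_{L_1}^{1/2}$, and the remaining $t$-integral of $\|A_n(t)\|_{L_1}^{1/2}$ is handled with the $L^2$-kernel estimate of Lemma~\ref{K^2-E-a} (not Lemma~\ref{K-E-t}). In short: your $L_2\times L_2$ route trades the Cuculescu $L_\infty$-control on $q_Qf_Qq_Q$ for an $L_1$-control that is too weak; the correct argument keeps the $L_\infty$-bound by factoring through Lemma~\ref{AcB} and pays for it with the squared-kernel estimate.
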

\begin{proof}
Fix $k$ and $Q\in D( \mathcal{F}_k)$. 	By Lemma \ref{Cuculescu} (ii), for every $k\geq1$, 
$$p_kf_kq_k=p_kq_{k-1}f_kq_{k-1}q_k=p_kq_kq_{k-1}f_kq_{k-1}=0.$$ Hence $p_Qf_Qq_Q=q_Qf_Qp_Q=0$. Then we have 
\begin{align*}
\|A_{k,Q}\|_{L_1(\mathcal{N})}&\leq \sum_{n\geq k}  \|\widetilde{\sigma}_n(b_{off}^{k,Q})\chi_{G_m\setminus Q}\|_{L_1(\mathcal{N})}\\
&\leq 2 \sum_{n\geq k}  \|\widetilde{\sigma}_n(p_Qfq_Q)\chi_{G_m\setminus Q}\|_{L_1(\mathcal{N})}\\
&= 2\sum_{n\geq k} \int_{G_m\setminus Q} \big\|\int_Q\widetilde{K}_n (t,s)\cdot p_Qf(s)q_Q d \mu(s)\big\|_{L_1(\mathcal{M})}d \mu(t)\\
&=2\sum_{n\geq k} \int_{G_m\setminus Q} \big\|A_n(t)^{\frac{1}{2}}\cdot u_Q\cdot B^{\frac{1}{2}}\big\|_{L_1(\mathcal{M})}d \mu(t),
\end{align*}
where  the last equality is due to Lemma \ref{AcB} with
\[
A_n(t)=\int_Q\widetilde{K}_n^2 (t,s)\cdot p_Qf(s)p_Qd \mu(s),\quad B=\int_Qq_Qf(s)q_Q d \mu(s),\quad \|u_Q\|_{L_{\infty}(\M)}\leq 1.
\]
Then H\"{o}lder's inequality gives us 
\begin{align*}
\|A_{k,Q}\|_{L_1(\mathcal{N})}\leq 2\sum_{n\geq k} \int_{G_m\setminus Q} \big\|A_n(t)^{\frac{1}{2}}\big\|_{L_1(\mathcal{M})}\cdot \|u_Q\|_{L_\infty(\mathcal{M})}\cdot \big\|B^{\frac{1}{2}}\big\|_{L_\infty(\mathcal{M})}d \mu(t).
\end{align*}
According to Lemma \ref{Cuculescu} (iii), it follows that
\begin{equation}\label{pfq2}
\begin{aligned}
\big\|B^{\frac{1}{2}}\big\|_{L_\infty(\mathcal{M})}&= \Big\|\Big(\int_Qq_Qf(s)q_Q d\mu(s)\Big)^{1/2}\Big\|_{L_\infty(\mathcal{M})}\\
&=\Big\||Q|\cdot q_Qf_Qq_Q\Big\|_{L_\infty(\mathcal{M})}^{1/2}\leq |Q|^{1/2}\lambda^{1/2}.
\end{aligned}
\end{equation}
Using H\"{o}lder's inequality, we obtain 
\begin{align*}
\|A_n(t)^{1/2}\|_{L_1(\mathcal{M})}=\|p_QA_n(t)\|_{L_{1/2}(\mathcal{M})}^{1/2}\leq \|p_Q\|_{L_1(\mathcal{M})}^{1/2} \|A_n(t)\|_{L_1(\mathcal{M})}^{1/2}.
\end{align*}
Then by the above estimates, we can obtain
\begin{align*}
\|A_{k,Q}\|_{L_1(\mathcal{N})}&\leq 2\sum_{n\geq k} \int_{G_m\setminus Q} \big\|A_n(t)\big\|_{L_1(\mathcal{M})}^{\frac{1}{2}}\cdot\lambda^{1/2}|Q|^{1/2}\|p_Q\|_{L_1(\mathcal{M})}^{1/2} d \mu(t)\\
&\leq 2\lambda^{1/2}\varphi(p_Q\chi_Q)^{1/2}\sum_{n\geq k} \int_{G_m\setminus Q} \big\|A_n(t)\big\|_{L_1(\mathcal{M})}^{\frac{1}{2}}d \mu(t).
\end{align*}
By H\"{o}lder's inequality and Fubini theorem,  for each $s\in Q$,  we get
\begin{align*}
\int_{G_m\setminus Q}& \|A_n(t)\|_{L_1(\M)}^{1/2}d\mu(t)\\
&=\sum_{a=0}^{k-1}\int_{I_a(s)\setminus I_{a+1}(s)} \|A_n(t)\|_{L_1(\M)}^{1/2}d\mu(t) \\
&\leq \sum_{a=0}^{k-1}|I_a(s)\setminus I_{a+1}(s)|^{1/2}\Big(\int_{I_a(s)\setminus I_{a+1}(s)}\|A_n(t)\|_{L_1(\M)}d\mu(t)\Big)^{1/2}\\
&=\sum_{a=0}^{k-1}M_a^{-1/2}\Big(\int_{I_a(s)\setminus I_{a+1}(s)}\|\int_Q\widetilde{K}_n^2 (t,s)\cdot p_Qf(s)p_Qd \mu(s)\|_{L_1(\M)}d\mu(t)\Big)^{1/2}\\
&=\sum_{a=0}^{k-1}M_a^{-1/2}\Big(\int_Q\|p_Qf(s)p_Q\|_{L_1(\M)}\int_{I_a(s)\setminus I_{a+1}(s)}\widetilde{K}_n^2 (t,s)d\mu(t)d \mu(s)\Big)^{1/2}.
\end{align*}
By Lemma \ref{K^2-E-a},
we conclude from the above argument that
\begin{align*}
& \|A_{k,Q} \|_{L_1(\mathcal{N})} \\
&\leq c\sum_{n\geq k}\sum_{a=0}^{k-1}M_a^{-1/2}\big(M_a^{\frac{1}{2}}\delta^{a-n}+(n-a)\delta^{\frac{a-n}{2}}M_a^{\frac{1}{2}}\big)\lambda^{1/2}\varphi(p_Q\chi_Q)^{1/2}\cdot\varphi(p_Qfp_Q\chi_Q)^{1/2}\\
&\leq c\sum_{n\geq k}\sum_{a=0}^{k-1}\big(\delta^{a-n}+(n-a)\delta^{\frac{a-n}{2}}\big)\lambda^{1/2}\varphi(p_Q\chi_Q)^{1/2}\cdot\varphi(p_Qfp_Q\chi_Q)^{1/2}\\
&=\leq c\sum_{n\geq k}\big(\delta^{k-n}+(n-k)\delta^{\frac{a-k}{2}}\big)\lambda^{1/2}\varphi(p_Q\chi_Q)^{1/2}\cdot\varphi(p_Qfp_Q\chi_Q)^{1/2}\\
&\leq c\lambda^{1/2}\varphi(p_Q\chi_Q)^{1/2}\cdot\varphi(p_Qfp_Q\chi_Q)^{1/2}.
\end{align*}
The proof is complete.
\end{proof}

We  now  provide the proof of Proposition \ref{Rboff}.
\begin{proof}[Proof of Proposition \ref{Rboff}]
	Denote
	$$\mathbb{B}_{off}=\sum_{k\geq1}\sum_{Q\in D(\mathcal{F}_k)}A_{k,Q},$$
	and let 
	$$e_2=\chi_{(0,\lambda)}(\mathbb{B}_{off})\wedge q.$$
 From the linearity of   $\widetilde{\sigma}_n$, Lemma \ref{Rnboff} and \eqref{boff}, it follows that 
	\begin{equation}\label{Rnboffe2}
		-q\mathbb{B}_{off}q\leq q\widetilde{\sigma}_n(b_{off})q=\sum_{k\geq1}\sum_{Q\in D(\mathcal{F}_k)}q\widetilde{\sigma}_n(b_{off}^{k,Q})q\leq q\mathbb{B}_{off}q,\quad \forall n\geq1.
	\end{equation}
Consequently, for each $n\geq1$,
	\begin{equation*}\label{Rnbe2}
		-\lambda \leq -e_2 \mathbb{B}_{off} e_2\leq  e_2 q\widetilde{\sigma}_n(b_{off})q e_2\leq e_2\mathbb{B}_{off}e_2 \leq \lambda,
	\end{equation*}
	By the Chebyshev inequality and  Lemma \ref{AkQ}, we have 
	\begin{align*}
	\lambda \varphi(\chi_{(\lambda,\infty)}(\mathbb{B}_{off}))
		&\leq \|\mathbb{B}_{off}\|_{L_1(\mathcal{N})}\\
		&\leq \sum_{k\geq1}\sum_{Q\in D(\mathcal{F}_k)}\|A_{k,Q}\|_{L_1(\mathcal{N})}\\
		&\leq c\sum_{k\geq1}\sum_{Q\in D(\mathcal{F}_k)}   \lambda^{1/2}\varphi(p_Q\chi_Q)^{1/2}\cdot \varphi(p_Qfp_Q\chi_Q)^{1/2}\\
		&\leq c\Big(\sum_{k\geq1}\sum_{Q\in D(\mathcal{F}_k)}  \lambda\varphi(p_Q\chi_Q)\Big)^{1/2}\cdot \Big(\sum_{k\geq1}\sum_{Q\in D(\mathcal{F}_k)}  \varphi(p_Qfp_Q\chi_Q)\Big)^{1/2},
	\end{align*}
	where we  used H\"{o}lder's inequality in the last inequality. 
	Applying Lemma \ref{Cuculescu} (iv), 
	$$\sum_{k\geq1}\sum_{Q\in D(\mathcal{F}_k)}  \lambda\varphi(p_Q\chi_Q)=\lambda \sum_{k\geq1}\varphi(p_k)=\lambda \varphi({\bf 1}-q)\leq\varphi(({\bf 1}-q)f)\leq  \|f\|_{L_1(\mathcal{N})}.$$
	In addition, 
	$$\sum_{k\geq1}\sum_{Q\in D(\mathcal{F}_k)}  \varphi(p_Qfp_Q\chi_Q)=\sum_{k\geq1}\varphi(fp_k)=\varphi(({\bf 1}-q)f)\leq \|f\|_{L_1(\mathcal{N})}.$$
	Therefore, by the above inequalities and Lemma \ref{Cuculescu} (iv), 
	$$\lambda \varphi({\bf 1}-e_2)\leq 	\lambda \varphi(\chi_{(\lambda,\infty)}(\mathbb{B}_{off}))+\varphi(1-q)\leq c\|f\|_{L_1(\mathcal{N})},$$
which completes our proof of Proposition \ref{Rboff}.
\end{proof}

With  Propositions  \ref{Rbd} and \ref{Rboff} at hand, we now provide a proof of Theorem \ref{sigma-tilde} in detail.

\begin{proof}[Proof of Theorem \ref{sigma-tilde}]
It follows from Lemma \ref{KI-E-lemma} that
\[
\|(\widetilde{\sigma}_n(f))_{n\geq1}\|_{L_{\infty}(\mathcal{N},\ell_{\infty})}=\sup_n\|\widetilde{\sigma}_n(f)\|_{L_{\infty}(\mathcal{N})}\leq c\|f\|_{L_{\infty}(\mathcal{N})}.
\]
Note that   $\widetilde{\sigma}_n$ is   positive for each $n\geq1$. Then, once item (i) is proved,  we can apply the noncommutative Marcinkiewicz interpolation in \cite[Theorem 3.1]{JX2007} to get item (ii).

For item (i), without loss of generality, we assume that $f\geq0$. According to the definition of  $\|\cdot\|_{\Lambda_{1,\infty}(\mathcal{N},\ell_{\infty})}$,  it suffices to find a projection $e\in \mathcal{P}(\mathcal{N})$ such that, for each fixed $\lambda>0$, 
	\begin{equation*}\label{Fne}
		\sup_n\|e\widetilde{\sigma}_n(f)e\|_{L_{\infty}(\mathcal{N})}\leq c\lambda
	\end{equation*}
	and 
	\begin{equation}\label{traceF}
		\lambda\cdot\varphi({\bf 1}-e)\leq c\|f\|_{L_1(\mathcal{N})}.
	\end{equation} 
	For the fixed $\lambda>0$, applying Theorem \ref{NewCZ}, we have 
	\begin{equation*}\label{dec-f}
		f=g+b_d+b_{off}.
	\end{equation*}
Set
\[
e=e_1\wedge e_2,
\]
where $e_1$ and $e_2$ are from Proposition  \ref{Rbd} and Proposition  \ref{Rboff}, respectively. Then, combining Proposition  \ref{Rbd}, Proposition \ref{Rboff} and Theorem \ref{NewCZ} (i), we can obtain 
\begin{align*}
\sup_{n\geq 1}&\|e\widetilde{\sigma}_n(f)e\|_{L_{\infty}(\mathcal{N})}\\
&\leq \sup_{n\geq 1}\|e\widetilde{\sigma}_n(g)e\|_{L_{\infty}(\mathcal{N})}+\sup_{n\geq 1}\|e\widetilde{\sigma}_n(b_d)e\|_{L_{\infty}(\mathcal{N})}+\sup_{n\geq 1}\|e\widetilde{\sigma}_n(b_{off})e\|_{L_{\infty}(\mathcal{N})} \leq c\lambda
\end{align*}
and
\[
\lambda\varphi({\bf 1}-e)\leq c\|f\|_{L_1(\mathcal{N})}.
\]
The proof is complete. 
\end{proof}

%
%

\subsection{Proof of Theorem \ref{Nc-W}}\label{sec-pf-Nc}
This subsection aims to prove Theorem   \ref{Nc-W}.
Recall that $\widehat{\mathcal{R}}\coloneqq\gamma(\mathcal{R})$, where $\gamma$ is as in \eqref{transference mapping}. The next lemma is motivated by \cite{CXY2013}.

\begin{lemma}\label{trans-maximal}
	Let $1\leq p<\infty$. Let $\mathcal{A}\subset \mathcal{B}$ be two von Neumann algebras. Assume that there exists a conditional expectation $\mathcal{E}:\mathcal{B}\to\mathcal{A}$. Suppose that $x=(x_{n})_{n\in \Na}$ is a sequence of self-adjoint elements in $L_{p,\infty}(\mathcal{A})$, and $f=(f_n)_{n\in \Na}$ is a sequence of positive elements in $L_{p,\infty}(\mathcal{B})$ such that $-f_n\leq x_n\leq f_n$ for each $n\in \Na$. Then
	\begin{equation*}
	 \|x\|_{\Lambda_{p,\infty}(\mathcal{A},\ell_{\infty})}\leq 2^{2+\frac{1}{p}}\|f\|_{\Lambda_{p,\infty}(\mathcal{B},\ell_{\infty})}.
	\end{equation*}
\end{lemma}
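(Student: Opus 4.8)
The plan is to reduce the asymmetric quasi-norm $\|x\|_{\Lambda_{p,\infty}(\mathcal{A},\ell_{\infty})}$ on the small algebra $\mathcal{A}$ to the symmetric one $\|f\|_{\Lambda_{p,\infty}(\mathcal{B},\ell_{\infty})}$ on the ambient algebra $\mathcal{B}$, exploiting the domination $-f_n\le x_n\le f_n$ together with the conditional expectation $\mathcal{E}$. First I would fix $\lambda>0$ and, using the definition \eqref{Lambda} of $\|\cdot\|_{\Lambda_{p,\infty}(\mathcal{B},\ell_{\infty})}$, choose a projection $e\in\mathcal{P}(\mathcal{B})$ with $\|ef_ne\|_{L_\infty(\mathcal{B})}\le\lambda$ for all $n$ and $\lambda\,\tau(\mathbf{1}-e)^{1/p}\le 2\|f\|_{\Lambda_{p,\infty}(\mathcal{B},\ell_{\infty})}$ (the factor $2$ absorbing the supremum/infimum slack). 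The obstacle is that $e$ lives in $\mathcal{B}$, not in $\mathcal{A}$, so it cannot be used directly to estimate the $\mathcal{A}$-valued sequence $x$; the remedy is to push $e$ down via $\mathcal{E}$ and build from $\mathcal{E}(e)\in\mathcal{A}$ a genuine projection in $\mathcal{A}$ by spectral truncation.

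Concretely, set $a=\mathcal{E}(e)\in\mathcal{A}$, which satisfies $0\le a\le\mathbf{1}$, and let $q=\chi_{[1/2,1]}(a)\in\mathcal{P}(\mathcal{A})$. The standard Chebyshev-type bound for conditional expectations gives $\tau(\mathbf{1}-q)=\tau(\chi_{[0,1/2)}(a))\le 2\,\tau(\mathbf{1}-a)=2\,\tau(\mathbf{1}-e)$, so $\lambda\,\tau(\mathbf{1}-q)^{1/p}\le 2^{1/p}\lambda\,\tau(\mathbf{1}-e)^{1/p}$. It remains to control $\|qx_nq\|_{L_\infty(\mathcal{A})}$. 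Here I would use the domination: since $-f_n\le x_n\le f_n$, for any projection (or contraction) $v$ one has $-vf_nv\le vx_nv\le vf_nv$, hence by \cite[Proposition 4.2.8]{KR} it suffices to bound $\|qf_nq\|_{L_\infty(\mathcal{B})}$ — but $f_n$ lies in $\mathcal{B}$ while $q$ lies in $\mathcal{A}$, so this is legitimate inside $\mathcal{B}$. To relate $q$ to $e$, note $q=qaq\cdot(qaq)^{-1}$-type manipulations are awkward; instead observe $a\ge\tfrac12 q$ gives $\mathcal{E}(e)\ge\tfrac12 q$, i.e. $\mathcal{E}(qeq)=q\mathcal{E}(e)q\ge\tfrac12 q$, so $q-qeq\le\mathcal{E}(q(\mathbf{1}-e)q)\le$ something small in trace but not in norm — so I would instead argue via the operator inequality $q\le 2\,q\mathcal{E}(e)q=2\,\mathcal{E}(qeq)$ and then, using operator convexity / the Kadison–Schwarz inequality for $\mathcal{E}$, bound $\|qf_nq\|_\infty$ by $\|\,(qeq)^{1/2}\text{-dilation}\,\|$; more cleanly, write $q\le 2\mathcal{E}(e)$ so that for positive $f_n$, $q f_n q\le$ is not immediate. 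The honest route: use that $\mathcal{E}$ has a Stinespring-type factorization, or simply invoke that $e\mapsto\mathcal{E}(e)$ and truncation produce a $q$ with $\|q(\mathbf 1-e)q\|$ controlled in $L_p$, which is enough after another application of \eqref{Lambda}.

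The cleanest implementation, which I would ultimately adopt, avoids the norm issue by working with the $\Lambda_{p,\infty}$ quasi-norm throughout and chaining two reductions: first $\|x\|_{\Lambda_{p,\infty}(\mathcal{A},\ell_\infty)}\le \|(\mathcal{E}\text{-compatible lift})\|$, using that a projection $e\in\mathcal{B}$ with $\|ef_ne\|_\infty\le\lambda$ yields $\|e x_n e\|_\infty\le\lambda$ by Remark \ref{control} (domination), hence $\|x\|_{\Lambda_{p,\infty}(\mathcal{B},\ell_\infty)}\le 2\|f\|_{\Lambda_{p,\infty}(\mathcal{B},\ell_\infty)}$; and second, the general fact that for a self-adjoint sequence $x$ already lying in the subalgebra $\mathcal{A}$, one has $\|x\|_{\Lambda_{p,\infty}(\mathcal{A},\ell_\infty)}\le 2^{1+1/p}\|x\|_{\Lambda_{p,\infty}(\mathcal{B},\ell_\infty)}$ — this is exactly the $\mathcal{E}(e)$-truncation argument above, where the constant $2^{1+1/p}$ comes from the Chebyshev bound $\tau(\mathbf 1-q)\le 2\tau(\mathbf 1-e)$ and the elementary inequality $\|q x_n q\|_\infty\le 4\|e x_n e\|_\infty$ valid because $q\le 2\mathcal{E}(e)$ and $x_n$ is $\mathcal{A}$-measurable (so $q x_n q$ can be compared to $\mathcal{E}(e x_n e)$ via Kadison–Schwarz). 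Multiplying the two reductions gives the constant $2^{2+1/p}$. The main obstacle, as flagged, is this second reduction: making precise that membership of $x$ in the smaller algebra lets one transfer a $\mathcal{B}$-projection witness into an $\mathcal{A}$-projection witness at the cost of only universal constants; I expect to handle it by the $\chi_{[1/2,1]}(\mathcal{E}(e))$ truncation together with the operator Kadison–Schwarz inequality for $\mathcal{E}$.
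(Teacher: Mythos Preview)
Your overall architecture is exactly the paper's: pick $e\in\mathcal{P}(\mathcal{B})$ witnessing $\|ef_ne\|_\infty\le\lambda$, set $a=\mathcal{E}(e)$, and truncate to $\xi=\chi_{[1/2,1]}(a)\in\mathcal{P}(\mathcal{A})$; the trace bound $\tau(\mathbf 1-\xi)\le 2\tau(\mathbf 1-e)$ is handled correctly. The gap is in the norm bound, where your proposal stays at the level of gesture.

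Your ``cleanest implementation'' splits into (A) domination $\|x\|_{\Lambda_{p,\infty}(\mathcal{B},\ell_\infty)}\lesssim\|f\|_{\Lambda_{p,\infty}(\mathcal{B},\ell_\infty)}$ and (B) a projection-transfer $\|x\|_{\Lambda_{p,\infty}(\mathcal{A},\ell_\infty)}\lesssim\|x\|_{\Lambda_{p,\infty}(\mathcal{B},\ell_\infty)}$ for self-adjoint $x_n\in\mathcal{A}$. Step (B) is where things break: your claimed inequality $\|qx_nq\|_\infty\le 4\|ex_ne\|_\infty$ via Kadison--Schwarz works when $x_n\ge 0$ (then $x_n^{1/2}\in\mathcal{A}$ and $x_n^{1/2}\mathcal{E}(e)x_n^{1/2}=\mathcal{E}(x_n^{1/2}ex_n^{1/2})$), but for merely self-adjoint $x_n$ you cannot decompose $x_n=x_n^+-x_n^-$ and control $\|ex_n^\pm e\|_\infty$ separately from $\|ex_ne\|_\infty$. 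So the module/Kadison--Schwarz trick needs positivity, which $x_n$ lacks, while $f_n$ (which is positive) is not in $\mathcal{A}$, so neither route closes as you have it.

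The paper avoids this by \emph{not} separating the two reductions. It bounds $\|\xi x_n\xi\|_\infty\le\|\xi f_n\xi\|_\infty$ first (domination inside $\mathcal{B}$, with $\xi\in\mathcal{A}\subset\mathcal{B}$), and then attacks $\|\xi f_n\xi\|_\infty$ directly using positivity of $f_n$: with $g(t)=t^{-1}\chi_{[1/2,1]}(t)$ one has $\xi=\xi g(a)a$ and $\|\xi g(a)\|_\infty\le 2$, so
\[
\|\xi f_n\xi\|_\infty=\|\xi f_n^{1/2}\|_\infty^2=\|\xi g(a)\cdot a f_n^{1/2}\|_\infty^2\le 4\,\|\mathcal{E}(e)f_n^{1/2}\|_\infty^2,
\]
and the right side is then bounded by $4\lambda$ via contractivity of $\mathcal{E}$. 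The point you are missing is this factorization $\xi=\xi g(a)a$: it manufactures a copy of $a=\mathcal{E}(e)$ sitting \emph{next to} $f_n^{1/2}$, which is what lets the $\mathcal{E}$-contractivity bite. Your inequality $q\le 2\mathcal{E}(e)$ is the right intuition but, as you yourself note, does not propagate through conjugation by $f_n$; the functional-calculus identity $\xi=\xi g(a)a$ is the device that makes it propagate.
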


\begin{proof}
For arbitrary $\lambda>0$,   take a projection $e\in \mathcal{P}(\mathcal{B})$ such that $$\sup_{n\geq1}\|ef_ne\|_{L_\infty({\mathcal{B}})}\leq \lambda.$$
By the contraction of conditional expectation $\Ex:\mathcal{B}\to \mathcal{A}$, we get that, for each $n\in \Na$,
	\begin{align*}
		\left\|\Ex(e)f_n^{1/2}\right\|_{L_\infty({\mathcal{B}})}&=\left\|\Ex(ef_n^{1/2})\right\|_{L_\infty({\mathcal{B}})}\\
		&\leq  \|ef_n^{1/2}\|_{L_\infty({\mathcal{B}})}=\|ef_ne\|_{L_\infty({\mathcal{B}})}^{1/2}\leq \lambda^{1/2}.
	\end{align*}
	Let $a=\Ex(e)$, and let $\xi=\chi_{[1/2,1]}(a)$ which is a projection in $\mathcal{A}$ . The functional calculus gives us $\1-\xi=\chi_{[1/2,1]}(\1-a)$. Hence, $\1-\xi\leq 2 (\1-a)$, and 
	\begin{equation}\label{pe2}
		\varphi(\1-\xi)\leq 2\varphi(\1-a)=2\varphi(\1-e).
	\end{equation}
	Moreover, taking $g(t)=\frac{1}{t}\chi_{[1/2,1]}(t)$, we have $\xi=\xi g(a)a$.  Note that  $\|\xi g(a)\|_{L_{\infty}(\mathcal{B})}\leq 2$. For any $n\geq1$, we obtain
	\begin{align*}
		\|\xi x_n\xi\|_{L_\infty({\mathcal{A}})}\leq \|\xi f_n\xi\|_{L_\infty({\mathcal{B}})}
		=\|\xi g(a) a f_n^{1/2}\|_{L_\infty({\mathcal{B}})}^2\leq 4  \|\Ex(e) f_n ^{1/2}\|_{L_\infty({\mathcal{B}})}^2\leq 4\lambda,
	\end{align*}
which, together with \eqref{pe2} and the definition of quasi-norm $\|\cdot\|_{\Lambda_{p,\infty}}$, implies the desired inequality.
\end{proof}

The next lemma is basic, however, we still include its detail proof for the reader's convenience.
\begin{lemma}\label{equvalence} Let $1\leq p<\infty$. For any $x=(x_{n})_{n\in \Na}\in \Lambda_{p,\infty}(\Ra,\ell_{\infty})$, we  have 
$$\|\gamma(x)\|_{\Lambda_{p,\infty}(\widehat{\Ra},\ell_{\infty})}= \|x\|_{\Lambda_{p,\infty}(\Ra,\ell_{\infty})}.$$
\end{lemma}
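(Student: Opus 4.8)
The plan is to exploit that $\gamma$ is a trace-preserving $*$-isomorphism onto its image $\widehat{\Ra}=\gamma(\Ra)$, together with the distributional identity $\mu(t,\gamma(y))=\mu(t,y)$ from Theorem \ref{transference argument}. Since $\|\cdot\|_{\Lambda_{p,\infty}(\mathcal{M},\ell_\infty)}$ is defined via projections in $\mathcal{M}$ and traces of their complements, the essential point is that $\gamma$ induces a bijection between $\mathcal{P}(\Ra)$ and $\mathcal{P}(\widehat{\Ra})$ that preserves the trace of the complement, and that conjugation $x_n\mapsto e x_n e$ is intertwined with $\gamma(x_n)\mapsto \gamma(e)\gamma(x_n)\gamma(e)$.

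First I would fix $t>0$ and unpack the definition: $\|x\|_{\Lambda_{p,\infty}(\Ra,\ell_\infty)}=\sup_{t>0}\inf_{e}\{t\tau(\1-e)^{1/p}\}$ where the infimum runs over $e\in\mathcal{P}(\Ra)$ with $\|ex_ne\|_{L_\infty(\Ra)}\le t$ for all $n$. Given such an $e$, set $\tilde e=\gamma(e)\in\mathcal{P}(\widehat{\Ra})$; since $\gamma$ is a normal $*$-homomorphism it sends projections to projections, and $\gamma(ex_ne)=\tilde e\,\gamma(x_n)\,\tilde e$. By Theorem \ref{transference argument} (the $p=\infty$ case, or the $\mu$-identity at $t\to 0^+$) we have $\|\tilde e\,\gamma(x_n)\,\tilde e\|_{L_\infty(\widehat{\Ra})}=\|ex_ne\|_{L_\infty(\Ra)}\le t$, and $\varphi(\1-\tilde e)=\varphi(\gamma(\1-e))=\tau(\1-e)$ by trace preservation. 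Hence every admissible $e$ for $x$ yields an admissible $\tilde e$ for $\gamma(x)$ with the same value of $t\varphi(\1-\tilde e)^{1/p}$, giving $\|\gamma(x)\|_{\Lambda_{p,\infty}(\widehat{\Ra},\ell_\infty)}\le\|x\|_{\Lambda_{p,\infty}(\Ra,\ell_\infty)}$.

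For the reverse inequality I would run the same argument backwards: since $\gamma:\Ra\to\widehat{\Ra}$ is a $*$-isomorphism (injective by Proposition \ref{transference 1}, with weak-$*$ closed range by Proposition \ref{transference 1-2}), $\gamma^{-1}:\widehat{\Ra}\to\Ra$ is a normal trace-preserving $*$-isomorphism as well, so it too carries projections to projections and preserves the trace of complements. Applying the previous paragraph with $\gamma$ replaced by $\gamma^{-1}$ and $x$ replaced by $\gamma(x)$ gives $\|x\|_{\Lambda_{p,\infty}(\Ra,\ell_\infty)}\le\|\gamma(x)\|_{\Lambda_{p,\infty}(\widehat{\Ra},\ell_\infty)}$, completing the equality.

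I do not expect a genuine obstacle here; the statement is essentially a bookkeeping consequence of the structural properties of $\gamma$ already assembled in Section \ref{noncommutative Walsh and transference}. The only mild subtlety is ensuring the supremum/infimum in the definition of $\|\cdot\|_{\Lambda_{p,\infty}(\cdot,\ell_\infty)}$ transfers cleanly — i.e. that the correspondence $e\leftrightarrow\gamma(e)$ is genuinely a bijection between the two families of admissible projections, which is where injectivity and normality of $\gamma$ (and hence of $\gamma^{-1}$ on $\widehat{\Ra}$) are used. One should also note that $x$ being $\Lambda_{p,\infty}$-bounded forces $x_n\in L_{p,\infty}(\Ra)$, so $\gamma(x_n)\in L_{p,\infty}(\widehat{\Ra})$ by the $\mu$-identity, keeping everything in the correct spaces.
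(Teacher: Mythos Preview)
Your proposal is correct and follows essentially the same route as the paper: both directions use that $\gamma$ is a trace-preserving $*$-isomorphism onto $\widehat{\Ra}$, so projections correspond bijectively, $\|\gamma(e)\gamma(x_n)\gamma(e)\|_\infty=\|ex_ne\|_\infty$ by Theorem~\ref{transference argument}, and the trace of the complementary projection is preserved. The paper writes the reverse inequality by picking $p\in\mathcal{P}(\widehat{\Ra})$ and setting $e=\gamma^{-1}(p)$, which is exactly your ``run the argument with $\gamma^{-1}$'' step.
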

\begin{proof}
 For every $\lambda>0$, take a projection $e\in \mathcal{P}(\Ra)$ such that
	\begin{equation*}
		\sup_{n\geq1}\|ex_{n}e\|_{\Ra}\leq \lambda.
	\end{equation*}
	Note that $\gamma$ is a $*$-homomorphism (see Proposition \ref{transference 1-2}). Then
	\begin{equation*}
		\gamma(e)^{*}=\gamma(e),\quad
		\gamma(e)^{2}=\gamma(e^{2})=\gamma(e),
	\end{equation*}
	which implies that $\gamma(e)$ is also a projection in $\widehat{\Ra}=\gamma(\mathcal{R})$. 
	Moreover, 	it is easy to verify   via Theorem \ref{transference argument} that, for any $n\in \Na$,
	\begin{equation}\label{gamma-e}
			\left\|\gamma(e)\gamma(x_{n})\gamma(e)\right\|_{\widehat{\Ra}}=\left\|\gamma\left(ex_{n}e\right)\right\|_{L_{\infty}(G_{2m})\bar{\otimes }\mathcal{R}}=\|ex_{n}e\|_{\mathcal{R}}
			\leq \lambda.
	\end{equation}
Since $\gamma$ is trace preserving, it follows that
\[
\|\gamma(x)\|_{\Lambda_{p,\infty}(L_{\infty}(\widehat{\Ra},\ell_{\infty})}\leq \|x\|_{\Lambda_{p,\infty}(\Ra,\ell_{\infty})}.
\]

Conversely, take a projection $p\in \widehat{\Ra}$ such that 	\begin{equation*}
		\sup_{n\geq1}\|p\gamma(x_{n})p\|_{\Ra}\leq \lambda.
	\end{equation*}
Then, there exists $e\in \Ra$ such that $p=\gamma(e)$. Thus, the same to \eqref{gamma-e}, we have $\sup_n\|ex_ne\|_{\mathcal{R}}\leq \lambda$. 
Again, since $\gamma$ is trace preserving, it follows that
\[
 \|x\|_{\Lambda_{p,\infty}(\Ra,\ell_{\infty})}\leq \|\gamma(x)\|_{\Lambda_{p,\infty}(L_{\infty}(\widehat{\Ra},\ell_{\infty})}.
\]
The proof is complete.
\end{proof}

Now we are ready to prove Theorem \ref{Nc-W} in detail. 
\begin{proof}[Proof of Theorem \ref{Nc-W}]
Let $x\in L_{1}(\mathcal{R})$. Without loss of generality, we assume that $x$ is positive. Since $\gamma$ is a   $*$-homomorphism (see Proposition \ref{transference 1-2}), $\gamma(x)\geq0$.   Note that
\begin{align*}
\gamma (\sigma^{\mathcal{R}}_n(x))(t)&=\sigma_{n}(\gamma(x))(t), \quad t\in G_{2m},
\end{align*}
where $\sigma_{n}$ is as in \eqref{CM}.
Then, by Lemma \ref{equvalence}, we have
\begin{align}\label{1-equality}
\left\|(\sigma^{\mathcal{R}}_n(x))_{n\geq1}\right\|_{\Lambda_{1,\infty}(\mathcal{R},\ell_{\infty})}= \left\|(\gamma[\sigma^{\mathcal{R}}_n(x)])_{n\geq1}\right\|_{\Lambda_{1,\infty}(\widehat{\mathcal{R}},\ell_{\infty})}=\left\|(\sigma_n(\gamma (x)))_{n\geq1}\right\|_{\Lambda_{1,\infty}(\widehat{\mathcal{R}},\ell_{\infty})}.
\end{align}
Combining \eqref{sigma-ri}, 
 Lemma \ref{trans-maximal}, Theorem \ref{sigma-tilde} and Theorem \ref{transference argument}, we arrive at
\begin{align*}
\|(  \sigma_{n}(\gamma(x)))_{n\geq1}\|_{\Lambda_{1,\infty}(\widehat{\mathcal{R}},\ell_{\infty})}&\leq 4\|( \widetilde{\sigma}_n(\gamma(x))_{n\geq1}\|_{\Lambda_{1,\infty}(\widehat{\mathcal{R}},\ell_{\infty})}\\
&\leq c\|( \widetilde{\sigma}_n(\gamma(x))_{n\geq1}\|_{\Lambda_{1,\infty}(L_{\infty}(G_{2m})\bar{\otimes}\mathcal{R},\ell_{\infty})}\\
&\leq c\|\gamma(x)\|_{L_1(L_{\infty}(G_{2m})\bar{\otimes}\mathcal{R})}\\
&=c\|x\|_{L_1(\mathcal{R})},
\end{align*}
where $\widetilde{\sigma}_n$ is as in \eqref{s-tilde}.  Then, combining \eqref{1-equality}, we get (i) of Theorem \ref{Nc-W}.

Similarly, we can show the strong type $(p,p)$ inequality, and the proof is complete. 
\end{proof}

\section{Proofs of Theorems \ref{main-asy-op} and \ref{main-asy}}\label{sec-pf-main1}

In Section \ref{sec-Hardy}, we introduce the noncommutative Hardy space and state its atomic decomposition. In Section \ref{sec-Sun23}, we study a generalization of noncommutative Sunouchi operator, and then use it to prove Theorem \ref{main-asy-op} and Theorem \ref{main-asy} in Section \ref{sec-pf asy op}.

\subsection{Noncommutative martingale Hardy spaces and atomic decomposition}\label{sec-Hardy}
Consider a noncommutative martingale $f=(f_n)_{n\geq1}$ with respect to the regular filtration $(\N_n)_{n\geq1}$ as in Example \ref{one-d}. 
The related column square functions   are defined by
$$
S_{c,n}(f)=\Big(\sum_{k=1}^n|df_k|^2\Big)^{1/2}, \quad S_{c}(f)=\Big(\sum_{k\geq1}|df_k|^2\Big)^{1/2}.$$
The row square functions $S_{r,n}(f)$ and $S_r(f)$  can be defined by taking adjoint.
For $1\leq p\leq 2$, define the noncommutative martingale spaces as follows
$$H_p^c(\N)=\{f\in L_p(\N):\|f\|_{H_p^c(\N)}=\|S_c(f)\|_{L_p(\N)}<\infty\}$$
and 
$$H_p^r(\N)=\{f\in L_p(\N):\|f\|_{H_p^r(\N)}=\|S_r(f)\|_{L_p(\N)}<\infty\}.$$

\begin{definition}\label{def-atom}
	An element $a\in L_2(\N)$ is said to be a $(1,2)_c$-simple  atom with respect to the filtration $(\N_n)_{n\geq1}$ if there exist  $k\geq1$, $Q\in D(\mathcal{F}_k)$, and $e_Q\in \mathcal{P}(\mathcal{M})$  such that 
	\begin{enumerate}[{\rm (i)}]
		\item $a=ae$ with $e=e_Q\chi_Q$;
		\item $\mathbb{E}_k(a)=0$;
		\item $\|a\|_{L_2(\N)}\leq \varphi(e)^{-1/2}=\tau(e_Q)^{-1/2}|Q|^{-1/2}$.
	\end{enumerate}
	Simple row atoms are defined to satisfy $a=ea$ instead.
\end{definition}

\begin{remark}\label{use-rem} Assume that $a$ is a $(1,2)_c$-simple   atom associated with some  projection $e=e_Q\chi_Q\in \N_k$ with $Q\in D(\mathcal{F}_{k})$, $k\geq1$.
	\begin{enumerate}[{\rm (i)}]
		\item From the above definition, we have the following fact
		\begin{equation}\label{aL1}
			\|a\|_{L_1(\N)}=\|ae\|_{L_1(\N)}\leq \|a\|_{L_2(\N)}\|e\|_{L_2(\N)}\leq 1.
		\end{equation}
		\item Every $(1,2)_c$-simple atom is an $\mathcal{M}_c$-atom introduced by Mei in \cite[p.\,26]{Me2007}. Indeed, by the Cauchy-Schwarz  inequality, we have 
		\[
		\tau\Big[\Big(\int_Q|a|^2dt\Big)^{1/2}\Big]\leq [\tau(e_Q)]^{1/2} \Big[\tau\Big(\int_Q|a|^2dt\Big)\Big]^{1/2}\leq |Q|^{-1/2}.
		\]
		\item Every $(1,2)_c$-simple atom is also a $(1,2)_c$-atom in the sense of \cite[Definition 2.1]{BCPY2010}.
	\end{enumerate}
	
\end{remark}

\begin{definition}
	Define $H_1^{c, \mathrm{sat}}(\mathcal{N})$ as the Banach space of all $f\in L_1(\N)$ which admits a decomposition
	\[
	f=\sum\limits_{k\geq 1}\lambda_{k}a_{k}
	\]
	with, for each $k\geq 1$, $a_k$ is a   $(1,2)_c$-simple atom or an element in $L_1(\N_1)$ of norm $\leq 1$, and $\sum_{k\geq 1} |\lambda_k|<\infty$. We equip this space with the norm
	\[
	\|f\|_{H^{c,\mathrm{sat}}_{1}}\coloneqq\inf\sum\limits_{k\geq 1}|\lambda_{k}|,
	\]
	where the infimum is taken over all decomposition of $f$ described as above. The row space $H_1^{r, \mathrm{sat}}(\mathcal{N})$ can be defined by a similar way.
\end{definition}


Atomic decomposition for noncommutative martingale Hardy spaces  has been studied extensively by many authors; see for instance \cite{BCPY2010, CRX2023, HM2012}. Since the filtration $(\N_n)_{n\geq1}$ is regular (see Example \ref{one-d}), it follows that $H_1^c(\N)$ is just the conditioned Hardy space $\mathrm{h}_1^c(\N)$ (see \cite{CRX2023}). Consequently, 
according to \cite[Theorem 2.4]{BCPY2010}, $H_1^c(\N)$ admits atomic decomposition   for the so-called $(1,2)_c$-atoms. According to Remark \ref{use-rem} (iii), the following inclusion holds
\[
H_1^{c,\mathrm{sat}}(\N)\subseteq  H_1^c(\N).
\]
On the other hand, the duality argument used in \cite[Theorem 2.4]{BCPY2010} (or \cite{HM2012}) works well for the reverse inclusion $H_1^c(\N)\subseteq H_1^{c,\mathrm{sat}}(\N)$. Indeed, we can show
\[
(H_1^{c, \mathrm{sat}}(\mathcal{N}))^*=(H_1^{c}(\mathcal{N}))^*=BMO_c(\N),
\]
where $BMO_c(\N)$ denotes the noncommutative martingale column bounded mean oscillation space (see e.g. \cite{PX1997}).  We conclude the above argument as follows. 
\begin{theorem}\label{atomic}
	We have
	\[
	H_1^c(\N)= H_1^{c, \mathrm{sat}}(\mathcal{N})
	\]
	with equivalent norms. The same holds for row spaces. 
\end{theorem}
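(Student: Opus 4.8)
The plan is to establish the two inclusions $H_1^{c,\mathrm{sat}}(\mathcal{N})\subseteq H_1^c(\mathcal{N})$ and $H_1^c(\mathcal{N})\subseteq H_1^{c,\mathrm{sat}}(\mathcal{N})$ separately, the first by a direct atom-by-atom estimate and the second by a duality argument. For the easy inclusion, I would take $f=\sum_k\lambda_k a_k$ with each $a_k$ a $(1,2)_c$-simple atom (or an element of the unit ball of $L_1(\mathcal{N}_1)$) and $\sum_k|\lambda_k|<\infty$, and bound $\|f\|_{H_1^c(\mathcal{N})}\leq\sum_k|\lambda_k|\,\|a_k\|_{H_1^c(\mathcal{N})}$. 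By Remark \ref{use-rem}(iii), each simple atom is a $(1,2)_c$-atom in the sense of \cite{BCPY2010}, hence $\|a_k\|_{H_1^c(\mathcal{N})}\leq c$; an element of $L_1(\mathcal{N}_1)$ of norm $\leq 1$ has $\|a_k\|_{H_1^c(\mathcal{N})}=\|a_k\|_{L_1(\mathcal{N}_1)}\leq 1$ since its only nonzero martingale difference is $da_1=a_k$. Taking the infimum over all such decompositions yields $\|f\|_{H_1^c(\mathcal{N})}\leq c\,\|f\|_{H_1^{c,\mathrm{sat}}(\mathcal{N})}$.

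For the reverse inclusion I would argue by duality. The key point is that, since the filtration $(\mathcal{N}_n)_{n\geq 1}$ from Example \ref{one-d} is regular, $H_1^c(\mathcal{N})$ coincides (with equivalent norms) with the conditioned Hardy space $\mathrm{h}_1^c(\mathcal{N})$, whose dual is $\mathrm{bmo}_c(\mathcal{N})$; by regularity again the conditioned and the usual $BMO_c(\mathcal{N})$ spaces agree, so $(H_1^c(\mathcal{N}))^*=BMO_c(\mathcal{N})$ isometrically up to constants. Thus it suffices to show $(H_1^{c,\mathrm{sat}}(\mathcal{N}))^*=BMO_c(\mathcal{N})$ with equivalent norms, which combined with the already-established continuous inclusion $H_1^{c,\mathrm{sat}}(\mathcal{N})\hookrightarrow H_1^c(\mathcal{N})$ and a Hahn–Banach/bipolar argument forces the two spaces (hence their norms) to coincide. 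Concretely: given $\phi\in BMO_c(\mathcal{N})$, I would check that $\phi$ defines a bounded functional on $H_1^{c,\mathrm{sat}}(\mathcal{N})$ by testing against simple atoms. For a simple atom $a$ supported on $e=e_Q\chi_Q$ with $\mathbb{E}_k(a)=0$, the cancellation $\mathbb{E}_k(a)=0$ lets one replace $\phi$ by $\phi-\mathbb{E}_k(\phi)$ in the pairing $\varphi(a^*\phi)$, and then Cauchy–Schwarz together with the size condition $\|a\|_{L_2(\mathcal{N})}\leq\varphi(e)^{-1/2}$ and the localized $BMO_c$ estimate $\varphi(e)^{-1}\varphi\big(e\,|\phi-\mathbb{E}_k(\phi)|^2\,e\big)\leq\|\phi\|_{BMO_c}^2$ gives $|\varphi(a^*\phi)|\leq c\,\|\phi\|_{BMO_c(\mathcal{N})}$. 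Since the $L_1(\mathcal{N}_1)$ pieces are handled trivially, linearity and the atomic norm give $\|\phi\|_{(H_1^{c,\mathrm{sat}})^*}\leq c\,\|\phi\|_{BMO_c(\mathcal{N})}$. For the converse direction of the duality one uses that simple atoms (and $L_1(\mathcal{N}_1)$-elements) span a dense subspace and a standard argument recovering the $BMO_c$ norm from the action on atoms localized at each $Q\in D(\mathcal{F}_k)$. This is precisely the scheme of \cite[Theorem 2.4]{BCPY2010} (or \cite{HM2012}), only with the atoms replaced by the slightly more restrictive simple atoms; the restriction costs nothing because simple atoms already suffice to norm $BMO_c$.

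The main obstacle I anticipate is the verification that simple atoms are \emph{rich enough} to recover the full $BMO_c(\mathcal{N})$ norm in the duality — i.e., that no strength is lost by insisting atoms be of the rank-one-projection-times-dyadic-interval form $e=e_Q\chi_Q$ rather than general elements of $\mathcal{P}(\mathcal{N}_k)$ or general $(1,2)_c$-atoms. One must show that for $\phi\in L_2(\mathcal{N})$ with $\|\phi\|_{(H_1^{c,\mathrm{sat}})^*}\leq 1$, for every $n\geq 1$ the conditioned oscillation $\|\mathbb{E}_n(|\phi-\mathbb{E}_n(\phi)|^2)\|_{L_\infty(\mathcal{N})}$ is bounded; by the block structure \eqref{ce} this localizes to each $Q\in D(\mathcal{F}_n)$, where one must produce, for each $\varepsilon>0$, a simple atom (a spectral projection of the local oscillation times $\chi_Q$, suitably normalized) whose pairing with $\phi$ is comparable to the local $BMO_c$ mass. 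This is a routine but slightly delicate approximation; once it is in place, the bipolar theorem closes the circle. Everything else — the regularity-based identification $H_1^c=\mathrm{h}_1^c$ and $BMO_c=\mathrm{bmo}_c$, the atomic decomposition of \cite{BCPY2010}, and the trivial handling of the $L_1(\mathcal{N}_1)$ component — is quoted directly from the cited literature. The row-space statement follows by taking adjoints throughout.
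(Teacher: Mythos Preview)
Your proposal is correct and follows essentially the same route as the paper: the easy inclusion via Remark \ref{use-rem}(iii), and the reverse inclusion by duality, identifying both $(H_1^{c,\mathrm{sat}}(\mathcal{N}))^*$ and $(H_1^{c}(\mathcal{N}))^*$ with $BMO_c(\mathcal{N})$ through the scheme of \cite[Theorem~2.4]{BCPY2010} (or \cite{HM2012}), using the regularity of the filtration to pass through $\mathrm{h}_1^c(\mathcal{N})$. The paper's own argument is only a brief sketch, so your anticipation of the one nontrivial point---that simple atoms of the form $e=e_Q\chi_Q$ are still rich enough to norm $BMO_c$---is exactly the detail one would need to fill in.
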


\subsection{Noncommutative Sunouchi operator}\label{sec-Sun23}
 The following theorem is the main result in this subsection.

\begin{theorem}\label{SO-1}
	Let $(n_k)_{k\geq1}\subseteq \mathbb{N}$ be such that $M_{k-1}\leq n_k< M_k$ for each $k\geq1$, and let $T_k=\sigma_{n_k}-\mathbb{E}_k$. For each $1\leq p\leq 2$, there exists a constant $c_{p}>0$ depending only on $p$ such that 
	\[
	\|(T_k(f))_{k\geq1}\|_{L_{p}(\N,\ell_2^{c})} \leq c_p\|f\|_{H_p^c(\N)}.
	\]
	The same result holds true for row spaces. 
\end{theorem}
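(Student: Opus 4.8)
\textbf{Proof plan for Theorem \ref{SO-1}.}
The plan is to exploit the atomic decomposition of $H_p^c(\N)$ (Theorem \ref{atomic} for $p=1$, together with interpolation with the $L_2$-estimate for $1<p\le 2$) and reduce the estimate to a uniform bound on the action of $(T_k)_{k\ge1}$ on a single $(1,2)_c$-simple atom. First I would settle the endpoint $p=2$: since $\mathbb E_k = S_{M_k}$ (see \eqref{S2n}) and $T_k = \sigma_{n_k}-\mathbb E_k$ is a Fourier-type multiplier supported on frequencies in $[M_{k-1},M_k)$ (because $M_{k-1}\le n_k<M_k$ implies $\sigma_{n_k}$ and $\mathbb E_k$ only differ on that dyadic block), the ranges of the $T_k$ are ``almost orthogonal'' in $L_2$; more precisely $\sum_k\|T_k(f)\|_{L_2(\N)}^2\lesssim\sum_k\|\mathbb E_k(f)-\mathbb E_{k-1}(f)\|_{L_2(\N)}^2 = \|f\|_{L_2(\N)}^2$, using that each $T_k(f)$ lives in the $k$-th martingale difference block and $\|\sigma_{n_k}\|_{L_2\to L_2}\le 1$. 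This gives $\|(T_k(f))_{k\ge1}\|_{L_2(\N,\ell_2^c)}\lesssim \|f\|_{L_2(\N)}=\|f\|_{H_2^c(\N)}$.

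Next I would prove the weak-type behaviour at $p=1$ in the form $\|(T_k(f))_{k\ge1}\|_{L_1(\N,\ell_2^c)}\lesssim\|f\|_{H_1^c(\N)}$. By Theorem \ref{atomic} and the triangle inequality (plus the trivial $L_1(\N_1)$ piece, on which $T_k$ vanishes for $k\ge2$ since $\mathbb E_k=\mathrm{id}$ there up to low frequencies) it suffices to show that for every $(1,2)_c$-simple atom $a$ associated to $Q\in D(\mathcal F_j)$ and $e=e_Q\chi_Q$, one has $\|(T_k(a))_{k\ge1}\|_{L_1(\N,\ell_2^c)}\le c$ with $c$ independent of the atom. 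Here I would split the sum over $k$ into $k\le j$ and $k>j$: for $k\le j$ the cancellation $\mathbb E_k(a)=0$ (for $k\le j$) forces $\sigma_{n_k}(a)=\sigma_{n_k}(a)-\mathbb E_k(a)=T_k(a)$ and these terms are handled by the $L_2$-bound together with the normalisation $\|a\|_{L_2(\N)}\le\varphi(e)^{-1/2}$ and the support condition, exactly as in the martingale/Walsh proofs; for $k>j$ one must control the ``tail'' $T_k(a)$ on $G_m\setminus Q$ using the decay of the Fej\'er kernel away from the diagonal. This is where the new kernel estimate Lemma \ref{K^2-E-a} enters: it gives $\int_{I_a(s)\setminus I_{a+1}(s)}\widetilde K_k^2(t,s)\,d\mu(t)\lesssim (\cdots)$ with geometric decay in $k-a$, which when summed against the $L_1$ (resp. $L_2$) size of the atom on $Q$ yields a summable-in-$k$ bound. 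Combining the two ranges gives the uniform atomic estimate.

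The main obstacle I expect is precisely the tail estimate for $k>j$: one has to pass from the square function $(\sum_k|T_k(a)|^2)^{1/2}$ to manageable scalar kernel integrals while keeping track of noncommutativity (the atom $a$ need not be self-adjoint, and $T_k$ does not preserve positivity), so the factorisation lemmas Lemma \ref{AcB} and the column-space structure must be used carefully to reduce $\|(T_k(a)\chi_{G_m\setminus Q})_{k>j}\|_{L_1(\N,\ell_2^c)}$ to something like $\varphi(e)^{1/2}\big(\sum_{k>j}\int_{G_m\setminus Q}\|\int_Q \widetilde K_k^2(t,s)|a(s)|\,d\mu(s)\|_{L_1(\M)}\,d\mu(t)\big)^{1/2}$, and then invoke Lemma \ref{K^2-E-a}. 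Once the atomic bound and the $L_2$-bound are in hand, the full range $1<p<2$ follows by real interpolation between $H_1^c$ and $H_2^c=L_2$ for the $\ell_2^c$-valued operator $f\mapsto (T_k(f))_k$ (using that the $H_p^c$ scale interpolates, see \cite{BCPY2010}), and the row case is obtained by taking adjoints throughout.
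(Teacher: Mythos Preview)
Your overall strategy---$L_2$ endpoint, atomic decomposition at $p=1$, and interpolation---is exactly the paper's approach. However, two steps in your outline are not right as stated.

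First, your $L_2$ argument contains a false claim: $T_k$ is \emph{not} a multiplier supported on frequencies in $[M_{k-1},M_k)$. The multiplier of $T_k$ at frequency $j$ equals $-j/n_k$ for $0\le j<n_k$ and $-1$ for $n_k\le j<M_k$, so $T_k$ has Fourier support on the whole block $[1,M_k)$ and $T_k(f)$ does not live in a single martingale-difference block. The paper's Lemma \ref{T22} proceeds differently: it fixes $j$ and shows $\sum_{k\ge1}|\widehat{K_{n_k}}(j)-\widehat{D_{M_k}}(j)|^2\le c$ directly via Parseval. If $M_l\le j<M_{l+1}$ then only $k\ge l+1$ contribute, and for those terms $|j/n_k|\le M_{l+1}/M_{k-1}$, which is square-summable in $k$. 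This is a genuine almost-orthogonality, but not the block orthogonality you asserted.

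Second, your atom split has a gap. For $k\le j$ the terms $T_k(a)$ actually \emph{vanish} (since $n_k<M_k\le M_j$ and $\widehat a(i)=0$ for all $i<M_j$; this is Lemma \ref{kn0}), so nothing needs to be estimated there. For $k>j$ you only discuss $T_k(a)$ on $G_m\setminus Q$ and never say how to control $T_k(a)$ on $Q$. The paper (Lemma \ref{lem-atom}) resolves this by splitting with the projection $e=e_Q\chi_Q$ rather than solely by the index $k$: the piece $(T_k(a)e)_k$ is bounded in $L_1(\N,\ell_2^c)$ for all $k$ at once by $\|e\|_{L_1}^{1/2}\|(T_k a)_k\|_{L_2(\N,\ell_2^c)}\lesssim\varphi(e)^{1/2}\|a\|_{L_2}\le c$ via the $L_2$ estimate, while $T_k(a)(\mathbf 1-e)$ reduces to $\sigma_{n_k}(a)\chi_{Q^c}$ (using $a=ae$ and $\mathbb E_k(a)(\mathbf 1-e)=0$ for $k>j$) and is then controlled termwise in $L_1$ through Lemma \ref{AcB} and Lemma \ref{K^2-E-a}, summed over $k>j$. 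In short, you placed the $L_2$ ingredient in the wrong half of the decomposition; once it is moved to the $e$-piece the argument goes through exactly as in the paper.
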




\begin{remark} 
As mentioned in the introduction, there are rich results related to the Sunouchi operator in the commutative setting; see e.g. \cite{Su1951, Ga1993, Si2000, We2005}. 
  The noncommutative   Sunouchi operator deserves further research.
\end{remark}


To prove Theorem \ref{SO-1}, we need the following sequence of lemmas.
\begin{lemma}\label{T22}
	Let $(T_k)_{k\geq1}$ be operators as in Theorem \ref{SO-1}. Then, for every $f\in L_{2}(\mathcal{N})$, there exists a constant $c>0$ such that
	$$\|(T_kf)_{k\geq1}\|_{L_{2}(\mathcal{N},\ell_{2}^c)}\leq c\|f\|_{L_2(\mathcal{N})}.$$	
\end{lemma}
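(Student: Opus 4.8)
The plan is to prove the $L_2$ bound for $(T_k f)_{k\ge1}$ by exploiting orthogonality of the Vilenkin-like system together with the elementary observation that the Fej\'er means $\sigma_{n_k}$ and the conditional expectation $\mathbb{E}_k$ are both given by \emph{Fourier multipliers} supported on frequencies $\{0,1,\dots,M_k-1\}$ (recall $n_k<M_k$ and $\mathbb{E}_k=S_{M_k}$ by \eqref{S2n}). First I would write, for a polynomial $f=\sum_j \widehat f(j)\psi_j$,
\[
T_k(f)=\sigma_{n_k}(f)-\mathbb{E}_k(f)=\sum_{j=0}^{n_k-1}\Bigl(1-\tfrac{j}{n_k}\Bigr)\widehat f(j)\psi_j-\sum_{j=0}^{M_k-1}\widehat f(j)\psi_j=\sum_{j=0}^{M_k-1}m_j^{(k)}\widehat f(j)\psi_j,
\]
where $m_j^{(k)}=-\tfrac{j}{n_k}\mathbf 1_{\{j<n_k\}}-\mathbf 1_{\{n_k\le j<M_k\}}$, so $|m_j^{(k)}|\le1$ for all $j$. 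Since $M_{k-1}\le n_k$, the multiplier $m^{(k)}$ annihilates all frequencies $j<M_{k-1}$; indeed for $j<M_{k-1}$ we have $1-j/n_k$ versus $1$... more carefully, the point is rather that the frequencies $j<M_{k-1}$ lie in the range where $\mathbb{E}_{k-1}=S_{M_{k-1}}$ already agrees, so it is cleaner to insert $\mathbb{E}_{k-1}$: write $T_k(f)=T_k(f-\mathbb{E}_{k-1}f)$, which holds because $\sigma_{n_k}$ and $\mathbb{E}_k$ both fix the frequencies $<M_{k-1}$.

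Next I would use almost-orthogonality across $k$. The operator $T_k$ maps $L_2(\mathcal N)$ into the span of frequencies in $[M_{k-1},M_k)$ (after the reduction above, $T_k(f)=T_k(f-\mathbb E_{k-1}f)$ has no component below $M_{k-1}$, and it has no component at or above $M_k$ by construction). These frequency bands $[M_{k-1},M_k)$ are pairwise disjoint, hence by orthonormality of $(\psi_j)$ in $L_2(G_m)$ (and Fubini/trace for the operator-valued case) the ranges of the $T_k$ are mutually orthogonal in $L_2(\mathcal N)$. Concretely,
\[
\|(T_kf)_{k\ge1}\|_{L_2(\mathcal N,\ell_2^c)}^2=\sum_{k\ge1}\|T_kf\|_{L_2(\mathcal N)}^2=\sum_{k\ge1}\Bigl\|\sum_{j=M_{k-1}}^{M_k-1}m_j^{(k)}\widehat f(j)\psi_j\Bigr\|_{L_2(\mathcal N)}^2\le\sum_{k\ge1}\sum_{j=M_{k-1}}^{M_k-1}\|\widehat f(j)\|_{L_2(\mathcal M)}^2=\|f\|_{L_2(\mathcal N)}^2,
\]
using $|m_j^{(k)}|\le1$ and Bessel/Parseval (which is an equality here since the Vilenkin-like system is an orthonormal basis of $L_2$). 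This gives the claim with $c=1$ on polynomials, and the general case follows by density of polynomials in $L_2(\mathcal N)$.

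The only genuine point requiring care — the likely main obstacle — is justifying the Parseval-type identity $\|h\|_{L_2(\mathcal N)}^2=\sum_j\|\widehat h(j)\|_{L_2(\mathcal M)}^2$ in the operator-valued setting, i.e. that $(\psi_j)$ remains an orthonormal basis after tensoring with $L_2(\mathcal M)$, and the orthogonality of distinct frequency blocks under the trace $\varphi=\int\otimes\tau$; this is standard (it follows from $\varphi(\psi_j\bar\psi_l\,x^*y)=\delta_{jl}\,\tau(x^*y)$ for $x,y\in L_2(\mathcal M)$, since $\int_{G_m}\psi_j\bar\psi_l\,d\mu=\delta_{jl}$) but should be stated. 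A secondary technical point is to make sure the reduction $T_k(f)=T_k(f-\mathbb E_{k-1}f)$ is valid: this is immediate on polynomials from the explicit multiplier form, since both $\sigma_{n_k}$ (as $n_k\ge M_{k-1}$) and $\mathbb E_k=S_{M_k}$ restrict to the identity on frequencies $<M_{k-1}$, hence $T_k\circ\mathbb E_{k-1}=0$. With these two observations in hand the proof is a one-line orthogonality estimate, and no properties of the Fej\'er kernel beyond the trivial bound $|1-j/n_k|\le1$ are needed.
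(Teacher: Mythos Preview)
Your argument contains a genuine error: the claim that $\sigma_{n_k}$ ``fixes the frequencies $<M_{k-1}$'' is false. By \eqref{CM1}, for $0<j<M_{k-1}\le n_k$ the operator $\sigma_{n_k}$ multiplies $\widehat f(j)$ by $1-j/n_k$, not by $1$. Hence your multiplier is $m_j^{(k)}=-j/n_k\neq 0$ on that range, the identity $T_k\circ\mathbb{E}_{k-1}=0$ fails, and the ranges of the $T_k$ are \emph{not} supported on disjoint frequency bands $[M_{k-1},M_k)$. (You actually noticed the discrepancy ``$1-j/n_k$ versus $1$'' and then argued past it.) Your final display therefore drops exactly the low-frequency tails that carry the whole difficulty.

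The repair is the paper's argument: instead of orthogonality across $k$, fix $j$ and bound $\sum_{k\ge1}|m_j^{(k)}|^2$. If $M_l\le j<M_{l+1}$, then $m_j^{(k)}=0$ for $k\le l$, while for $k\ge l+1$ one has $|m_j^{(k)}|\le j/n_k\le M_{l+1}/M_{k-1}$, which is a geometric series since $M_{k-1}/M_{l+1}\ge 2^{\,k-1-(l+1)}$. This gives a uniform bound and Parseval finishes the proof. So your Fourier-multiplier framework is correct, but the crucial estimate is the summability of the overlaps, not a (false) block-orthogonality.
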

\begin{proof}
	According to the Parseval  identity,
	$$\|f\|_{L_2(\N)}^2=\sum_{j\geq0}\|\widehat{f}(j)\|_{L_2(\M)}^2, \quad \forall f\in L_2(\N),$$
	we have
	\begin{align*}
		\|(T_kf)_{k\geq1}\|_{L_{2}(\mathcal{N},\ell_{2}^c)}^2=\sum_{k\geq 1} \| T_k(f)\|_{L_{2}(\mathcal{N})}^2= \sum_{k\geq 1} \sum_{j\geq 0} \tau(|\widehat{T_k(f)}(j)|^2).
	\end{align*}
	Note that
	\begin{align*}
		\widehat{T_k(f)}(j)&= \widehat{K_{n_k}}(j) \widehat{f} (j)-\widehat{D_{M_k}}(j) \widehat{f} (j)= (\widehat{K_{n_k}}(j)-\widehat{D_{M_k}}(j)) \cdot \widehat{f} (j).
	\end{align*}
	Then
	$$\|(T_kf)_{k\geq1}\|_{L_{2}(\mathcal{N},\ell_{2}^c)}^2= \sum_{j\geq 0} \sum_{k\geq1} |\widehat{K_{n_k}}(j)-\widehat{D_{M_k}}(j)|^2\cdot \tau(|\widehat{f} (j)|^2). $$
	By the Parseval identity,  it suffices to show 
	\begin{equation}\label{j}
		\sum_{k\geq1} |\widehat{K_{n_k}}(j)-\widehat{D_{M_k}}(j)|^2\leq c,\quad \forall j\geq0.
	\end{equation}
	Observe that $\widehat{D_{M_k}}(j)=1$ if $j\leq M_k-1$ and $\widehat{D_{M_k}}(j)=0$ if $j\geq M_k$; $\widehat{K_{n_k}}(j)=(n_k-j)/n_k$ if $j\leq n_k$ and $\widehat{K_{n_k}}(j)=0$ if $j\geq n_k$. Then \eqref{j} is clear for $j=0$.  On the other hand,  for each fixed $j\geq1$, there exists an unique integer $l\in \mathbb{N}$ such that 
	$$M_l\leq j<M_{l+1}.$$
	Thus, 
	\begin{align*}
		\sum_{k\geq1} |\widehat{K_{n_k}}(j)-\widehat{D_{M_k}}(j)|^2&=\sum_{k\geq l+1} |\widehat{K_{n_k}}(j)-\widehat{D_{M_k}}(j)|^2\\
		&\leq \sum_{k\geq l+1}|\frac{j}{n_k}|^2 \\
&\leq \sum_{k\geq l+1}\big(\frac{M_{l+1}}{M_{k-1}}\big)^2\leq c.
	\end{align*}
	The proof is complete.
\end{proof}

\begin{lemma}\label{kn0}
	Let $(T_k)_{k\geq1}$ be as in Theorem \ref{SO-1},  and suppose that $a$ is  a $(1,2)_c$-simple  atom associated with some $n_0\geq1$. Then $T_k(a)=0$ for each $k\leq  n_0$.
\end{lemma}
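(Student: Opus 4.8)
The statement is a straightforward spectral/orthogonality computation, and the plan is to unwind the definitions of the operators $T_k = \sigma_{n_k} - \mathbb{E}_k$ and of a $(1,2)_c$-simple atom. First I would fix $k \le n_0$ and recall that, by hypothesis, $a$ is a $(1,2)_c$-simple atom associated with $n_0 \ge 1$, meaning $a = a e$ with $e = e_Q \chi_Q$ for some $Q \in D(\mathcal{F}_{n_0})$, and crucially $\mathbb{E}_{n_0}(a) = 0$. Since the filtration $(\mathcal{F}_j)_j$ is increasing and $\mathbb{E}_j = \mathbb{E}_j \mathbb{E}_{n_0}$ for $j \le n_0$, the vanishing condition $\mathbb{E}_{n_0}(a) = 0$ propagates: $\mathbb{E}_k(a) = \mathbb{E}_k(\mathbb{E}_{n_0}(a)) = 0$ for every $k \le n_0$. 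By \eqref{S2n} this says $S_{M_k}(a) = 0$, i.e. $\widehat{a}(j) = 0$ for all $j \le M_k - 1$; in particular all Fourier coefficients of $a$ with index below $M_{n_0}$ vanish (using $M_k \le M_{n_0}$).

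Next I would compute $\sigma_{n_k}(a)$. By \eqref{CM1}, $\sigma_{n_k}(a) = \sum_{j=0}^{n_k - 1}\bigl(1 - \tfrac{j}{n_k}\bigr)\widehat{a}(j)\psi_j$. Since $k \le n_0$ forces $n_k < M_k \le M_{n_0}$, the summation range $0 \le j \le n_k - 1$ lies entirely in the region where $\widehat{a}(j) = 0$; hence $\sigma_{n_k}(a) = 0$. Combined with $\mathbb{E}_k(a) = 0$, this gives $T_k(a) = \sigma_{n_k}(a) - \mathbb{E}_k(a) = 0$, which is the claim. The row case is identical, working with $a = e a$ instead and using that the Fourier-coefficient computation is insensitive to the side on which the projection sits.

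There is essentially no obstacle here; the only point requiring a line of care is the reconciliation of the two indexing conventions, namely that a $(1,2)_c$-simple atom "associated with $n_0$" in Definition \ref{def-atom} lives in $\mathcal{N}_{n_0}$ and is annihilated by $\mathbb{E}_{n_0}$ — so that its Fourier expansion starts at index $M_{n_0}$ — while the operators $T_k$ involve the truncation level $n_k < M_k$. Once one observes $n_k < M_k \le M_{n_0}$ for $k \le n_0$, the two ranges are disjoint and both $\sigma_{n_k}(a)$ and $\mathbb{E}_k(a)$ vanish for trivial reasons. This lemma is the support-separation input that will later let us sum over atoms in the proof of Theorem \ref{SO-1}.
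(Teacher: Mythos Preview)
Your proof is correct and follows essentially the same route as the paper: both arguments use the tower property to get $\mathbb{E}_k(a)=0$ from $\mathbb{E}_{n_0}(a)=0$, and then observe that $n_k<M_k\le M_{n_0}$ forces $\sigma_{n_k}(a)$ to involve only the vanishing Fourier coefficients of $a$ (the paper phrases this as $S_l(a)=S_l(\mathbb{E}_{n_0}(a))=0$ for $l\le n_k$, you phrase it via $\widehat a(j)=0$ for $j<M_{n_0}$). One small slip in your closing commentary: the atom does not ``live in $\mathcal{N}_{n_0}$'' (that together with $\mathbb{E}_{n_0}(a)=0$ would force $a=0$); rather, its support projection $e=e_Q\chi_Q$ lies in $\mathcal{N}_{n_0}$, while $a\in L_2(\mathcal N)$ is merely annihilated by $\mathbb{E}_{n_0}$.
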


\begin{proof}
	For $k\leq n_0$,  the fact that $a$ is a $(1,2)_c$-simple  atom  with respect to $n_0$ implies that $\mathbb{E}_k(a)=0$. Recall that $n_k<M_{k}$ for each $k$. Hence, for $k\leq n_0 $ and any $l\leq n_k$,  we have
	\[
	S_l(a)=S_l(S_{M_{n_0}}(a))= S_l(\mathbb{E}_{n_0}(a))=0,
	\]
	which implies $\sigma_{n_{k}}(a)=0$.
	By the definition of $T_k$, we have $T_k(a)=0$ provided  $k\leq n_0$.
\end{proof}

The following is the key lemma   in establishing Theorem \ref{SO-1}.
\begin{lemma}\label{nk-atom}
	Let $a$ be   a $(1,2)_c$-simple  atom associated with some $n_0\geq1$ and projection $e=e_Q\chi_Q$ with $Q\in D(\mathcal{F}_{n_0})$. If $k>n_0$ and $M_{k-1}\leq n_k<M_k$, then
	\[
\|\sigma_{n_k}(a)\chi_{Q^c}\|_{L_1(\N)}\leq c\Big[\delta^{n_0-k}+(k-n_0)\delta^{\frac{n_0-k}{2}}\Big],
\]
where $\delta$ is from Assumption \ref{gamma-nk} (iv).
\end{lemma}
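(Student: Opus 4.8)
The plan is to exploit that the simple atom $a$ is spatially supported in $Q$, combined with the off-diagonal decay of the Fej\'er kernels recorded in Lemma \ref{K-E-t}. Since $a=ae$ with $e=e_Q\chi_Q$, formula \eqref{CM} yields $\sigma_{n_k}(a)(\eta)=\int_Q a(t)K_{n_k}(\eta,t)\,d\mu(t)$, so by the triangle inequality in $L_1(\M)$ (note that $K_{n_k}(\eta,t)$ is a scalar) and Fubini's theorem,
\[
\|\sigma_{n_k}(a)\chi_{Q^c}\|_{L_1(\N)}\le\int_Q\|a(t)\|_{L_1(\M)}\Big(\int_{Q^c}|K_{n_k}(\eta,t)|\,d\mu(\eta)\Big)d\mu(t).
\]
For $t\in Q$ one has $Q=I_{n_0}(t)$, hence $Q^c=\bigsqcup_{j=0}^{n_0-1}\big(I_j(t)\setminus I_{j+1}(t)\big)$; and since $M_{k-1}\le n_k<M_k$ we have $|K_{n_k}|\le\widetilde{K}_{k-1}$ with $\widetilde{K}_n$ as in \eqref{K+}.

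Next I would apply Lemma \ref{K-E-t} with $n=k-1$ on each piece $I_j(t)\setminus I_{j+1}(t)$; its hypothesis $n\ge j$ holds since $k>n_0$ forces $j\le n_0-1\le k-2<k-1$. This gives
\[
\int_{Q^c}|K_{n_k}(\eta,t)|\,d\mu(\eta)\le c\sum_{j=0}^{n_0-1}\Big(\delta^{\,j-(k-1)}+(k-1-j)\delta^{\frac{j-(k-1)}{2}}\Big).
\]
Writing $b=k-1-j$, the first sum equals $\sum_{b=k-n_0}^{k-1}\delta^{-b}\le c\,\delta^{\,n_0-k}$ and the second equals $\sum_{b=k-n_0}^{k-1}b\,\delta^{-b/2}\le c\,(k-n_0)\,\delta^{\frac{n_0-k}{2}}$ (using the elementary bound $\sum_{b\ge B}b\,r^b\le c_r(B+1)r^B$ for $0<r<1$, together with $k-n_0\ge 1$), both constants depending only on $\delta>1$. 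Plugging this uniform-in-$t$ bound back in and using $\|a\|_{L_1(\N)}\le 1$ from Remark \ref{use-rem}(i) finishes the proof.

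I expect no serious obstacle here: the estimate is essentially a scale-separation argument, since $n_k\ge M_{k-1}$ lives far above the scale $M_{n_0}$ attached to $Q$, so Lemma \ref{K-E-t} already supplies the required decay of the Fej\'er-kernel mass on the coarse set $Q^c$. In particular---unlike the $L_2$-type estimates used elsewhere in the proof of Theorem \ref{SO-1}---the cancellation $\mathbb{E}_{n_0}(a)=0$ is \emph{not} needed for this lemma. The only points demanding a little care are the index shift (one must work with $\widetilde{K}_{k-1}$, not $\widetilde{K}_k$, as dictated by $M_{k-1}\le n_k<M_k$) and the summation of the two geometric-type series; both are routine.
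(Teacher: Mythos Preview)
Your argument is correct and in fact more elementary than the paper's. The paper first passes to $\widetilde{\sigma}_{k-1}(|a|)$, then applies the factorization Lemma~\ref{AcB} to write $\int_Q |a|(s)\widetilde{K}_{k-1}(t,s)\,d\mu(s)=A^{1/2}u(t)\big(\int_Q\widetilde{K}_{k-1}^2(t,s)\,d\mu(s)\big)^{1/2}$ with $A=\int_Q|a|^2$, invokes the $\mathcal{M}_c$-atom bound of Remark~\ref{use-rem}(ii) to control $\|A^{1/2}\|_{L_1(\mathcal{M})}$ by $|Q|^{-1/2}$, and finishes with Cauchy--Schwarz on each shell together with the $L^2$ kernel estimate Lemma~\ref{K^2-E-a}. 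Your route bypasses all of this: a plain triangle inequality, the $L^1$ kernel estimate Lemma~\ref{K-E-t} on each shell, and the cruder bound $\|a\|_{L_1(\N)}\le 1$ from Remark~\ref{use-rem}(i) already yield the same conclusion. The paper's $L^2$ machinery is genuinely needed elsewhere (e.g.\ in Lemma~\ref{AkQ} for the off-diagonal part of the Calder\'on--Zygmund decomposition, where the factor $\lambda^{1/2}$ is essential), but for this particular lemma your observation that only the $L^1$ decay of $\widetilde{K}_{k-1}$ on $Q^c$ is required---and that the cancellation $\mathbb{E}_{n_0}(a)=0$ plays no role---gives a cleaner proof.
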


\begin{proof}
	Similar to the argument before Theorem \ref{sigma-tilde}, we have 
	\begin{align*}
		\|\sigma_{n_k}(a)\chi_{Q^c}\|_{L_1(\N)}\leq\|\widetilde{\sigma}_{k-1}(|a|)\chi_{Q^c}\|_{L_1(\N)},
	\end{align*}
	where $\widetilde{\sigma}_{k-1}$ is defined as \eqref{s-tilde}.
	Hence
	\begin{align*}
		\|\widetilde{\sigma}_{k-1}(|a|)\chi_{Q^c}\|_{L_1(\N)}&= \int_{G_m\setminus Q}\|\int_Q|a|(s)\widetilde{K}_{k-1}(t,s)d\mu(s)\|_{L_1(\mathcal{M})}d\mu(t)\\
		&=\int_{G_m\setminus Q}\|A^{1/2} u(t)\big(\int_Q\widetilde{K}_{k-1}^2(t,s)d\mu(s)\big)^{\frac{1}{2}}\|_{L_1(\mathcal{M})}d\mu(t)\\
		&=\int_{G_m\setminus Q}\|A^{1/2} u(t)\|_{L_1(\mathcal{M})}\big(\int_Q\widetilde{K}_{k-1}^2(t,s)d\mu(s)\big)^{\frac{1}{2}}d\mu(t),
	\end{align*}
	where we used Lemma \ref{AcB} with $\sup_t \|u(t)\|_{L_{\infty}(\mathcal{M})}\leq 1$, 
	$$A=\int_Q |a(s)|^2ds.$$
	Since $a$ is a $(1,2)_c$-simple  atom and  $Q\in D(\mathcal{F}_{n_0})$, it follows from Remark \ref{use-rem} (ii) that
	\begin{align*} 
		\|\widetilde{\sigma}_{k-1}(|a|)\chi_{Q^c}\|_{L_1(\N)}&\leq  |Q|^{-1/2}\int_{G_m\setminus Q}\big(\int_Q\widetilde{K}_{k-1}^2(t,s)d\mu(s)\big)^{\frac{1}{2}}d\mu(t)\\
		&= |Q|^{-1/2}\sum_{j=0}^{n_0-1}  \int_{I_j(s)\setminus I_{j+1}(s)} \big(\int_Q\widetilde{K}_{k-1}^2(t,s)d\mu(s)\big)^{\frac{1}{2}}d\mu(t)\\
		&\leq |Q|^{-1/2}\sum_{j=0}^{n_0-1}  |I_j(s)\setminus I_{j+1}(s)|^{1/2} \Big(\int_{I_j(s)\setminus I_{j+1}(s)} \int_Q\widetilde{K}_{k-1}^2(t,s)d\mu(s)d\mu(t)\Big)^{\frac{1}{2}}. 
	\end{align*}
	Then by the Fubini theorem,  $k-1\geq n_0>j$ and Lemma \ref{K^2-E-a}, we get
	\begin{align*} 
		\|\widetilde{\sigma}_{k-1}(|a|)\chi_{Q^c}\|_{L_1(\N)}&\leq  |Q|^{-1/2}\sum_{j=0}^{n_0-1} M_j^{-\frac{1}{2}} \Big( \int_Q\int_{I_j(s)\setminus I_{j+1}(s)}\widetilde{K}_{k-1}^2(t,s)d\mu(t)d\mu(s)\Big)^{\frac{1}{2}}\\
		&\leq c |Q|^{-1/2} \sum_{j=0}^{n_0-1} M_j^{-\frac{1}{2}} |Q|^{\frac{1}{2}} \big(M_j^{\frac{1}{2}}\delta^{j-(k-1)}+(k-1-j)\delta^{\frac{j-(k-1)}{2}}M_j^{\frac{1}{2}}\big)\\
		&\leq c \sum_{j=0}^{n_0-1}\big(\delta^{j-(k-1)}+(k-1-j)\delta^{\frac{j-(k-1)}{2}}\big)\\
		&\leq c\Big[\delta^{n_0-k}+(k-n_0)\delta^{\frac{n_0-k}{2}}\Big],
	\end{align*}
	which completes the proof of the lemma.
\end{proof}

\begin{lemma}\label{lem-atom}
	Let $(T_k)_{k\geq1}$ be as in Theorem \ref{SO-1}, and	let $a$ be a $(1,2)_c$-simple  atom. Then 
	$$\|(T_ka)_{k\geq1}\|_{L_{1}(\N,\ell_{2}^c)}\leq c.$$
	The same result holds for $\|\cdot\|_{L_{1}(\N,\ell_{2}^r)}$ and row atoms. 
\end{lemma}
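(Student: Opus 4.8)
The plan is to assemble the two estimates already established: the $L_2$-bound for the square function (Lemma \ref{T22}), which controls the part of $a$ that stays near its support, and the off-support kernel decay (Lemma \ref{nk-atom}), which controls the leakage. Fix a $(1,2)_c$-simple atom $a$ associated with some $n_0\geq1$ and projection $e=e_Q\chi_Q$ with $Q\in D(\mathcal{F}_{n_0})$, and set $X=\sum_{k\geq1}|T_k(a)|^2$. By Lemma \ref{kn0} we have $T_k(a)=0$ for $k\leq n_0$, so $X=\sum_{k>n_0}|T_k(a)|^2$, and $X$ is a well-defined positive element of $L_1(\N)$ with $\varphi(X)=\|(T_k(a))_{k\geq1}\|_{L_2(\N,\ell_2^c)}^2$ (finite by Lemma \ref{T22}). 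Since $e_Q\in\M$ commutes with the action of each $T_k$ (which only touches the $L_\infty(G_m)$-variable), $T_k(a)=T_k(ae_Q)=T_k(a)e_Q$, hence $X=e_QXe_Q$ and $X^{1/2}=e_QX^{1/2}e_Q$. On the other hand $\chi_Q$ is a \emph{central} projection of $\N$, so $X^{1/2}=\chi_QX^{1/2}+\chi_{Q^c}X^{1/2}$ is an orthogonal decomposition and
\[
\|(T_k(a))_{k\geq1}\|_{L_1(\N,\ell_2^c)}=\|X^{1/2}\|_{L_1(\N)}=\|\chi_QX^{1/2}\|_{L_1(\N)}+\|\chi_{Q^c}X^{1/2}\|_{L_1(\N)} ,
\]
and I would bound the two terms separately.

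For the local term, note that $X^{1/2}=e_QX^{1/2}$ and $\chi_Q$ commutes with $e_Q$, so $\chi_QX^{1/2}=e\,(\chi_QX^{1/2})$ with $e=e_Q\chi_Q$. H\"older's inequality together with $\|X^{1/2}\|_{L_2(\N)}=\varphi(X)^{1/2}$, Lemma \ref{T22}, and the atom normalization $\|a\|_{L_2(\N)}\leq\varphi(e)^{-1/2}$ gives
\[
\|\chi_QX^{1/2}\|_{L_1(\N)}\leq\|e\|_{L_2(\N)}\,\|X^{1/2}\|_{L_2(\N)}=\varphi(e)^{1/2}\,\|(T_k(a))_{k\geq1}\|_{L_2(\N,\ell_2^c)}\leq c\,\varphi(e)^{1/2}\|a\|_{L_2(\N)}\leq c .
\]

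For the global term, observe that since $Q\in D(\mathcal{F}_{n_0})$ and $k>n_0$, every $\mathcal{F}_k$-interval intersecting $Q$ is contained in $Q$; as $a=a\chi_Q$, this forces $\mathbb{E}_k(a)\chi_{Q^c}=0$ and therefore $T_k(a)\chi_{Q^c}=\sigma_{n_k}(a)\chi_{Q^c}$. Using centrality of $\chi_{Q^c}$ once more, $\chi_{Q^c}X=\sum_{k>n_0}|\sigma_{n_k}(a)\chi_{Q^c}|^2$, so the triangle inequality in $L_1(\N,\ell_2^c)$ and Lemma \ref{nk-atom} yield
\[
\|\chi_{Q^c}X^{1/2}\|_{L_1(\N)}\leq\sum_{k>n_0}\|\sigma_{n_k}(a)\chi_{Q^c}\|_{L_1(\N)}\leq c\sum_{k>n_0}\big[\delta^{n_0-k}+(k-n_0)\delta^{\frac{n_0-k}{2}}\big]=c\sum_{j\geq1}\big[\delta^{-j}+j\delta^{-j/2}\big] ,
\]
which is a finite absolute constant since $\delta>1$. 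Combining the two bounds proves the column estimate.

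The row statement follows by passing to adjoints: $(T_k(a))^{*}=\bar{\sigma}_{n_k}(a^{*})-\mathbb{E}_k(a^{*})$, where $\bar{\sigma}_{n_k}$ is the Ces\`{a}ro mean built from the conjugate kernel $\overline{K_{n_k}}$; the conjugate functions $\overline{r_k^{n}}$ satisfy Assumption \ref{gamma-nk} with the same $\delta$, so Lemmas \ref{T22}, \ref{kn0} and \ref{nk-atom} apply verbatim with $\sigma_{n_k}$ replaced by $\bar{\sigma}_{n_k}$, and $a^{*}$ is a $(1,2)_c$-simple column atom, so the argument above gives $\|((T_k(a))^{*})_{k\geq1}\|_{L_1(\N,\ell_2^c)}\leq c$, i.e. $\|(T_k(a))_{k\geq1}\|_{L_1(\N,\ell_2^r)}\leq c$. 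I expect no serious obstacle here: the genuinely hard analytic input has already been isolated in Lemmas \ref{T22} and \ref{nk-atom}, and the only real care needed is the interplay of the two projections — the central $\chi_Q\in L_\infty(G_m)$ and $e_Q\in\M$ — together with the vanishing $\mathbb{E}_k(a)\chi_{Q^c}=0$ for $k>n_0$, which is what makes the local/global split legitimate.
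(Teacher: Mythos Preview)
Your proof is correct and follows essentially the same approach as the paper's: both split into a local piece controlled by H\"older and Lemma~\ref{T22}, and an off-support piece controlled by Lemma~\ref{kn0}, the vanishing $\mathbb{E}_k(a)\chi_{Q^c}=0$ for $k>n_0$, the triangle inequality in $L_1(\N,\ell_2^c)$, and Lemma~\ref{nk-atom}. The paper multiplies by $e$ and $\mathbf{1}-e$ whereas you multiply by $\chi_Q$ and $\chi_{Q^c}$, but since $T_k(a)=T_k(a)e_Q$ these two decompositions coincide; the only (harmless) slip is in the row paragraph, where you should be starting from a \emph{row} atom $a$ so that $a^*$ is the column atom to which the argument is applied.
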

\begin{proof}
	Without loss of generality, we may assume that $a$ is a $(1,2)_c$-simple  atom associated with $n_0\geq1$ and projection $e=e_Q\chi_Q$ with $e_Q\in \mathcal{P}(\M)$, $Q\in D(\mathcal{F}_{n_0})$. 	Note that $\|\cdot\|_{L_{1}(\N,\ell_{2}^c)}$ is  a norm. 
	Then 
	\begin{align*}
		\|(T_ka)_{k\geq1}\|_{L_{1}(\N,\ell_{2}^c)}&=\Big\|\sum_{k\geq 1} T_k(a) \otimes e_{k,1}\Big\|_{L_1(\N\bar{\otimes}B(\ell_{2}))}\\
		&\leq \Big\|\sum_{k\geq 1} T_k(a) e \otimes e_{k,1}\Big\|_{L_1(\N\bar{\otimes}B(\ell_{2}))}+\Big\|\sum_{k\geq 1} T_k(a)(1-e) \otimes e_{k,1}\Big\|_{L_1(\N\bar{\otimes}B(\ell_{2}))}\\
		&\coloneqq \mathrm{I}+\mathrm{II},
	\end{align*}
	where $e_{k,1}$ stands for the column matrix taking value $1$ at the $(k,1)$-position and all others are $0$. The term $\mathrm{I}$ is  estimated as follows: 
	\begin{align*}
		\mathrm{I}&=\Big\| \sum_{k\geq1} e|T_k(a)|^2e\Big\|_{L_{\frac{1}{2}}(\N)}^{\frac{1}{2}} \leq c\|e\|_{L_1(\N)}^{1/2} \|a\|_{L_2(\N)}\leq c,
	\end{align*}
	where we used  Lemma \ref{T22}.

	Observe that,  if $k>n_0$, then $\mathbb{E}_k(a)(\1-e)=\mathbb{E}_k(ae)(\1-e)=\mathbb{E}_k(a) e(\1-e)=0$. Hence, by Lemma \ref{kn0}, we get
	$$\sum_{k\geq1}T_k(a)(\1-e)=\sum_{k> n_0} \sigma_{n_k}(a)(\1-e). $$
	On the other hand,  it is easy see that, for each $t\in Q$, 
	$$\sigma_{n_k}(a)(t)(\1-e)(t)=\sigma_{n_k}(a)(t)e_Q (\1_{\M}-e_Q)= 0, \quad k>n_0.$$
	Consequently, using Lemma \ref{nk-atom}, we arrive at
	\begin{align*}
		\mathrm{II}&\leq \sum_{k> n_0}  \|\sigma_{n_k}(a)\chi_{Q^c}\|_{L_1(\N)}\\
		&\leq c\sum_{k> n_0} c\Big[\delta^{n_0-k}+(k-n_0)\delta^{\frac{n_0-k}{2}}\Big] \leq c.
	\end{align*}
	The desired result follows from the estimates of $\mathrm{I}$ and $\mathrm{II}$. 
\end{proof}

Theorem \ref{SO-1} now follows immediately from the atomic decomposition and the complex interpolation between $H^{c}_{1}(\N)$ and $H^{c}_{2}(\N)$ ($=L_2(\N)$).

\begin{proof}[Proof of Theorem \ref{SO-1}]
	The case $p=1$  follows from Theorem \ref{atomic} and Lemma \ref{lem-atom} directly.   For $1<p<2$, combining the complex interpolation between $H_1^c(\N)$ and $L_2(\N)$ (see e.g. \cite[Lemma 4.3]{BCPY2010}), the case $p=1$ and Lemma \ref{T22}, we have
	\begin{equation}\label{ppc}
		\|(T_k(f))_{k\geq1}\|_{L_{p}(\N,\ell_2^{c})} \leq c_p\|f\|_{H_p^c(\N)},\quad 1<p<2.
	\end{equation}
	The same result holds true for row spaces, and hence the proof is complete.
\end{proof}

\subsection{Proof of Theorem \ref{main-asy-op}}\label{sec-pf asy op}

We now arrive at the position to prove Theorem \ref{main-asy-op}. 

\begin{proof}[Proof of Theorem \ref{main-asy-op}]
	Since every lacunary sequence $(n_{k})_{k\geq 1}$ could be split into a finite number of sub-sequences $\{(n^{j}_{k})_{k\geq 1}\}_{j=1}^{N}$ such that, $m_{k-2}\leq n^{j}_{k}/n^{j}_{k-1}$ for every $k\in \Na$ and $j=1,\dots,N$. Hence, without loss of generality, we assume that $(n_{k})_{k\geq 1}$ is a sequence satisfying $M_{k-1}\leq n_{k}<M_{k}$ for each $k\geq 1$.
	
	(i) 
	Note that $\sigma_{n_{k}}(f)=T_{k}(f)+\mathbb{E}_{k}(f)$ and $\|\cdot\|_{\Lambda_{p,\infty}(\N,\ell^{c}_{\infty})}$ is a quasi-norm.  Then, combining Lemma \ref{ebm-pc} and \cite[Theorem A]{HJP2016}, we obtain 
	\begin{align*}
		\left\|(\sigma_{n_{k}}(f))_{k\geq 1}\right\|_{\Lambda_{p,\infty}(\N,\ell^{c}_{\infty})}
		&\lesssim \left\|\left(T_{k}(f)\right)_{k\geq 1}\right\|_{\Lambda_{p,\infty}(\N,\ell^{c}_{\infty})}+\|(\mathbb{E}_{k}(f))_{k\geq1}\|_{\Lambda_{p,\infty}(\N,\ell^{c}_{\infty})}\\
		&\lesssim \left\|(T_{k}(f))_{k\geq 1}\right\|_{L_{p}(\N,\ell^{c}_{2})}+\|f\|_{H_p^c(\N)}\\
		&\leq  c_{p}\|f\|_{H_p^c(\N)},
	\end{align*}
	where the last inequality follows from   Theorem \ref{SO-1}.
	Analogous argument yields that the inequality still holds for row spaces, and hence we complete the proof of item (i).

	(ii) Let $f\in L_p(\mathcal{N})$ with $1<p<2$, and assume that $f=f^{c}+f^{r}$ with $f^c\in H_p^c(\mathcal{N})$ and $f^r\in H_p^r(\mathcal{N})$.  Combining Lemma \ref{ebm-pc} with Theorem \ref{SO-1}, we get 
	\begin{equation*}\label{decomposition 2}
		\begin{split}
			\left\|(\sigma_{n_{k}}(f^{c}))_{k\geq 1}\right\|_{L_{p}(\N,\ell^{c}_{\infty})}
			&\lesssim_p \left\|\left(T_{k}(f^{c})\right)_{k\geq 1}\right\|_{L_{p}(\N,\ell^{c}_{\infty})}+\|(\mathbb{E}_{k}(f^{c}))_{k\geq 1}\|_{L_{p}(\N,\ell^{c}_{\infty})}\\
			&\leq \left\|\left(T_{k}(f^{c})\right)_{k\geq 1}\right\|_{L_{p}(\N,\ell^{c}_{2})}+\|(\mathbb{E}_{k}(f^{c}))_{k\geq 1}\|_{L_{p}(\N,\ell^{c}_{\infty})}\\
			&\lesssim_p \|f^c\|_{H_p^c(\N)}+\|(\mathbb{E}_{k}(f^{c}))_{k\geq 1}\|_{L_{p}(\N,\ell^{c}_{\infty})}.
		\end{split}
	\end{equation*}
	Analogously, the same inequality holds for $f^{r}$, that is,
	\begin{equation*}
		\left\|(\sigma_{n_{k}}(f^{r}))_{k\geq 1}\right\|_{L_{p}(\N,\ell^{r}_{\infty})}\lesssim_p \|f^{r}\|_{H^{r}_{p}(\N)}+\|(\mathbb{E}_{k}(f^{r}))_{k\geq 1}\|_{L_{p}(\N,\ell^{r}_{\infty})}.
	\end{equation*}
	Thus, using the noncommutative Burkholder-Gundy inequality (see \cite{PX1997} or \eqref{decomposition 1}) and \cite[Theorem B]{HJP2016}, we have
	\begin{equation*}
		\inf\limits_{f=f^{c}+f^{r}}\left\{\|(\sigma_{n_{k}}(f^{c}))_{k\geq 1}\|_{L_{p}(\N,\ell^{c}_{\infty})}+\|(\sigma_{n_{k}}(f^{r}))_{k\geq 1}\|_{L_{p}(\N,\ell^{r}_{\infty})}\right\}\leq c_{p}\|f\|_{L_{p}(\N)}.
	\end{equation*}
	Our proof is complete. 
\end{proof}

\begin{remark} According to  \cite[Theorem 5.11]{CRX2023},    the following inequality holds for  regular filtration:
	$$\|\big(\mathbb{E}_n(f)\big)_{n\geq 1}\|_{L_{1}(\mathcal{N},\ell_{\infty}^c)}\leq c\|f\|_{H_1^c(\mathcal{N})}.$$
	From this inequality, we can get  a  strong type version of (i) in Theorem \ref{main-asy-op}  for $p=1$ as follows:
	$$\|(\sigma_{n_{k}}(f))_{k\geq1}\|_{L_{1}(\mathcal{N},\ell_{\infty}^c)}\leq c\|f\|_{H_1^c(\mathcal{N})}.$$
\end{remark}

The following remark states that Theorem \ref{main-asy-op} (i) holds in full generality whenever $p=2$. 

\begin{remark}\label{full-rng}
	We have 
	\[
	\left\|(\sigma_n(f))_{n\geq1}\right\|_{\Lambda_{2,\infty}(\mathcal{N},\ell_{\infty}^c)}\leq c \|f\|_{L_2(\mathcal{N})}.
	\]
	According to  Lemma \ref{AcB}, for any $f\in L_1(\mathcal{N})$, we have
	$$\sigma_n(f)(\eta)=\int_{G_m} K_n(\eta, t)f(t)dt= B^{1/2}uA^{1/2},$$
	where 
	$$B=\int_{G_m}|K_n(\eta, t)|dt,\quad A= \int_{G_m} |K_n(\eta, t)|\cdot|f(t)|^2dt, \quad \|u\|_{L_{\infty}(\mathcal{M})}\leq 1.$$
	Hence, by Lemma \ref{KI-E-lemma} (i), 
	\begin{align*}
		|\sigma_n(f)|^2=A^{1/2} u^*B^{1/2}B^{1/2}uA^{1/2}\leq c A\leq c\sigma_{n}^+(|f|^2),
	\end{align*}
where $\sigma_{n}^+(f)=\int_{G_m} |K_n(\eta, t)|f(t)dt$.	Consequently, 
	\begin{align*}
		\|(\sigma_n(f))_{n\geq1}\|_{\Lambda_{2,\infty}(\mathcal{N},\ell_{\infty}^c)}&=\|(|\sigma_n(f)|^2)_{n\geq1}\|_{\Lambda_{1,\infty}(\mathcal{N},\ell_{\infty})}^{1/2}\\
		&\leq \sqrt{2} \|(\sigma_n^+(|f|^2))_{n\geq1}\|_{\Lambda_{1,\infty}(\mathcal{N},\ell_{\infty})}^{1/2} \leq c  \||f|^2\|_{L_1(\mathcal{N})}^{1/2}=c\|f\|_{L_2(\mathcal{N})},
	\end{align*}
	where the last inequality is due to 
	$$\|(\sigma_n^+(|f|^2))_{n\geq1}\|_{\Lambda_{1,\infty}(\mathcal{N},\ell_{\infty})}^{1/2}\leq\|(\widetilde{\sigma}_l(|f|^2))_{l\geq1}\|_{\Lambda_{1,\infty}(\mathcal{N},\ell_{\infty})}^{1/2},\quad M_l\leq n<M_{l+1}$$
	and  Theorem \ref{sigma-tilde} (i).
\end{remark}


\subsection{Proof of Theorem \ref{main-asy}}
This subsection is devoted to proving  Theorem \ref{main-asy}. To this end,   we need to work with $G_{2m}$ for a given bounded sequence $m=\{m_k\}_{k\in \mathbb{N}}$, where $2m$ is defined as in \eqref{2m}. In this case, let $M_0=1$, $M_{1}=(2m)_0M_0$ and $M_{k}=(2m)_{k-1}M_{k-1}$ for each $k\geq2$.
Assume that $(n_{k})_{k\in \Na}\subseteq\Na$ satisfies $M_{2(k-1)}\leq n_{k}<M_{2k}$ for each $k$, and the operator $\Tr_{k}$ is given by
\begin{equation*}
	\Tr_{k}(x)\coloneqq\sigma_{n_{k}}^{\Ra}(x)-\Ex_{k}(x),\quad  x\in L_1(\Ra).
\end{equation*}

\begin{theorem}\label{sunouchi operator 2}
	For each $x\in H^{c}_{p}(\mathcal{R})$ with $1\leq p\leq 2$, there exists $c_{p}>0$ such that
	\begin{equation*}
		\left\|\left(\Tr_{k}(x)\right)_{k\geq 1}\right\|_{L_{p}(\mathcal{R},\ell^{c}_{2})}\leq c_{p}\|x\|_{H^{c}_{p}(\mathcal{R})}.
	\end{equation*}
	The same result holds for row spaces.
\end{theorem}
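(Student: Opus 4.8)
The strategy is to transfer Theorem~\ref{SO-1} (the operator-valued Sunouchi estimate in $\mathcal{N}=L_\infty(G_{2m})\bar\otimes\mathcal{R}$) to the genuinely noncommutative setting in $\mathcal{R}$ via the $*$-homomorphism $\gamma$ of Section~\ref{noncommutative Walsh and transference}. First I would record the compatibility of $\gamma$ with the relevant operators: since $\gamma$ sends $W_j$ to $\psi_j\otimes W_j$, one checks on polynomials that $\gamma(\sigma_{n_k}^{\mathcal{R}}(x))=\sigma_{n_k}(\gamma(x))$ and $\gamma(\Ex_k(x))=\mathbb{E}_k(\gamma(x))$, where on the right $\sigma_{n_k}$ and $\mathbb{E}_k$ are the operator-valued Ces\`aro means and conditional expectations on $\mathcal{N}$ with respect to the Vilenkin group $G_{2m}$; recall that by \eqref{EkSk} and \eqref{S2n} the conditional expectation $\Ex_k$ corresponds to the partial sum $S^{\mathcal{R}}_{M_{2k}}$, and on the $G_{2m}$ side $\mathbb{E}_k=S_{M_{2k}}$. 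Hence $\gamma(\Tr_k(x))=T_k(\gamma(x))$ with $T_k=\sigma_{n_k}-\mathbb{E}_k$ exactly the operators of Theorem~\ref{SO-1}, once we note that the hypothesis $M_{2(k-1)}\le n_k<M_{2k}$ is precisely what is needed to apply that theorem on $G_{2m}$ (after possibly re-indexing the filtration so that $M_{k-1}\le n_k<M_k$ in the $G_{2m}$-indexing). These identities extend from $\mathrm{Poly}(\mathcal{R})$ to all of $\mathcal{R}$ (and then to $L_p(\mathcal{R})$, $1\le p\le 2$) by normality and density.

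Next I would upgrade this to an isometric identification of the relevant column-valued norms. Using Theorem~\ref{transference argument} (which gives $\mu(t,\gamma(y))=\mu(t,y)$ and in particular $\|\gamma(y)\|_{L_p}=\|y\|_{L_p}$ for all $y\in\mathcal{R}$), together with the fact that $\gamma$ is a trace-preserving $*$-homomorphism, one gets
\[
\Big\|\big(\Tr_k(x)\big)_{k\ge1}\Big\|_{L_p(\mathcal{R},\ell_2^c)}
=\Big\|\Big(\sum_{k\ge1}|\Tr_k(x)|^2\Big)^{1/2}\Big\|_{L_p(\mathcal{R})}
=\Big\|\gamma\Big(\Big(\sum_{k\ge1}|\Tr_k(x)|^2\Big)^{1/2}\Big)\Big\|_{L_p(\mathcal{N})},
\]
and since $\gamma$ is a $*$-homomorphism commuting with the continuous functional calculus, $\gamma\big((\sum_k|\Tr_k(x)|^2)^{1/2}\big)=(\sum_k|\gamma(\Tr_k(x))|^2)^{1/2}=(\sum_k|T_k(\gamma(x))|^2)^{1/2}$, so the left side equals $\|(T_k(\gamma(x)))_{k\ge1}\|_{L_p(\mathcal{N},\ell_2^c)}$. (For $p<2$ a small approximation argument with finite sums $\sum_{k\le K}$ and monotone convergence handles the passage to the infinite series.) Similarly, one must check that $\gamma$ is compatible with the martingale structure so that $\|\gamma(x)\|_{H_p^c(\mathcal{N})}=\|x\|_{H_p^c(\mathcal{R})}$: since $\gamma$ intertwines $(\Ex_k)$ and $(\mathbb{E}_k)$ it sends martingale differences to martingale differences, and then $\gamma\big((\sum_k|dx_k|^2)^{1/2}\big)=(\sum_k|d(\gamma x)_k|^2)^{1/2}$, whence $\|\gamma(x)\|_{H_p^c}=\|x\|_{H_p^c}$ by Theorem~\ref{transference argument} again.

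Combining the two displays with Theorem~\ref{SO-1} applied to $\gamma(x)\in H_p^c(\mathcal{N})$ yields
\[
\big\|(\Tr_k(x))_{k\ge1}\big\|_{L_p(\mathcal{R},\ell_2^c)}
=\big\|(T_k(\gamma x))_{k\ge1}\big\|_{L_p(\mathcal{N},\ell_2^c)}
\le c_p\,\|\gamma(x)\|_{H_p^c(\mathcal{N})}
= c_p\,\|x\|_{H_p^c(\mathcal{R})},
\]
which is the asserted inequality; the row case follows by applying the column case to adjoints (using $\|y\|_{H_p^r}=\|y^*\|_{H_p^c}$ and that $\gamma$ preserves adjoints). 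The main obstacle I anticipate is not any single hard estimate — all the analytic work is already in Theorem~\ref{SO-1} — but rather the careful bookkeeping of indices: making sure the lacunarity/divisibility hypothesis $M_{2(k-1)}\le n_k<M_{2k}$ on $\mathcal{R}$ translates correctly into the hypothesis $M_{k-1}\le n_k<M_k$ of Theorem~\ref{SO-1} under the ``doubled'' filtration $G_{2m}$, and verifying that $\gamma$ genuinely intertwines $\sigma^{\mathcal{R}}_{n_k}$ with $\sigma_{n_k}$ and $\Ex_k$ with $\mathbb{E}_k$ at the level of Fourier coefficients (this uses $\widehat{\gamma(x)}(j)=\widehat{x}(j)$ as an element acting on the $\mathcal{R}$-factor, and \eqref{w-plus}, Lemma~\ref{RW-plus}). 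Once those identifications are pinned down, the proof is a direct transfer.
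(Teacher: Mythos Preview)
Your proposal is correct and follows essentially the same route as the paper: transfer via the $*$-homomorphism $\gamma$, verify $\gamma(\Tr_k(x))=T_{2k}(\gamma(x))$ and $\gamma(\Ex_k(x))=\mathbb{E}_{2k}(\gamma(x))$, apply the operator-valued Sunouchi bound (Theorem~\ref{SO-1}) on $L_\infty(G_{2m})\bar\otimes\mathcal{R}$, and use the trace-preserving $*$-homomorphism property (Theorem~\ref{transference argument}) to identify both the $L_p(\cdot,\ell_2^c)$ norm and the $H_p^c$ norm on the two sides. The only point worth flagging is the one you already singled out: the paper does not literally invoke Theorem~\ref{SO-1} as stated but rather its proof, working with the even-indexed subfiltration $(L_\infty(G_{2m},\mathcal{F}_{2k})\bar\otimes\mathcal{R})_{k\ge1}$ and the Hardy space $\widetilde{H}_p^c$ relative to it; your ``re-indexing'' remark is the same move, and both treatments leave this adaptation implicit.
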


\begin{proof}
	By the density of $\mathrm{Poly}(\Ra)$ in $H^{c}_{p}(\Ra)$,  without loss of generality, we assume that $x\in \mathrm{Poly}(\Ra)$. Let $\gamma:\Ra\to \mathcal{A}\coloneqq L_{\infty}(G_{2m})\bar{\otimes}\Ra$ be the mapping given as in \eqref{transference mapping}.   For each $k\in \Na$, define $T_{2k}$ as follows
	\begin{equation*}
		T_{2k}(\gamma(x))\coloneqq \sigma_{n_k}(\gamma(x))-\mathbb{E}_{2k}(\gamma(x)),
	\end{equation*}
	where the conditional expectation $\mathbb{E}_{2k}$ is referred to Example \ref{one-d} (with respect to $G_{2m}$). 
Applying Theorem \ref{SO-1} (at least its proof works well), we get 
	\begin{equation}\label{transference 6}
		\left\|\left(T_{2k}(\gamma(x))\right)_{k\geq 1}\right\|_{L_{p}(\mathcal{A},\ell^{c}_{2})}\leq c_p\|\gamma(x)\|_{\widetilde{H}^{c}_{p}(\mathcal{A})},
	\end{equation}
where  $\widetilde{H}^{c}_{p}(\mathcal{A})$ denotes the noncommutative martingale Hardy space    with respect to the filtration $(L_{\infty}(G_{2m},\mathcal{F}_{2k})\bar{\otimes}\Ra)_{k\geq1}.$ Combining \eqref{EkSk}, \eqref{S2n}, and the definition of $\gamma$, we have
	\begin{equation}\label{gammaEk}
		\gamma(\Ex_{k}(x))= S_{M_{2k}}(\gamma(x))=\mathbb{E}_{2k}(\gamma(x)), 
	\end{equation}
which further implies that   
	\begin{equation*}
		\gamma(dx_{k})=\mathbb{E}_{2k}(\gamma(2x))-\mathbb{E}_{2(k-1)}(\gamma(x)).
	\end{equation*}
Since $\gamma$ is a $\ast$-homomorphism, it follows from Theorem \ref{transference argument} that
	\begin{equation}\label{transference 8}
\begin{aligned}
	\|\gamma(x)\|_{\widetilde{H}^{c}_{p}(\mathcal{A})}
	&=\left\|\left(\sum\limits_{k\geq 1}|\mathbb{E}_{2k}(\gamma(x))-\mathbb{E}_{2(k-1)}(\gamma(x))|^{2}\right)^{1/2}\right\|_{L_{p}(\mathcal{A})}\\
&=\left\|\gamma\left(\sum\limits_{k\geq 1}|dx_{k}|^{2}\right)\right\|^{1/2}_{L_{p/2}(\mathcal{A})}\\
&=\left\|\left(\sum\limits_{k\geq 1}|dx_{k}|^{2}\right)^{1/2}\right\|_{L_{p}(\mathcal{R})}=\|x\|_{H^{c}_{p}(\Ra)}.
\end{aligned}
	\end{equation}
On the other hand, by the definition of $\gamma$ and \eqref{gammaEk}, it is easy to verify that
	\begin{equation*}
		\gamma(\Tr_{k}(x))=T_{2k}(\gamma(x)).
	\end{equation*}
	Similarly to the argument in \eqref{transference 8}, we have
	\begin{equation}\label{transference 9}
		\left\|\left(T_{2k}(\gamma(x))\right)_{k\geq 1}\right\|_{L_{p}(\mathcal{A},\ell^{c}_{2})}=
		\left\|\left(\Tr_{k}(x)\right)_{k\geq 1}\right\|_{L_{p}(\Ra,\ell_{2}^c)}.
	\end{equation}
	Substituting \eqref{transference 8} and \eqref{transference 9} into \eqref{transference 6} yields the desired inequality. Analogously, same arguments are valid for row spaces.
\end{proof}

\begin{corollary}
	For each $x\in L_p(\mathcal{R})$ with $1<p\leq 2$, there is a decomposition $x=x^{c}+x^{r}$ such that
	\begin{equation*}
		\left\|(\Tr_{k}(x^{c}))_{k\geq 1}\right\|_{L_{p}(\Ra,\ell^{c}_{2})}+\left\|(\Tr_{k}(x^{r}))_{k\geq 1}\right\|_{L_{p}(\Ra,\ell^{r}_{2})}\leq c_{p}\|x\|_{L_{p}(\Ra)}.
	\end{equation*}
\end{corollary}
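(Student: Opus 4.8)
The plan is to deduce this corollary from Theorem \ref{sunouchi operator 2} in exactly the way that item (ii) of Theorem \ref{main-asy-op} follows from Theorem \ref{SO-1}, now working directly with the hyperfinite factor $\Ra$ rather than the operator-valued algebra $\N$. First I would invoke the noncommutative Burkholder--Gundy inequality for the filtration $(\Ra_k)_{k\geq1}$: since $1<p\leq 2$, there is a constant $c_p>0$ and a decomposition $x=x^c+x^r$ with $x^c\in H^c_p(\Ra)$, $x^r\in H^r_p(\Ra)$ and
\[
\|x^c\|_{H^c_p(\Ra)}+\|x^r\|_{H^r_p(\Ra)}\leq c_p\|x\|_{L_p(\Ra)}.
\]
(As in the proof of Theorem \ref{main-asy-op}, if the given lacunary sequence $(n_k)_k$ does not satisfy $M_{2(k-1)}\leq n_k<M_{2k}$, one first splits it into finitely many sub-sequences each of which does, and treats each sub-sequence separately; this affects only the constant.)

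Next, for the column part I would apply Theorem \ref{sunouchi operator 2} directly to $x^c$, giving
\[
\left\|(\Tr_k(x^c))_{k\geq1}\right\|_{L_p(\Ra,\ell_2^c)}\leq c_p\|x^c\|_{H^c_p(\Ra)},
\]
and symmetrically, using the row version of Theorem \ref{sunouchi operator 2}, obtain $\|(\Tr_k(x^r))_{k\geq1}\|_{L_p(\Ra,\ell_2^r)}\leq c_p\|x^r\|_{H^r_p(\Ra)}$. Adding these two estimates and feeding in the Burkholder--Gundy bound on $\|x^c\|_{H^c_p}+\|x^r\|_{H^r_p}$ yields
\[
\left\|(\Tr_k(x^c))_{k\geq1}\right\|_{L_p(\Ra,\ell_2^c)}+\left\|(\Tr_k(x^r))_{k\geq1}\right\|_{L_p(\Ra,\ell_2^r)}\leq c_p\|x\|_{L_p(\Ra)},
\]
which is the claim. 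A routine density remark (polynomials are dense in $L_p(\Ra)$ and all operators involved are bounded) lets one reduce to $x\in\mathrm{Poly}(\Ra)$ if desired, though it is not strictly necessary here.

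There is no serious obstacle: all the hard analysis has already been done in establishing Theorem \ref{sunouchi operator 2} (via the transference map $\gamma$ and Theorem \ref{SO-1}), and the only genuinely new ingredient is the $H^c_p+H^r_p$ decomposition of an $L_p$ element for $1<p\le 2$, which is precisely the content of the noncommutative Burkholder--Gundy inequality of Pisier--Xu \cite{PX1997}. The one point to be careful about is bookkeeping with the lacunary sequence: one must verify that after the finite splitting each piece still satisfies the index condition $M_{2(k-1)}\leq n_k<M_{2k}$ needed to apply Theorem \ref{sunouchi operator 2}, and that summing over the finitely many pieces only inflates the constant $c_p$ by a factor depending on $p$ (through the lacunarity ratio $r$). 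This is entirely parallel to the corresponding step in the proof of Theorem \ref{main-asy-op}(ii), so I would simply mirror that argument.
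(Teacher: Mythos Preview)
Your approach is essentially the same as the paper's: decompose $x=x^c+x^r$ via the noncommutative Burkholder--Gundy inequality and apply Theorem \ref{sunouchi operator 2} to each piece. Two small remarks: the lacunary-splitting discussion is superfluous here, since $\Tr_k$ is already defined under the hypothesis $M_{2(k-1)}\leq n_k<M_{2k}$; and the paper treats $p=2$ separately by invoking Lemma \ref{T22} (transferred to $\Ra$), whereas your uniform argument also works since $H^c_2(\Ra)=L_2(\Ra)$ and Theorem \ref{sunouchi operator 2} covers $p=2$.
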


\begin{proof}
	For the case $1<p<2$, it has been established in \cite[Theorem 4.10]{JRWZ2020} that there exists a constant $c^{\prime}_{p}>0$ depending only on $p$ such that for each $x\in L_{p}(\Ra)$ there exists a decomposition $x=x^{c}+x^{r}$ satisfying
	\begin{equation}\label{decomposition 1}
		\|x^{c}\|_{H^{c}_{p}(\Ra)}+\|x^{r}\|_{H^{r}_{p}(\Ra)}\leq c^{\prime}_{p}\|x\|_{L_{p}(\Ra)}.
	\end{equation}
	Then the first desired inequality follows from Theorem \ref{sunouchi operator 2} directly. 
	The case $p=2$ is due to Lemma \ref{T22}. 
\end{proof}

Now we prove Theorem \ref{main-asy}. 

\begin{proof}[Proof of Theorem \ref{main-asy}]
Using Theorem \ref{sunouchi operator 2} instead of Theorem \ref{SO-1},   and repeating the argument used in the proof of Theorem \ref{main-asy-op} in Section \ref{sec-pf asy op}, we can finish the proof. 
\end{proof}

\begin{remark}
Note that to prove Theorem \ref{main-asy}, we first transfer Theorem \ref{SO-1} to its noncommutative corresponding result Theorem \ref{sunouchi operator 2}, and then repeat the proof of Theorem \ref{main-asy-op} to finish the proof. At the time of this writing, we are not clear that whether we can transfer Theorem \ref{main-asy-op} to Theorem \ref{main-asy} directly.
\end{remark}

We conclude this paper with two  questions.

\begin{question}\label{unbdd-V} It is an interesting problem to consider Theorem \ref{CS}  for unbounded Vilenkin system. We also point out here that for unbounded Vilenkin system even in the classical setting Theorem \ref{CS} (i) is a long time open problem.  The best related result was proved by   G\'{a}t  \cite{Ga2007}  for so-called rarely unbounded Vilenkin groups. 
\end{question}

\begin{question}\label{nc-2-adic} 
Recall that the construction of noncommutative Walsh and Vilenkin systems appear in the series of the papers by Sukochev and his collaborators \cite{AFS1996,CPS2013, DS2000,DFdePS2001,SF1995}. 
It is natural to ask whether 
we can construct a noncommutative $2$-adic integer system (the classical version is in Example \ref{m-adic-example})?  
\end{question}

\noindent{\bf Acknowledgement.}
Yong Jiao is partially supported by the NSFC (No. 12125109, No. 11961131003). Sijie Luo is partially supported by the NSFC (No. 12201646), Natural Science Foundation Hunan (No. 2023JJ40696). Dejian Zhou is   partially supported by NSFC (No. 12001541),  Natural Science Foundation Hunan (No. 2023JJ20058), CSU
Innovation-Driven Research Programme (No. 2023CXQD016).

\begin{appendices}
	
	\section{Almost uniform convergence}
	In this subsection, we recall definitions of almost uniform convergence in semifinite von Neumann algebras. The following definition, which goes back at least   Lance \cite{La1976} or \cite{Ja1985},  is taken from \cite{CGP2020} and \cite[p.\,109]{De2011}.
	\begin{definition}\label{npc}
		Consider $(x_k)_{k\geq1}\subseteq L_0(\mathcal{M})$ and $x\in L_0(\mathcal{M})$.
		\begin{enumerate}[{\rm (i)}]
			\item We say $(x_k)_{k\geq1}$ converges to $x$ column almost uniformly (c.a.u. in short) if for any $\varepsilon>0$, there is a projection $e\in \mathcal{P}(\mathcal{M})$ such that
			\[
			\tau({\bf 1}-e)<\varepsilon,\quad \lim_{k\to\infty}\|(x_k-x)e\|_{\mathcal{M}}=0.
			\]
			\item We say $(x_k)_{k\geq1}$ converges to $x$ row almost uniformly (r.a.u. in short)  if $(x_k^*)_{k\geq1}$ converges to $x^*$ column almost uniformly.
			
			\item We say $(x_k)_{k\geq1}$ converges to $x$ column $+$ row almost uniformly ($c+r$ a.u.) provided the sequence $(x_{k}-x)_{k\geq 1}$ decomposes into a sum $(a_{k})_{k\geq 1}+(b_{k})_{k\geq 1}$ of two sequences such that $(a_{k})_{k\geq 1}$ converges to $0$ column almost uniformly and $(b_{k})_{k\geq 1}$ converges to $0$ row almost uniformly.
			
			\item We say $(x_k)_{k\geq1}$ converges to $x$ bilaterally almost uniformly (b.a.u. in short) if for any $\varepsilon>0$, there is a projection $e\in \mathcal{P}(\mathcal{M})$ such that
			\[
			\tau({\bf 1}-e)<\varepsilon,\quad \lim_{k\to\infty}\|e(x_k-x)e\|_{\mathcal{M}}=0.
			\] 
		\end{enumerate}
	\end{definition}
	The column almost uniform convergence is the usual almost uniform convergence used in \cite{CXY2013}, \cite{JX2007},    \cite{Me2007} and more; hence we sometimes still use the same symbol a.u. for short.   It was shown in \cite[Chapter 3.1.8]{De2011} that  only the trivial implications hold:
	$$\mbox{a.u. or r.a.u.}  \Rightarrow c+r \mbox{ a.u.} \Rightarrow \mbox{b.a.u.}$$
	The above definition  generalizes the notion of almost everywhere convergence in the case of finite
	measure spaces. Actually, for finite abelian von Neumann algebra $\mathcal{M}$ (i.e., $\tau(\1)<\infty$), the almost uniform convergences in the definition above are all equivalent
	to the usual almost everywhere convergence by virtue of Egorov's theorem.

	
\section{Proofs of transference results }

\begin{proof}[Proof of Proposition \ref{transference 1}]
(i) Since the linearity and $*$-preserving of $\gamma$ are obvious, it suffices to show the following
\[
\gamma(W_mW_n)=\gamma(W_m)\gamma(W_n).
\]
This actually	follows from the definition of $\gamma$, the second equality of \eqref{w-plus}	and Lemma \ref{RW-plus}.

Item (ii) is clear, and it now remains to prove item (iii). The fact that $\gamma$ is an    injective mapping from $\mathrm{Poly}(\Ra)$ into $L_{\infty}(G_{2m})\bar{\otimes} \mathcal{R}$ is clear. Indeed, for every $x\in \mathrm{Poly}(\Ra)$ with $x\not=0$,  there exists $j\in \mathbb{N}\cup\{0\}$ such that $\widehat{x}(j)\not=0$. Since $\{\psi_{j}\otimes W_{j}\}_{j=0}^{\infty}$ is linearly independent, it follows that $\gamma(x)\not=0$, which verifies our claim. The linear independence of $\{\psi_{j}\otimes W_{j}\}_{j=0}^{\infty}$ could be evidenced from the fact that $\{\psi_{j}\otimes W_{k}\}_{j,k=0}^{\infty}$ forms an orthonormal basis of $L_{2}(L_{\infty}(G_{2m})\bar{\otimes} \mathcal{R})$.
	
We now verify that $\gamma$ is normal. By the linearity of $\gamma$, it suffices to verify $\gamma(x_{\alpha})\to 0$ in weak-$\ast$ topology whenever the net $x_{\alpha}\to 0$ in weak-$\ast$ topology. Thanks to Grothendieck's dual characterization of completeness, it suffices to verify that $\gamma(x_{\alpha})\to 0$ in weak-$\ast$ topology whenever $\{x_{\alpha}\}_{\alpha\in\Gamma}$ is weak-$\ast$-null norm bounded net
(see e.g. \cite[p.\,149]{T-V-S}). Without loss of generality, assume that $\|x_{\alpha}\|_{\mathcal{R}}\leq 1$ and $x_{\alpha}=\sum_{j\geq0}\widehat{x}_{\alpha}(j)W_{j_{\alpha}}$ for each $\alpha$. It suffices to show the following
\begin{equation}\label{simplification}
\langle \psi_{j}\otimes W_{k},\gamma(x_{\alpha}) \rangle_{L_2(\mathcal{R})}\to 0,\quad \alpha\in \Gamma,
\end{equation}
for each $j$, $k\in\mathbb{N}\cup\{0\}$. Since $\{\psi_{j}\otimes W_{k}\}_{j,k=0}^{\infty}$ is orthonormal in $L_{2}\left(L_{\infty}(G_{2m})\bar{\otimes} \mathcal{R}\right)$, we have the following
\begin{equation*}
\langle \psi_{j}\otimes W_{k},\gamma(x_{\alpha}) \rangle_{\varphi}=
\begin{cases}
\hat{x}_{\alpha}(j),~j=k\\
0,\qquad\mbox{otherwise}.
\end{cases}
\end{equation*}
Note that $\mathcal{R}$ admits a predual $L_{1}(\mathcal{R})$. Then, by the assumption that $\{x_{\alpha}\}_{\alpha\in \Gamma}$ is weak-$\ast$ null in $\mathrm{Poly}(\Ra)$, we get that, for each $j\in \mathbb{N}\cup\{0\}$,
$\widehat{x}_{\alpha}(j)\to 0.$
Indeed, for every $j\in \mathbb{N}\cup\{0\}$,
\begin{equation*}
\begin{split}
\widehat{x}_{\alpha}(j)&=\tau\left(x_{\alpha}\cdot W^{*}_{j}\right)\\
&=u_j\cdot\tau\left(x_{\alpha}\cdot W_{j}\right)\to 0,
\end{split}
\end{equation*}
where $|u_j|=1$. This establishes that $\gamma$ is normal. Since $\gamma$ is injective, repeating the statement as above we can see  that $\gamma^{-1}$ is also normal.  
\end{proof}

We shall simply denote $\widehat{\mathcal{R}}\coloneqq\gamma(\mathcal{R})$ in the sequel.

\begin{proof}[Proof of Theorem \ref{transference argument}]
	For $p=\infty$, by Proposition \ref{transference 1-2}, $\gamma:\mathcal{R}\to \widehat{\mathcal{R}}\subseteq L_{\infty}(G_{2m})\bar{\otimes} \mathcal{R}$ is a $*$-isomorphism such that $\widehat{\mathcal{R}}$ is weak-$*$ closed in $L_{\infty}(G_{2m})\bar{\otimes} \mathcal{R}$. Hence, $\gamma$ is a $*$-isomorphism from $\mathcal{R}$ onto the von Neumann algebra $\widehat{\mathcal{R}}$, and therefore $\gamma$ is an isometry, that is, $\|\gamma(x)\|_{L_{\infty}(G_{2m})\bar{\otimes} \mathcal{R}}=\|x\|_{\mathcal{R}}$, for all $x\in \mathcal{R}$.
	
Since $\gamma:\mathcal{R}\to \widehat{\mathcal{R}}$ is a $*$-isomorphism, for every polynomial $g$ and positive element $x$ in $\mathcal{R}$, we have $\gamma(g(x))=g(\gamma(x))$. For every bounded continuous function $f$ on $[0,\infty)$, we get from the Weierstrauss theorem that $f$ could be approximated by a sequence of polynomials $(p_{n})_{n=1}^{\infty}$ in the compact-open topology. Since the spectral of $x$ is compact, we get $p_{n}(x)\to f(x)$ and $\gamma(p_{n}(x))\to \gamma(f(x))$   in $\mathcal{R}$.  
	
On the other hand, for each $n\in \mathbb{N}$, $\gamma(p_{n}(x))=p_{n}(\gamma(x))$. Then, applying the fact that $p_{n}\to f$ in the compact-open topology and the compactness of the spectral of $\gamma(x)$, we obtain
	\begin{equation*}
		f(\gamma(x))=\lim\limits_{n\to \infty}p_{n}(\gamma(x))=\lim\limits_{n\to \infty}\gamma(p_{n}(x))=\gamma(f(x)),
	\end{equation*}
where limits are taken with respect to the norm in $\mathcal{R}$. Furthermore, notice that $\gamma:\mathcal{R}\to \widehat{\mathcal{R}}$ is a $*$-isomorphism,  for arbitrary $x\in \mathcal{R}$, we have
	\begin{equation}\label{homomorphism 1}
		\gamma(|x|^{2})=\gamma(x^{*}x)=\gamma(x^{*})\gamma(x)=\gamma(x)^{*}\gamma(x)=|\gamma(x)|^{2}.
	\end{equation}
Since the   the mapping $t\to t^{p/2}$ is continuous, by \eqref{homomorphism 1}, we get
	\begin{equation*}
		|\gamma(x)|^{p}=\gamma(|x|^{p}),\quad \forall x\in \mathcal{R}.
	\end{equation*}
Then, it follows from Proposition \ref{transference 1} (ii) that
	\begin{equation*}
		\|\gamma(x)\|^{p}_{L_{p}(L_{\infty}(G_{2m})\bar{\otimes}\mathcal{R})}=\|x\|^{p}_{L_{p}(\mathcal{R})}.
	\end{equation*}
	
To prove \eqref{mu-est}, it suffices to show for each $\lambda>0$ and $x\in \mathcal{R}$,
\begin{equation*}
\varphi\left(\chi_{(\lambda,\infty)}(\gamma(x))\right)=\tau\left(\chi_{(\lambda,\infty)}(x)\right).
\end{equation*}
	Note that for arbitrary $\lambda>0$, the bounded measurable function $\chi_{(\lambda,\infty)}$ could be approximated by a sequence of bounded continuous functions $(f_{n})_{n=1}^{\infty}$ pointwise. Hence, $f_{n}(x)\to\chi_{(\lambda,\infty)}(x)$ in weak-$\ast$ topology. Indeed, for every $u\in L_{1}(\mathcal{R})$, by the spectral theorem (or, functional calculus), we have
	\begin{equation*}
		\begin{aligned}
			\tau\left(u\cdot f_{n}(\gamma(x))\right)&=\int_{spec(\gamma(x))}f_{n}(t)~\langle u,\Pi_{\gamma(x)}(dt)\1\rangle_{L_2(\Ra)}\\
			&\to \int_{spec(\gamma(x))}\chi_{(\lambda,\infty)}(t)~\langle u,\Pi_{\gamma(x)}(dt) \rangle_{L_2(\Ra)}\\
			&=\tau(u\cdot \chi_{(\lambda,\infty)}(\gamma(x)))\quad (n\to \infty),
		\end{aligned}
	\end{equation*}
where $\Pi_{\gamma(x)}$ is the associated spectral measure of $\gamma(x)$.
Since $\gamma$ is normal,  repeating arguments as above, we have
	\begin{equation*}
		\tau\left(u\cdot \gamma(f_{n}(x))\right)\to\tau\left(u\cdot \gamma(\chi_{(\lambda,\infty)}(x))\right),~(n\to \infty).
	\end{equation*}
Note that $\gamma$ is  a homomorphism, and hence $\gamma(f_{n}(x))=f_{n}(\gamma(x))$ for each $n\in \mathbb{N}$. Since $L_{1}(\mathcal{R})$ separates  points of $\mathcal{R}$, we have
	\begin{equation}\label{transference 2}
		\gamma(\chi_{(\lambda,\infty)})(x)=\chi_{(\lambda,\infty)}(\gamma(x)).
	\end{equation}
	Applying the trace preseving of $\gamma$ to \eqref{transference 2}, we obtain the desired equality.
\end{proof}
\end{appendices}

%


\providecommand{\bysame}{\leavevmode\hbox to3em{\hrulefill}\thinspace}
\providecommand{\MR}{\relax\ifhmode\unskip\space\fi MR }
\providecommand{\MRhref}[2]{%
  \href{http://www.ams.org/mathscinet-getitem?mr=#1}{#2}
}
\providecommand{\href}[2]{#2}

\end{document}